\providecommand{\A}{\mathbf A}
\providecommand{\Z}{\mathbf Z}
\renewcommand{\P}{\mathbf P}
\renewcommand{\O}{\mathscr O}
\providecommand{\Q}{\mathbf Q}
\providecommand{\PP}{\mathscr P} 
\providecommand{\displayroot}[2]{\sqrt[\leftroot{-1}\uproot{2}{#1}]{#2}}
\providecommand{\pthroot}[1]{\displayroot p {#1}}
\renewcommand{\phi}{\varphi}
\renewcommand{\epsilon}{\varepsilon}
\renewcommand{\emptyset}{\varnothing}
\providecommand{\sheafHom}{\mathscr H\! \mathit{om}}
\providecommand{\Frac}{\operatorname{Frac}}
\providecommand{\Sym}{\mathrm{S}}
\providecommand{\T}{\mathrm{T}}
\providecommand{\N}{\mathrm{N}}
\providecommand{\jac}{\operatorname{jac}}
\providecommand{\coker}{\operatorname{coker}}
\providecommand{\im}{\operatorname{im}}
\providecommand{\Spec}{\operatorname{Spec}}
\providecommand{\Hom}{\operatorname{Hom}}
\newtheorem{theorem}{Theorem}
\numberwithin{theorem}{chapter} %numbering
\newtheorem{proposition}[theorem]{Proposition}
\newtheorem{lemma}[theorem]{Lemma}
\newtheorem{corollary}[theorem]{Corollary}
\theoremstyle{definition}
\newtheorem{definition}[theorem]{Definition}
\newtheorem{example}[theorem]{Example}
\newtheorem{remark}[theorem]{Remark}
\newtheorem{construction}[theorem]{Construction}
\numberwithin{equation}{chapter}
\title{Irrational Complete Intersections}
\author{Lucas Braune}
\date{\today}
\begin{document}

\maketitle

\begin{abstract}
We prove that a complete intersection of $c$ very general hypersurfaces of degrees $d_1,\dots,d_c\ge 2$ in $N$-dimensional complex projective space is not ruled (and therefore not rational) provided that
$\sum_{i=1}^c d_i \ge \tfrac 2 3 N + c + 1$.
To this end we consider a degeneration to positive characteristic, following Koll\'ar.
Our argument does not require a resolution of the singularities of the special fiber of the degeneration.
It relies on a generalization of Koll\'ar's ``algebraic Morse lemma'' that controls the dimensions of the second-order Thom-Boardman singularities of general sections of Frobenius pullbacks of vector bundles.
\end{abstract}

\setcounter{tocdepth}{1}
\tableofcontents

\chapter*{Introduction}

Let $k$ be an algebraically closed field.
An algebraic variety over $k$ is said to be \emph{rational} if it contains a (Zariski-)dense open subset that is isomorphic to a dense open subset of $n$-dimensional projective space $\P^n_k$ for some $n$.
Varieties that are not rational are referred to as \emph{irrational}.
A projective variety of codimension $c$ in $\P^N_k$ is called a \emph{complete intersection} if it is the scheme-theoretic intersection of $c$ hypersurfaces (equivalently, if its ideal is generated by $c$ homogeneous polynomials).

In this thesis, we consider the following question: Over the field $\mathbf C$ of complex numbers, which smooth complete intersections are rational?

Positive results asserting the rationality of smooth complex complete intersections are few and scattered in the literature.
For example, quadric hypersurfaces are rational, as are even-dimensional cubic hypersurfaces in $\P^{2m+1}_\mathbf C$ containing a pair of skew $m$-planes (a class that includes all cubic surfaces in $\P^3_\mathbf C$) and complete intersections of two quadrics containing a line.
On the other hand, it seems that there are no known  examples of smooth rational hypersurfaces of odd dimension and degree 3, or of any dimension and degree at least 4.

Turning to negative results, let $X\subseteq \P^N_\mathbf C$ be a smooth complete intersection of $c$ hypersurfaces of degrees $d_1,\dotsc,d_c$.
By the adjunction formula, the canonical bundle of $X$ is $\omega_X = \O_{\P^N}(\sum  d_i - N-1)|_X$.
Thus, if $\sum d_i \ge N+1$, then $X$ has nonzero geometric genus and therefore is not rational. In fact, in this case $X$ is not even covered by rational curves, by a standard argument similar to the proof of Lemma \ref{not-uniruled} below.

Conversely, if $\sum d_i\le N$, then $X$ is \emph{Fano}, that is, has ample anti-canonical bundle.
This implies by a result of Campana and Koll\'ar-Miyaoki-Mori  \cite[Theorem V.2.13]{Kollar1996} that $X$ is rationally connected, hence covered by rational curves.
Thus Fano complete intersections are close to being rational.
A more classical result in this direction asserts that, if $\sum d_i \le N-1$, then there passes a line through every point of $X$ \cite[Exercise 4.10]{Kollar1996}.
Another states that, if the multi-degree $(d_1,\dotsc,d_c)$ is fixed, $N$ is sufficiently large, and $X$ is general, then $X$ is unirational \cite{paranjape1992}.

The first irrationality results for Fano complete intersections appeared in the early 1970s. Iskovskikh and Manin \cite{IM1971} showed that a quartic threefold $X_4\subset \P^4_\mathbf C$ has no birational automorphisms other than biregular ones, that is, $\mathrm{Bir}(X_4)=\mathrm{Aut}(X_4)$. This implies that quartic threefolds have finite birational automorphism groups and hence cannot be rational.

Iskovskikh and Manin's method has since been applied to prove the irrationality of Fano complete intersections of index 1 and 2, that is, complete intersections $X\subseteq \P^N_\mathbf C$ with $\sum d_i = N$ and with $\sum d_i = N-1$; see \cite{deFernex2013,deFernex2016,Suzuki2017} and references therein.

At around the same time as \cite{IM1971} appeared, Clemmens and Griffiths showed that all cubic threefolds $X_3\subset \P^4_\mathbf C$ are irrational \cite{CG1972}.
To this end, they introducted the intermediate Jacobian of a smooth complex projective threefold, which is a complex torus constructed via Hodge theory.
They showed that, for a rationally connected threefold, the intermediate jacobian is an Abelian variety with a natural principal polarization;
and that for a rational threefold, it is a product of Jacobians of curves (equipped with their natural polarizations).
By studying the singularites of the theta divisor giving the polarization of the intermediate Jacobian of the cubic threefold $X_3$, Clemmens and Griffiths were able to show that this intermediate Jacobian is not a product of Jacobians of curves, and therefore that $X_3$ is not rational.

Intermediate Jacobians have since been applied to prove the irrationality of other complete intersection threefolds; see \cite{Beauville2016} and references therein. 

Koll\'ar made a breakthrough with his 1995 paper \cite{kollar1995}, which establishes the irrationality of a large class of Fano hypersurfaces of dimension and index exceeding 3. Precisely, he proved that a very general hypersurface in $\P^N_\mathbf C$ of degree $d\ge 2\lceil (N+2)/3\rceil$ and dimension at least 3 is not ruled (and therefore not rational).

To exhibit a hypersurface that is not ruled, Koll\'ar considered a degeneration $f:Z\to S$ proposed by Mori \cite[Example 4.3]{Mori1975}. This is in particular a flat proper morphism of schemes with $S$ the spectrum of a discrete valuation ring. Its general fiber $Z_\eta$ is a smooth hypersurface of even degree in $\P^N_{\kappa(\eta)}$; and its special fiber $Z_s$ is a double cover of a smooth hypersurface $Y\subset \P^N_{\kappa(s)}$ of half that degree. Koll\'ar chose the discrete valuation ring to have fraction field of characteristic zero and residue field of characteristic 2; exploited the resulting inseparability of the cover $Z_s\to Y$ to construct a big invertible subsheaf $Q\subset \Omega_{B}^{\dim B-1}$, where $B\to Z_s$ is an explicit resolution of singularities; and noted that the existence of such a subsheaf implies that $B$ is not ruled.
From this it follows immediately that the special fiber $Z_s$ is not ruled, and applying a result of Matsusaka's one concludes that the geometric general fiber $Z_{\bar\eta}$ is not ruled.

Koll\'ar's work was built upon by Totaro \cite{Totaro2016}. Instead of ruledness, Totaro worked with the property of \emph{universal triviality} of the Chow group of zero-dimensional cycles modulo rational equivalence: a proper variety $V$ over a field $k$ is said to possess it if, for all extension fields $k\subset E$, the degree homomorphism $\deg: \mathrm{CH}_0(V_E)\to \Z$ is an isomorphism. Totaro's result is that a very general complex hypersurface $X\subset \P^N$ of degree $d\ge 2 \lceil (N+1)/3\rceil$ and dimension at least 3 does not have universally trivial $\mathrm{CH}_0$. Because the group $\mathrm{CH}_0$ is a birational invariant \cite[Chapter 16]{Fulton98}, this implies that the very general hypersurface $X$ is not stably rational, meaning that $X\times \P^m$ is irrational for every $m\ge 0$.

Totaro's proof uses Koll\'ar's degeneration to positive characteristic. He shows that the existence of a subsheaf $Q\subset \Omega_B^{\dim B-1}$ with nonzero global sections implies that $B$ does not have universally trivial $\mathrm{CH}_0$. Totaro then observes that this conclusion implies by a result of Colliot-Th\'el\`ene and Pirutka that the geometric generic fiber $Z_{\bar\eta}$ does not have universally trivial $\mathrm{CH}_0$.

Chatzistamatiou and Levine  \cite{CL17} apply Koll\'ar and Totaro's methods to complete intersections of codimension $c$ by degenerating a hypersurface in a fixed complete intersection of codimension $c-1$.
They show in particular that if $X\subseteq \P^N_\mathbf C$ is a very general complete intersection of multi-degree $(d_1,\dotsc,d_c)$ with $d_1$ divisible by a prime number $p\ge 3$, then $X$ is not stably rational provided that
$\frac {p+1} p d_1 + \sum_{i=2}^c d_i\ge N+1$
and not ruled provided that this inequality is strict.

In this thesis, we consider an analogue of Mori's degeneration in which all of the  equations defining a complete intersection are allowed to vary.
To state our main results, we introduce the following two finite sets of pairs of integers:
\begin{align*}
\mathscr E' := \{&
(1, 2),
(2, 3),
(2, 4),
(3, 4),
(3, 5),
(4, 5),
(4, 6),
(4, 7),
(5, 7),\\
&(5, 8),
(6, 9),
(7, 11)\}\\
\mathscr E'' :=\{&
(1, 3),
(2, 5),
(3, 6),
(4, 8),
(5, 9),
(6, 10),
(7, 12),
(8, 13)\}
\end{align*}
Let $p$ be a prime number.
If $p=2$, let $\mathscr E := \mathscr E'\sqcup \mathscr E''$; otherwise, let $\mathscr E := \mathscr E'$. 

\begin{theorem}[=Theorem \ref{MainResultIrrationalCIs}]
\label{IntroThm1}
Let $N,d_1,\dotsc,d_c$ be positive integers.
For $i=1,\dotsc,c$, let $r_i\in \{0,1,\dotsc,p-1\}$ be the remainder of the division of $d_i$ by $p$. Suppose that
\begin{enumerate}
\item $c\le \tfrac 1 2 N-1$, $c\le \frac 1 3 N +1$ and $(c,N-c)\not\in \mathscr E$; 
\item $d_1,\dotsc,d_c \ge p$; and
\item $\sum_{i=1}^c (d_i-r_i) > \frac{p}{p+1}(N+1)$.
\end{enumerate}
Then a complete intersection of $c$ very general hypersurfaces of degrees $d_1,\dotsc,d_c$ in $N$-dimensional complex projective space is not ruled (and therefore not rational).
\end{theorem}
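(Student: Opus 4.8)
The plan is to realize the very general complex complete intersection as the geometric generic fiber of a one-parameter degeneration and to prove non-ruledness by specializing to a singular fiber in characteristic $p$. Concretely, I would fix a discrete valuation ring $R$ whose fraction field $K$ has characteristic zero and whose residue field $k$ is algebraically closed of characteristic $p$, and build a flat projective family $f\colon Z\to\Spec R$ whose generic fiber $Z_\eta$ is a smooth complete intersection of multidegree $(d_1,\dots,d_c)$ over $K$ and whose closed fiber $Z_s$ is a purely inseparable degree-$p$ cover of a smooth complete intersection $Y\subseteq\P^N_k$. The remainders $r_i$ enter here: after subtracting each $r_i$ I would arrange the defining equations to acquire $p$-th power parts, so that $Z_s\to Y$ is inseparable and the ``effective'' degrees governing the construction are the multiples of $p$ recorded in condition (3).

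Having set up the degeneration, I would reduce the theorem to a statement in characteristic $p$. Since ruledness is preserved under specialization (Matsusaka), non-ruledness of the closed fiber $Z_s$ forces non-ruledness of the geometric generic fiber $Z_{\bar\eta}$; and because a complex complete intersection is very general precisely when it avoids countably many proper closed subsets of the parameter space, realizing any one such fiber as $Z_{\bar\eta}$ transfers non-ruledness back to $\C$. Thus it suffices to prove that $Z_s$ is not ruled.

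To show $Z_s$ is not ruled I would follow Koll\'ar and exhibit a big invertible subsheaf of $\Omega^{\dim Z_s-1}_{Z_s}$, the existence of which obstructs separable uniruledness and hence ruledness. The inseparability of $Z_s\to Y$ produces, via the Frobenius and the kernel of the differential on the cover, such a subsheaf generically, and the numerical inequality (3) is exactly what makes the relevant line bundle on $Y$ big. The essential novelty---and the main obstacle---is to carry this out \emph{without} resolving the singularities of $Z_s$. Here I would invoke the generalized algebraic Morse lemma to control the second-order Thom--Boardman loci $\Sigma^{i,j}$ of a general section of the Frobenius pullback of the relevant bundle: one must show that these singular strata have codimension large enough that the big subsheaf extends across them and that the obstruction to uniruledness survives on the singular variety $Z_s$ itself. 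The hypotheses $c\le\tfrac12 N-1$, $c\le\tfrac13 N+1$ and the exclusion of the finite set $\mathscr E$ are precisely the conditions under which these dimension estimates hold; the set $\mathscr E$ collects the small values of $(c,N-c)$ for which the generic Thom--Boardman bounds fail and must be discarded by hand. Assembling these ingredients yields the non-ruledness of $Z_s$, and hence of the very general complex complete intersection.
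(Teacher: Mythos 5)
Your overall strategy---degenerate over a DVR to characteristic $p$, identify the special fiber with an inseparable cover, prove it is not separably uniruled by exhibiting a big invertible subsheaf of a torsion-free quotient of the sheaf of top forms using the Thom--Boardman dimension estimates in place of an explicit resolution, and transfer nonruledness back to $\mathbf C$ with Matsusaka's theorem---is the paper's strategy, and it does prove the theorem when $r_1=\dotsb=r_c=0$, i.e.\ when $p$ divides every $d_i$; this is exactly the paper's Proposition \ref{IrrationalCIs}. The gap is in your one-sentence treatment of nonzero remainders. In the Mori-type degeneration (Construction \ref{MoriDegeneration}) the total space sits in the vector bundle $\mathbf V(\oplus_i \O(a_i))$ and is cut out by $\tau_i^{p}-f_i$ and $t\tau_i-g_i$; eliminating $\tau_i$ over the generic point shows that the generic fiber is defined by $g_i^p-t^pf_i$, so its degrees are forced to be the multiples $pa_i$ of $p$. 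There is no way to ``arrange the defining equations to acquire $p$-th power parts'' while keeping the exact degree $d_i=pa_i+r_i$ and landing on a special fiber of the form $Y[\pthroot s]$: inserting a degree-$r_i$ factor $h_i$ (equations $g_i^ph_i-t^pf_i$) makes the special fiber satisfy $\tau_i^p h_i=f_i$, which is not an inseparable cover in the sense of Construction \ref{insep-cover-constr}, falls outside the scope of Theorem \ref{irrational-insep-covers}, and acquires uncontrolled singularities along $\{h_i=0\}$.

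The paper closes this gap with a second, purely characteristic-zero degeneration that your proposal lacks. Having produced (via the char-$p$ argument) one nonruled smooth complex complete intersection $X$ of multidegree $(d_1-r_1,\dotsc,d_c-r_c)$, it multiplies the equations of $X$ by general homogeneous forms of degrees $r_1,\dotsc,r_c$ to obtain a generically smooth complete intersection $X'\subseteq\P^N_\mathbf C$ of the full multidegree $(d_1,\dotsc,d_c)$ containing $X$ as an irreducible component, joins $X'$ by a pencil to a smooth complete intersection of that multidegree, and checks that the total space of the pencil over the local ring at $0$ is normal and irreducible (Lemma \ref{family-CIs}). It then applies a different variant of Matsusaka's theorem, \cite[Theorem IV.1.6.2]{Kollar1996}, which deduces nonruledness of the geometric generic fiber from the existence of a nonruled irreducible component of the special fiber, provided the total space is normal and irreducible. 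Note that the version of Matsusaka's theorem you invoke (Theorem \ref{Matsusaka}) requires geometrically integral fibers, so it cannot be applied to this second family, since $X'$ is reducible; this is precisely why the variant, and the normality statement of Lemma \ref{family-CIs}, are needed. (A small further inaccuracy: the special fiber of the char-$p$ degeneration is a cover of degree $p^c$ of $Y$, one $p$-th root for each of the $c$ equations, not a degree-$p$ cover.)
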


\begin{corollary}
A complete intersection of $c$ very general hypersurfaces of degrees $d_1,\dots,d_c\ge 2$ in $N$-dimensional complex projective space is not ruled (and therefore not rational) provided that
$\sum_{i=1}^c d_i \ge \tfrac 2 3 N + c + 1$.
\end{corollary}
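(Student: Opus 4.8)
The plan is to deduce the corollary from Theorem~\ref{IntroThm1} by taking the prime $p=2$, after first disposing of the range in which the canonical bundle is already effective. So I would begin by splitting on the size of $\sum_i d_i$. If $\sum_{i=1}^c d_i \ge N+1$, then by the adjunction formula $\omega_X = \O_{\P^N}(\sum_i d_i - N - 1)|_X$ is a nonnegative twist of $\O_X$, hence has a nonzero global section; this forces $X$ to be non-uniruled and in particular not ruled, by the standard argument indicated after Lemma~\ref{not-uniruled}. It then remains to treat the range $\tfrac 2 3 N + c + 1 \le \sum_i d_i \le N$, where I would invoke the theorem with $p=2$. (Note the two cases are exhaustive: if $c > \tfrac 1 3 N - 1$ then $\tfrac 2 3 N + c + 1 > N$, so the hypothesis already forces $\sum_i d_i \ge N+1$.)

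For the application with $p=2$ one has $\tfrac{p}{p+1} = \tfrac 2 3$ and each remainder $r_i \in \{0,1\}$, so $\sum_i r_i \le c$. Condition~(2) is immediate from the hypothesis $d_i \ge 2 = p$. For condition~(3) I would estimate
\[
\sum_{i=1}^c (d_i - r_i) \;\ge\; \sum_{i=1}^c d_i - c \;\ge\; \Big(\tfrac 2 3 N + c + 1\Big) - c \;=\; \tfrac 2 3 N + 1 \;>\; \tfrac 2 3 (N+1),
\]
where the first inequality uses $\sum_i r_i \le c$ and the second is the corollary's hypothesis; this is exactly condition~(3).

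The remaining work, and the only genuinely delicate point, is condition~(1)—especially the exclusion $(c,N-c)\notin\mathscr E$. Here I would exploit the upper bound $\sum_i d_i \le N$ defining the present range: combined with $\sum_i d_i \ge \tfrac 2 3 N + c + 1$ it yields $c \le \tfrac 1 3 N - 1$. This single inequality delivers both numerical bounds at once, since $\tfrac 1 3 N - 1 \le \tfrac 1 2 N - 1$ and $\tfrac 1 3 N - 1 \le \tfrac 1 3 N + 1$; moreover it forces $N \ge 3c+3$, i.e.\ $N - c \ge 2c+3$. The exclusion then becomes purely combinatorial: one checks directly against the two tabulated lists that every pair $(c',n')\in\mathscr E$ satisfies $n' \le 2c'+2$, whereas our pair has $N-c \ge 2c+3 > 2c+2$, so it cannot lie in $\mathscr E$. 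With all three hypotheses of Theorem~\ref{IntroThm1} verified, the theorem gives that $X$ is not ruled. The main obstacle I anticipate is nothing deep but rather this bookkeeping: confirming that the $\mathscr E$-exclusion is \emph{automatic} throughout the relevant range, which I expect to reduce entirely to the uniform bound $n' \le 2c'+2$ on the finitely many exceptional pairs.
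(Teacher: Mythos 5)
Your proposal is correct and follows essentially the same route as the paper: dispose of the range $\sum_i d_i \ge N+1$ via the effective canonical bundle, then reduce to Theorem \ref{IntroThm1} with $p=2$ using the bound $c \le \tfrac 1 3 N - 1$ forced by $\sum_i d_i \le N$. The paper states the verification of the hypotheses (including the exclusion $(c,N-c)\notin\mathscr E$) without detail, whereas you carry it out explicitly via the observation that every pair $(c',n')\in\mathscr E$ satisfies $n'\le 2c'+2$ while $N-c\ge 2c+3$; this is exactly the bookkeeping the paper leaves implicit.
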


\begin{proof}
Let $X\subseteq \P^N_\mathbf C$ be the complete intersection of $c$ very general hypersurfaces of degrees $d_1,\dotsc,d_c$ such that $\sum_i d_i \ge \tfrac 2 3 N + c + 1.$
Then $X$ is smooth.
As was remarked above, if $\sum_i d_i \ge N+1$, then $X$ has nonzero geometric genus and is therefore not ruled (in fact, not even covered by rational curves). 
Hence we may assume that $\sum_i d_i \le N$.
It then follows that $c+1\le \tfrac 1 3 N$ and moreover that the integers $N,d_1,\dotsc,d_c$ satisfy the hypotheses of Theorem \ref{IntroThm1} with $p=2$.
Applying that theorem, the result follows.
\end{proof}

Setting $p=2$ and $c=1$ in Theorem \ref{IntroThm1}, we recover Koll\'ar's original result for hypersurfaces of dimension at least 4.
But Theorem \ref{IntroThm1} says nothing about the irrationality of the very general quartic threefold in $\P^4_\mathbf C$, which is established by Koll\'ar's result.

To prove Theorem \ref{IntroThm1}, we consider a degeneration whose generic fiber is a smooth complete intersection $Z_\eta\subseteq \P^N_K$ over a field $K$ of characteristic zero and whose special fiber is a finite insepable cover $Z_s$ of a smooth complete intersection $Y\subseteq \P^N_k$ of smaller multi-degree, but the same codimension $c$, over a field $k$ of characteristic $p$.
By Matsusaka's result (restated as Theorem \ref{Matsusaka} below), to prove that the complete intersection $Z_K$ is not ruled, it suffices to show that the special fiber $Z_s$ is not ruled.

The covering map $Z_s\to Y$ may be described in terms of Construction \ref{insep-cover-constr} below, which to a triple $(X,E,s)$, where
\begin{itemize}
\item $X$ is a scheme of characteristic $p$,
\item $E$ is a locally free sheaf of finite rank on $X$, and
\item $s\in \Gamma(X,F_X^* E)$ is a section of the pullback of $E$ along the absolute Frobenius morphism $F_X: X\to X$,
\end{itemize}
associates a finite morphism of schemes $X[\pthroot s]\to X$.
More precisely, there exist a locally free sheaf $E$ on $Y$ of rank $c$ (equal to the codimension of $Z_\eta$ in $\P^N_K$) and a section $s\in \Gamma(Y,F_Y^* E)$ such that $Z_s$ and $Y[\pthroot s]$ are isomorphic as schemes over $Y$.

We deduce the nonruledness of the special fiber $Z_s$ from the following result. More precisely, we deduce it from Theorem \ref{irrational-insep-covers} below, which is a mild generalization of Theorem \ref{IntroThm2} to nonclosed base fields.

\begin{theorem}
\label{IntroThm2}
Let $k$ be an algebraically closed field of characteristic $p$.
Let $X$ be a smooth, proper, $n$-dimensional, connected variety over $k$.
Let $E$ be an locally free sheaf of finite rank $e$ on $X$. Let $V\subset \Gamma(X,F_X^* E)$ be a $k$-linear subspace of finite dimension. 
Suppose that
\begin{enumerate}
\item $e\le n-1$, $e\le \tfrac 1 2 (n+3)$ and $(e,n)\not\in \mathscr E$;
\item the natural map $V\to (F_X^* E)/\frak m_x^3 (F_X^*E)$ is surjective for all closed points $x\in X$; and 
\item $\omega_X\otimes \det(E)^{\otimes p}$ is a big invertible sheaf on $X$.
\end{enumerate}
Let $s\in W$ be a general section.
Then the $k$-scheme $X[\pthroot s]$ is integral, normal and not separably uniruled (hence not ruled, nor rational).
\end{theorem}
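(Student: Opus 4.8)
Set $W := X[\pthroot s]$, and let $\pi\colon W\to X$ be the finite covering morphism of Construction \ref{insep-cover-constr}. Since $\pi$ is finite, flat and purely inseparable, it is a universal homeomorphism; in particular $W$ is connected of pure dimension $n$, and it will be integral once we know it is reduced. \'Etale-locally on $X$ the morphism $\pi$ has the form $\Spec \O_X[t_1,\dots,t_e]/(t_1^p-a_1,\dots,t_e^p-a_e)\to\Spec\O_X$, where $a_1,\dots,a_e$ are the coordinates of $s$ in a local frame of $F_X^*E$; thus $W$ is a relative complete intersection over the regular scheme $X$, hence Cohen--Macaulay and in particular $S_2$. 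The plan is to (i) bound the locus over which $\pi$ fails to be smooth and thereby deduce that $W$ is $R_1$, hence normal and integral; (ii) construct a big invertible subsheaf of the sheaf $\Omega^{[n-e]}_W$ of reflexive differentials; and (iii) show that the existence of such a subsheaf rules out separable uniruledness. The mechanism underlying (ii) is special to characteristic $p$: because the transition functions of $F_X^*E$ are $p$-th powers, their differentials vanish, so the local expressions $da_i$ glue to a well-defined global section $ds\in\Gamma(X,\Omega^1_X\otimes F_X^*E)$, equivalently to a bundle map $ds\colon T_X\to F_X^*E$.

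From the local model, $\Omega^1_{W/X}$ is locally free of rank $e$ with frame $dt_1,\dots,dt_e$, and the cotangent sequence of $\pi$ takes the form $\pi^*\Omega^1_X\xrightarrow{\alpha}\Omega^1_W\to\pi^*E^\vee\to 0$, where $\ker\alpha$ is the image of the conormal map obtained by contracting with $\pi^*ds$. Consequently $W$ is smooth at a point if and only if $ds$ is surjective there, so $\operatorname{Sing}W=\pi^{-1}(D)$, where $D\subseteq X$ is the degeneracy locus on which $ds\colon T_X\to F_X^*E$ drops rank. For a sufficiently general bundle map of ranks $n\ge e$ this locus has codimension $n-e+1$. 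Here hypothesis (2) is what forces genericity: as $s$ ranges over $V$, its $2$-jet at each closed point of $X$ is unconstrained, so a Kleiman--Bertini argument applied to the relevant jet bundle shows that for general $s$ the first- and second-order loci of $ds$ meet the Thom--Boardman strata in the expected codimension. In particular $\operatorname{codim}_X D=n-e+1\ge 2$ by (1), so $W$ is regular in codimension $1$; combined with $S_2$ this gives that $W$ is normal, and hence integral.

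Let $W^\circ\subseteq W$ denote the smooth locus, whose complement has codimension $\ge 2$. On $W^\circ$ the subsheaf $S:=\im(\alpha)\subseteq\Omega^1_{W^\circ}$ is a subbundle of rank $n-e$; taking determinants, and using $\ker\alpha\cong\pi^*F_X^*E^\vee$ together with $\det F_X^*E^\vee\cong(\det E)^{\otimes(-p)}$, we obtain
\[
\det S\;=\;\Lambda^{n-e}S\;\cong\;\pi^*\bigl(\omega_X\otimes(\det E)^{\otimes p}\bigr).
\]
Thus the inclusion $\Lambda^{n-e}S\hookrightarrow\Omega^{n-e}_{W^\circ}$ exhibits the line bundle $L:=\pi^*\bigl(\omega_X\otimes(\det E)^{\otimes p}\bigr)$ as an invertible subsheaf of $\Omega^{n-e}_{W^\circ}$; note $n-e\ge 1$ by (1), so this is a genuine nonzero power of the cotangent sheaf. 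Since $W$ is normal and $W\setminus W^\circ$ has codimension $\ge 2$, this inclusion extends to an injection $L\hookrightarrow\Omega^{[n-e]}_W$. Finally, $\omega_X\otimes(\det E)^{\otimes p}$ is big by (3), and bigness is preserved under pullback along the generically finite morphism $\pi$, so $L$ is big. (It is precisely the Frobenius twist $\det F_X^*E^\vee\cong(\det E)^{\otimes(-p)}$ that upgrades the naive factor $(\det E)^{\otimes(p-1)}$ appearing in $\omega_W$ to the factor $(\det E)^{\otimes p}$ demanded by (3).)

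It remains to deduce non-separable-uniruledness from the big subsheaf $L\hookrightarrow\Omega^{[n-e]}_W$. Suppose $W$ were separably uniruled. Then through a general point of $W$ there passes a free rational curve $f\colon\P^1\to W$ whose deformations cover $W$, and bigness of $L$ gives $\deg f^*L>0$ for a curve $C=f(\P^1)$ moving in such a covering family. If $C$ lies in $W^\circ$, then $T_W|_C$ is globally generated, so every Harder--Narasimhan slope of $\Omega^1_W|_C=(T_W|_C)^\vee$, and hence of $\Omega^{n-e}_W|_C$, is $\le 0$; the nonzero map $L|_C\to\Omega^{n-e}_W|_C$ then forces $\deg f^*L\le 0$, a contradiction. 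The crux of the whole argument---and the step I expect to be the main obstacle---is to guarantee, without resolving the singularities of $W$, that the covering free curves may indeed be chosen inside $W^\circ$ (equivalently, that the singularities of $W$ are mild enough that the differential-form criterion applies to $W$ directly). This is exactly what the finer, second-order estimates provide: the promised generalization of Koll\'ar's algebraic Morse lemma bounds the dimensions of the second-order Thom--Boardman loci $\Sigma^{1,1}(ds)$ of a general section, and the remaining hypotheses $e\le\tfrac12(n+3)$ and $(e,n)\notin\mathscr E$ are precisely the inequalities that make these bounds large enough. Granting this, $W$ is not separably uniruled; being integral and normal over an algebraically closed field, and since both ruledness and rationality imply separable uniruledness, $W$ is therefore neither ruled nor rational. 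The genuinely hard part is thus the second-order Thom--Boardman dimension count, including the verification that the finitely many pairs collected in $\mathscr E$ are the only exceptions to the required codimension bound.
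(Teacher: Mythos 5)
Your first two steps (normality and integrality of $W=X[\pthroot s]$ via $S_2$ plus the codimension bound on the degeneracy locus, and the construction of the big invertible subsheaf $\pi^*(\omega_X\otimes(\det E)^{\otimes p})\subseteq \Omega^{n-e}_{W^\circ}$ on the smooth locus) agree with the paper (Propositions \ref{insep-cover-normal}, \ref{1st-sings-generic}, \ref{this-is-Q} and Construction \ref{map-from-Q}). But the proposal has a genuine gap exactly at the step you flag as ``the main obstacle,'' and the way you propose to close it does not work. Your slope argument needs the covering free rational curves to lie inside $W^\circ$, and you assert that the second-order Thom--Boardman dimension counts (the role of $e\le\tfrac12(n+3)$ and $(e,n)\notin\mathscr E$) guarantee this. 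That is a non sequitur: those estimates bound the codimensions in $X$ of the loci $\Sigma^2(s)$ and $\Sigma^{1,j}(s)$ of the \emph{section} $s$; they say nothing about whether rational curves on $W$ can be deformed off the singular locus. Moving free curves away from a codimension-$2$ set, or even making sense of freeness for a curve meeting $\mathrm{Sing}\,W$, is ordinarily done by passing to a resolution of singularities --- which is unavailable in characteristic $p$ and is precisely what this paper is engineered to avoid. Relatedly, your reflexive extension $L\hookrightarrow\Omega^{[n-e]}_W$ is sound as a sheaf statement, but there is no criterion (in the paper or in general) asserting that a big invertible subsheaf of \emph{reflexive} differentials on a normal variety precludes separable uniruledness; reflexive forms on a normal variety need not pull back to regular forms on any smooth model, so the subsheaf you build is not directly usable.

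The paper closes this gap differently, and this is where the hypotheses you misattribute actually enter. The conditions $e\le\tfrac12(n+3)$ and $(e,n)\notin\mathscr E$ force, for general $s$, that $\Sigma^2(s)=\emptyset$ and $\Sigma^{1,n-e-1}(s)=\emptyset$, i.e.\ every singular point of $s$ has corank $1$ with Hessian of rank at least $3$ (at least $4$ when $p=2$). Via the Morse lemma with parameters (Propositions \ref{morse-parameters} and \ref{morse-corank-1}) this yields a local normal form $f=q+h$ at every singular point, and Proposition \ref{inclusion-Q} then shows by explicit computation (the divisibility argument with $du_1\wedge du_2/g$) that the rational form defining $Q$ becomes a \emph{regular} section of $\Omega_B^{n-e}$ modulo torsion on the blowup $\rho:B\to Y$ of the singular locus --- an honest proper birational model, not just the smooth locus. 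Finally, non-separable-uniruledness of $B$ (hence of $W$) follows from Lemma \ref{not-uniruled}, which is a global Koll\'ar-style criterion (Kodaira's lemma plus the K\"unneth formula applied to a separable uniruling $Y\times\P^1\dashrightarrow B$) valid for arbitrary geometrically integral proper schemes; no free curves, no slopes, and no smoothness of $B$ are needed. So the missing idea in your write-up is exactly this pair: produce the form on a blowup where it is regular modulo torsion, and replace the free-curve argument by the torsion-tolerant criterion of Lemma \ref{not-uniruled}.
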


If the locally free sheaf $E$ in Theorem \ref{IntroThm2} has rank $e=1$, then $F_X^* E = E^{\otimes p}$ and $X[\pthroot s]\to X$ reduces to a degree-$p$ cyclic cover of the type considered by Koll\'ar.
Comparing the rank-one case of Theorem \ref{IntroThm2} with \cite[Thm. V.5.11]{Kollar1996}, we find that the two results are identical, except that, when $p=2$, Koll\'ar's result applies as soon as dimension $n$ of $X$ is at least 3, whereas our result requires that $n\ge 4$.

To prove Theorem \ref{IntroThm2}, we exploit the inseparability of the structure map $X[\pthroot s]\to X$ to produce a big invertible subsheaf $Q\subseteq \Omega_B^{n-c}/T$, where $B\to X[\pthroot s]$ is a proper birational map and $T\subseteq \Omega_B^{n-c}$ is the subsheaf of torsion forms.
By Lemma \ref{not-uniruled} below, which has a simple proof and generalizes \cite[Lemma 7]{kollar1995} to varieties with singularities, the existence of the subsheaf $Q$ implies that $B$ and $X[\pthroot s]$ are not separably uniruled.

Thanks in part to Lemma \ref{not-uniruled}, the proof of Theorem \ref{IntroThm2} bypasses two closely related steps of Koll\'ar's argument in the rank-one case, namely (i) to explicitly classify the
singularities\footnote{In the context of Theorem \ref{IntroThm2}, it seems natural to define the \emph{singularity} of a section $s\in \Gamma(X,F_X^* E)$ at a point $x\in X$ to be the equivalence class of $e$-tuples of power series consisting of all images of $s$ under $k$-linear isomorphisms
\begin{equation*}
(F_X^* E)\otimes \widehat \O_{X,x} \xrightarrow\sim k[[x_1,\dotsc,x_n]]^{\oplus e}
\end{equation*}
induced by choices of formal coordinates around $x$ and of $\widehat\O_{X,x}$-bases for $E\otimes \widehat \O_{X,x}$.}
of the general section $s\in V$, and (ii) to explicity resolve the singularities of the inseparable cover $X[\pthroot s]$.

Step (i) fits into the framework of the theory of singularities of maps, where one of the guiding problems is the classification of  germs of smooth maps $f: (\mathbf R^n,0)\to (\mathbf R^e,0)$ up to diffeomorphisms of the source and the target. For two surveys of this subject, see \cite{Wall81, Arnold1985}.
It would be interesting to know whether, using ideas from singularity theory, one can carry out steps (i) and (ii) when the vector bundle $E$ has small rank, for example, 2 or 3.

In this thesis we do not obtain stable irrationality results for complete intersections, partly because we do not see how the counterpart in Totaro's argument to \cite[Lemma 7]{kollar1995}, namely \cite[Lemma 2.2]{Totaro2016}, could be generalized to singular varieties.
With a complete description of the singularities of the general section $s\in V$ and a resolution of the singularities of the inseparable cover $X[\pthroot s]$ at hand, one might hope to be able to apply \cite[Lemma 2.2]{Totaro2016} directly.

Our proof of Theorem \ref{IntroThm2} requires  preparation in the form of scheme-theoretic, positive-characteristic analogues of results about maps between smooth manifolds.
This preparation occupies Chapters 1 and 2 of this thesis.
We establish two main results: Proposition \ref{morse-corank-1} below, which concerns the local structure of a section of a vector bundle around a Thom-Boardman singularity of order 2 and corank 1, and Corollary \ref{generic-sigma-i-j} below, which concerns the dimensions of Thom-Boardman loci of order 2 of generic sections.
These two results are connected to Theorem \ref{IntroThm2} by the hypothesis in that theorem that the section $s\in V$ be generic.

Proposition \ref{morse-corank-1} follows from Proposition \ref{morse-parameters} below, which is a version of ``Morse's Lemma with parameters'' that holds in positive characteristics.
For another version, see \cite[Lemmas 3.9 and 3.12]{GN16}.
We derive Proposition \ref{morse-parameters} from general statements about finite determinacy and versal unfoldings of power series, namely Propositions \ref{DeterminacyBound} and \ref{versal-deform} below.
We believe that the latter two propositions are folklore, that is, that they have not yet appeared in the literature, but are well known to experts.
Analogues of these propositions in the context of maps between smooth manifolds are special cases of \cite[Theorems 1.2 and 3.4]{Wall81}.

Corollary \ref{generic-sigma-i-j} follows from the combination of Lemma \ref{serre-bertini} and Proposition \ref{univ-sigma-i-j-bis} below.
The first result is a lemma of Serre's similar in flavor to Bertini's theorem.
The second result is new. It extends to positive characteristics Levine's computation \cite[p. 55]{Levine71} of the codimensions of the universal Thom-Boardman loci of order two in jet bundles.
We note that our conventions for labelling Thom-Boardman loci are different from Levine's.
The codimensions we find agree with those found by Levine in all characteristics different from two, but disagree in characteristic two.

\chapter{Thom-Boardman singularities of order two}

\section{Vector bundles and connections}

Let $X$ be a scheme.

\begin{definition}
\label{vb-def}
Let $E$ be a locally free $\O_X$-module of finite rank. 
The \textbf{vector bundle} associated to $E$ is the $X$-scheme 
\begin{equation*}
\mathbf V(E) = \Spec_X \Sym(E^\vee).
\end{equation*}
\end{definition}

\begin{remark}
\label{vb-ump}
Let $T$ be a scheme over $X$ with structure morphism $t:T\to X$. The universal properties of the relative spectrum and of the symmetric algebra yield bijections as follows.
\begin{align*}
\Hom_X(T,\mathbf V(E)) 
&= \Hom_{\O_X\text{-alg.}}(\Sym (E^\vee),t_* \O_T)\\
&= \Hom_{\O_X}(E^\vee,t_* \O_T)\\
&= \Hom_{\O_T}(t^* E^\vee,\O_T)\\
&= \Gamma(T,E_T)
\end{align*}
\end{remark}

\begin{definition}
\label{tautological-section}
Let $E$ be a locally free $\O_X$-modules of finite rank. 
The \textbf{tautological section}
$h\in \Gamma(\mathbf V(E),E_{\mathbf V(E)})$
is the section corresponding to the identity morphism of $\mathbf V(E)$ as in Remark \ref{vb-ump}.
\end{definition}

\begin{remark}
Pullback of $h$ recovers the bijection
\begin{equation*}
\Hom_X(T,\mathbf V(E))\xrightarrow\sim \Gamma(T,E_T)
\end{equation*}
described in Remark \ref{vb-ump}.
\end{remark}

\begin{remark}
\label{vb-trivial}
Let $\A^e$ denote the affine space of dimension $e$ over $\Spec \Z$. Let $t_1,\dotsc,t_e$ denote the coordinates on $\A^e$. If $E=\O_X^{\oplus e}$, then $\mathbf V(E)=X\times \A^e$ as schemes over $X$ and $h=(t_1,\dotsc,t_e)$ as sections of $\O_{X\times \A^e}^{\oplus e}$.
\end{remark}

\begin{remark}
\label{vb-differentials}
Let $E$ be a locally free $\O_X$-module of finite rank.
Write $V:=\mathbf V(E)$.
Let $\pi: V\to X$ denote the projection.
Let $E^\vee \to \pi_* \Omega_{V/X}$ be the map that sends $\sigma \mapsto d(\sigma \cdot h)$. This map is linear over $\O_X$ hence, by adjunction, induces a map of $\O_V$-modules 
\begin{equation*}
\tau : E^\vee_V \to \Omega_{V/X}.
\end{equation*}
By the computation of the differentials on affine space, $\tau$ is an isomorphism.
\end{remark}

Let $S$ be a scheme.
Let $f : X\to S$ be a smooth morphism, which we use to regard $X$ as a scheme over $S$.
In sequel we will abuse our notation by denoting both the structure sheaf of $S$ and its inverse image along $f$ by $\O_S$.

\begin{definition}
\label{connection-def}
Let $E$ be a quasi-coherent $\O_X$-module.
A \textbf{connection} on $E$ is a $\O_S$-linear map
\begin{equation*}
\nabla:E\to \Omega_X\otimes E
\end{equation*}
satisfying the \textbf{Leibniz rule}
\begin{equation*}
\nabla(fs) = df\otimes s + f\nabla s
\end{equation*}
for all sections $f\in \O_X$ and $s\in E$ over a common open subset of $X$.
\end{definition}

\begin{example}
\label{trivial-connection}
The \textbf{trivial connection} on $\O_X^{\oplus e}$ is the sheaf morphism
\begin{equation*}
\nabla^\mathrm{triv}: \O_X^{\oplus e} \to \Omega_X \otimes \O_X^{\oplus e}
\end{equation*}
that sends $(f_1,\dotsc, f_e)\mapsto (df_1,\dotsc,df_e)$.
\end{example}

\begin{definition}
Let $E$ be a quasi-coherent $\O_X$-module.
Let $\nabla : E\to \Omega_X\otimes E$ be a connection on $E$.
Let $s$ be a section of $E$ defined over an open subset of $X$.
The \textbf{covariant derivative} of $s$ is the section $\nabla s\in \Omega_X\otimes E$. 
We say that $s$ is a \textbf{horizontal section} if its covariant derivative vanishes.
\end{definition}

\begin{remark}
\label{gend-horiz}
Let $E$ be a locally free $\O_X$-module of finite rank.
Let $\nabla : E\to \Omega_X\otimes E$ be a connection on $E$.
The subsheaf of horizontal sections $\ker(\nabla)\subseteq E$ is an $\O_S$-module.
Suppose that $E$ is generated as an $\O_X$-module by this subsheaf.
Then $X$ is covered by open subsets $U\subseteq X$ with the property that there exist sections $v_1,\dotsc,v_e\in \Gamma(U,E)$ which form a basis for $E$ over $U$ and satisfy $\nabla v_i=0$ for all $i=1,\dotsc, e$. If $s\in \Gamma(U,E)$ is another section, then $s = \sum_{i=1}^e f_i v_i$ for some $f_1,\dotsc,f_e\in \Gamma(U,\O_X)$ and $\nabla s = \sum_{i=1}^e df_i \otimes v_i$.
Thus, if a locally free sheaf is generated by the kernel of a connection on it, then that connection is locally modeled on the trivial one.
\end{remark}

In this work we will almost exclusively consider pairs $(E,\nabla)$ which satisfy the hypotheses of of Remark \ref{gend-horiz}.

\begin{construction}
\label{connection-subscheme}
Let $\Sigma$ be a subscheme of $X$ that is smooth over $S$.
Let $E$ be a quasi-coherent $\O_X$-module.
Let $\nabla : E\to \Omega_X\otimes E$ be a connection on $E$.
Then $\nabla$ induces a connection $E_\Sigma \to \Omega_\Sigma\otimes E_\Sigma$ on the restriction of $E$ to $\Sigma$, in the following way.

Let $I\subseteq \O_X$ be the ideal sheaf of $X$. 
If $f\in I$ and $s\in E$ are local sections, then $\nabla (fs)\in \Omega_X\otimes E$ maps to zero in $\Omega_\Sigma \otimes E_\Sigma$. Thus the composition of maps of abelian sheaves on $X$
\begin{equation*}
E\xrightarrow\nabla \Omega_X \otimes E \to \Omega_\Sigma \otimes E_\Sigma
\end{equation*}
vanishes on $I\cdot E$, hence factors through the quotient $E/I\cdot E = E_\Sigma$. The factoring map is the induced connection.
\end{construction}

\begin{definition}
\label{horiz-sub-def}
Let $E$ be a locally free $\O_X$-module of finite rank.
Let $V := \mathbf V(E)$ be the corresponding vector bundle over $X$.
A connection $\nabla$ on $E$ determines an $\O_V$-submodule $H\subseteq \T_V$ such that $H\oplus \T_{V/X} = \T_V$, see Construction \ref{horiz-sub-constr} below.
We call $H$ the \textbf{horizontal subbundle} of $\T_V$ corresponding to the connection $\nabla$. 
\end{definition}

The next result shows that the horizontal subbundle determines the connection.
A proof of it will be given following Construction \ref{horiz-sub-constr}.

\begin{proposition}[= Corollary \ref{horiz-sub-prop-2}]
\label{horiz-sub-prop}
Setup as in Definition \ref{horiz-sub-def}.
Let $s\in \Gamma(X,E)$ be a section.
Let $ds : \T_X\to s^* \T_V$ denote the differential of $s$ viewed as a morphism $X\to V$.
Then the following diagram of solid arrows commutes:
\begin{equation*}
\begin{tikzcd}
\T_X \ar[r,"ds"] \ar[d, "\nabla s"'] &
s^* \T_V \ar[r, two heads] &
s^*(\T_V/H) \\
E \ar[r,equals] &
s^* \T_{V/X} \ar[u,hook,dashed] \ar[ru, "\sim"']
\end{tikzcd}
\end{equation*}
\end{proposition}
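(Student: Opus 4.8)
The plan is to reduce the statement to the single identity $s^* p\circ ds=\nabla s$, where $p\colon \T_V\to \T_{V/X}$ denotes the vertical projection determined by the splitting $\T_V=H\oplus \T_{V/X}$. Indeed, writing $q\colon \T_V\twoheadrightarrow \T_V/H$ for the quotient and $j\colon \T_{V/X}\hookrightarrow \T_V$ for the inclusion, the direct sum decomposition makes $\beta:=q\circ j$ an isomorphism and gives $q=\beta\circ p$; so the diagonal isomorphism and the dashed inclusion in the diagram are accounted for tautologically (the lower triangle is literally $q\circ j=\beta$), and the only content is that the top composite $\T_X\xrightarrow{ds}s^*\T_V\xrightarrow{s^* q}s^*(\T_V/H)$ agrees with $s^*\beta\circ\nabla s$, which after cancelling the isomorphism $s^*\beta$ is exactly $s^* p\circ ds=\nabla s$. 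Here I use Remark~\ref{vb-differentials} in its dual form to identify $\T_{V/X}\cong E_V$, hence $s^*\T_{V/X}\cong s^* E_V=E$, which is the bottom equality of the diagram. I would first record that, by its construction (Construction~\ref{horiz-sub-constr}), the projection $p$ is contraction against the covariant derivative $(\pi^*\nabla)h\in\Gamma(V,\Omega_V\otimes E_V)$ of the tautological section $h$ (Definition~\ref{tautological-section}) with respect to the connection $\pi^*\nabla$ pulled back to $E_V$; that is, $H=\ker p$ is exactly where this covariant derivative annihilates tangent vectors.

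The computational heart of the argument is the identity
\begin{equation*}
s^*\bigl((\pi^*\nabla)h\bigr)=\nabla s.
\end{equation*}
I would verify this locally, choosing an open set on which $E$ has a frame $e_1,\dots,e_e$, with fibre coordinates $t_1,\dots,t_e$ on $V$ (Remark~\ref{vb-trivial}), so that $h=\sum_j t_j (e_j)_V$ and $\nabla e_j=\sum_i \omega_{ij}\otimes e_i$. Expanding via the Leibniz rule gives $(\pi^*\nabla)h=\sum_j dt_j\otimes (e_j)_V+\sum_{i,j} t_j\,\pi^*\omega_{ij}\otimes (e_i)_V$. Because $s^* h=s$, pulling back along $s$ substitutes $t_j\mapsto s_j$ and $dt_j\mapsto ds_j$, turning the right-hand side into $\sum_j ds_j\otimes e_j+\sum_{i,j}s_j\,\omega_{ij}\otimes e_i$, which is precisely $\nabla s$ written in the frame. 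The only delicate point is that the pulled-back connection $\pi^*\nabla$ restricts along the section $s$ to $\nabla$, which follows from its characterization on pullback sections.

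Finally I would conclude using the adjunction between pullback of forms and pushforward of vector fields: for any $\xi\in\T_X$ one has $s^*\langle(\pi^*\nabla)h,\,ds(\xi)\rangle=\langle s^*((\pi^*\nabla)h),\,\xi\rangle$. The left-hand side is $s^* p(ds(\xi))$ by the description of $p$ as contraction against $(\pi^*\nabla)h$, while the right-hand side equals $\langle\nabla s,\xi\rangle=(\nabla s)(\xi)$ by the identity above; hence $s^* p\circ ds=\nabla s$ and the diagram commutes. I expect the main obstacle to be purely organizational: keeping straight the three identifications $\T_{V/X}\cong E_V$, $s^*\T_{V/X}\cong E$, and $\T_V/H\cong\T_{V/X}$, and confirming that Construction~\ref{horiz-sub-constr} really presents $p$ as contraction against $(\pi^*\nabla)h$. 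Once these are pinned down, the genuine calculation collapses to the one-line frame computation above. When $(E,\nabla)$ satisfies the hypotheses of Remark~\ref{gend-horiz}, one may instead pick a horizontal frame, so that $\omega_{ij}=0$ and $H=\pi^*\T_X$, and the computation simplifies still further.
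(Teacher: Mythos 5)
Your strategy is correct in outline but genuinely different from the paper's. You run the classical ``connector map'' argument: identify the projection $p:\T_V\to \T_{V/X}$ killing $H$ with contraction against $(\pi^*\nabla)h$, prove the form-pullback identity $s^*\bigl((\pi^*\nabla)h\bigr)=\nabla s$ in a local frame, and conclude by the adjunction between pullback of forms and pushforward of tangent vectors. The paper instead proves the dual statement for an \emph{arbitrary} first-order jet $s\in\Gamma(T,x^*\PP_X^1E)$ (Proposition \ref{horiz-sub-prop-3}): it checks that the two maps $s_0^*\Omega_V\to x^*\Omega_X$ agree on the generators $s_0^*df$ and $s_0^*dt$ of $\Omega_V$, the crux being that $s-s_0^*\tilde h$ lies in $\delta_1(\Omega_X\otimes E)$ and equals $\delta_1(\overline\nabla s)$; the statement you were asked to prove is then the special case $s=d_E^1s$ of an honest section. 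What the paper's route buys is this jet-level generality, which is reused later (Proposition \ref{nd-jet-prop} and the second-order theory); what your route buys is transparency in the section case, at the cost of introducing the pullback connection $\pi^*\nabla$, a construction the paper never defines and which you would need to set up.

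The one real gap is your assertion that ``by its construction'' the projection $p$ is contraction against $(\pi^*\nabla)h$. That is not what Construction \ref{horiz-sub-constr} says: there $H$ is defined as the \emph{image} of $d\tilde h$, where $\tilde h=\epsilon^{-1}h$ is the $\ker(\overline\nabla)$-valued lift of the tautological section and $d\tilde h$ is the differential of a jet in the sense of Proposition \ref{diff-jet-prop}. Your characterization $H=\ker\bigl((\pi^*\nabla)h:\T_V\to E_V\bigr)$ is true, but it is a lemma, not a definition, and proving it is where the work parallel to the paper's proof actually lives. Concretely: in a frame $e_1,\dots,e_e$ with $\nabla e_j=\sum_i\omega_{ij}\otimes e_i$, one has $\tilde h=\sum_j t_j\,(d_E^1e_j-\delta_1\nabla e_j)_V$, and unwinding Proposition \ref{diff-jet-prop} gives $d\tilde h^*(dt_k)=-\sum_j t_j\,\pi^*\omega_{kj}$, so that over the product chart $V\cong X\times\A^e$ determined by the frame the image of $d\tilde h$ is spanned by the vector fields $\partial_a-\sum_{k,j}t_j\,\omega_{kj}(\partial_a)\,\partial/\partial t_k$; one then checks that these are annihilated by contraction against $(\pi^*\nabla)h$ and that this contraction restricts to the canonical isomorphism $\T_{V/X}\cong E_V$ on vertical vectors, whence the two rank-$n$ complements of $\T_{V/X}$ coincide. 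With that verification inserted your proof closes up, but it is a computation of the same order as the paper's own proof, not the mere bookkeeping you describe it as; only under the extra hypothesis of Remark \ref{gend-horiz} (horizontal frames exist) does it collapse to a triviality, and the proposition is stated without that hypothesis.
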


\section{Degeneracy loci}

Let $X$ be a scheme. Let $\alpha:E\to F$ be a map of locally free $\O_X$-modules of finite rank.
Let $e$ and $f$ respectively denote the ranks of $E$ and $F$.
Let $m=\min(e,f)$. 

\begin{definition}
\label{degeneracy-locus-def}
Let $i$ be a nonnegative integer. The \textbf{$i$th degeneracy locus} of $\alpha$ is defined to be the subscheme $\Sigma^i(\alpha)\subseteq X$ defined by the equation
$\wedge^{m-i+1}\alpha=0$
if $i\le m+1$ and the empty scheme otherwise.
\end{definition}

Let $i$ be an integer such that $0\le i \le m$.

\begin{remark}
A point $x\in X$ lies in $\Sigma^i(\alpha)$ if, and only if, the $k(x)$-linear map $\alpha(x)$ has rank at most $m-i$.
\end{remark}

\begin{proposition}
There exist closed immersions
\begin{equation*}
\emptyset = \Sigma^{m+1}(\alpha)\subseteq \Sigma^m(\alpha) \subseteq \dotsb \subseteq 
\Sigma^0(\alpha) = X.
\end{equation*}
\end{proposition}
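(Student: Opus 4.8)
The plan is to work locally, reduce the chain of inclusions to a standard containment between ideals of minors, and treat the two endpoints separately. First I would pin down the scheme structure hidden in the phrase ``the subscheme defined by the equation $\wedge^{m-i+1}\alpha = 0$.'' For a morphism $\phi : G\to H$ of locally free sheaves of finite rank, I interpret the vanishing scheme of $\phi$ as $V(I(\phi))$, where $I(\phi)\subseteq \O_X$ is the image of the $\O_X$-linear map $G\otimes H^\vee \to \O_X$ adjoint to $\phi$. Over an open $U$ on which $G$ and $H$ are trivialized, $I(\phi)|_U$ is generated by the matrix entries of $\phi$; a change of trivialization multiplies this matrix on the left and right by invertible matrices and so does not alter the generated ideal, whence $I(\phi)$ is independent of the choices. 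Applying this to $\phi = \wedge^{m-i+1}\alpha$, whose matrix in local trivializations of $E$ and $F$ is the matrix of $(m-i+1)\times(m-i+1)$ minors of the matrix $A$ representing $\alpha$, identifies the ideal sheaf of $\Sigma^i(\alpha)$ with the determinantal ideal $I_{m-i+1}(A)$ generated by those minors.

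Next I would establish the chain of inclusions. It suffices to show, for each $0\le i\le m$, that the ideal sheaf of $\Sigma^i(\alpha)$ is contained in that of $\Sigma^{i+1}(\alpha)$; given this, $\Sigma^{i+1}(\alpha)$ is a closed subscheme of $\Sigma^i(\alpha)$, and the desired closed immersion follows. By the identification above, this is the local containment $I_{k+1}(A)\subseteq I_k(A)$ with $k = m-i$, which is an immediate consequence of Laplace (cofactor) expansion: expanding any $(k+1)\times(k+1)$ minor of $A$ along a row exhibits it as an $\O_X$-linear combination of $k\times k$ minors, with entries of $A$ as coefficients. Hence every generator of $I_{k+1}(A)$ lies in $I_k(A)$.

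Finally I would dispose of the two extremes. For $\Sigma^0(\alpha)$, the relevant morphism $\wedge^{m+1}\alpha$ lands in $\sheafHom(\wedge^{m+1}E, \wedge^{m+1}F)$, and since $m+1 > \min(e,f)$ one of $\wedge^{m+1}E$ and $\wedge^{m+1}F$ vanishes; thus $\wedge^{m+1}\alpha = 0$, its ideal is $(0)$, and $\Sigma^0(\alpha) = X$. For $\Sigma^{m+1}(\alpha)$, the relevant morphism is $\wedge^0\alpha = \mathrm{id}_{\O_X}$, whose ideal is all of $\O_X$, so $\Sigma^{m+1}(\alpha) = \emptyset$.

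I expect the only genuine subtlety to lie in the first step, namely making precise that ``$\phi = 0$'' means $V(I(\phi))$ and verifying that this ideal is invariant under change of trivialization, so that the subschemes $\Sigma^i(\alpha)$ are globally well defined. The combinatorial heart, the containment $I_{k+1}(A)\subseteq I_k(A)$, is a one-line consequence of cofactor expansion and presents no difficulty.
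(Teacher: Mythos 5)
Your proposal is correct and follows essentially the same route as the paper, whose proof is exactly the observation that Laplace (cofactor) expansion gives the containment of determinantal ideals $I_{k+1}(A)\subseteq I_k(A)$. The additional material you supply — the intrinsic description of the vanishing ideal (which is automatically trivialization-independent, being the image of a globally defined sheaf map) and the verification of the two endpoints $\Sigma^0(\alpha)=X$, $\Sigma^{m+1}(\alpha)=\emptyset$ — is routine bookkeeping that the paper leaves implicit.
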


\begin{proof}
Follows from the Laplace expansion of the determinant.
\end{proof}

Our convention for indexing degeneracy loci has the following pleasant feature. 

\begin{proposition}
\label{deg-loci-dual}
Let $\alpha^\vee : F^\vee \to E^\vee$ be the dual of $\alpha$. Then $\Sigma^i(\alpha^\vee)=\Sigma^i(\alpha)$ as subschemes of $X$.
\end{proposition}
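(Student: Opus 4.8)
The plan is to reduce the statement to two facts from multilinear algebra: that the scheme-theoretic vanishing locus of a morphism of locally free sheaves coincides with that of its dual, and that forming exterior powers commutes with dualization. Once these are in place, the proposition is formal.

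First I would record the general observation that for any morphism $\beta : A\to B$ of locally free $\O_X$-modules of finite rank, one has $Z(\beta)=Z(\beta^\vee)$, where $Z(\beta)$ denotes the closed subscheme whose ideal sheaf is locally generated by the matrix entries of $\beta$ in trivializations of $A$ and $B$ (equivalently, the zero scheme of $\beta$ regarded as a global section of $\sheafHom(A,B)=A^\vee\otimes B$). The point is that under the canonical identification $\sheafHom(A,B)\cong\sheafHom(B^\vee,A^\vee)$, both sides being $A^\vee\otimes B$ since the double dual of a locally free sheaf is the identity, the section $\beta$ corresponds to $\beta^\vee$. Concretely, in a local trivialization the matrix of $\beta^\vee$ is the transpose of that of $\beta$, so the two ideals of matrix entries literally coincide.

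Second I would invoke the natural isomorphism $\wedge^k(\beta^\vee)\cong(\wedge^k\beta)^\vee$, compatible with the perfect pairings $\wedge^k(A^\vee)\otimes\wedge^k A\to \O_X$ sending $(\phi_1\wedge\dots\wedge\phi_k)\otimes(a_1\wedge\dots\wedge a_k)\mapsto\det(\phi_i(a_j))$. These pairings are isomorphisms for locally free sheaves of finite rank and hold in all characteristics, so they identify $\wedge^k(A^\vee)$ with $(\wedge^k A)^\vee$ naturally; under these identifications $\wedge^k(\beta^\vee)$ becomes $(\wedge^k\beta)^\vee$. Applying this with $\beta=\alpha$ and $k=m-i+1$, and noting that $\min(\operatorname{rank}F^\vee,\operatorname{rank}E^\vee)=\min(f,e)=m$ so that the same value of $k$ governs $\Sigma^i(\alpha^\vee)$, I obtain
\[
\Sigma^i(\alpha^\vee)=Z(\wedge^{m-i+1}\alpha^\vee)=Z((\wedge^{m-i+1}\alpha)^\vee)=Z(\wedge^{m-i+1}\alpha)=\Sigma^i(\alpha),
\]
where the middle two equalities use the second and first facts respectively. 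The boundary cases $i=m+1$, where $\wedge^0$ is the identity of $\O_X$ with empty zero locus, and $i>m+1$, are immediate.

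The hard part will not be conceptual but a matter of bookkeeping: checking that the isomorphism $\wedge^k((-)^\vee)\cong(\wedge^k(-))^\vee$ is genuinely natural in the morphism, so that it intertwines $\wedge^k(\beta^\vee)$ with $(\wedge^k\beta)^\vee$, and that the determinantal pairing is perfect with no hypothesis on the characteristic. Both reduce, after passing to a local trivialization where $A$ and $B$ are free, to the familiar identity of minors: the $(m-i+1)$-minors of a matrix and of its transpose are the same polynomials in the entries, which also gives an independent verification of the first fact at the level of exterior powers.
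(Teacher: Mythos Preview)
Your proof is correct and, at its core, is the same as the paper's: both rest on the observation that $\min(\operatorname{rank}F^\vee,\operatorname{rank}E^\vee)=m$ and that the $(m-i+1)$-minors of a matrix coincide with those of its transpose (equivalently, $\det M=\det M^{\mathrm T}$ over any ring). The paper states this in one line, while you package it as the naturality of $\wedge^k((-)^\vee)\cong(\wedge^k(-))^\vee$ together with $Z(\beta)=Z(\beta^\vee)$; your final paragraph already acknowledges that both routes collapse to the same local computation.
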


\begin{proof}
The minimum between the ranks of $E$ and $F$ is equal to the minimum between the ranks of $F^\vee$ and $E^\vee$.
The result therefore follows from the fact that, over any ring, the determinant of a matrix is equal to that of its transpose.
\end{proof}

\begin{proposition}
\label{deg-loci-functor}
Let $f: T\to X$ be a morphism of schemes. Then
\begin{equation*}
f^{-1}\Sigma^i(\alpha) = \Sigma^i(f^* \alpha)
\end{equation*}
as closed subschemes of $T$.
\end{proposition}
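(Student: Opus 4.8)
The plan is to reduce the assertion to two routine compatibilities with pullback: that of exterior powers of a morphism of vector bundles, and that of the zero scheme of such a morphism. Since both sides are closed subschemes of $T$ and the equality of closed subschemes may be checked locally, I would first pass to the affine case and choose trivializations of $E$ and $F$ over a small open, so that $\alpha$ is represented by an $f\times e$ matrix $(a_{jk})$ with entries in $\O_X$. I would next note that $f^* E$ and $f^* F$ are locally free of the same ranks $e$ and $f$, so that the integer $m=\min(e,f)$ is unchanged upon replacing $\alpha$ by $f^*\alpha$; this guarantees that the exponent $m-i+1$ in Definition \ref{degeneracy-locus-def} is the same on both sides. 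If $i>m+1$, then by definition $\Sigma^i(f^*\alpha)=\emptyset=f^{-1}\emptyset=f^{-1}\Sigma^i(\alpha)$, so it suffices to treat the case $i\le m+1$.

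For $i\le m+1$, recall that $\Sigma^i(\alpha)$ is the vanishing scheme of the bundle map $\wedge^{m-i+1}\alpha\colon \wedge^{m-i+1}E\to \wedge^{m-i+1}F$; in the chosen trivialization its ideal sheaf is generated by the entries of this map, which are exactly the $(m-i+1)\times(m-i+1)$ minors of the matrix $(a_{jk})$. The scheme-theoretic preimage $f^{-1}\Sigma^i(\alpha)$ is the closed subscheme of $T$ whose ideal is generated by the image of $f^*\mathcal I$ in $\O_T$, where $\mathcal I$ is the ideal of $\Sigma^i(\alpha)$; that is, by the functions $f^\sharp(g)$ as $g$ ranges over the minors above. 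On the other hand $f^*\alpha$ is represented by the matrix $(f^\sharp(a_{jk}))$, whose $(m-i+1)\times(m-i+1)$ minors are precisely the $f^\sharp$-images of the corresponding minors of $(a_{jk})$, because each minor is a fixed polynomial in the matrix entries and $f^\sharp$ is a ring homomorphism. These images generate the ideal of $\Sigma^i(f^*\alpha)$. The two ideals of $\O_T$ thus agree on the trivializing open, and gluing over a cover yields the stated equality; invariantly, this is the compatibility $f^*(\wedge^{r}\alpha)=\wedge^{r}(f^*\alpha)$ of exterior powers with pullback together with the compatibility of zero schemes of bundle maps with pullback.

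I expect the only genuine point of care to be the bookkeeping of scheme structures rather than any substantive difficulty. One must confirm that $f^{-1}(-)$ of a closed subscheme is computed via the pulled-back ideal (the image of $f^*\mathcal I$ in $\O_T$), and that the entries of $\wedge^{r}\alpha$ really are the $r\times r$ minors, so that the identification of ideals is an equality of generators and not merely of radicals or of underlying sets. Once these conventions are pinned down, the equality is purely formal, since forming minors is a polynomial operation commuting with the ring homomorphism $f^\sharp$.
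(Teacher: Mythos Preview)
Your argument is correct and is essentially the paper's own proof spelled out in coordinates: the paper's one-line justification is precisely the compatibility $f^*(\wedge^r M)\cong \wedge^r(f^*M)$ you invoke invariantly at the end, and your local computation with minors is just an explicit unwinding of this isomorphism together with the definition of $f^{-1}$ of a closed subscheme.
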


\begin{proof}
This holds because, for every $\O_X$-module $M$, the natural $\O_T$-linear map $f^*(\wedge^i M) \to \wedge^i (f^* M)$ is an isomorphism.
\end{proof}

\begin{lemma}[{\cite[Lemma 2.5]{Kleiman69}}]
\label{kleimans-lemma}
Let $R$ be a ring. Let $M$ be an $R$-module. Let $A, B\subseteq M$ be submodules. Suppose that $B$ is free, of finite rank $b$, and a direct summand of $M$. Let $q$ be an integer such that $q\ge b$. The following are equivalent:
\begin{enumerate}
\item The natural map $\wedge^q (A+B) \to \wedge^q M$ is zero.
\item The natural map $\wedge^{q-b} A\to \wedge^{q-b}(M/B)$ is zero. 
\end{enumerate}
\end{lemma}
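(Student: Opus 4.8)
The plan is to reduce to the split situation guaranteed by the hypotheses and then to read off both implications from the grading of the exterior algebra of a direct sum. Since $B$ is a free direct summand of $M$ of rank $b$, I would fix a decomposition $M = B\oplus C$ and a basis $e_1,\dotsc,e_b$ of $B$, so that $\omega := e_1\wedge\dotsb\wedge e_b$ generates the free rank-one module $\wedge^b B$ and the projection $p\colon M\to C$ identifies $C$ with $M/B$. The structural input is the canonical decomposition
\begin{equation*}
\wedge^q M \;\cong\; \bigoplus_{i+j=q}\wedge^i B\otimes \wedge^j C,
\end{equation*}
together with the vanishing $\wedge^i B = 0$ for $i>b$. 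Writing $\bar a := p(a)\in C$ for $a\in A$, the map in (2) is $\wedge^{q-b}$ of $p|_A\colon A\to C$, whose image is generated by the wedges $\bar a_1\wedge\dotsb\wedge\bar a_{q-b}$.

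I would then reduce condition (1) to a statement about generators. As $A+B$ is generated by $A$ together with $e_1,\dotsc,e_b$, the module $\wedge^q(A+B)$ is generated by the elements $e_I\wedge a_1\wedge\dotsb\wedge a_t$, with $I\subseteq\{1,\dotsc,b\}$ of size $s$, $t=q-s$, and $a_1,\dotsc,a_t\in A$; condition (1) says precisely that all of these map to zero in $\wedge^q M$. For the implication (1)$\Rightarrow$(2) it suffices to test (1) on the special generators $\omega\wedge a_1\wedge\dotsb\wedge a_{q-b}$. Expanding each $a_k$ along $M=B\oplus C$ and using $\omega\wedge\beta=0$ for all $\beta\in B$, the image of such a generator is exactly $\omega\otimes(\bar a_1\wedge\dotsb\wedge\bar a_{q-b})$; since tensoring with the basis element $\omega$ of $\wedge^b B$ is injective, the vanishing of all these images is equivalent to condition (2).

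For the converse (2)$\Rightarrow$(1) I would expand an arbitrary generator $e_I\wedge a_1\wedge\dotsb\wedge a_t$ in the same fashion and inspect its component in each summand $\wedge^i B\otimes\wedge^j C$. A nonzero component forces $i\le b$, hence $j=q-i\ge q-b$, and its $C$-part is a sum of $j$-fold wedges of the $\bar a_k$, each lying in the image of $\wedge^j(p|_A)$. The vanishing of $\wedge^{q-b}(p|_A)$ propagates to $\wedge^j(p|_A)$ for every $j\ge q-b$, because any $j$-fold wedge factors through a $(q-b)$-fold one; so condition (2) annihilates every $C$-part, every component vanishes, and the generator maps to zero. (An alternative, should this bookkeeping prove awkward, is to induct on $b$, peeling off a single basis vector using the rank-one case, which is exactly the computation above with $b=1$.)

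The only genuinely delicate step, and the one I would write out with care, is the degree bookkeeping in the converse: one must verify that the bound $j\ge q-b$ holds for every surviving component and handle the boundary case $q=b$ separately, where (2) degenerates to the assertion that $\wedge^0(p|_A)\colon R\to R$ vanishes, i.e.\ that $R=0$ — in which case (1) holds trivially. Everything else is a routine unwinding of the exterior-algebra decomposition of a direct sum, so I expect no further difficulty.
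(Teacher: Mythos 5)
Your proof is correct; the main thing to flag is that there is no internal argument in the paper to compare it against --- Lemma \ref{kleimans-lemma} is quoted from Kleiman (\cite[Lemma 2.5]{Kleiman69}, as the bracketed attribution indicates), and the paper defers entirely to that reference rather than proving the statement. Your argument is the natural self-contained one: fix a splitting $M=B\oplus C$ identifying $C$ with $M/B$, invoke the bigraded decomposition $\wedge^q M\cong \bigoplus_{i+j=q}\wedge^i B\otimes \wedge^j C$ together with $\wedge^i B=0$ for $i>b$, and test on the generators $e_I\wedge a_1\wedge\dotsb\wedge a_t$ of $\wedge^q(A+B)$. The two delicate points are both handled correctly. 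First, in the direction (2)$\Rightarrow$(1), every component surviving the constraint $i\le b$ has $C$-degree $j=q-i\ge q-b$, and the vanishing of $\wedge^{q-b}(p|_A)$ does propagate upward: $\bar a_1\wedge\dotsb\wedge\bar a_j=(\bar a_1\wedge\dotsb\wedge\bar a_{q-b})\wedge(\bar a_{q-b+1}\wedge\dotsb\wedge\bar a_j)$, and the first factor already vanishes in $\wedge^{q-b}C$, so the product does too. Second, the boundary case $q=b$ is consistent in both directions: condition (2) becomes the assertion that $\mathrm{id}_R=0$, i.e.\ $R=0$, while condition (1) applied to the generator $\omega$ forces $\omega\otimes 1=0$ in the free rank-one summand $\wedge^b B\otimes\wedge^0 C$, which likewise forces $R=0$; in fact your main computation covers this case uniformly (the empty wedge of the $\bar a_k$ is $1\in \wedge^0 C=R$), so the separate discussion, while harmless, is not even needed.
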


Let $\Sigma$ denote the locally closed subscheme $\Sigma^i(\alpha)\setminus \Sigma^{i+1}(\alpha)\subseteq X$.

\begin{proposition}
\label{coker-deg-loci}
The $\O_\Sigma$-modules $\ker(\alpha|_\Sigma)$ and $\coker(\alpha|_\Sigma)$ are locally free of respective ranks $e-m+i$ and $f-m+i$.
If $t:T\to \Sigma$ is a map of schemes, then 
$
\ker (t^* (\alpha|_\Sigma)) = t^* \ker(\alpha|_\Sigma)$
and
$\coker (t^*(\alpha|_\Sigma)) = t^* \coker(\alpha|_\Sigma)$.
\end{proposition}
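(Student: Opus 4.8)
The plan is to work locally on $\Sigma$ and to reduce $\alpha|_\Sigma$ to a block-diagonal normal form from which both assertions are immediate. Since local freeness, the ranks, and compatibility with base change may all be checked locally, I would shrink $\Sigma$ to an affine open on which $E_\Sigma$ and $F_\Sigma$ are free, and fix bases for each. Writing $r:=m-i$, a point $x$ lies in $\Sigma=\Sigma^i(\alpha)\setminus\Sigma^{i+1}(\alpha)$ exactly when $\alpha(x)$ has rank at most $r$ but not at most $r-1$; thus $\alpha|_\Sigma$ has constant rank $r$. Reading off the scheme-theoretic definitions of the two loci, this says that every $(r+1)\times(r+1)$ minor of the matrix of $\alpha|_\Sigma$ vanishes as a section of $\O_\Sigma$ (because $\Sigma\subseteq\Sigma^i(\alpha)$), while near each point some $r\times r$ minor is a unit (this is exactly the condition of lying off $\Sigma^{i+1}(\alpha)$, which is cut out by the $r\times r$ minors).

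After permuting the basis vectors I may assume that a unit $r\times r$ minor occupies the top-left block, so that $\alpha|_\Sigma$ is given by a matrix $\left(\begin{smallmatrix}A&B\\C&D\end{smallmatrix}\right)$ with $A$ of size $r\times r$ and $\det A$ a unit after further shrinking. Since $A$ is then invertible over $\O_\Sigma$, left and right multiplication by the elementary invertible matrices $\left(\begin{smallmatrix}I&0\\-CA^{-1}&I\end{smallmatrix}\right)$ and $\left(\begin{smallmatrix}I&-A^{-1}B\\0&I\end{smallmatrix}\right)$ carries $\alpha|_\Sigma$ into the form $\left(\begin{smallmatrix}A&0\\0&S\end{smallmatrix}\right)$, where $S=D-CA^{-1}B$ is the Schur complement. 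The crucial step is to show that $S=0$ identically. The ideal of $(r+1)\times(r+1)$ minors is preserved by multiplication by invertible matrices, so all such minors of $\left(\begin{smallmatrix}A&0\\0&S\end{smallmatrix}\right)$ still vanish in $\O_\Sigma$; but the minor on rows $\{1,\dots,r,r+k\}$ and columns $\{1,\dots,r,r+l\}$ equals $\pm(\det A)\,S_{kl}$, and since $\det A$ is a unit this forces each entry $S_{kl}=0$.

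Thus, locally around each point, $\alpha|_\Sigma$ is isomorphic as a map of free modules to a block-diagonal map with invertible $r\times r$ block and zeros elsewhere: with respect to splittings $E_\Sigma=E_1\oplus E_2$ and $F_\Sigma=F_1\oplus F_2$, where $E_1,F_1$ are free of rank $r$ and $E_2,F_2$ are free of ranks $e-r$ and $f-r$, the map sends $(u_1,u_2)\mapsto(Au_1,0)$ with $A\colon E_1\to F_1$ an isomorphism. Hence $\ker(\alpha|_\Sigma)=E_2$ and $\coker(\alpha|_\Sigma)=F_2$ are free of the asserted ranks $e-m+i$ and $f-m+i$. For base change along $t\colon T\to\Sigma$, the cokernel is handled automatically, since $t^*$ is right exact and so preserves cokernels, and the normal form shows the result is locally free of the right rank. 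For the kernel, the sequence $0\to E_2\to E_\Sigma\to F_1\to 0$ coming from the normal form splits (as $F_1$ is free), hence stays exact after applying $t^*$; equivalently $t^*A$ remains an isomorphism, so $\ker(t^*(\alpha|_\Sigma))=t^*E_2=t^*\ker(\alpha|_\Sigma)$. I expect the one genuine obstacle to be the vanishing $S=0$: this is where one must use the scheme-theoretic definition of the degeneracy loci, since a merely set-theoretic constancy of rank would only force the entries of $S$ into the nilradical rather than make them zero.
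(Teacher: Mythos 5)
Your proof is correct, and it reaches the result by a genuinely different route than the paper. The paper first reduces both claims to the local freeness of $\coker(\alpha|_\Sigma)$, using the general principle that a short exact sequence of $\O_\Sigma$-modules with locally free middle and right terms has locally free kernel and stays exact under arbitrary pullback; it then produces, near a point of $\Sigma$, a free rank-$(m-i)$ direct summand $F'\subseteq F$ contained in $\im(\alpha)$ (by lifting a partial basis from the fiber via Nakayama), and invokes Kleiman's lemma on exterior powers (Lemma \ref{kleimans-lemma}) to deduce from the scheme-theoretic vanishing $\wedge^{m-i+1}(\alpha|_\Sigma)=0$ that the induced map $\im(\alpha|_\Sigma)\to (F/F')|_\Sigma$ is zero, whence $\coker(\alpha|_\Sigma)\cong (F/F')|_\Sigma$ is locally free. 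You instead choose bases, isolate a unit $(m-i)\times(m-i)$ minor, and reduce the matrix of $\alpha|_\Sigma$ to the block form $\left(\begin{smallmatrix}A&0\\0&S\end{smallmatrix}\right)$, killing the Schur complement $S$ via the invariance of determinantal ideals under multiplication by invertible matrices --- which is precisely the role Kleiman's lemma plays in the paper, in coordinate-free disguise. What your route buys is self-containedness and an explicit local splitting of $\alpha|_\Sigma$, from which the kernel, the cokernel, their ranks, and both base-change statements (via split exactness) drop out simultaneously; what the paper's route buys is a basis-free argument with no minor bookkeeping, together with a reusable formal reduction (the kernel statement follows from the cokernel statement for purely homological reasons). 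Both proofs hinge on the same essential point, which you correctly identify as the crux: the size-$(m-i+1)$ minors of $\alpha|_\Sigma$ vanish as sections of $\O_\Sigma$, scheme-theoretically and not merely pointwise.
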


\begin{proof}
Let
\begin{equation}
\label{ses-locally-free}
\begin{tikzcd}
0 \ar{r} &
A \ar{r} &
B \ar{r} &
C \ar{r} &
0
\end{tikzcd}
\end{equation}
be a short exact sequence of $\O_\Sigma$-modules. Suppose that $B$ and $C$ are locally free of finite rank. Then $A$ is locally free of finite rank, and the sequence (\ref{ses-locally-free}) remains exact after pullback along any map $t:T\to \Sigma$.
Thus it suffices to show that $\coker(\alpha|_\Sigma)$ is locally free.

Let $x\in X\setminus \Sigma^{i+1}(\alpha)$. 
Then $\alpha(x): E(x)\to F(x)$ is a $k(x)$-linear map of rank greater than $m-i-1$.
Therefore there exist elements $\bar f_1,\dotsc,\bar f_{m-i}\in \im (\alpha (x))$ which may be completed to a basis of $F(x)$ as a vector space over $k(x)$. It follows by Nakayama's lemma that there exist an open subset $U\subseteq X\setminus \Sigma^{i+1}(\alpha)$ containing $x$ and sections $f_1,\dotsc,f_{m-i}\in \Gamma(U,\im(\alpha))$ which may be completed to a basis of $F_U$ as an $\O_U$-module. Replacing $\Sigma$ by $\Sigma\cap U$, we may assume that $U=X$. Then $\im(\alpha)$ contains an $\O_X$-submodule $F'$ that is locally free of rank $m-i$ and a direct summand of $F$. Let $\bar \alpha$ denote the composition $E\xrightarrow \alpha F\twoheadrightarrow F/F'$. 

The natural map $\wedge^{m-i+1}\im(\alpha|_\Sigma) \to \wedge^{m-i+1}(F|_\Sigma)$ is zero by definition of $\Sigma^i(\alpha)$. By Lemma \ref{kleimans-lemma}, this implies that the composition of natural maps
\begin{equation*}
\im(\alpha|_{\Sigma}) \twoheadrightarrow \im( \alpha|_{\Sigma}) \hookrightarrow (F/F')|_{\Sigma}
\end{equation*}
is zero. It follows that the composition of natural maps
\begin{equation*}
(F/F')|_{\Sigma} \twoheadrightarrow \coker( \alpha |_{\Sigma}) \twoheadrightarrow \coker(\bar \alpha|_{\Sigma})
\end{equation*}
is an isomorphism. Therefore $\coker( \alpha|_{\Sigma})$ is locally free.
\end{proof}

The following construction sometimes yields a resolution of the singularities of the degeneracy locus $\Sigma^i(\alpha)$.

\begin{construction}
\label{tjurina-transform}
Let $G:=G_{f-m+i}(F)$ denote the Grassmannian of rank $f-m+i$ quotients of $F$ over $X$. 
Let $q:F_G\to Q$ denote the universal quotient and let $A$ be its kernel. Thus $Q$ and $A$ are locally free $\O_G$-modules of ranks $f-m+i$ and $m-i$, respectively.
Let $Z\subseteq G$ be the closed subscheme defined by the equation $q\circ \alpha_G=0$.
Consider the following diagram,
where $\pi$ denotes the projection.
\begin{equation*}
\begin{tikzcd}
Z = \{q\circ  \alpha_G =0 \}  \ar[hook]{r} \ar[dashed, "\rho"]{d} & G \ar{d}\\
\Sigma^i( \alpha) \ar[hook]{r} & X
\end{tikzcd}
\end{equation*}
A dashed arrow $\rho$ making the diagram commute exists because $ \alpha_Z:E_Z\to F_Z$ factors though $A_Z$, which has rank $m-i$, so that $\wedge^{m-i+1}  \alpha_Z=0$.
\end{construction}

\begin{remark}
\label{tjurina-isomorphism}
The pullback of the universal quotient $q: F_G\twoheadrightarrow Q$ to $Z$ factors through a quotient of $\coker( \alpha)_Z$. This quotient induces an isomorphism of $Z$ with the Grassmannian $G_{f-m+i}(\coker( \alpha))$ of rank $f-m+i$ quotients of $\coker( \alpha)$ over $X$.

The map $\rho : Z\to \Sigma^i( \alpha)$ is an isomorphism over $\Sigma$. 
Indeed, $Z_\Sigma$ is isomorphic as a scheme over $\Sigma$ to the Grassmannian
$G_{f-m+i}(\coker( \alpha_\Sigma))$ of rank $f-m+i$ quotients of $\coker( \alpha_\Sigma)$, which is itself a locally free sheaf of rank $f-m+i$ on $\Sigma$, by Proposition \ref{coker-deg-loci}. Thus $Z_\Sigma \cong \Sigma$.
\end{remark}

Let $\pi : H\to X$ be the vector bundle associated to the locally free $\O_X$-module $\sheafHom_X(E,F)$. In symbols,
\begin{equation*}
H = \mathbf V(\sheafHom_X(E,F)).
\end{equation*}
Let $h : E_H\to F_H$ be the tautological map, see Definition \ref{tautological-section}.
Let $\Sigma'\subseteq H$ denote the degeneracy locus $\Sigma^i(h)\setminus \Sigma^{i+1}(h)$.

\begin{remark}
Viewing $\alpha \in \Gamma(X,\sheafHom_X(E,F))$ as a morphism $X\to H$, we have $\alpha^{-1}\Sigma = \Sigma'$.
\end{remark}

The following result is well known.

\begin{proposition}
\label{univ-deg-loci}
The universal degeneracy locus $\Sigma'\subseteq H$ is smooth over $X$, of relative codimension $i(|e-f|+i)$ in $H$ over $X$.
The canonical $\O_H$-linear isomorphism $\T_{H/X}\xrightarrow\sim \sheafHom_X(E,F)_H$ of Remark \ref{vb-differentials} induces an $\O_{\Sigma'}$-linear isomorphism
\begin{equation*}
\N_{\Sigma'/X} := (\T_{H/X})_{\Sigma'}/\T_{\Sigma'/X} \xrightarrow\sim
\sheafHom_{\Sigma'}(\ker(h_{\Sigma'}), \coker(h_{\Sigma'})).
\end{equation*}
\end{proposition}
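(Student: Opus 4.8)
The plan is to reduce everything to a local, fiberwise computation on the classical generic determinantal variety and then to identify the normal bundle by a first-order deformation argument. Since smoothness, relative codimension, and the normal-bundle isomorphism are all local on $X$, I would first pass to an open $U \subseteq X$ over which $E$ and $F$ are trivial. Writing $r := m - i$ and using Remark \ref{vb-trivial}, over $U$ we get $H|_U \cong U \times \A^{ef}$ with $h$ the generic $f \times e$ matrix of coordinates, so that $\Sigma'|_U \cong U \times D^\circ$, where $D^\circ \subseteq \A^{ef}$ is the locus of matrices of rank exactly $r$. The first assertion thus reduces to the classical fact that $D^\circ$ is smooth of codimension $(e-r)(f-r)$: after acting by $\mathrm{GL}_e \times \mathrm{GL}_f$ one may assume a given rank-$r$ matrix equals $\bigl(\begin{smallmatrix} I_r & 0 \\ 0 & 0 \end{smallmatrix}\bigr)$, write a nearby matrix in block form $\bigl(\begin{smallmatrix} A & B \\ C & D' \end{smallmatrix}\bigr)$ with $A$ invertible, and observe that the rank-$r$ condition is exactly $D' = C A^{-1} B$; this exhibits $D^\circ$ as a smooth graph over the free parameters $(A,B,C)$, and a count of entries gives codimension $(e-r)(f-r) = i(|e-f| + i)$, using $r = \min(e,f) - i$.

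For the normal bundle, by Proposition \ref{coker-deg-loci} (applied to $h$ over $H$) the sheaves $\ker(h_{\Sigma'})$ and $\coker(h_{\Sigma'})$ are locally free of ranks $e - r$ and $f - r$, and $\ker(h_{\Sigma'})$ is a subbundle of $E_{\Sigma'}$ because the quotient $\im(h_{\Sigma'})$ is locally free. I would then introduce the $\O_{\Sigma'}$-linear map
\[
\psi : \sheafHom_X(E,F)_{\Sigma'} \longrightarrow \sheafHom_{\Sigma'}(\ker(h_{\Sigma'}), \coker(h_{\Sigma'})), \qquad
\psi(\beta) = \bigl[\,\ker(h_{\Sigma'}) \hookrightarrow E_{\Sigma'} \xrightarrow{\ \beta\ } F_{\Sigma'} \twoheadrightarrow \coker(h_{\Sigma'})\,\bigr].
\]
Since $\ker(h_{\Sigma'})$ is a subbundle and $\coker(h_{\Sigma'})$ a quotient bundle, $\psi$ is surjective. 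Precomposing with the canonical isomorphism $\T_{H/X} \xrightarrow\sim \sheafHom_X(E,F)_H$ of Remark \ref{vb-differentials}, restricted to $\Sigma'$, turns $\psi$ into a surjection $(\T_{H/X})_{\Sigma'} \twoheadrightarrow \sheafHom_{\Sigma'}(\ker(h_{\Sigma'}), \coker(h_{\Sigma'}))$.

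The heart of the argument is to show this surjection annihilates $\T_{\Sigma'/X}$, so that it descends to $\N_{\Sigma'/X}$. This is a first-order deformation computation. A relative tangent vector of $\Sigma'$ over $X$ at a point $z$ with $h(z) = \phi$ of rank $r$ is a point $\phi + \epsilon\dot\phi$ of $\Sigma'$ over $k(z)[\epsilon]/(\epsilon^2)$ whose $X$-component is constant; the image of this tangent vector under Remark \ref{vb-differentials} is precisely $\dot\phi \in \Hom(E(z), F(z))$. By Proposition \ref{coker-deg-loci}, $\ker(\phi + \epsilon\dot\phi)$ is free of rank $e-r$ over $k(z)[\epsilon]/(\epsilon^2)$ and surjects onto $\ker\phi$, so every $v \in \ker\phi$ lifts to $\tilde v = v + \epsilon w$ with $(\phi + \epsilon\dot\phi)\tilde v = 0$; expanding gives $\dot\phi(v) = -\phi(w) \in \im\phi$. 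Hence $\dot\phi(\ker\phi) \subseteq \im\phi$, i.e.\ $\psi(\dot\phi) = 0$, which is exactly the statement that $\psi$ kills $\T_{\Sigma'/X}$.

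Having produced a surjection $\bar\psi : \N_{\Sigma'/X} \twoheadrightarrow \sheafHom_{\Sigma'}(\ker(h_{\Sigma'}), \coker(h_{\Sigma'}))$, I would finish with a rank count: the source is locally free of rank equal to the relative codimension $i(|e-f|+i)$ computed above, and the target is locally free of rank $(e-r)(f-r) = i(|e-f|+i)$, so $\bar\psi$ is a surjection of locally free sheaves of equal finite rank and therefore an isomorphism. I expect the main obstacle to be the deformation-theoretic step of the previous paragraph, namely translating the informal ``rank stays $r$ to first order iff $\dot\phi(\ker\phi) \subseteq \im\phi$'' into a clean scheme-theoretic identity via Remark \ref{vb-differentials}; the smoothness, codimension, and rank count are routine once the local determinantal picture is in place. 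As a backup for the smoothness and codimension claims I would keep the Tjurina transform of Construction \ref{tjurina-transform}, which by Remark \ref{tjurina-isomorphism} identifies $\Sigma'$ with an open subscheme of the Grassmann bundle $G_{f-m+i}(\coker h)$ and hence is manifestly smooth over $X$, though it does not shortcut the normal-bundle computation.
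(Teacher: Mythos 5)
The paper offers no proof of this proposition at all (it is introduced with ``The following result is well known''), so your proposal is being measured against the standard argument rather than against anything in the text. Your route is indeed the standard one, and most of it is sound: local trivialization reduces to the generic determinantal locus $D^\circ\subseteq \A^{ef}$ over $\Z$; the Schur-complement chart, on which the rank-$r$ condition becomes the graph $D'=CA^{-1}B$, gives smoothness and the codimension count $(e-r)(f-r)=i(|e-f|+i)$; the surjectivity of $\psi$, the identification of a relative tangent vector with $\dot\phi$ via Remark \ref{vb-differentials}, and the closing step that a surjection of locally free sheaves of equal rank is an isomorphism are all correct. (One cosmetic point: translating a given point to $\bigl(\begin{smallmatrix} I_r & 0\\ 0 & 0\end{smallmatrix}\bigr)$ uses elements of $\mathrm{GL}_e\times \mathrm{GL}_f$ with entries in the residue field, which is not an automorphism over the base; it is cleaner to say that every point of $D^\circ$ has some invertible $r\times r$ minor and to move it to the upper-left block by permutation matrices, defined over $\Z$, after which your chart argument applies verbatim.)

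The genuine gap is the final clause ``which is exactly the statement that $\psi$ kills $\T_{\Sigma'/X}$.'' What your deformation computation proves is that the composite $u:\T_{\Sigma'/X}\hookrightarrow (\T_{H/X})_{\Sigma'}\to \sheafHom_{\Sigma'}(\ker(h_{\Sigma'}),\coker(h_{\Sigma'}))$ vanishes after tensoring with $k(z)$ for every point $z\in\Sigma'$, because you only test against tangent vectors defined over residue fields. For a map of locally free sheaves, fiberwise vanishing implies vanishing only on a reduced scheme: on $\Spec k[t]/(t^2)$ the map $t\cdot\mathrm{id}:\O\to\O$ vanishes at the unique point but is nonzero. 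The proposition is stated for an arbitrary scheme $X$, and the paper invokes it over arbitrary bases (Definition \ref{id}, Propositions \ref{univ-1st-sings} and \ref{univ-sigma-i-j} allow any $S$), so $\Sigma'$ may be non-reduced and the implication fails as written. Fortunately the repair is short and you already have both ingredients. Either (i) run the identical lifting argument with $T=\Spec A[\epsilon]/(\epsilon^2)$ for affine opens $\Spec A\subseteq \Sigma'$: a section of $\T_{\Sigma'/X}$ over $\Spec A$ is the same as an $X$-morphism $\Spec A[\epsilon]\to\Sigma'$ lifting the identity, Proposition \ref{coker-deg-loci} applies to this $T$ just as well, and the same expansion shows $\dot\phi(\ker\phi)\subseteq\im(\phi)$ as $A$-modules, giving vanishing of $u$ on all local sections and hence $u=0$; or (ii) exploit your own part-one reduction: over $U\times D^\circ$ the map $u$ is pulled back along $\mathrm{pr}_2$ from the corresponding map on $D^\circ$ over $\Z$ (using Propositions \ref{deg-loci-functor} and \ref{coker-deg-loci} to identify $\ker$, $\coker$ and the tangent sheaves with pullbacks), and $D^\circ$ is smooth over $\Z$, hence reduced, so there your fiberwise argument does suffice, and the general case follows by pullback.
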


\section{The intrinsic differential}

Let $S$ be a base scheme.

\begin{definition}
\label{nd}
Let $f:X\to Y$ be a morphism of smooth schemes over $S$.
Let $Z\subseteq Y$ be an $S$-smooth, locally closed subscheme of $Y$. 
\begin{enumerate}
\item The \textbf{normal differential} of $f$ with respect to $Z$, denoted $\mathbf d_Z f$, is the composition $\O_{f^{-1}Z}$-linear maps,
\begin{equation*}
\begin{tikzcd}
\T_X|_{f^{-1}Z} \ar["df"]{r} &
f^* (\T_Y)_Z \ar[two heads]{r} &
f^*(\T_Y)_Z/f^* \T_Z = f^* \N_Z.
\end{tikzcd}
\end{equation*}
\item The map $f$ is \textbf{transverse} to $Z$ if the normal differential $\mathbf d_Z f$ is surjective. We write $f\pitchfork Z$ to indicate that this condition holds.
\end{enumerate}
\end{definition}

\begin{remark}
\label{transversality}
By \cite[Proposition IV.17.13.2]{EGA}, the morphism $f:X\to Y$ is transverse to $Z$ if, and only if, the scheme-theoretic inverse image $f^{-1}Z$ is smooth over $S$ and, for all $x\in f^{-1}Z$, the relative codimension of $f^{-1}Z$ in $X$ at $x$ over $S$ is equal to the relative codimension of $Z$ in $Y$ at $f(x)$ over $S$.
\end{remark}

Let $X$ be a smooth scheme over $S$.
Let $E$ be a locally free sheaf of finite rank on $X$.
Let $V := \mathbf V(E)$ be the vector bundle associated to $E$. Let $\pi: V\to X$ be the projection.
Let $Z \subseteq V$ be an $S$-smooth, locally closed subscheme. Let $s\in \Gamma(X,E)$ be a section. Viewing $s$ as a morphism $X\to V$, we now consider its normal differential 
\begin{equation*}
\label{nd-section}
\mathbf d_Z s : (\T_X)_{s^{-1}Z} \to s^* \N_Z.
\end{equation*}

\begin{proposition}
\label{nd-horizontal}
Let $\nabla : E\to \Omega_X\otimes E$ be a connection.
Suppose that the tangent bundle $\T_Z$ contains the restriction to $Z$ of horizontal subbundle $H \subseteq \T_V$ corresponding to $\nabla$. 
Then the normal differential $\mathbf d_Z s$ is equal to the composition
\begin{equation*}
\begin{tikzcd}
(\T_X)_{s^{-1}Z} \ar[r,"\nabla s"] &
E_{s^{-1}Z} = (\T_{V/X})_{s^{-1}Z} \ar[r,hook]  &
(\T_V)_{s^{-1}Z} \ar[r,two heads] &
s^* \N_Z.
\end{tikzcd}
\end{equation*}
\end{proposition}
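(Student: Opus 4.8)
The plan is to reduce the statement to Proposition \ref{horiz-sub-prop} by using the hypothesis $H|_Z \subseteq \T_Z$ to factor the projection defining the normal differential through the quotient $\T_V/H$, at which point the content of Proposition \ref{horiz-sub-prop} can be applied verbatim.

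First I would unwind the definition. By Definition \ref{nd}, the normal differential $\mathbf d_Z s$ is the composition of $ds\colon (\T_X)_{s^{-1}Z}\to s^*(\T_V)_Z$ with the canonical projection $s^*(\T_V)_Z \twoheadrightarrow s^*\N_Z$, where $\N_Z = (\T_V)_Z/\T_Z$. The key step is then the following observation: since $H|_Z\subseteq \T_Z$ by hypothesis, the projection $(\T_V)_Z\twoheadrightarrow \N_Z$ annihilates $H|_Z$ and hence factors as
\[
(\T_V)_Z \twoheadrightarrow (\T_V/H)_Z \twoheadrightarrow \N_Z .
\]
Pulling back along $s$ and restricting to $s^{-1}Z$, the projection defining $\mathbf d_Z s$ factors through $s^*(\T_V/H)$, so that
\[
\mathbf d_Z s = \Bigl[(\T_X)_{s^{-1}Z} \xrightarrow{\,ds\,} s^*\T_V \twoheadrightarrow s^*(\T_V/H) \twoheadrightarrow s^*\N_Z\Bigr].
\]

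Next I would invoke Proposition \ref{horiz-sub-prop}, which identifies the composition $\T_X \xrightarrow{ds} s^*\T_V \twoheadrightarrow s^*(\T_V/H)$ with $\nabla s$ followed by the canonical isomorphism $E = s^*\T_{V/X} \xrightarrow{\sim} s^*(\T_V/H)$. By the direct-sum decomposition $H\oplus \T_{V/X} = \T_V$ of Definition \ref{horiz-sub-def}, that isomorphism is exactly the composite of the inclusion $s^*\T_{V/X}\hookrightarrow s^*\T_V$ with the projection $s^*\T_V\twoheadrightarrow s^*(\T_V/H)$. Substituting this into the displayed factorization, $\mathbf d_Z s$ becomes
\[
(\T_X)_{s^{-1}Z} \xrightarrow{\,\nabla s\,} (\T_{V/X})_{s^{-1}Z} \hookrightarrow s^*\T_V \twoheadrightarrow s^*(\T_V/H) \twoheadrightarrow s^*\N_Z,
\]
and since the composite of the two projections $s^*\T_V\twoheadrightarrow s^*(\T_V/H)\twoheadrightarrow s^*\N_Z$ is just $s^*\T_V\twoheadrightarrow s^*\N_Z$ (by the factorization of the normal projection noted above), this is precisely the composition asserted in the statement.

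I do not expect a genuine obstacle here: the argument is a formal diagram chase once Proposition \ref{horiz-sub-prop} is in hand, and the single piece of input specific to this proposition is the factorization of $(\T_V)_Z\twoheadrightarrow \N_Z$ through $(\T_V/H)_Z$, which is exactly where the transversality-type hypothesis $H|_Z\subseteq \T_Z$ enters. The only care required is bookkeeping: keeping the restrictions to $s^{-1}Z$ and the pullbacks along $s$ consistent, and checking that the factorization of the projection is compatible with applying $s^*$. Both points follow from the right-exactness of pullback and the functoriality of restriction, so no new ingredient beyond Proposition \ref{horiz-sub-prop} and the hypothesis $H|_Z\subseteq \T_Z$ is needed.
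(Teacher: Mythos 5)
Your proof is correct and takes exactly the paper's route: the paper's entire proof reads ``Follows immediately from Proposition \ref{nd-horizontal}''-style deference to Proposition \ref{horiz-sub-prop}, and your write-up is precisely the diagram chase that justifies that ``immediately.'' The one genuinely non-formal input you identify --- that $H|_Z \subseteq \T_Z$ makes the projection $(\T_V)_Z \twoheadrightarrow \N_Z$ factor through $(\T_V/H)_Z$, after which Proposition \ref{horiz-sub-prop} and the splitting $\T_V = H \oplus \T_{V/X}$ finish the argument --- is indeed where the hypothesis enters, so nothing is missing.
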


\begin{proof}
Follows immediately from Proposition \ref{horiz-sub-prop}.
\end{proof}

Let $A$ and $B$ be locally free sheaves of finite rank on $X$.
Let $\alpha :A \to B$ be an $\O_X$-linear map.
Let $i\ge 0$ be a nonnegative integer.
Let $\Sigma$ denote the degeneracy locus $\Sigma^i(\alpha)\setminus \Sigma^{i+1}(\alpha)\subseteq X$.

\begin{definition}
\label{id}
The \textbf{intrinsic differential} of $\alpha$ on $\Sigma$ is the $\O_{\Sigma}$-linear map 
\begin{equation*}
\mathbf d_{\Sigma} \alpha : (\T_X)_{\Sigma} \to
\sheafHom_{\Sigma}(\ker(\alpha_{\Sigma}), \coker( \alpha_{\Sigma} ))
\end{equation*}
defined as follows.

Let $H\to X$ be the vector bundle associated to the locally free sheaf $\sheafHom_X(A,B)$. Let $h: A_H\to B_H$ be the tautological map, see Definition \ref{tautological-section}. 
Let $\Sigma'\subseteq H$ denote the degeneracy locus $\Sigma^i(h)\setminus \Sigma^{i+1}(h)\subseteq H$.
Identifying $\alpha$ with a morphism $X\to H$, we have $\alpha^* h = \alpha$ and $\alpha^{-1}\Sigma' = \Sigma$ as subschemes of $X$.
By Proposition \ref{univ-deg-loci}, the scheme $\Sigma'$ is smooth over $X$ and there exists a canonical isomorphism
\begin{equation*}
(\T_{H/X})_{\Sigma'}/\T_{\Sigma'/X} \xrightarrow\sim
\sheafHom_{\Sigma'}(\ker(h_{\Sigma'}), \coker( h_{\Sigma'} )).
\end{equation*}
The normal differential of $\alpha : X\to H$ with respect to $\Sigma'$ is an $\O_\Sigma$-linear map $\mathbf d_{\Sigma'} \alpha : (\T_X)_\Sigma \to \alpha^* \N_{\Sigma'}$.
Because $\Sigma'$ is smooth over $X$, the natural $\O_{\Sigma'}$-linear map
\begin{equation*}
(\T_{H/X})_{\Sigma'}/\T_{\Sigma'/X} \to (\T_H)_{\Sigma'}/\T_{\Sigma'} = \N_{\Sigma'}
\end{equation*}
is an isomorphism.
Furthermore, by Proposition \ref{coker-deg-loci}, the natural $\O_\Sigma$-linear map $\alpha^* \ker(h_{\Sigma'})\to \ker(\alpha_\Sigma)$ is an  isomorphism.
Thus there exists a natural $\O_\Sigma$-linear isomorphism
\begin{equation*}
\delta : \alpha^* \N_{\Sigma'} \xrightarrow\sim
\sheafHom_{\Sigma}(\ker(\alpha_{\Sigma}), \coker( \alpha_{\Sigma} )).
\end{equation*}
Set $\mathbf d_{\Sigma} \alpha := \delta \circ  \mathbf d_{\Sigma'} \alpha$. 
\end{definition}

\begin{proposition}
\label{id-locally}
Suppose that the $\O_X$-modules $A$ and $B$ are free.
Choose bases on $A$ and $B$ and let 
\begin{equation*}
D : \sheafHom(A,B)\to \Omega_X\otimes \sheafHom(A,B)
\end{equation*}
be the trivial connection corresponding to theses bases, see Example \ref{trivial-connection}.
Then the intrinsic differential $\mathbf d_{\Sigma} \alpha$ is equal to the composition
\begin{equation*}
\begin{tikzcd}
(T_X)_{\Sigma} \ar[r,"D\alpha"] &
\sheafHom_X(A,B)_{\Sigma} \ar[r,two heads] &
\sheafHom_{\Sigma}(\ker(\alpha_{\Sigma}), \coker(\alpha_{\Sigma})).
\end{tikzcd}
\end{equation*}
\end{proposition}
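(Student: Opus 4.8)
The plan is to reduce the claim to Proposition \ref{nd-horizontal}, applied to the tautological setup over the total space $H = \mathbf V(\sheafHom_X(A,B))$. Recall from Definition \ref{id} that $\mathbf d_\Sigma \alpha = \delta \circ \mathbf d_{\Sigma'}\alpha$, where $\mathbf d_{\Sigma'}\alpha$ is the normal differential of $\alpha$, viewed as a morphism $X \to H$, with respect to the universal degeneracy locus $\Sigma' = \Sigma^i(h) \setminus \Sigma^{i+1}(h)$, and $\delta \colon \alpha^*\N_{\Sigma'} \xrightarrow{\sim} \sheafHom_\Sigma(\ker \alpha_\Sigma, \coker \alpha_\Sigma)$ is the canonical isomorphism assembled there. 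I would invoke Proposition \ref{nd-horizontal} with its $(E, V, s, Z, \nabla)$ taken to be $(\sheafHom_X(A,B), H, \alpha, \Sigma', D)$; the covariant derivative appearing there is then $D\alpha$, regarded as an $\O_X$-linear map $\T_X \to \sheafHom_X(A,B)$.

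First I would check the hypothesis of Proposition \ref{nd-horizontal}: that the restriction to $\Sigma'$ of the horizontal subbundle of $D$ is contained in $\T_{\Sigma'}$. The chosen bases of $A$ and $B$ trivialize $\sheafHom_X(A,B) \cong \O_X^{\oplus ef}$, where $e$ and $f$ are the ranks of $A$ and $B$; by Remark \ref{vb-trivial} this identifies $H$ with the product $X \times \A^{ef}$ and exhibits the tautological map $h$ as the universal $f \times e$ matrix, whose entries are the coordinates on $\A^{ef}$ and in particular are constant along $X$. Consequently each locus $\Sigma^j(h)$, cut out by the vanishing of $\wedge^{m-j+1} h$, is a product $X \times W_j$ with $W_j \subseteq \A^{ef}$ the locus where $h$ has rank at most $m-j$ (here $m=\min(e,f)$); hence $\Sigma'$ is likewise a product over $X$. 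For the trivial connection $D$ the horizontal sections are the constant sections $x \mapsto (x,c)$ (Example \ref{trivial-connection}), so the corresponding horizontal subbundle is the distribution tangent to the $X$-factor, and this is manifestly contained in the tangent bundle of the product $\Sigma'$. So the hypothesis holds.

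Granting this, Proposition \ref{nd-horizontal} identifies $\mathbf d_{\Sigma'}\alpha$ with the composition
\[
(\T_X)_\Sigma \xrightarrow{D\alpha} \sheafHom_X(A,B)_\Sigma = (\T_{H/X})_\Sigma \hookrightarrow (\T_H)_\Sigma \twoheadrightarrow \alpha^*\N_{\Sigma'},
\]
the middle equality being the canonical isomorphism of Remark \ref{vb-differentials}. It then remains to compose with $\delta$ and to recognize the resulting map $\sheafHom_X(A,B)_\Sigma \to \sheafHom_\Sigma(\ker\alpha_\Sigma, \coker\alpha_\Sigma)$ as the natural restriction-and-projection surjection $\psi \mapsto (\ker\alpha_\Sigma \hookrightarrow A_\Sigma \xrightarrow{\psi} B_\Sigma \twoheadrightarrow \coker\alpha_\Sigma)$. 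To this end I would observe that the last two arrows of the display are the pullback along $\alpha$ of the composite $(\T_{H/X})_{\Sigma'} \hookrightarrow (\T_H)_{\Sigma'} \twoheadrightarrow \N_{\Sigma'}$, which factors as the quotient $(\T_{H/X})_{\Sigma'} \twoheadrightarrow (\T_{H/X})_{\Sigma'}/\T_{\Sigma'/X}$ followed by the isomorphism onto $\N_{\Sigma'}$ coming from the $X$-smoothness of $\Sigma'$. Since $\delta$ is built by inverting exactly this $X$-smoothness isomorphism, applying the isomorphism of Proposition \ref{univ-deg-loci}, and base-changing via the identifications $\alpha^*\ker h_{\Sigma'} \cong \ker\alpha_\Sigma$ and $\alpha^*\coker h_{\Sigma'} \cong \coker\alpha_\Sigma$ of Proposition \ref{coker-deg-loci}, the $X$-smoothness isomorphisms cancel, and the total composite is the $\alpha$-pullback of the map $\sheafHom_X(A,B)_{\Sigma'} = (\T_{H/X})_{\Sigma'} \to \sheafHom_{\Sigma'}(\ker h_{\Sigma'}, \coker h_{\Sigma'})$ furnished by Proposition \ref{univ-deg-loci}. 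Because that isomorphism is, by construction, induced by the canonical identification $\T_{H/X} \cong \sheafHom_X(A,B)_H$ together with the natural restriction-projection, the composite is the desired natural surjection, and the proposition follows.

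The step I expect to require the most care is the last one: correctly threading $\delta$, which packages two a priori different normal-bundle identifications (the one from the $X$-smoothness of $\Sigma'$ and the determinantal one of Proposition \ref{univ-deg-loci}), so that the former cancels and only the restriction-projection survives. In effect this reduces to confirming that the isomorphism of Proposition \ref{univ-deg-loci} has kernel exactly the fibrewise tangent space $\T_{\Sigma'/X}$, i.e.\ the maps $\psi$ sending $\ker h$ into $\im h$ — which is the content already recorded in that proposition. By contrast, verifying the hypothesis of Proposition \ref{nd-horizontal} is routine, resting solely on the product structure of the universal determinantal loci in the chosen trivialization.
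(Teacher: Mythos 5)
Your proposal is correct and takes essentially the same route as the paper: both reduce the claim to Proposition \ref{nd-horizontal} applied to $(\sheafHom_X(A,B), H, \alpha, \Sigma', D)$, and both verify its hypothesis by noting that in the trivialization induced by the chosen bases the tautological map has the coordinates of the affine factor as entries, so $\Sigma'$ is a product $X\times \Sigma''$ and its tangent bundle contains the horizontal subbundle of the trivial connection. The additional bookkeeping you carry out for $\delta$ — cancelling the $X$-smoothness identification of $\N_{\Sigma'}$ against its inverse inside $\delta$ so that only the restriction-projection map of Proposition \ref{univ-deg-loci} survives — is precisely what the paper leaves implicit in Definition \ref{id} when it ends with ``which implies the result.''
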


\begin{proof}
Let $\Sigma'\subseteq H$ be as in Definition \ref{id}.
By Corollary \ref{nd-horizontal}, it suffices to show that $\T_{\Sigma'}$ contains the restriction to $\Sigma'$ of the horizontal subbundle $\tilde H\subseteq \T_V$ determined by $D$.
Let $a:=\operatorname{rank}(A)$ and $b:=\operatorname{rank}(B)$.  The chosen bases on $A$ and $B$ induce an isomorphism $\theta: V\xrightarrow\sim X\times \A^{ab}$ over $X$.
The differential
\begin{equation*}
d\theta : \T_V \to \theta^* \T_{X\times \A^{ab}} = \theta^* \mathrm{pr}_1^* \T_X \oplus \theta^* \mathrm{pr}_2^* \T_{\A^{ab}}
\end{equation*}
maps $\tilde H$ isomorphically onto the first summand.
The tautological map $h$ is given by a matrix whose entries are the coordinates on $\A^{ab}$.
Thus $\theta(\Sigma')=X\times \Sigma''$, where $\Sigma''\subseteq \A^{ab}$ is a locally closed subscheme that is smooth over $S$. Hence  $d\theta (\T_{\Sigma'}) \supseteq (\mathrm{pr}_1^* \T_X)_{\Sigma'}$, which implies the result.
\end{proof}

\section{Second-order singularities}

Let $S$ be a scheme.
Let $X$ be a smooth scheme over $S$.
Let $E$ be a locally free sheaf of finite rank on $X$.
Let $\nabla : E\to \Omega_X\otimes E$ be a connection on $E$. 
Let $s\in \Gamma(X,E)$ be a section.

\begin{definition}
Let $i$ be a nonnegative integer.
The \textbf{$i$th critical locus} of $s$ is the $i$th degeneracy locus of the $\O_X$-linear map $\nabla s : \T_X\to E$. It is a subscheme of $X$ that we denote by $\Sigma^i(s)$.
\end{definition}

Let $i$ be a nonnegative integer.
Let $\Sigma$ denote the critical locus $\Sigma^i(s)\setminus \Sigma^{i+1}(s)$.
Let $K := \ker((\nabla s)_\Sigma)$ and $C=\coker((\nabla s)_\Sigma)$.
Note that $K$ and $C$ are locally free $\O_\Sigma$-modules by Proposition \ref{coker-deg-loci}.

\begin{remark}
The intrinsic differential of $\nabla s : \T_X\to E$ on $\Sigma$ is an $\O_\Sigma$-linear map 
\begin{equation*}
\mathbf d_{\Sigma}(\nabla s) : (\T_X)_{\Sigma} \to \sheafHom_{\Sigma}(K,C),
\end{equation*}
see Definition \ref{id}.
Suppose that $i\le \min(n,e)$, so that $\Sigma$ has a chance of being nonempty.
Then $\Sigma$ is smooth of relative codimension $i(|n-e|+i)$ in $X$ over $S$ if, and only if, $\mathbf d_\Sigma(\nabla s)$ is surjective, see Remark \ref{transversality}.
\end{remark}

\begin{definition}
\label{second-id}
The \textbf{second intrinsic differential} of $s$ on $\Sigma = \Sigma^i(s)\setminus \Sigma^{i+1}(s)$ is the $\O_{\Sigma}$-linear map
\begin{equation*}
\mathbf d_\Sigma^2 s : K\to
\sheafHom_\Sigma(K,C)
\end{equation*}
obtained by restricting the intrinsic differential $\mathbf d_\Sigma(\nabla s)$ to $K$.
\end{definition}

\begin{definition}
\label{2nd-thom-boardman}
Let $j$ be a nonnegative integer.
The \textbf{(second-order) Thom-Boardman singularity} of \textbf{symbol} $(i,j)$ of the section $s$, denoted $\Sigma^{i,j}(s)$, is the $j$th degeneracy locus of the second intrinsic differential of $s$ on $\Sigma^i(s)\setminus \Sigma^{i+1}(s)$. In symbols,
\begin{equation*}
\Sigma^{i,j}(s) := \Sigma^j( \mathbf d_\Sigma^2 s) \subseteq X.
\end{equation*}
\end{definition}

\begin{remark}
Definition \ref{2nd-thom-boardman} was originally proposed by Porteous \cite{Porteous71}, although in a slightly different setting. Porteous considered maps between smooth manifolds (or nonsingular varieties), rather then sections of vector bundles.
His definition was motivated by an analogue of Proposition \ref{tb-vs-porteous} below. Corollary \ref{1st-sings-generic} below gives conditions under which the hypothesis of this proposition is satisfied.
\end{remark}

\begin{proposition}
\label{tb-vs-porteous}
Suppose that $\Sigma$ is smooth of relative codimension $i(|n-e|+i)$ in $X$ over $S$.
Endow the restriction of $E$ to $\Sigma$ with the connection induced by $\nabla$, see Definition \ref{connection-subscheme}.
Then it makes to talk about the critical loci of the restriction $s_\Sigma\in \Gamma(\Sigma, E_\Sigma)$, and we have $\Sigma^j(s_\Sigma) = \Sigma^{i,j}(s)$ for each nonnegative integer $j$.
\end{proposition}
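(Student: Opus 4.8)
The plan is to reduce the identity $\Sigma^j(s_\Sigma)=\Sigma^{i,j}(s)$ to a symmetric statement about degeneracy loci of two maps built from a single pair of subbundles of $W:=(\T_X)_\Sigma$, and to handle the scheme structure by tracking ideals of minors. I would begin with two structural identifications. First, unwinding Construction \ref{connection-subscheme}, the covariant derivative $\nabla_\Sigma(s_\Sigma)\colon \T_\Sigma\to E_\Sigma$ of the induced connection is the image of $\nabla s$ under $\Omega_X\otimes E\to \Omega_\Sigma\otimes E_\Sigma$; under $\Omega_X\otimes E=\sheafHom(\T_X,E)$ this is exactly the restriction of $(\nabla s)_\Sigma\colon W\to E_\Sigma$ to the subbundle $\T_\Sigma\subseteq W$. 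Second, writing $\alpha=\nabla s$ as a morphism $X\to H$ as in Definition \ref{id}, so that $\Sigma=\alpha^{-1}\Sigma'$, the smoothness hypothesis together with Remark \ref{transversality} says $\alpha\pitchfork \Sigma'$, hence $\mathbf d_{\Sigma'}\alpha$ and therefore $\mathbf d_\Sigma(\nabla s)=\delta\circ \mathbf d_{\Sigma'}\alpha$ (with $\delta$ an isomorphism) are surjective. Since $d\alpha$ sends $\T_\Sigma$ into $\alpha^*\T_{\Sigma'}$, we get $\T_\Sigma\subseteq \ker\mathbf d_\Sigma(\nabla s)$, and as both subbundles have rank $\dim_S\Sigma$ (the kernel because the target $\sheafHom_\Sigma(K,C)$ of the surjection $\mathbf d_\Sigma(\nabla s)$ has rank equal to the codimension of $\Sigma$) they coincide: $T:=\T_\Sigma=\ker\mathbf d_\Sigma(\nabla s)$.

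Next I would set $K:=\ker((\nabla s)_\Sigma)$, a subbundle of $W$ by Proposition \ref{coker-deg-loci}, with $\operatorname{im}((\nabla s)_\Sigma)$ a subbundle of $E_\Sigma$. The surjection $\mathbf d_\Sigma(\nabla s)\colon W\twoheadrightarrow \sheafHom_\Sigma(K,C)$ has kernel $T$, so it identifies $\sheafHom_\Sigma(K,C)\cong W/T$, under which the second intrinsic differential $\mathbf d_\Sigma^2 s=\mathbf d_\Sigma(\nabla s)|_K$ (Definition \ref{second-id}) becomes the natural composite $\phi\colon K\hookrightarrow W\twoheadrightarrow W/T$. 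Similarly $(\nabla s)_\Sigma$ identifies $\operatorname{im}((\nabla s)_\Sigma)\cong W/K$, and by the first identification $\nabla_\Sigma(s_\Sigma)$ is the natural map $\pi\colon T\hookrightarrow W\twoheadrightarrow W/K$ followed by the isomorphism $W/K\cong \operatorname{im}((\nabla s)_\Sigma)$ and the split inclusion $\operatorname{im}((\nabla s)_\Sigma)\hookrightarrow E_\Sigma$. Because degeneracy loci are cut out by ideals of minors, $\Sigma^j$ is unchanged under pre- or post-composition with isomorphisms and under post-composition with split injections (locally, adjoining zero rows creates no new minors). Thus $\Sigma^{i,j}(s)=\Sigma^j(\phi)$ and $\Sigma^j(s_\Sigma)=\Sigma^j(\pi)$.

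It then remains to prove the symmetric lemma: for any two subbundles $K,T$ of a locally free sheaf $W$, the natural maps $\phi\colon K\to W/T$ and $\pi\colon T\to W/K$ satisfy $\Sigma^j(\phi)=\Sigma^j(\pi)$ for all $j$. The plan is to realize both as $\Sigma^j(\mu)$ for the single map $\mu\colon K\oplus T\to W$, $(x,y)\mapsto x-y$. Working Zariski-locally, I would choose a complement $W=T\oplus T'$; subtracting the $T$-component of $K\hookrightarrow W$ is an automorphism of the source $K\oplus T$ that puts $\mu$ into block-diagonal form $\phi\oplus(-\mathrm{id}_T)$, whence $\Sigma^j(\mu)=\Sigma^j(\phi)$ by the invariance of degeneracy loci under adjoining an identity block (at the level of minors, $I_r(\phi\oplus \mathrm{id}_T)=I_{r-\operatorname{rank} T}(\phi)$). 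Choosing instead a complement to $K$ yields symmetrically $\Sigma^j(\mu)=\Sigma^j(\pi)$. Chaining the identifications gives $\Sigma^j(s_\Sigma)=\Sigma^j(\pi)=\Sigma^j(\mu)=\Sigma^j(\phi)=\Sigma^{i,j}(s)$.

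The main obstacle is the passage from the transparent pointwise fact that $\nabla_\Sigma(s_\Sigma)$ and $\mathbf d_\Sigma^2 s$ have the same kernel $K\cap T$ to the scheme-theoretic equality of their full chains of degeneracy loci; this is exactly what the symmetric reduction to $\mu$ and the minor-level invariance statements are designed to supply, and the one place needing genuine care is verifying those invariances—most notably $I_r(\phi\oplus \mathrm{id}_T)=I_{r-\operatorname{rank} T}(\phi)$, which makes the index bookkeeping match on the nose.
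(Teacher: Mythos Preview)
Your argument is correct and follows essentially the same route as the paper's sketch: the paper reduces to the commutative diagram with the exact row $0\to \T_\Sigma\to (\T_X)_\Sigma\to \sheafHom_\Sigma(K,C)\to 0$ and the two maps $\nabla(s_\Sigma)$ and $\mathbf d_\Sigma^2 s$, then invokes Kleiman's Lemma~\ref{kleimans-lemma} applied to the pair of locally split subbundles $K,\T_\Sigma\subseteq (\T_X)_\Sigma$. Your ``symmetric lemma'' via the map $\mu\colon K\oplus T\to W$ and the identity $I_r(\phi\oplus\mathrm{id}_T)=I_{r-\operatorname{rank} T}(\phi)$ is exactly a self-contained reformulation of that double application of Kleiman's lemma (once with $B=T$, once with $B=K$); the only point to make explicit is that the passage from $\pi\colon T\to W/K$ to $\nabla_\Sigma(s_\Sigma)\colon T\to E_\Sigma$ preserves $\Sigma^j$ not for the general reason you cite (post-composition with a split injection can shift the index when $\min$ changes) but because here $\operatorname{rank}\T_\Sigma\le m-i\le e$, so the relevant $\min$ is $\operatorname{rank}\T_\Sigma$ in both cases.
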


\begin{proof}
As we won't need Proposition \ref{tb-vs-porteous} in the sequel, we just give the idea of the proof: one considers the commutative diagram of $\O_\Sigma$-modules
\begin{equation*}
\begin{tikzcd}[column sep=large]
&
&
K \ar[d,hook] \ar[rd,"\mathbf d_\Sigma^2 s"] \\
0 \ar[r] &
\T_{\Sigma} \ar[r] \ar[rd,"\nabla(s_{\Sigma})"'] &
(\T_X)_{\Sigma} \ar[d, "(\nabla s)_{\Sigma}" ] \ar[r,"\mathbf d_{\Sigma}(\nabla s)"] &
\sheafHom_{\Sigma}(K,C) \ar[r] &
0 \\
&
& E_{\Sigma}
\end{tikzcd}
\end{equation*}
and applies Lemma \ref{kleimans-lemma} to $K$ and $\T_\Sigma$ viewed as locally free, locally split $\O_\Sigma$-submodules of $(\T_X)_\Sigma$.
\end{proof}

\section{The second intrinsic differential, locally}

Let $S$ be a scheme.
Let $X$ be a smooth scheme over $S$.
Let $E$ be a locally free sheaf of finite rank on $X$.
Let $n$ denote the relative dimension of $X$ over $S$.
Let $e$ denote the rank of $E$. Let $m=\min(n,e)$.

Let $\nabla : E\to \Omega_X\otimes E$ be a connection on $E$. Suppose that the $\O_S$-submodule of horizontal sections $\ker(\nabla)\subseteq E$ generates $E$ as an $\O_X$-module.

Let $s\in \Gamma(X,E)$ be a section.
Let $x\in X$ be a point.
Let $i$ be the unique nonnegative integer such that $x$ is containted in the subscheme $\Sigma^i(s)\setminus \Sigma^{i+1}(s) \subseteq X$. Let $\Sigma$ denote this subscheme.

\begin{remark}
Note that $i\le m$, since $\Sigma^i(s)$ is nonempty.
The tensor product of the $\O_X$-linear map $\nabla s : \T_X\to E$ with residue field $k(x)$ has rank $m-i$.
\end{remark}

Because $E$ is generated by $\ker(\nabla)$, there exist an open subset $U\subseteq X$ containing $x$ and a basis $v_1,\dotsc,v_e\in \Gamma(U,E)$ of $E$ over $U$ such that $\nabla v_l=0$ for all $l=1,\dotsc,e$. 
Fix a choice of such an open subset $U$ and basis $v_1,\dotsc,v_n$. 
Let $f_1,\dotsc,f_e\in \Gamma(U,\O_X)$ be the components of $s$ in this basis, so that 
\begin{equation*}
s_U = \sum_{l=1}^e f_l v_l
\qquad\text{and}\qquad
(\nabla s)_U = \sum_{l=1}^e df_l\otimes v_l.
\end{equation*}

\begin{definition}
Let $W$ be a smooth scheme of relative dimension $n$ over $S$. Let $x_1,\dotsc,x_n\in \Gamma(W,\O_W)$ be sections.
Then the following conditions are equivalent:
\begin{enumerate}
\item The $S$-morphism $(x_1,\dotsc,x_n) : W \to \mathbf A_S^n$ is \'etale.
\item The differentials $dx_1,\dotsc, dx_n\in \Gamma(W,\Omega_W)$ form a basis for $\Omega_W$ as an $\O_W$-module.
\end{enumerate}
If these conditions hold, we say that $x_1,\dotsc,x_n$ are \textbf{\'etale coordinates} on $W$.
\end{definition}

\begin{proposition}
\label{local-coords-prop}
After possibly shrinking $U$ to a smaller Zariski open neighborhood of $x$ in $X$ and reordering the sections $v_1,\dotsc,v_l\in\Gamma(U,E)$, there exist \'etale coordinates $x_1,\dotsc,x_n$ on $U$ such that
\begin{equation*}
(f_1,\dotsc,f_e) = (x_1,\dotsc,x_{m-i}, f_{m-i+1},\dotsc,f_e).
\end{equation*}
\end{proposition}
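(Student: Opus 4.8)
The plan is to translate the rank condition that defines the locus $\Sigma^i(s)\setminus\Sigma^{i+1}(s)$ into a statement about the linear independence of the differentials $df_l$ at $x$, and then to complete a suitable subset of these differentials to a basis of $\Omega_X$ near $x$ by Nakayama's lemma. The only step requiring genuine care is the identification of the rank of $\nabla s$ at $x$ with the dimension of the span of the $df_l(x)$; once that is in hand the rest is linear algebra together with Nakayama.

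First I would compute the rank of $\nabla s$ at the point $x$. Since $(\nabla s)_U = \sum_{l=1}^e df_l\otimes v_l$, viewing $\nabla s$ as the $\O_X$-linear map $\T_X\to E$ and passing to the fiber over $x$, its transpose $E(x)^\vee\to\Omega_X(x)$ sends $v_l^\vee\mapsto df_l(x)$. Hence the rank of $(\nabla s)(x)$ equals the dimension of the span of $df_1(x),\dotsc,df_e(x)$ inside the $n$-dimensional $k(x)$-vector space $\Omega_X(x)$. Because $x\in\Sigma^i(s)\setminus\Sigma^{i+1}(s)$, this rank is exactly $m-i$, so the span has dimension $m-i$. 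After reordering $v_1,\dotsc,v_e$ (equivalently, $f_1,\dotsc,f_e$), I may therefore assume that $df_1(x),\dotsc,df_{m-i}(x)$ are linearly independent and form a basis of that span.

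Next I would complete these to a basis of $\Omega_X(x)$. Since $\Omega_X$ is locally free of rank $n$ and is locally generated by the differentials of sections of $\O_X$, there exist, after shrinking $U$, sections $g_{m-i+1},\dotsc,g_n\in\Gamma(U,\O_X)$ whose differentials at $x$, together with $df_1(x),\dotsc,df_{m-i}(x)$, form a basis of $\Omega_X(x)$; here I use that $m-i\le m\le n$, so there is room to extend. I then set $x_j:=f_j$ for $1\le j\le m-i$ and $x_j:=g_j$ for $m-i<j\le n$, so that $dx_1(x),\dotsc,dx_n(x)$ is a basis of $\Omega_X(x)$.

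Finally I would invoke Nakayama's lemma: the $n$ sections $dx_1,\dotsc,dx_n\in\Gamma(U,\Omega_X)$ generate $\Omega_X$ in a neighborhood of $x$, and since $\Omega_X$ is locally free of rank $n$ they form a basis of $\Omega_X$ over a possibly smaller open neighborhood $U$ of $x$. By the characterization of étale coordinates as sections whose differentials form a basis of $\Omega_X$, the sections $x_1,\dotsc,x_n$ are then étale coordinates on $U$, and by construction $(f_1,\dotsc,f_e)=(x_1,\dotsc,x_{m-i},f_{m-i+1},\dotsc,f_e)$, as required.
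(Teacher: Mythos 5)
Your proposal is correct and follows essentially the same route as the paper's own proof: identify the rank of $\nabla s$ at $x$ (equivalently, of its transpose $v_l^\vee\mapsto df_l(x)$) with the dimension of the span of the $df_l(x)$, reorder so that $df_1(x),\dotsc,df_{m-i}(x)$ form a basis of that span, complete by differentials of functions using that $\Omega_{X,x}$ is generated by exact forms, and conclude by Nakayama's lemma. The only difference is cosmetic: you spell out the transpose argument that the paper leaves implicit in its preceding remark on the rank of $\nabla s \otimes k(x)$.
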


\begin{proof}
The images of the sections $df_1,\dotsc,df_e$ span a $k(x)$-linear subspace of $\Omega_X(x)$ of dimension $m-i$. Reordering the basis $v_1,\dotsc,v_e$, we may assume that the images of $df_1,\dotsc,df_{m-i}$ form a basis for this subspace. The stalk $\Omega_{X,x}$ is generated as an $\O_{X,x}$-module by sections of the form $dg$ with $g\in \O_{X,x}$. After possibly shrinking $U$, we may find sections $x_{m-i+1},\dotsc,x_n$ whose differentials complete the images of $df_1,\dotsc,df_{m-i}$ to a basis of $\Omega_X(x)$. By Nakayama's lemma, after further shrinking $U$, we may assume that $df_1,\dotsc,df_{m-i},dx_{m-i+1},\dotsc, dx_n$ form a basis for $\Omega_X$ over $U$.
\end{proof}

\begin{corollary}
\label{local-coords-cor}
Suppose that $S$ is the spectrum of a field $k$ and that $x\in X$ is a $k$-rational point.
After possibly shrinking $U$ to a smaller Zariski open neighborhood of $x$ in $X$ and reordering the sections $v_1,\dotsc,v_l\in\Gamma(U,E)$, there exist \'etale coordinates $x_1,\dotsc,x_n$ on $U$ such that $x_a(x)=0$ for all $a=1,\dotsc, n$ and
\begin{equation*}
(f_1,\dotsc,f_e) = (f_1(x) + x_1,\dotsc,f_{m-i}(x) + x_{m-i}, f_{m-i+1},\dotsc,f_e).
\end{equation*}
\end{corollary}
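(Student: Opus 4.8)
The plan is to deduce the corollary from Proposition \ref{local-coords-prop} by translating coordinates so that they vanish at the $k$-rational point $x$. First I would apply Proposition \ref{local-coords-prop} to obtain, after shrinking $U$ and reordering $v_1,\dotsc,v_e$, \'etale coordinates $y_1,\dotsc,y_n$ on $U$ such that
\[
(f_1,\dotsc,f_e) = (y_1,\dotsc,y_{m-i}, f_{m-i+1},\dotsc,f_e).
\]

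Since $x$ is $k$-rational, each value $y_a(x)$ lies in $k$, so I can form the sections $x_a := y_a - y_a(x)\in \Gamma(U,\O_X)$ for $a=1,\dotsc,n$. By construction $x_a(x)=0$ for every $a$. Because each $y_a(x)$ is a constant, we have $dx_a = dy_a$, so the differentials $dx_1,\dotsc,dx_n$ still form a basis of $\Omega_X$ over $U$; hence $x_1,\dotsc,x_n$ are again \'etale coordinates. This verification---that translating by scalars preserves the property of being \'etale coordinates---is the only step that requires any care, and it is immediate from the vanishing of the differential of a constant. There is accordingly no serious obstacle: the corollary is a routine normalization of Proposition \ref{local-coords-prop} made possible by the hypothesis that $x$ be $k$-rational, which is exactly what guarantees that the values $y_a(x)$ are honest scalars available for the translation.

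It then remains to read off the components of $s$ in the new coordinates. For $a\le m-i$ we have $f_a = y_a$, so evaluating at $x$ gives $y_a(x)=f_a(x)$ and hence $f_a = x_a + y_a(x) = f_a(x)+x_a$. The components $f_{m-i+1},\dotsc,f_e$ are left unchanged, so that
\[
(f_1,\dotsc,f_e) = (f_1(x)+x_1,\dotsc,f_{m-i}(x)+x_{m-i}, f_{m-i+1},\dotsc,f_e),
\]
as required.
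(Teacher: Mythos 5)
Your proof is correct and is exactly the intended argument: the paper states this corollary without a separate proof, the implicit reasoning being precisely this normalization of Proposition \ref{local-coords-prop}. Since $x$ is $k$-rational, the values $y_a(x)$ are scalars, translating by them kills nothing in the differentials ($dx_a = dy_a$), so the translated sections remain \'etale coordinates and the components transform as claimed.
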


Reordering the basis sections $v_1,\dotsc, v_l\in \Gamma(U,E)$ if necessary, fix \'etale coordinates $x_1,\dotsc,x_n$ as in Proposition \ref{local-coords-prop} (or as in Corollary \ref{local-coords-cor}, if $S$ is spectrum of a field $k$ and $x\in X$ is a rational point).

Let $\partial_1,\dotsc,\partial_n\in \Gamma(U,(\T_X)_U)$ be the basis of $(\T_X)_U=(\Omega_X)^\vee_U$ that is dual to $dx_1,\dotsc,dx_n$. We identify the vector field $\partial_a$ with the derivation $\partial_a\circ d : \O_U\to \O_U$, for all $a=1,\dotsc, n$.

\begin{remark}
If $g\in \O_U$ is a local section, then 
$dg = \partial_1 g\cdot dx_1 + \dotsb + \partial_n g\cdot dx_n$.
In particular, 
\begin{align*}
(\nabla s)_U &=
\sum_{l=1}^e \sum_{b=1}^n
(\partial_b f_l) dx_b\otimes v_l 
= \sum_{l\le m-i} dx_l \otimes v_l +
\sum_{l>m-i} \sum_{b=1}^n (\partial_b f_l) dx_b \otimes v_l.
\end{align*}
\end{remark}

Regarding the covariant derivative $\nabla s$ as an $\O_X$-linear map $\T_X\to E$, 
let $K:= \ker((\nabla s)_{\Sigma\cap U})$ and $C:= \coker((\nabla s)_{\Sigma\cap U})$.

\begin{proposition}
The images of $\partial_{m-i+1},\dotsc,\partial_n$ in $(\T_X)_{\Sigma\cap U}$ lie in $K$ and form a basis for $K$ as an $\O_{\Sigma\cap U}$-module. The images of $v_{m-i+1},\dotsc,v_e$ in $C$ form a basis for $C$ as an $\O_{\Sigma\cap U}$-module.
\end{proposition}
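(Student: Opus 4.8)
The plan is to carry out the entire computation in the chosen étale coordinates $x_1,\dots,x_n$ and horizontal basis $v_1,\dots,v_e$, where the assertion reduces to linear algebra over $\O_{\Sigma\cap U}$ once one block of the matrix of $\nabla s$ is shown to vanish. Writing $M_{lb}:=\partial_b f_l$, the map $(\nabla s)_U\colon (\T_X)_U\to E_U$ has matrix $(M_{lb})$ in the bases $\partial_1,\dots,\partial_n$ and $v_1,\dots,v_e$. Since $f_l=x_l$ and hence $\partial_b f_l=\delta_{bl}$ for $l\le m-i$, this matrix has the block form
\begin{equation*}
\begin{pmatrix} I_{m-i} & 0 \\ P & Q \end{pmatrix},
\end{equation*}
where the top-left block is the identity, the top-right block vanishes (as $\partial_b x_l=0$ for $b>m-i\ge l$), and $P=(\partial_b f_l)_{l>m-i,\,b\le m-i}$, $Q=(\partial_b f_l)_{l>m-i,\,b>m-i}$.

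The crux is to show that $Q$ vanishes on $\Sigma$. Recall that $\Sigma^i(s)$ is cut out by the equation $\wedge^{m-i+1}(\nabla s)=0$, whose coefficients are, up to sign, the $(m-i+1)\times(m-i+1)$ minors of $M$. For any $l_0,b_0>m-i$ I would expand the minor on rows $1,\dots,m-i,l_0$ and columns $1,\dots,m-i,b_0$: by the block structure above its top-left $(m-i)\times(m-i)$ corner is $I_{m-i}$ and its top-right column is zero, so the minor is block upper triangular and equals $\partial_{b_0} f_{l_0}$. Hence every entry of $Q$ lies in the ideal of $\Sigma^i(s)$ and therefore vanishes on $\Sigma\subseteq \Sigma^i(s)$, so that $(\nabla s)_{\Sigma\cap U}$ has matrix $\left(\begin{smallmatrix} I_{m-i} & 0 \\ P & 0\end{smallmatrix}\right)$.

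With $Q|_\Sigma=0$ in hand both bases can be read off directly. The first $m-i$ rows of the restricted matrix are the standard covectors, so a section $\sum_b c_b\partial_b$ lies in $K$ if and only if $c_1=\dots=c_{m-i}=0$; the remaining rows impose no further condition precisely because $Q|_\Sigma=0$, whence $K$ is free with basis the images of $\partial_{m-i+1},\dots,\partial_n$ (in particular these sections do land in $K$, as the columns of the matrix indexed by $b>m-i$ are zero). For the cokernel I would apply the unipotent change of basis $w_b:=v_b+\sum_{l>m-i}(\partial_b f_l)\,v_l$ for $b\le m-i$ and $w_l:=v_l$ for $l>m-i$ on $E_{\Sigma\cap U}$; then $\im((\nabla s)_{\Sigma\cap U})$ is exactly the span of $w_1,\dots,w_{m-i}$, so $C$ is free with basis the images of $v_{m-i+1},\dots,v_e$. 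That these spanning sets have the correct cardinality is confirmed by the ranks $n-m+i$ and $e-m+i$ supplied by Proposition \ref{coker-deg-loci}. The single genuine step is the vanishing of $Q$ on $\Sigma$; everything after it is bookkeeping.
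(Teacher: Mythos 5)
Your proof is correct, but it justifies the crucial step by a genuinely different mechanism than the paper does. Both arguments pivot on the same fact, namely that the bottom-right block $Q=(\partial_b f_l)_{l,b>m-i}$ of the matrix of $\nabla s$ vanishes on $\Sigma$; you derive this directly from the scheme-theoretic definition of the degeneracy locus: thanks to the block form $\left(\begin{smallmatrix} I & 0 \\ P & Q\end{smallmatrix}\right)$, each entry $\partial_{b_0}f_{l_0}$ of $Q$ is itself an $(m-i+1)\times(m-i+1)$ minor of the full matrix, hence lies in the ideal cutting out $\Sigma^i(s)$ and restricts to zero on $\Sigma$. The paper runs the logic in the opposite order: it first invokes Proposition \ref{coker-deg-loci} to know that $C$ is locally free of rank $e-m+i$, concludes from the relations $\nabla s(\partial_b)=0$ in $C$ for $b\le m-i$ that the $e-m+i$ sections $v_{m-i+1},\dotsc,v_e$ generate and hence form a basis of $C$, and only then extracts the vanishing of $Q$ from the relations $\sum_{l>m-i}(\partial_b f_l)\,v_l=0$ in $C$ for $b>m-i$. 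So in the paper the vanishing of $Q$ is a consequence of the basis statement for $C$, whereas for you it is the starting point from which both basis statements, and as a by-product the local freeness and ranks of $K$ and $C$ in this chart, follow by pure linear algebra (your citation of Proposition \ref{coker-deg-loci} is only a consistency check, not an input). Your route is more self-contained and lets the determinantal structure of $\Sigma^i(s)$ do the work; the paper's route is shorter on the page because Proposition \ref{coker-deg-loci} is already established and no minor expansion is needed.
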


\begin{proof}
Recall that $K$ and $C$ are locally free $\O_{\Sigma}$-modules of respective ranks $n-m+i$ and $e-m+i$, see Proposition \ref{coker-deg-loci}.

In $C$, we have
\begin{equation}
\label{local-in-cokernel}
0 = \nabla s (\partial_b) =
\begin{cases}
v_b + \sum_{l>m-i} (\partial_b f_l) v_l & \text{if $b\le m-i$}\\
\sum_{l>m-i} (\partial_b f_l) v_l &\text{if $b>m-i$}.
\end{cases}
\end{equation}
The equalities (\ref{local-in-cokernel}) for $b\le m-i$ show that $v_{m-i+1},\dotsc,v_e$ generate, hence form a basis for $C$ as an $\O_\Sigma$-module. This fact combined with the equalities (\ref{local-in-cokernel}) for $b>m-i$ show that $\partial_b f_l=0$ as a section of $\O_\Sigma$ for all $l=m-i+1,\dotsc,e$ and $b=m-i+1,\dotsc,n$. In particular, the sections $\partial_b\in (\T_X)_\Sigma$ with $b>m-i$, lie in $K$ for all $b>m-i$, hence form a basis for $K$.
\end{proof}

The following proposition shows that the second intrinsic differential is represented by the bottom-right square submatrices of size $n-e+i$ of the Hessian matrices of $f_{m-i+1},\dotsc, f_e$.

\begin{proposition}
\label{2nd-id-locally}
The second intrinsic differential $\mathbf d_\Sigma^2 s$ is the unique $\O_\Sigma$-linear map $K\to \sheafHom_\Sigma(K,C)$ such that
\begin{equation*}
(\mathbf d_\Sigma^2 s)(\partial_a)(\partial_b)=
\sum_{l=m-i+1}^e (\partial_a\partial_b f_l) v_l
\end{equation*}
for all $a, b = m-i+1,\dotsc, n$.
\end{proposition}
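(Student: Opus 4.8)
The plan is to unwind the definition of $\mathbf d_\Sigma^2 s$ (Definition \ref{second-id}) as the restriction to $K$ of the intrinsic differential $\mathbf d_\Sigma(\nabla s)$ of the $\O_X$-linear map $\nabla s\colon \T_X\to E$ (Definition \ref{id}), and then to compute that intrinsic differential in the local coordinates already fixed, by invoking Proposition \ref{id-locally}. First I would apply Proposition \ref{id-locally} over $U$ with $A=\T_X$ and $B=E$, taking the free bases $\partial_1,\dotsc,\partial_n$ and $v_1,\dotsc,v_e$ and the associated trivial connection $D$ on $\sheafHom(\T_X,E)=\Omega_X\otimes E$. This identifies $\mathbf d_\Sigma(\nabla s)$ over $\Sigma\cap U$ with the composite of $D(\nabla s)$ and the projection onto $\sheafHom_\Sigma(K,C)$. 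Since $\nabla s=\sum_{b,l}(\partial_b f_l)\,dx_b\otimes v_l$, applying the trivial connection to the coefficient functions and using $d(\partial_b f_l)=\sum_a(\partial_a\partial_b f_l)\,dx_a$ yields, after contracting the $\Omega_X$-factor with $\partial_a$ and evaluating the resulting homomorphism at $\partial_b$,
\[
(D(\nabla s))(\partial_a)(\partial_b)=\sum_{l=1}^e(\partial_a\partial_b f_l)\,v_l\in E_\Sigma .
\]

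Next I would push this forward to $C=\coker((\nabla s)_\Sigma)=E_\Sigma/\im((\nabla s)_\Sigma)$ and restrict the indices to $K$, that is, to $a,b\in\{m-i+1,\dotsc,n\}$, using the basis $\partial_{m-i+1},\dotsc,\partial_n$ of $K$ and the basis of $C$ given by the images of $v_{m-i+1},\dotsc,v_e$ furnished by the preceding proposition. The step I expect to be the crux is that the quotient $E_\Sigma\to C$ does \emph{not} simply discard $v_1,\dotsc,v_{m-i}$: the relations (\ref{local-in-cokernel}) for $b\le m-i$ express each image $v_b$ as a combination of the images of $v_{m-i+1},\dotsc,v_e$, so a priori the terms with $l\le m-i$ in the sum above would contribute correction terms in $C$. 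Here I would invoke the special form of the coordinates from Proposition \ref{local-coords-prop}: because $f_l=x_l$ is itself a coordinate for $l\le m-i$, one has $\partial_a\partial_b f_l=0$ for all such $l$. Thus every would-be correction term vanishes identically, and the image in $C$ collapses to $\sum_{l=m-i+1}^e(\partial_a\partial_b f_l)\,v_l$, which is precisely the asserted formula.

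Finally, uniqueness is immediate: an $\O_\Sigma$-linear map $K\to\sheafHom_\Sigma(K,C)$ is determined by the elements $(\mathbf d_\Sigma^2 s)(\partial_a)(\partial_b)\in C$ as $\partial_a,\partial_b$ range over the basis $\partial_{m-i+1},\dotsc,\partial_n$ of $K$, so the displayed formula pins the map down completely. As the whole computation is local on $\Sigma$ and the conclusion is an equality of $\O_\Sigma$-linear maps, it suffices to verify it over the chosen neighborhood $U$, where the étale coordinates and horizontal frame are available.
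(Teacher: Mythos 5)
Your proposal is correct and follows essentially the same route as the paper's proof: both invoke Proposition \ref{id-locally} for the trivial connection attached to the frames $\partial_1,\dotsc,\partial_n$ and $v_1,\dotsc,v_e$, compute $D(\nabla s)$ as the Hessian of the $f_l$, and then project to $\sheafHom_\Sigma(K,C)$. The point you flag as the crux---that the terms with $l\le m-i$ vanish because $f_l=x_l$, which matters since the projection $E_\Sigma\to C$ does not simply kill $v_1,\dotsc,v_{m-i}$---is exactly what the paper absorbs silently into the displayed equality defining $H(s)$; your writeup just makes that step explicit.
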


\begin{proof}
Let $H(s) : \T_X \to \sheafHom_X(\T_X, E)$ be the unique $\O_X$-linear map such that
\begin{equation*}
H(s)(\partial_a) =
\sum_{l=1}^e \sum_{b=1}^n
(\partial_a\partial_b f_l) dx_b\otimes v_l = 
\sum_{l>m-i} \sum_{b=1}^n (\partial_a \partial_b f_l) dx_b \otimes v_l
\end{equation*}
for all $a=1,\dotsc, n$.
Let 
\begin{equation*}
\delta : \sheafHom_X(\T_X,E)_\Sigma \to
\sheafHom_\Sigma(K,C)
\end{equation*}
be the natural map induced by the inclusion $K\hookrightarrow (\T_X)_\Sigma$ and the projection $(\T_X)_\Sigma\twoheadrightarrow C$. Then
\begin{equation*}
\delta \circ H(s)_\Sigma = \mathbf d_{\Sigma^i}(\nabla s)
\end{equation*}
by Proposition \ref{id-locally}. The result follows.
\end{proof}

For the remainder of this section, we assume that $n\ge e$ and $i=1$.
Then the second intrinsic differential $\mathbf d_\Sigma^2 s : K\to \sheafHom_\Sigma(K,C)$ is a map of locally free sheaves of rank $n-e+1$ on $\Sigma := \Sigma^1(s)\setminus \Sigma^2(s)$.
Consider the natural inclusions of second-order Thom-Boardman loci
\begin{equation*}
\emptyset = \Sigma^{1,n-e+2}(s) \subseteq
\Sigma^{1,n-e+1}(s) \subseteq \dotsb \subseteq
\Sigma^{1,0}(s) = \Sigma^1(s)\setminus \Sigma^2(s). 
\end{equation*}
Let $j$ be the unique integer such that $x\in \Sigma^{1,j}(s)\setminus \Sigma^{1,j+1}(s)$.

\begin{corollary}
\label{2nd-intrinsic-corank-1}
Let $\overline H$ denote the lower-right minor of size $n-e+1$ in the Hessian matrix of $f_e\in \Gamma(X,\O_X)$, that is, let
\begin{equation*}
\overline H := (\partial_a \partial_b f_e)_{e\le a,b\le n}
\in \Gamma(X,\O_X)^{\oplus( n-e+1 )^2}.
\end{equation*}
Then $\overline H$ has rank $n-e+1-j$ at $x$.
\end{corollary}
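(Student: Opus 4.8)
The plan is to combine the explicit local formula for the second intrinsic differential established in Proposition \ref{2nd-id-locally} with the rank characterization of degeneracy loci underlying Definition \ref{2nd-thom-boardman}. The whole statement is really a translation of a degeneracy condition into a statement about the rank of a Hessian minor, so the substantive work has already been carried out in the preceding proposition; what remains is careful bookkeeping.

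First I would record the ranks in play. Since $n\ge e$ we have $m=e$, and since $i=1$, Proposition \ref{coker-deg-loci} gives that $K=\ker((\nabla s)_\Sigma)$ is locally free of rank $n-e+1$ and that $C=\coker((\nabla s)_\Sigma)$ is locally free of rank $e-m+i=1$. Consequently $\sheafHom_\Sigma(K,C)$ is locally free of rank $(n-e+1)\cdot 1=n-e+1$, so the second intrinsic differential $\mathbf d_\Sigma^2 s:K\to \sheafHom_\Sigma(K,C)$ is a map between locally free $\O_\Sigma$-modules of equal rank $n-e+1$. In particular the minimum of the two ranks, which governs the indexing of the degeneracy loci, equals $n-e+1$.

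Next I would write down the matrix of $\mathbf d_\Sigma^2 s$. With $m-i+1=e$, the basis $\partial_e,\dotsc,\partial_n$ of $K$ and the basis $v_e$ of $C$ are exactly the ones furnished above. By Proposition \ref{2nd-id-locally} we have $(\mathbf d_\Sigma^2 s)(\partial_a)(\partial_b)=(\partial_a\partial_b f_e)\,v_e$ for $a,b\in\{e,\dotsc,n\}$, so in these bases, together with the induced basis $\{\partial_b^\vee\otimes v_e\}$ of $\sheafHom_\Sigma(K,C)$, the map $\mathbf d_\Sigma^2 s$ is represented precisely by the symmetric matrix $(\partial_a\partial_b f_e)_{e\le a,b\le n}=\overline H$. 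Therefore the rank of $\mathbf d_\Sigma^2 s$ at $x$ equals the rank of $\overline H$ at $x$.

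Finally I would read off the rank from the hypothesis on $x$. By Definition \ref{2nd-thom-boardman}, $\Sigma^{1,j}(s)=\Sigma^j(\mathbf d_\Sigma^2 s)$, and by Definition \ref{degeneracy-locus-def} (together with the rank characterization of degeneracy loci recorded just after it) the locus $\Sigma^j(\mathbf d_\Sigma^2 s)$ consists of the points where $\mathbf d_\Sigma^2 s$ has rank at most $(n-e+1)-j$. Hence the assumption $x\in\Sigma^{1,j}(s)\setminus\Sigma^{1,j+1}(s)$ says exactly that $\mathbf d_\Sigma^2 s$ has rank equal to $n-e+1-j$ at $x$, and by the previous paragraph this is the rank of $\overline H$ at $x$, as claimed. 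I expect no genuine obstacle here; the only point requiring attention is verifying that the source and target of $\mathbf d_\Sigma^2 s$ have the same rank $n-e+1$, so that the degeneracy index $j$ translates without shift into a corank of $\overline H$.
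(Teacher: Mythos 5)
Your proposal is correct and matches the paper's intent exactly: the paper's own proof is the one-line remark that the corollary ``follows immediately from Proposition \ref{2nd-id-locally}'', and your argument simply spells out the bookkeeping behind that remark (with $n\ge e$, $i=1$ one has $m=e$, so $K$ has basis $\partial_e,\dotsc,\partial_n$, $C$ has basis $v_e$, the matrix of $\mathbf d_\Sigma^2 s$ in these bases is $\overline H$, and the degeneracy-locus indexing converts membership in $\Sigma^{1,j}(s)\setminus\Sigma^{1,j+1}(s)$ into rank exactly $n-e+1-j$). All the rank computations and the use of the equal-rank source and target are accurate, so there is nothing to correct.
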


\begin{proof}
Follows immediately from Proposition \ref{2nd-id-locally}.
\end{proof}

Suppose that $S$ is the spectrum of an algebraically closed field $k$.
Suppose that $x\in \Sigma^{1,j}(s)\setminus \Sigma^{1,j+1}(s)$ is a closed point.
Suppose that the \'etale coordinates $x_1,\dotsc,x_n\in \Gamma(U,\O_X)$ are as in Corollary \ref{local-coords-cor}.
Then  $x_a\in \frak m_x$ for all $a=1,\dotsc, n$, and
\begin{equation*}
\widehat\O_{X,x} = k[[x_1,\dotsc,x_n]].
\end{equation*}

\begin{proposition}
\label{morse-corank-1}
The nonnegative integer $n-e+1-j$ is even if $k$ has characteristic 2. 
Let
\begin{equation*}
q := \begin{cases}
x_e^2 + \dotsb + x_{n-j}^2 &
\text{if }\operatorname{char}(k)\ne 2, \\
x_e x_{e+1} + \dotsb + x_{n-j-1} x_{n-j} &
\text{if }\operatorname{char}(k)=2.
\end{cases}
\end{equation*}
Then there exist a power series $h\in k[[x_1,\dotsc,x_n]]$ contained in the ideal
\begin{equation*}
\langle x_1,\dotsc,x_{e-1} \rangle + \langle x_{n-j+1},\dotsc,x_n \rangle^2
\end{equation*}
and which doesn't depend on the variables $x_e,\dotsc,x_{n-j}$, and an automorphism $\phi$ of
\begin{equation*}
\widehat \O_{X,x} = k[[x_1,\dotsc,x_n]]
\end{equation*}
as a local $k[[x_1,\dotsc, x_{e-1}]]$-algebra such that $\phi(f_e) = f_e(x) + q + h$.
\end{proposition}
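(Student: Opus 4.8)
Write $g := f_e \in \widehat\O_{X,x} = k[[x_1,\dotsc,x_n]]$, abbreviate the \emph{parameter} variables $u := (x_1,\dotsc,x_{e-1})$, and split the \emph{fibre} variables as $y' := (x_e,\dotsc,x_{n-j})$ and $y'' := (x_{n-j+1},\dotsc,x_n)$; thus I must produce a local $k[[u]]$-algebra automorphism $\phi$ of $k[[x_1,\dotsc,x_n]]$ with $\phi(g) = f_e(x) + q(y') + h$, where $h$ depends only on $u,y''$ and lies in $\langle u\rangle + \langle y''\rangle^2$. Two facts drive the argument. First, since $x\in\Sigma^1(s)\setminus\Sigma^2(s)$ and $\partial_e,\dotsc,\partial_n$ span the kernel $K$ there, we have $\partial_b g(x)=0$ for $b=e,\dotsc,n$; that is, the origin is a critical point of $g$ in the fibre directions at $u=0$. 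Second, by Corollary \ref{2nd-intrinsic-corank-1} the Hessian block $\overline H = (\partial_a\partial_b g)_{e\le a,b\le n}$ has rank $r := n-e+1-j$ at $x$. The parity claim is then immediate: when $\charac k = 2$ every second partial $\partial_a^2 g$ vanishes (as $\partial_a^2$ annihilates each monomial, $k(k-1)$ being even), so $\overline H(x)$ is symmetric with zero diagonal, hence alternating, hence of even rank. Thus $r$ is even and the hyperbolic form $q$ is well defined.

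\textbf{Normal form of the quadratic part and splitting with parameters.}
Over the algebraically closed field $k$, a $k$-linear change of the fibre variables $y$ — which extends to a $k[[u]]$-automorphism fixing $u$ — puts $\overline H(x)$ in block form, nondegenerate on $y'$ and zero on $y''$, with the $y'$-block equal to the Gram matrix of $q$ (recall that over an algebraically closed field every nondegenerate quadratic form of even rank is hyperbolic). The heart of the proof is a Morse lemma with parameters: I split off the $r$ nondegenerate directions one at a time, treating $u$ and the untreated fibre variables as constants. In characteristic $\ne 2$ this is the usual completing of the square: for the leading variable $x_e$ the coefficient $\partial_e^2 g$ is a unit, so the formal implicit function theorem (Hensel's lemma) yields $\psi\in k[[u,x_{e+1},\dotsc,x_n]]$ solving $\partial_e g = 0$, and the substitution $x_e\mapsto x_e+\psi$ followed by a rescaling deletes all $x_e$-dependence except a pure square; iterating over $x_e,\dotsc,x_{n-j}$ and normalizing signs yields $q(y')$. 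Each substitution fixes $u$, so the composite $\phi$ is a $k[[u]]$-automorphism, and what survives is a series in $u,y''$ alone.

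\textbf{The main obstacle.}
The delicate case is $\charac k = 2$, where one cannot complete squares ($\partial_a^2 g\equiv 0$) and the nondegenerate directions must be peeled off in hyperbolic pairs $x_e x_{e+1}$. Normalizing a germ with nondegenerate $2$-jet $x_ex_{e+1} + (\text{higher order})$ to exactly $x_ex_{e+1}$, uniformly in the parameters $u$, is where the work concentrates. I would dispatch it via finite determinacy: a nondegenerate quadratic $2$-jet is $2$-determined in every characteristic, so $g$ is right-equivalent over $k[[u]]$ to $q(y') + h$. This is precisely the content of the positive-characteristic Morse-with-parameters statement; a fully hands-on version instead requires a Hensel/Weierstrass argument in two variables that accommodates the vanishing of the second partials, which is the technical crux.

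\textbf{Membership of $h$ in the ideal.}
Finally I record why the remainder has the claimed form. Write $\phi(g) = f_e(x) + q(y') + h$ with $h$ a series in $u,y''$. Its $u$-free part $h(0,y'')$ has no constant term, since $\phi$ is local and hence $\phi(g)(0) = g(0) = f_e(x)$, and no linear term, since the origin is a critical point of $g$ in the fibre directions at $u=0$ (a property preserved by the splitting process, whose substitutions fix the critical locus). Therefore $h(0,y'')\in\langle y''\rangle^2$. Every remaining term of $h$ involves some $u_i$, so $h\in\langle x_1,\dotsc,x_{e-1}\rangle + \langle x_{n-j+1},\dotsc,x_n\rangle^2$, as required.
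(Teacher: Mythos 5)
Your proposal is correct and follows essentially the same route as the paper: both extract the same two inputs (the vanishing of the fibre-direction linear part of $f_e$ at $x$, coming from $x\in\Sigma^1(s)$, and the rank $n-e+1-j$ of the Hessian block from Corollary \ref{2nd-intrinsic-corank-1}), and both then reduce the normalization, uniformly in the parameters $x_1,\dotsc,x_{e-1}$, to the positive-characteristic Morse Lemma with Parameters (Proposition \ref{morse-parameters}), with the same final bookkeeping placing $h$ in $\langle x_1,\dotsc,x_{e-1}\rangle + \langle x_{n-j+1},\dotsc,x_n\rangle^2$. The one imprecision is your aside that $2$-determinacy of the quadratic jet by itself yields right-equivalence over $k[[x_1,\dotsc,x_{e-1}]]$ — the parametrized statement also requires the versal-unfolding argument (Propositions \ref{DeterminacyBound} and \ref{versal-deform}) — but since you ultimately defer to the Morse-with-parameters statement exactly as the paper's proof does, this does not affect the argument.
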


\begin{proof}
Write $f_e = g_0 + g_1 + g_2$, where $g_0\in k$ is a constant, $g_1$ is homogeneous polynomial of degree 1 in $x_1,\dotsc,x_n$, and $g_2\in \langle x_1,\dotsc, x_n\rangle^2$.
Then $g_1$ only involves the variables $x_1,\dotsc,x_{e-1}$, since by assumption $x\in \Sigma^1(s)$, which translates to
\begin{equation*}
0 = (df_1\wedge \dotsb \wedge df_e) (x) = 
(dx_1\wedge \dotsb \wedge dx_{e-1}\wedge dg_1)(x). 
\end{equation*}
Let $\bar g_2 := g_2(0,\dotsc,0,x_e,\dotsc,x_n)\in k[[x_e,\dotsc,x_n]]$.
By Proposition \ref{2nd-intrinsic-corank-1}, the Hessian matrix of $\bar g_2$ has rank $n-e+1-j$ at the origin.
Viewing $g_2$ as an unfolding of $\bar g_2$ over $R:= k[[x_1,\dotsc,x_{e-1}]]$ and applying Morse's Lemma with Parameters (Proposition \ref{morse-parameters} below), we may find an automorphism $\phi$ of $k[[x_1,\dotsc,x_n]]$ as a local $k[[x_1,\dotsc,x_{e-1}]]$-algebra that sends $g_2$ to $q + h'$ for some power series $h'\in k[[x_1,\dotsc,x_n]]$ that does not involve the variables $x_e,\dotsc,x_{n-j}$. Setting $h := g_1 + h'$, the result follows. 
\end{proof}

\section{Power series with finite Milnor number}

Let $k$ be a field. 
Let $x=(x_1,\dotsc,x_n)$ be a finite set of indeterminates.
Let $f\in k[[x]]$ be a power series.

\begin{definition}
The \textbf{Jacobian ideal} of $f$, denoted $\operatorname{jac}(f)$, is the ideal generated in the power series ring $k[[x]]$ by the partial derivatives $\partial f/\partial x_1,\dotsc, \partial f/\partial x_n$.
The quotient $k[[x]]/\operatorname{jac}(f)$ is called the \textbf{Milnor algebra} of $f$.
Its (possibly infinite) dimension as a vector space over $k$ is called the \textbf{Milnor number} of $f$ and denoted by $\mu(f)$.
\end{definition}

\begin{definition}
Let $r$ be a positive integer.
We say that $f\in k[[x]]$ is \textbf{$r$-determined} if for every power series $g\in k[[x]]$ such that $f-g\in \langle x\rangle^{r+1}$, there exists an automorphism of $k[[x]]$ as a local $k$-algebra that sends $g$ to $f$.
We say that $f$ is \textbf{finitely determined} if it is $r$-determined for some $r\ge 1$.
\end{definition}

\begin{proposition}
\label{DeterminacyBound}
If $f\in k[[x]]$ has finite Milnor number, then $f$ is finitely determined.
More precisely, let $r$ be a positive integer.
If $\langle x\rangle^r\subseteq \operatorname{jac}(f)$, then $f$ is $2r$-determined.
\end{proposition}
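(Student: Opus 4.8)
The plan is to prove the finite-determinacy statement by the standard homotopy (or "Mather–Thom") argument adapted to power series rings in arbitrary characteristic. The key principle is that if we can continuously deform $f$ to $g$ through a family $f_t = f + t(g-f)$ while always trimming away the perturbation via an automorphism of $k[[x]]$, then $f$ and $g$ are related by an automorphism. Concretely, I would first reduce to the infinitesimal statement: it suffices to show that for every $f$ with $\langle x\rangle^r \subseteq \jac(f)$ and every $\epsilon \in \langle x\rangle^{2r+1}$, the perturbation $f + \epsilon$ is carried back to $f$ by some local automorphism. The heart of the matter is the tangent-space computation showing that the ideal $\langle x\rangle^{2r+1}$ lies inside the "tangent space to the orbit" of $f$ under the automorphism group.

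The central algebraic input is the following claim, which I would establish first: under the hypothesis $\langle x\rangle^r \subseteq \jac(f)$, one has the inclusion
\begin{equation*}
\langle x\rangle^{2r+1} \subseteq \langle x \rangle^{r+1}\cdot \jac(f) \subseteq \langle x\rangle \cdot \left( \frac{\partial f}{\partial x_1},\dotsc,\frac{\partial f}{\partial x_n}\right).
\end{equation*}
The first inclusion is immediate from $\langle x\rangle^{2r+1} = \langle x\rangle^{r+1}\cdot \langle x\rangle^{r} \subseteq \langle x\rangle^{r+1}\cdot \jac(f)$. This says precisely that any degree-$(2r+1)$ perturbation $\epsilon$ can be written as $\epsilon = \sum_a g_a \,\partial f/\partial x_a$ with each $g_a \in \langle x\rangle^{r+1}$, which is exactly the condition needed to absorb $\epsilon$ by an infinitesimal change of coordinates fixing the origin to high order.

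With that in hand, I would run the homotopy argument. Set $f_t = f + t\epsilon$ for $t\in[0,1]$ (working formally in $t$, or over $k[[t]]$, to stay inside algebra), and seek a family of local automorphisms $\phi_t$ with $\phi_0 = \mathrm{id}$ and $\phi_t(f_t) = f$. Differentiating in $t$ reduces this to solving, at each $t$, the equation $\epsilon = -\sum_a g_a^{(t)}\,\partial f_t/\partial x_a$ for coordinate-vector-field coefficients $g_a^{(t)} \in \langle x\rangle^2$; integrating the resulting vector field yields $\phi_t$. Because $f_t - f \in \langle x\rangle^{2r+1}$ and $\langle x\rangle^r \subseteq \jac(f)$, Nakayama's lemma in the complete local ring $k[[x]]$ guarantees $\langle x\rangle^r \subseteq \jac(f_t)$ for all $t$, so the same inclusion gives the needed solvability uniformly in $t$. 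The resulting automorphism at $t=1$ sends $f+\epsilon$ to $f$; applying this to $g$ with $f-g\in\langle x\rangle^{2r+1}$ proves that $f$ is $2r$-determined, and the first sentence of the proposition follows since finite Milnor number means $k[[x]]/\jac(f)$ is finite-dimensional, hence $\langle x\rangle^r\subseteq \jac(f)$ for some $r$.

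The main obstacle is making the homotopy/integration argument rigorous in the purely algebraic, positive-characteristic setting, where one cannot simply invoke the flow of a smooth vector field. I expect to handle this by working order-by-order in the $\langle x\rangle$-adic filtration: rather than integrating a genuine flow, one constructs $\phi$ as a convergent (in the $\langle x\rangle$-adic topology) composite of automorphisms each correcting the perturbation one filtration-degree higher, using the completeness of $k[[x]]$ to guarantee convergence. The key estimate driving the induction is again that each correction term lands in $\langle x\rangle^{2r+1}$ and can be absorbed using $\langle x\rangle^{r+1}\cdot\jac(f)$, so that the residual perturbation strictly increases in order at each step. Verifying that this iteration preserves the hypothesis $\langle x\rangle^r\subseteq\jac$ at every stage, and that the infinite composite defines a genuine local $k$-algebra automorphism, is the delicate part; everything else is the elementary ideal inclusion displayed above.
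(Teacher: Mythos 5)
Your real argument is the order-by-order iteration in your final paragraph, and that is the right mechanism: it is the standard characteristic-free determinacy argument, and it is in substance what the paper itself appeals to, since the paper's entire proof is a one-line citation to the proof of Milnor's Lemma 10.8. Note, though, that the homotopy portion of your proposal cannot be salvaged in positive characteristic: integrating the vector field $\sum_a g_a^{(t)}\partial/\partial x_a$ formally requires exponentials (divided powers), which do not exist over $k$ of characteristic $p$, so everything must be carried by the iteration, and the iteration has to stand on its own.

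As you have stated it, the iteration has a concrete gap. You propose to absorb, at every stage, a residual in $\langle x\rangle^{2r+1}$ using coefficients in $\langle x\rangle^{r+1}$, and you claim the residual order then strictly increases. It does not: if $g_a\in\langle x\rangle^{r+1}$, the substitution $x_a\mapsto x_a-g_a$ produces a Taylor remainder lying only in $\langle g_1,\dotsc,g_n\rangle^2\subseteq\langle x\rangle^{2r+2}$, so after the first step the residual sits in $\langle x\rangle^{2r+2}$, and every subsequent step with coefficients again only known to lie in $\langle x\rangle^{r+1}$ produces a remainder again only in $\langle x\rangle^{2r+2}$ --- the order stalls at $2r+2$; moreover the partial composites of automorphisms, each moving $x_a$ by a series possibly of order exactly $r+1$, need not be Cauchy, so the infinite composite need not exist. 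The fix is your own displayed inclusion with a moving exponent: at stage $m$ the residual $\epsilon_m$ lies in $\langle x\rangle^{2r+m}=\langle x\rangle^{r+m}\langle x\rangle^{r}\subseteq\langle x\rangle^{r+m}\jac(f)$, so you can and must choose $g_a^{(m)}\in\langle x\rangle^{r+m}$. Then the remainder lies in $\langle x\rangle^{2(r+m)}\subseteq\langle x\rangle^{2r+m+1}$, the cross term $\sum_a g_a^{(m)}\,\partial\epsilon_m/\partial x_a$ lies in $\langle x\rangle^{3r+2m-1}\subseteq\langle x\rangle^{2r+m+1}$, so the residual order does increase, and the partial composites $\Phi_m$ satisfy $\Phi_{m+1}(x_a)\equiv\Phi_m(x_a)$ modulo $\langle x\rangle^{r+m+1}$, hence converge to a local $k$-algebra endomorphism $\Phi$ congruent to the identity modulo $\langle x\rangle^{2}$, which is therefore an automorphism, with $\Phi(g)=f$. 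Two further simplifications: always absorb against the partials of the fixed $f$ (never against $\jac$ of the intermediate series), so your worry about preserving $\langle x\rangle^{r}\subseteq\jac$ along the way, and the Nakayama step, disappear entirely; and record explicitly that the expansion $f(x+h)\equiv f(x)+\sum_a h_a\,\partial f/\partial x_a$ modulo $\langle h_1,\dotsc,h_n\rangle^2$ holds with integer binomial coefficients, hence in characteristic $p$ --- the usual factorial form of Taylor's formula is not available.
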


\begin{proof}
Follows from the proof of Lemma 10.8 in \cite{Milnor68}.
\end{proof}

\begin{remark}
In \cite{BGM12} it is shown that, in fact, $f$ has finite Milnor number if, and only if, it is finitely determined.
They also show that, if $f\in \langle x\rangle^2$ and $\langle x\rangle^{r+2}\subseteq \langle x\rangle^2\operatorname{jac}(f)$, then $f$ is $(2r-\operatorname{ord}(f)+2)$-determined, where $\operatorname{ord}(f)$ is the maximum integer $o$ such that $f\in \langle x\rangle^o$.

The analogues of these results for germs of real- or complex-valued, analytic or infinitely differentiable functions follow from Theorem 1.2 in \cite{Wall81}.
\end{remark}

\begin{example}
Suppose that $n$ is even if $k$ has characteristic 2, and let 
\begin{equation*}
q=\begin{cases}
x_1^2 + \dotsb + x_n^2 & \text{if $\operatorname{char}(k)\ne 2$}\\
x_1x_2 + \dotsb + x_{n-1}x_n & \text{if $\operatorname{char}(k)=2$}.
\end{cases}
\end{equation*}
Let $f\in k[[x]]$ be a power series.
If $f - q\in \langle x\rangle^3$, there exists an automorphism of $k[[x]]$ as a local $k$-algebra that sends $f$ to $q$. Indeed, $\jac(q) = \langle x\rangle$, so $q$ is 2-determined by Proposition \ref{DeterminacyBound}.
\end{example}

Let $\mathbf C$ be the category whose objects are complete, Noetherian, local $k$-algebras with residue field $k$, and whose morphisms are maps of local $k$-algebras.

\begin{definition}
Let $R$ be a complete local $k$-algebra in $\mathbf C$. 
\begin{enumerate}
\item An \textbf{unfolding} of $f$ over $R$ is a power series $F\in R[[x]]$ that maps to $f\in k[[x]]$ under the quotient map $R\to k$.
\item Let $F, F' \in R[[x]]$ be unfoldings of $f$ over $R$.
A \textbf{morphism} (or \textbf{right-equivalence}) $F\to F'$ is a local $R$-algebra map $\phi : R[[x]]\to R[[x]]$ that lifts the identity of $k[[x]]$ and sends $F$ to $F'$.
\end{enumerate}
\end{definition}

\begin{remark}
Unfoldings of $f$ over $R$ and morphisms between them form a category (in fact, a groupoid) that we denote by $\mathscr D(R)$.
A map $b: R\to R'$ of complete local $k$-algebras in $\mathbf C$ induces an obvious functor functor $b_*: \mathscr D(R)\to \mathscr D(R')$. 
\end{remark}

\begin{definition}
The \textbf{functor of unfoldings} of $f$ is the functor
\begin{equation*}
D: \mathbf C\to (\mathrm{Sets})
\end{equation*}
that sends a complete local $k$-algebra $R\in \mathbf C$ to the set $D(R)$ of isomorphism classes of unfoldings of $f$ over $R$, and acts on morphisms in the obvious way.
\end{definition}

\begin{definition}
Let $R$ be a complete local $k$-algebra in $\mathbf C$. Let $F\in R[[x]]$ be a unfolding of $f$ over $R$. We say that $F$ is \textbf{right-complete} if, for every complete local $k$-algebra $A$ in $\mathbf C$, the map 
\begin{equation*}
\Hom_{\mathbf C}(R,A)\to D(A)
\end{equation*}
that sends $b\mapsto b_*F$ is surjective.
\end{definition}

\begin{proposition}
\label{versal-deform}
Suppose that $f$ has finite Milnor number.
Let $g_1, \dotsc, g_\mu \in k[[x]]$ be power series whose images span the Milnor algebra $k[[x]]/\jac(f)$ as a vector space over $k$.
Let $s= (s_1,\dotsc, s_\mu)$ be a set of $\mu$ indeterminates.
Then
\begin{equation*}
F := f + s_1 g_1 + \dotsb + s_\mu g_\mu \in k[[s,x]]
\end{equation*}
is a right-complete unfolding of $f$ over $k[[s]]$.
\end{proposition}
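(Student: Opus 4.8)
The plan is to unwind the definition of right-completeness and then build the required base homomorphism and right-equivalence by successive approximation along the $\mathfrak m_A$-adic filtration. First I would record that, since every $A$ in $\mathbf C$ is $\mathfrak m_A$-adically complete, a morphism $b\colon k[[s]]\to A$ in $\mathbf C$ amounts precisely to a choice of elements $a_1,\dots,a_\mu\in\mathfrak m_A$, namely the images $b(s_i)$, and that for such $b$ one has $b_*F=f+\sum_i a_i g_i$. Thus the statement to prove becomes: for every unfolding $G\in A[[x]]$ of $f$ there exist $a_1,\dots,a_\mu\in\mathfrak m_A$ and an automorphism $\phi$ of $A[[x]]$ as a local $A$-algebra lifting the identity of $k[[x]]$ such that $\phi(G)=f+\sum_i a_i g_i$.

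I would prove this by constructing inductively, for $N\ge 1$, elements $a_i^{(N)}\in\mathfrak m_A$ and automorphisms $\phi_N$ with
\[
\phi_N(G)-\Bigl(f+\textstyle\sum_i a_i^{(N)}g_i\Bigr)\in\mathfrak m_A^N[[x]],
\]
starting from $\phi_1=\mathrm{id}$ and $a_i^{(1)}=0$, the case $N=1$ holding because $G\equiv f\bmod\mathfrak m_A$. The inductive step is the heart of the argument. Writing $E_N$ for the error above and reducing modulo $\mathfrak m_A^{N+1}$, I view $E_N$ in $(\mathfrak m_A^N/\mathfrak m_A^{N+1})\otimes_k k[[x]]$, whose coefficient space is finite-dimensional over $k$. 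The hypothesis that $g_1,\dots,g_\mu$ span the Milnor algebra is exactly the relation $k[[x]]=\jac(f)+\sum_i k\,g_i$ (this is the $N=2$, or infinitesimal, case of the very statement being proved), so each $k[[x]]$-component of $E_N$ splits into a combination of the $g_i$ and of the partials $\partial f/\partial x_a$. Lifting coefficients back to $\mathfrak m_A^N$ gives
\[
E_N\equiv \sum_i\beta_i g_i+\sum_a\Psi_a\,\frac{\partial f}{\partial x_a}\pmod{\mathfrak m_A^{N+1}[[x]]}
\]
with $\beta_i\in\mathfrak m_A^N$ and $\Psi_a\in\mathfrak m_A^N[[x]]$. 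I would absorb the first sum by updating the parameters, $a_i^{(N+1)}:=a_i^{(N)}+\beta_i$, and the second by the coordinate change $\psi\colon x_a\mapsto x_a-\Psi_a$, setting $\phi_{N+1}:=\psi\circ\phi_N$. A short check shows every cross term produced lies in $\mathfrak m_A^{2N}[[x]]\subseteq\mathfrak m_A^{N+1}[[x]]$, so the new error sits in $\mathfrak m_A^{N+1}[[x]]$.

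Finally I would pass to the limit. Since $a_i^{(N+1)}-a_i^{(N)}\in\mathfrak m_A^N$ and $\psi$ moves each $x_a$ by an element of $\mathfrak m_A^N[[x]]$, the sequences $(a_i^{(N)})$ and $(\phi_N)$ are $\mathfrak m_A$-adically Cauchy and converge, by completeness of $A$ and of $A[[x]]$, to $a_i\in\mathfrak m_A$ and to a local $A$-algebra endomorphism $\phi$ lifting the identity of $k[[x]]$; its Jacobian is congruent to the identity matrix modulo the maximal ideal of $A[[x]]$, so $\phi$ is an automorphism. Comparing modulo each $\mathfrak m_A^M[[x]]$ then forces $\phi(G)-(f+\sum_i a_i g_i)\in\bigcap_M\mathfrak m_A^M[[x]]=0$ by Krull's intersection theorem applied to $A$, which is the desired right-equivalence. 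I note that finite Milnor number enters only through the finiteness of $\mu$ and the spanning hypothesis; convergence here is supplied by completeness rather than by the finite-determinacy bound of Proposition \ref{DeterminacyBound}.

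The step I expect to require the most care is the coordinate change in positive characteristic: the relation $\psi(f)\equiv f-\sum_a\Psi_a\,\partial f/\partial x_a$ cannot be justified by the usual Taylor expansion, which would divide by factorials. Instead I would use the characteristic-free Hadamard-type identity $f(y)-f(x)=\sum_a(y_a-x_a)G_a(x,y)$ with power series $G_a$ satisfying $G_a(x,x)=\partial f/\partial x_a$, and substitute $y_a=x_a-\Psi_a$; the remainder then lands in the ideal generated by the products $\Psi_a\Psi_b$, hence in $\mathfrak m_A^{2N}[[x]]$. The only other delicate point is confirming that the limit $\phi$ is genuinely invertible, which follows from the formal inverse function theorem once its Jacobian is seen to reduce to the identity.
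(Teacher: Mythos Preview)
Your argument is correct and complete. The paper itself does not give a proof but simply writes ``Follows from the method of proof of Corollary 1.17 in \cite{GLS2007}''; what you have written is precisely that method---the standard successive-approximation construction of the classifying map and right-equivalence along the $\mathfrak m_A$-adic filtration---spelled out in full, including the characteristic-free handling of the coordinate change via the Hadamard identity and the invertibility of the limit $\phi$ via its reduction modulo $\mathfrak m_A$.
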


\begin{proof}
Follows from the method of proof of Corollary 1.17 in \cite{GLS2007}.
\end{proof}

\begin{remark}
The analogues of Proposition \ref{versal-deform} for unfoldings of  germs of real- or complex-valued, analytic or infintely differentiable functions are special cases of Theorem 3.4 in \cite{Wall81}.
\end{remark}

\begin{example}
\label{morse-versal}
Suppose that $n$ is even if $k$ has characteristic 2, and let 
\begin{equation*}
q=\begin{cases}
x_1^2 + \dotsb + x_n^2 & \text{if $\operatorname{char}(k)\ne 2$}\\
x_1x_2 + \dotsb + x_{n-1}x_n & \text{if $\operatorname{char}(k)=2$}.
\end{cases}
\end{equation*}
Let $s$ be an indeterminate. Then $q+s\in k[[s,x]]$ is a right-complete unfolding of $q$. Indeed, $\jac(q) = \langle x\rangle$, so the Milnor algebra $k[[x]]/\jac(q)$ is a one-dimensional $k$-vector space generated by the image of $1\in k[[x]]$.
\end{example}

\section{Morse's Lemma with parameters}

Let $k$ be an algebraically closed field of characteristic $p\ge 0$.
Let $V$ be a finite dimensional vector space over $k$.
Let $q\in \Sym^2( V^\vee)$ be a quadratic form on $V$.
Let $h\in (V\otimes V)^\vee$ be the bilinear form associated to $q$, so that
\begin{equation*}
h(u \otimes v) = q(u+v) - q(u) - q(v)
\end{equation*}
for all $u,v\in V$.

\begin{lemma}
\label{classification-of-QFs}
Let $r$ denote the rank of $h$. Then $r$ is even if $p=2$.
Let $x_1,\dotsc,x_n$ be a $k$-linear basis of $V^\vee$. Let 
\begin{equation*}
q_0=\begin{cases}
x_1^2 + \dotsb + x_r^2 & \text{if $p\ne 2$}\\
x_1x_2 + \dotsb + x_{r-1}x_r & \text{if $p=2$}.
\end{cases}
\end{equation*}
If $p\ne 2$, then there exists a $k$-linear automorphism $\phi$ of $V^\vee$ such that $\Sym^2(\phi)$  sends $q_0$ to $q$.
If $p=2$, then there exists a $k$-linear automorphism $\phi$ of $V$ such that $\Sym^2(\phi)$ sends $q_0$ to either $q$ or $q+x_{r+1}^2$. 
\end{lemma}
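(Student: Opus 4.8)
The plan is to read off the normal form of $q$ from its associated bilinear form $h$, handling the two characteristics separately. I would first dispose of the parity claim: evaluating the defining identity on the diagonal gives $h(v\otimes v)=q(2v)-2q(v)=2q(v)$, which vanishes when $p=2$, so $h$ is alternating and hence has even rank; this forces $r$ to be even. In the case $p\neq 2$ the form is recovered from $h$ by $q(v)=\tfrac12 h(v\otimes v)$, so $q$ and $h$ share the rank $r$; a standard diagonalisation (repeatedly completing the square) brings $q$ to $c_1y_1^2+\dots+c_ry_r^2$ with each $c_i\neq 0$, and since $k$ is algebraically closed I may rescale $y_i$ by $\sqrt{c_i}$ to reach $y_1^2+\dots+y_r^2$. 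Reading the resulting change of basis as an automorphism $\phi$ of $V^\vee$ that carries $q_0=x_1^2+\dots+x_r^2$ onto $q$ then settles this case.

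The case $p=2$ is the substantial one, and I would organise it around the radical $W:=\operatorname{rad}(h)$, choosing a complement $V=U\oplus W$ with $\dim U=r$. First I would note that there are no cross terms: for $u\in U$ and $w\in W$ one has $q(u+w)=q(u)+q(w)+h(u\otimes w)=q(u)+q(w)$ because $w$ lies in the radical, so $q=q|_U\oplus q|_W$. On $W$ the bilinear form vanishes, so $q|_W$ is additive and satisfies $q|_W(\lambda w)=\lambda^2q|_W(w)$; since $k$ is perfect, the assignment $w\mapsto q|_W(w)^{1/2}$ is $k$-linear, so $q|_W=\ell^2$ for a linear functional $\ell\in W^\vee$, which is either zero or a single square. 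On $U$ the form $h$ is nondegenerate alternating, and the goal is to show that $q|_U$ is the split form $x_1x_2+\dots+x_{r-1}x_r$.

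The hyperbolic splitting of $q|_U$ is where algebraic closedness does the real work, and I expect it to be the main obstacle. Given a nonzero $u\in U$, the equation $q(u+tu')=q(u)+t^2q(u')+t\,h(u\otimes u')=0$ is quadratic (or linear) in $t$, hence solvable over $k$, producing an isotropic vector $v_1$; replacing a dual partner $v_2'$ by $v_2=v_2'+q(v_2')v_1$ then yields a hyperbolic pair with $q(v_1)=q(v_2)=0$ and $h(v_1\otimes v_2)=1$. Splitting off $\langle v_1,v_2\rangle$ together with its $h$-orthogonal complement and inducting on $\dim U$ gives the split normal form; the only obstruction one might fear here, the Arf invariant, is trivial precisely because the Artin--Schreier map $t\mapsto t^2+t$ is surjective over an algebraically closed field, which is exactly the solvability used above.

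Assembling the two pieces, I obtain $q\cong q_0$ when $\ell=0$, and $q\cong q_0+\ell^2$ when $\ell\neq 0$; in the latter case a change of coordinates among the free radical variables $x_{r+1},\dots,x_n$, which does not disturb $q_0$, identifies $\ell$ with the coordinate $x_{r+1}$, giving the normal form $q_0+x_{r+1}^2$. Recording the inverse of this adapted change of basis as the automorphism $\phi$ then produces the two stated alternatives, with the characteristic-two identity $x_{r+1}^2+x_{r+1}^2=0$ accounting for how the extra square is transferred between $q_0$ and $q$; this is the only place, besides the solvability of the conic, where $p=2$ is genuinely exploited.
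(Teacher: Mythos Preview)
Your argument is correct and reaches the same classification as the paper, but by a genuinely different route. The paper treats only the characteristic-two case in detail and outsources the coarse decomposition to Arf's 1941 theorem: over an arbitrary field one can write $q=\sum_i(a_iu_i^2+b_iu_iv_i+c_iv_i^2)+\sum_j d_jw_j^2$ with all $b_i\neq 0$; algebraic closedness then enters only to rescale each binary piece to $u_iv_i$ and each diagonal piece to $w_j^2$, after which the characteristic-two identity $\sum_j w_j^2=(\sum_j w_j)^2$ collapses the diagonal part to at most one square. You instead build the decomposition by hand: split off the radical $W=\operatorname{rad}(h)$, recognise $q|_W$ as the square of a single linear form via perfectness of $k$, and on the nondegenerate complement manufacture hyperbolic pairs directly by solving the conic $q(u+tu')=0$ and then correcting the dual vector. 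Your approach is more self-contained and makes the role of algebraic closedness explicit at each step (solving quadratics, extracting square roots); the paper's is shorter but leans on an external reference. Your remark about the Arf invariant being killed by surjectivity of the Artin--Schreier map is the conceptual reason the hyperbolic splitting succeeds, though your direct isotropic-vector construction does not actually need to invoke it by name.

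Your closing sentence about ``$x_{r+1}^2+x_{r+1}^2=0$ accounting for how the extra square is transferred between $q_0$ and $q$'' is opaque and does not quite parse as written. What you have actually shown is that some automorphism carries $q$ to $q_0$ or to $q_0+x_{r+1}^2$; passing to the inverse automorphism gives the direction stated in the lemma. The paper's proof is equally terse at this final step, so you are not missing anything the paper supplies.
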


\begin{proof}
We consider the case where $k$ has characteristic 2, which is less standard.
By \cite[Satz 2 on p. 150]{Arf1941}, without using the hypothesis that $k$ is algebraically closed, we may find a basis of $V^\vee$ consisting of vectors
\begin{equation*}
u_1,\dotsc, u_s, v_1,\dotsc, u_s, w_1,\dotsc, 
w_{n-2s}
\end{equation*}
such that 
\begin{equation*}
q = \sum_{i=1}^{s} (a_i u_i^2 + b_i u_i v_i + c_i v_i^2) + \sum_{j=1}^t d_j w_j^2.
\end{equation*}
Here $s$ and $t$ are nonnegative integers satisfying $2s+t\le n$; $a_i, b_i, c_i$ and $d_j$ are elements of $k$ for all $i=1,\dotsc, s$ and $j=1,\dotsc, t$; and $b_i\ne 0$ for all $i=1,\dotsc, s$.
Using the hypothesis on $k$, it is easy to find an automorphism of $V$ that preserves the subspaces
$\langle u_i, v_i \rangle$ and $\langle w_j\rangle$ of $V^\vee$ for all $i$ and $j$, and that sends 
\begin{equation*}
a_i u_i^2 + b_i u_i v_i + c_i v_i^2 \mapsto u_i v_i
\qquad\text{and}\qquad
d_j w_j^2 \mapsto w_j^2
\end{equation*}
for all $i$ and $j$.
This implies the result, since $\sum_{j=1}^t w_j^2 = (\sum_{j=1}^t w_j)^2$. 
\end{proof}

Let $x=(x_1,\dotsc,x_n)$ be a finite set of indeterminates.
Let $f\in k[[x]]$ be a power series. Suppose that $f\in \langle x\rangle^2$ and that the Hessian matrix of $f$ has rank $r$ at the origin. Then $r$ is even if $p=2$.
Let
\begin{equation*}
q=\begin{cases}
x_1^2 + \dotsb + x_r^2 & \text{if $p\ne 2$}\\
x_1x_2 + \dotsb + x_{r-1}x_r & \text{if $p=2$}.
\end{cases}
\end{equation*}

\begin{lemma}
\label{morse-approximation}
If $p\ne 2$, then there exists a local $k$-algebra automorphism $\phi:k[[x]]\to k[[x]]$ such that $\phi(f)\equiv q$ modulo $\langle x\rangle^3$.
If $p= 2$, then there exists a local $k$-algebra  automorphism $\phi:k[[x]]\to k[[x]]$ such that either $\phi(f)\equiv q$ or $\phi(f)\equiv q+x_{r+1}^2$ modulo $\langle x\rangle^3$.
\end{lemma}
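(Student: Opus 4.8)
The plan is to treat this as the ``quadratic part'' of Morse's lemma, where the entire content has already been packaged into the linear algebra of Lemma \ref{classification-of-QFs}. Modulo $\langle x\rangle^3$ only the homogeneous degree-two component of $f$ survives, so the whole problem is to put that component into normal form by a linear change of variables; the only additional observation needed is the elementary fact that a linear automorphism of the coordinates, extended continuously to $k[[x]]$, preserves the $\langle x\rangle$-adic filtration and therefore cannot disturb anything modulo $\langle x\rangle^3$.

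First I would write $f = f_2 + f_{\ge 3}$, where $f_2\in \Sym^2(V^\vee)$ is the homogeneous degree-two part of $f$ (viewing $x_1,\dots,x_n$ as a basis of $V^\vee$) and $f_{\ge 3}\in\langle x\rangle^3$; this decomposition exists because $f\in\langle x\rangle^2$. A direct computation identifies the Hessian matrix $(\partial_a\partial_b f)(0)=(\partial_a\partial_b f_2)$ with the Gram matrix of the symmetric bilinear form $h$ attached to $f_2$, since $h(e_a,e_b)=\partial_a\partial_b f_2$. Hence the rank of $h$ equals $r$, and Lemma \ref{classification-of-QFs} already yields the claim that $r$ is even when $p=2$, so no separate argument for that assertion is needed.

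Next I would apply Lemma \ref{classification-of-QFs} to the quadratic form $f_2$. Since $\Sym^2(\mathrm{GL})$-equivalence is symmetric, this produces a $k$-linear automorphism $\phi_0$ of $V^\vee$ carrying $f_2$ to the standard form $q$ when $p\ne 2$, and to $q$ or $q+x_{r+1}^2$ when $p=2$. Let $\phi$ be the unique continuous extension of $\phi_0$ to a local $k$-algebra automorphism of $k[[x]]$; it preserves each $\langle x\rangle^d$ and acts on each homogeneous component by $\Sym^d(\phi_0)$, so
\begin{equation*}
\phi(f)=\phi(f_2)+\phi(f_{\ge 3})\equiv \phi(f_2)\pmod{\langle x\rangle^3}.
\end{equation*}
In odd characteristic, and in the first of the two characteristic-two alternatives, $\phi(f_2)=q$ exactly, and $\phi$ is the required automorphism.

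The one point that demands real care is the second characteristic-two alternative, where inverting the normal form of Lemma \ref{classification-of-QFs} literally produces $\phi(f_2)=q+\ell^2$ for some linear form $\ell$ rather than the verbatim $q+x_{r+1}^2$. Here I would normalize by a further linear automorphism that fixes the hyperbolic part $q$ and carries $\ell$ to the coordinate $x_{r+1}$; that $q+\ell^2$ and $q+x_{r+1}^2$ are genuinely equivalent is exactly the statement that both have bilinear form of rank $r$ and are nonzero on its radical, i.e.\ they lie in the same non-split class, which is precisely the invariant separating the two normal forms in Lemma \ref{classification-of-QFs}. Composing this bookkeeping change with $\phi$ gives the automorphism sending $f$ to $q+x_{r+1}^2$ modulo $\langle x\rangle^3$. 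I expect this characteristic-two matching of the two inequivalent Arf-type normal forms to be the only subtle step; in every other characteristic the lemma is an immediate corollary of the classification of quadratic forms together with the order-preservation of linear coordinate changes.
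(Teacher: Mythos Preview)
Your proof is correct and follows essentially the same approach as the paper: both extract the degree-two part of $f$, identify its associated bilinear form with the Hessian at the origin, invoke Lemma \ref{classification-of-QFs} to obtain a linear automorphism putting this quadratic form in normal form, and then extend that linear automorphism to $k[[x]]$. Your final paragraph about normalizing $q+\ell^2$ to $q+x_{r+1}^2$ is more cautious than the paper---which simply reads Lemma \ref{classification-of-QFs} as the symmetric statement that $q(f)$ is equivalent to one of the two standard forms---but it is not wrong, just unnecessary once one notes that the proof of Lemma \ref{classification-of-QFs} already produces the normal form with $x_{r+1}$ a coordinate.
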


\begin{proof}
Let $\frak n$ denote the maximal ideal $\langle x\rangle \subset k[[x]]$. Let $q(f)$ denote the image of $f\in \frak n^2$ inside $\frak n^2/\frak n^3 = \Sym^2 (\frak n/\frak n^2)$. Then $q(f)$ is a quadratic form whose associated bilinear form is represented by the Hessian matrix of $f$ at the origin. 
Therefore, by Lemma \ref{classification-of-QFs}, there exists a $k$-linear automorphism $\phi_1$ of $\frak n/\frak n^2$ such that $\Sym^2(\phi_1)$ sends $q(f)$ to either $q$ or $q+x_{r+1}^2$.
We may take $\phi$ to be the linear automorphism of $k[[x]]$ corresponding to $\phi_1$, which characterized by the following property: for all $i$, the self-map of $\frak n^i/\frak n^{i+1} = \Sym^i(\frak n/\frak n^2)$ induced by $\phi$ is equal to $\Sym^i(\phi_1)$.
\end{proof}

\begin{proposition}[Morse's Lemma]
\label{morse-lemma}
If $r=n$, then there exists an automorphism of $k[[x]]$ as a local $k$-algebra that maps $f$ to $q$.
\end{proposition}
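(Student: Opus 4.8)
The plan is to reduce the statement to the two finite-determinacy and classification results already established. First I would observe that the hypothesis $r=n$ means the Hessian of $f$ at the origin has full rank, so the quadratic form $q(f)\in \frak n^2/\frak n^3$ is nondegenerate. I would then invoke Lemma \ref{morse-approximation}: since $r=n$, there is no room for the extra variable $x_{r+1}$, so that lemma produces a local $k$-algebra automorphism $\phi_1$ of $k[[x]]$ with $\phi_1(f)\equiv q \pmod{\langle x\rangle^3}$. In other words, after the linear change of coordinates $\phi_1$, the power series $g:=\phi_1(f)$ agrees with $q$ up to order three.

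The second step is to upgrade this congruence modulo $\langle x \rangle^3$ to an exact equality, using finite determinacy. The key point is that $\jac(q)=\langle x\rangle$: when $p\neq 2$ one has $\partial q/\partial x_i = 2x_i$ and $2$ is invertible, while when $p=2$ the nonzero off-diagonal structure $x_1 x_2 + \dotsb + x_{n-1}x_n$ gives $\partial q/\partial x_{2i-1} = x_{2i}$ and $\partial q/\partial x_{2i} = x_{2i-1}$, so again the partials generate the maximal ideal. Hence $\langle x\rangle^1 \subseteq \jac(q)$, and by Proposition \ref{DeterminacyBound} (with $r=1$) the form $q$ is $2$-determined. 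Since $g-q\in \langle x\rangle^3$, by definition of $2$-determinacy there is an automorphism $\psi$ of $k[[x]]$ as a local $k$-algebra with $\psi(g)=q$.

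Composing, I would set $\phi:=\psi\circ\phi_1$, which is an automorphism of $k[[x]]$ as a local $k$-algebra satisfying $\phi(f)=\psi(\phi_1(f))=\psi(g)=q$, as required. This is exactly the content noted in the Example following Proposition \ref{DeterminacyBound}, applied now to an arbitrary $f$ whose quadratic part is nondegenerate rather than to a series already of the form $q$ plus higher-order terms. I do not expect any serious obstacle here: the only subtlety is the characteristic-two case, where one must be careful that Lemma \ref{morse-approximation} really yields $\phi_1(f)\equiv q$ with no residual $x_{r+1}^2$ term, and this is guaranteed precisely because $r=n$ leaves no spare variable. The argument is a clean two-step composition: reduce to the normal form modulo cubes via linear algebra, then kill the higher-order terms via $2$-determinacy.
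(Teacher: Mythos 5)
Your proof is correct and follows essentially the same route as the paper: apply Lemma \ref{morse-approximation} (noting that $r=n$ rules out the residual $x_{r+1}^2$ term in characteristic two) to bring $f$ to $q$ modulo $\langle x\rangle^3$, then conclude via $2$-determinacy of $q$, which follows from $\jac(q)=\langle x\rangle$ and Proposition \ref{DeterminacyBound}. The explicit verification of $\jac(q)=\langle x\rangle$ in both characteristics is a nice addition the paper leaves implicit.
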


\begin{proof}
Because $r=n$, we have $\langle x\rangle \subseteq \operatorname{jac}(q)$. It follows from Proposition \ref{DeterminacyBound} that $q$ is 2-determined. Hence it suffices to show that there exists an automorphism of $k[[x]]$ as local $k$-algebra that sends $f$ to $q$ modulo $\langle x\rangle^3$. This follows from Lemma \ref{morse-approximation} above.
\end{proof}

\begin{proposition}
\label{morse-parameters}
Let $R$ be a complete local $k$-algebra with residue field $k$.
Let $F\in R[[x]]$ be a power series with residue $f$ in $k[[x]]$.
Then there exist a power series $h\in R[[x_{r+1},\dotsc,x_n]]$ and an automorphism of $R[[x]]$ as a local $R$-algebra that sends $F$ to $q+h$.
\end{proposition}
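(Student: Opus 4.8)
The plan is to recognize the parametrized statement as an instance of the \emph{absolute} right-completeness of the unfolding $q+s$ of $q$, taken over an enlarged base ring. Split the variables as $x'=(x_1,\dotsc,x_r)$ and $x''=(x_{r+1},\dotsc,x_n)$, and set $R':=R[[x_{r+1},\dotsc,x_n]]$. Then $R'$ is again a complete local $k$-algebra with residue field $k$ (and Noetherian once $R$ is, so $R'\in\mathbf C$), with maximal ideal $\mathfrak m_{R'}=\mathfrak m_R R'+\langle x''\rangle$, and $R[[x]]=R'[[x']]$. Any $R'$-algebra automorphism of $R'[[x']]$ fixes $x''$ and $R$, so it is in particular an $R$-algebra automorphism of $R[[x]]$; moreover every element of $R'$ is a power series in $x_{r+1},\dotsc,x_n$ over $R$. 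Thus it suffices to produce an $R'$-algebra automorphism of $R'[[x']]$ carrying $F$ to $q+c$ for some $c\in R'$, and then take $h:=c$.

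First I would arrange that the residue of $F$ over $R'$ is exactly $q$. Applying Lemma \ref{morse-approximation} to $f$ and lifting the resulting $k$-linear substitution coefficientwise to an $R$-algebra automorphism $\Phi_0$ of $R[[x]]$, I may assume the degree-two part of $f$ equals $q$ (if $p=2$, possibly $q+x_{r+1}^2$; the extra square involves only $x''$ and will be harmless). Consequently the $x'$-block of the Hessian of $f$ is nondegenerate of rank $r$. Let $\bar f:=f|_{x''=0}\in k[[x']]$ be the residue of $F$ under $R'\to k$; its Hessian is precisely this $x'$-block, hence nondegenerate of full rank $r$ in the $r$ variables $x'$. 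By Morse's Lemma (Proposition \ref{morse-lemma}, applied in $k[[x']]$) there is $\chi\in\Aut_k(k[[x']])$ with $\chi(\bar f)=q$. Lifting $\chi$ coefficientwise to the $R'$-algebra automorphism $\tilde\chi$ of $R'[[x']]$ that fixes $x''$, and noting that reduction modulo $\mathfrak m_{R'}$ commutes with $\tilde\chi$ because $\chi$ has coefficients in $k$, I find that $\tilde\chi(\Phi_0(F))$ is an unfolding of $q$ over $R'$.

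Now I invoke right-completeness. Since $q$ is nondegenerate in the $r$ variables $x'$, we have $\jac(q)=\langle x'\rangle$ and the Milnor algebra of $q$ is spanned by the image of $1$; hence $q+s$ is a right-complete unfolding of $q$ over $k[[s]]$ (Example \ref{morse-versal}). Applying right-completeness to the object $A:=R'\in\mathbf C$ and to the class of $\tilde\chi(\Phi_0(F))$ in $D(R')$, there is a morphism $b:k[[s]]\to R'$ in $\mathbf C$ with $b_*(q+s)=q+c$, where $c:=b(s)\in\mathfrak m_{R'}$, right-equivalent over $R'$ to $\tilde\chi(\Phi_0(F))$; that is, an $R'$-algebra automorphism $\psi$ of $R'[[x']]$ lifting the identity of $k[[x']]$ with $\psi(\tilde\chi(\Phi_0(F)))=q+c$. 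Setting $h:=c\in R'=R[[x_{r+1},\dotsc,x_n]]$, the composite $\psi\circ\tilde\chi\circ\Phi_0$ is an $R$-algebra automorphism of $R[[x]]$ sending $F$ to $q+h$, as required.

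The only genuine content lies in the reduction carried out in the first two paragraphs: once $F$ has been presented as an unfolding of $q$ over $R'$, the conclusion is a formal consequence of a right-completeness already established. The points needing care are bookkeeping rather than difficulty, namely that $R'\in\mathbf C$ (which is why one wants $R$ Noetherian), that coefficientwise lifts of $k$-automorphisms commute with the residue map so that residues stay under control, and---in characteristic $2$---that the possible extra term $x_{r+1}^2$ produced by Lemma \ref{morse-approximation} lives among the $x''$ variables and is therefore silently absorbed into $h$ rather than obstructing the normalization of $\bar f$, whose Hessian, computed at $x''=0$, is unaffected by it.
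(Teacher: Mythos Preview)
Your proof is correct and follows essentially the same route as the paper's: normalize the quadratic part via Lemma~\ref{morse-approximation}, enlarge the base to $R'=R[[x_{r+1},\dotsc,x_n]]$ so that $F$ becomes an unfolding of a nondegenerate quadratic $q$ in the remaining $r$ variables (after one further application of Morse's Lemma to the residue $\bar f$), and then invoke the right-completeness of $q+s$ from Example~\ref{morse-versal}. Your handling of the characteristic-$2$ term $x_{r+1}^2$ and the Noetherian hypothesis on $R'$ is exactly the bookkeeping the paper's argument leaves implicit.
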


Proposition \ref{morse-parameters} belongs to a class of results that are usually referred to as ``Morse's Lemma with Parameters'' or ``the Splitting Lemma'' in the literature.
Note that the power series $h$ does not involve the variables occurring in $q$.

\begin{proof}
By Lemma \ref{morse-approximation}, there exists a local $k$-algebra automorphism of $k[[x]]$ that maps $f$ to either $q$ or $q + x_{r+1}^2$ modulo $\langle x\rangle^2$. 
After lifting such an automorphism to a local $R$-algebra automorphism of $R[[x]]$, we may assume that $f$ is congruent to either $q$ or $q + x_{r+1}^2$ modulo $\langle x\rangle^2$.

Let $R'$ denote the complete local $k$-algebra $R[[x_{r+1},\dotsc,x_n]]$.
Let $\bar f$ denote the image of $f$ under the map $k[[x_1,\dotsc,x_n]]\to k[[x_1,\dotsc,x_r]]$ that sends $x_i\mapsto x_i$ for $i\le r$ and $x_i\mapsto 0$ for $i>r$.
After replacing $R$ by $R'$ and $f$ by $\bar f$, we may assume that $r=n$ and $f\equiv q$ modulo $\langle x\rangle^3$.

By Morse's Lemma, there exists a local $k$-algebra automorphism of $k[[x_1,\dotsc,x_r]]$ that sends $f$ to $q$. After lifting such an automorphism to a local $R$-algebra automorphism of $R[[x]]$, we may assume that $f=q$. Put differently, we may assume that $F$ is a unfolding of $q$ over $R$.

By Example \ref{morse-versal} and the assumption that $r=n$, the power series
\begin{equation*}
q + t\in k[[t,x]],
\end{equation*}
is a versal unfolding of $q$ over $k[[t]]$.
We may therefore find a map of local $k$-algebras $a : k[[t]]\to R$ and an isomorphism of unfoldings $\phi : q+ a(t) \to F$ of $q$ over $R$. The element $h := a(t)\in R$ and the automorphism of $R[[x]]$ underlying $\phi$ satisfy the conclusions of the proposition.
\end{proof}

\chapter{Singularities of generic sections}

\section{Principal bundles}

\begin{definition}
\label{principal-bundle}
Let $G$ be a group scheme over $X$. A \textbf{principal $G$-bundle} over $X$ is a scheme $P$ over $X$, equipped with a a $G$-action $\rho: G\times_X P \to P$, such that
\begin{enumerate}
\item the map 
\begin{equation*}
(\rho,\mathrm{pr}_2): G\times_X P\to P\times_X P
\end{equation*}
is an isomorphism; and
\item there exists a cover of $X$ by Zariski open subsets, $\{U_i\subseteq X\}_{i\in I}$ such that the second projection $P_{U_i}:= P\times_X U_i \to U_i$ admits a section for all $i$.
\end{enumerate}
A \textbf{trivialization} of a principal $G$-bundle $P$ over $X$ is a $G$-equivariant isomorphism of $X$-schemes $P\xrightarrow\sim G$. A principal $G$-bundle equipped with a trivialization is \textbf{trivial}. 
\end{definition}

\begin{remark}
If the structure morphism $P\to X$ has a section, then pullback of $(\rho,\mathrm{pr_2})$ along this section defines a trivialization of $P$. Conversely, a trivial principal $G$-bundle has a canonical section corresponding to the identity morphism $e:X\to G$. Thus, according to the Definition \ref{principal-bundle}, a principal $G$-bundle admits trivializations locally in the Zariski topology.
\end{remark}

\begin{remark}
While Definition \ref{principal-bundle} is suitable for the purposes of the present work, one often finds in the literature definitions that are less stringent in that they only require local triviality in the \'etale, fppf or fpqc topologies.
\end{remark}

\begin{example}
\label{ses-vb}
Let $X$ be a scheme. Let
\begin{equation*}
\begin{tikzcd}
0 \ar{r} & A \ar["\alpha"]{r}
& B \ar["\beta"]{r} & \ar{r} C \ar{r} & 0
\end{tikzcd}
\end{equation*}
be a short exact sequence of locally free sheaves of finite rank on $X$.
Let $T$ be as scheme over $X$. Let $T\to \mathbf V(C)$ be a morphism over $X$ corresponding to a section $s\in \Gamma(T,C_T)$. Let
\begin{equation*}
T':= T\times_{s,\mathbf V(C),\beta} \mathbf V(B).
\end{equation*}

Let $Z$ be a scheme over $T$. Note the natural bijection
\begin{equation*}
\Hom_T(Z,T') = \{s'\in \Gamma(Z,B_Z)~|~\beta(s')=s_Z\}
\end{equation*}
and define an action
\begin{equation*}
\rho_Z:\Gamma(Z,A_Z)\times \Hom_T(Z,T') \to \Hom_T(Z,T')
\end{equation*}
of the additive group $\Gamma(Z,A_Z)$ by $\rho_Z(t,s')=t+s'$.

The maps $\rho_Z$ correspond (by Yoneda's Lemma) to an action of the $T$-group scheme $\mathbf V(A)_T$ on $T'$ which gives $T'$ the structure of a principal $\mathbf V(A)_T$-bundle over $T$.
\end{example}

\begin{proposition}
\label{vb-surj}
Let $X$ be a scheme. Let $\beta : B\to C$ be a map of locally free sheaves of finite rank on $X$. 
Let $\mathbf V(\beta) : \mathbf V(B)\to \mathbf V(C)$ be the corresponding morphism of vector bundles over $X$.
Suppose that $\beta$ is surjective.
Let $b$ and $c$ denote the respective ranks of $B$ and $C$.
Then $\mathbf V(\beta)$ is surjective and smooth of relative dimension $b-c$.
\end{proposition}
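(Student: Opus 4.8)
The plan is to reinterpret the morphism $\mathbf V(\beta)$ itself as a principal bundle, using Example \ref{ses-vb}, and then to deduce smoothness and surjectivity from the local triviality of principal bundles together with the observation that the projection from a vector bundle is manifestly smooth and surjective.

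First I would verify that the kernel $A := \ker(\beta)$ is locally free of rank $b-c$. Because $\beta$ surjects onto the locally free sheaf $C$, it admits a section Zariski-locally on $X$, so $A$ is locally a direct summand of $B$ and therefore locally free; comparing ranks gives $\operatorname{rank}(A) = b-c$. This produces a short exact sequence $0 \to A \to B \xrightarrow{\beta} C \to 0$ of locally free sheaves of finite rank, which is precisely the data required to invoke Example \ref{ses-vb}.

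Next I would apply Example \ref{ses-vb} to this sequence with $T = \mathbf V(C)$ and with $s$ taken to be the tautological section $h \in \Gamma(\mathbf V(C), C_{\mathbf V(C)})$ of Definition \ref{tautological-section}. Since $h$ corresponds to the identity morphism of $\mathbf V(C)$, the fiber product $T' = \mathbf V(C) \times_{h,\, \mathbf V(C),\, \mathbf V(\beta)} \mathbf V(B)$ is canonically identified with $\mathbf V(B)$, and under this identification the structure morphism $T' \to T$ becomes $\mathbf V(\beta)$ itself. Example \ref{ses-vb} then shows that $\mathbf V(\beta) : \mathbf V(B) \to \mathbf V(C)$ exhibits $\mathbf V(B)$ as a principal $\mathbf V(A)_{\mathbf V(C)}$-bundle over $\mathbf V(C)$.

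Finally I would invoke the local triviality built into Definition \ref{principal-bundle}: there is a cover of $\mathbf V(C)$ by Zariski open subsets $U_i$ over which the bundle trivializes, so that $\mathbf V(\beta)$ restricts over each $U_i$ to a morphism isomorphic, as a scheme over $U_i$, to the second projection $\mathbf V(A)_{\mathbf V(C)}|_{U_i} = \mathbf V(A) \times_X U_i \to U_i$. This last projection is the base change along $U_i \to X$ of the vector bundle projection $\mathbf V(A) \to X$, which by Remark \ref{vb-trivial} is Zariski-locally on $X$ the projection $X \times \A^{b-c} \to X$ and hence smooth and surjective of relative dimension $b-c$; these properties are preserved by base change. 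Since smoothness, surjectivity, and relative dimension can all be checked locally on the target and the $U_i$ cover $\mathbf V(C)$, it follows that $\mathbf V(\beta)$ is smooth and surjective of relative dimension $b-c$. The argument is formal once the principal-bundle reformulation is in hand; the one step demanding genuine care is the identification in the third paragraph of $T'$ with $\mathbf V(B)$ and of $T' \to T$ with $\mathbf V(\beta)$, which hinges on the tautological section corresponding to the identity morphism of $\mathbf V(C)$.
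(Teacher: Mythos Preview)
Your proof is correct and follows essentially the same approach as the paper: both identify $\mathbf V(B)$ as a principal $\mathbf V(A)_{\mathbf V(C)}$-bundle over $\mathbf V(C)$ via Example \ref{ses-vb} and then deduce the conclusion from the fact that $\mathbf V(A)\to X$ is smooth and surjective of relative dimension $b-c$. Your version simply spells out in more detail the local-freeness of $A$, the choice of tautological section when invoking Example \ref{ses-vb}, and the passage from local triviality to smoothness and surjectivity.
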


\begin{proof}
Let $A$ denote the kernel of $\beta$. Then $A$ is a locally free $\O_X$-module of rank $b-c$. 
By Example \ref{ses-vb}, the $\mathbf V(C)$-scheme $\mathbf V(B)$ is naturally a principal $\mathbf V(A)_{\mathbf V(C)}$-bundle. The result follows as $\mathbf V(A)\to X$ is surjective and smooth of relative dimension $b-c$.
\end{proof}

\begin{remark}
If $k$ is an infinite field, any nonempty subset of a positive-dimensional affine space over $k$ contains (infinitely many) $k$-rational points. This observation renders the notion of a general point of a finite dimensional vector space over $k$ meaningful.
\end{remark}

\begin{proposition}[Serre]
Let $k$ be an infinite field.
Let $X$ be a scheme of finite type over $k$.
Let $E$ be a locally free sheaf of rank $e$ on $X$.
Let $\Sigma\subset \mathbf V(E)$ be a locally closed subscheme of pure dimension $d$.
Let $W\subset \Gamma(X,E)$ be a $k$-linear subspace of finite dimension.
Suppose that $W$ generates $E$ as an $\O_X$-module.
Let $s\in W$ be a general section. 
Then every irreducible component of the locally closed subscheme $s^{-1}\Sigma\subseteq X$ has dimension $d-e$.
Furthermore, if $\Sigma$ is smooth and $k$ has characteristic zero, then $s^{-1}\Sigma$ is smooth over $k$.
\label{serre-bertini}
\end{proposition}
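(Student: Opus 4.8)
The plan is to recognize $s^{-1}\Sigma$ as a fibre of one morphism coming from the universal family of all sections in $W$, and then to extract both conclusions from generic flatness and generic smoothness. The input that makes this work is the hypothesis that $W$ generates $E$: evaluation is then a surjection of locally free sheaves $\mathrm{ev}\colon W\otimes_k\O_X\twoheadrightarrow E$, with $W\otimes_k\O_X\cong\O_X^{\oplus\dim_k W}$. By Proposition \ref{vb-surj} the induced map of vector bundles
\begin{equation*}
\mathbf V(\mathrm{ev})\colon P:=\mathbf V(W\otimes_k\O_X)\longrightarrow\mathbf V(E)
\end{equation*}
is smooth and surjective of relative dimension $\dim_k W-e$. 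By Remark \ref{vb-trivial} I identify $P=X\times_k\mathbb W$, where $\mathbb W:=\Spec\Sym(W^\vee)$ is the affine space with $\mathbb W(k)=W$. Unwinding Remark \ref{vb-ump}, the restriction of $\mathbf V(\mathrm{ev})$ to a slice $X\times\{s\}$, for $s\in W=\mathbb W(k)$, is precisely the section morphism $s\colon X\to\mathbf V(E)$. Hence, writing $\mathcal Z:=\mathbf V(\mathrm{ev})^{-1}\Sigma\subseteq P$ and letting $q\colon\mathcal Z\to\mathbb W$ be the projection, there is an identification of $k$-schemes $q^{-1}(s)=s^{-1}\Sigma$ for every $s\in\mathbb W(k)$, and the whole proposition reduces to a statement about the fibres of $q$.

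For the dimension assertion I first note that $\mathcal Z$ is pure of dimension $D:=d+\dim_k W-e$. Indeed, base-changing $\mathbf V(\mathrm{ev})$ along $\Sigma\hookrightarrow\mathbf V(E)$ shows $\mathcal Z\to\Sigma$ is smooth and surjective of relative dimension $\dim_k W-e$; being flat with fibres of pure dimension $\dim_k W-e$ over the pure-dimensional $\Sigma$, the scheme $\mathcal Z$ is pure of dimension $D$. By generic flatness (\cite[IV.6.9.1]{EGA}) there is a dense open $V\subseteq\mathbb W$ with $q^{-1}(V)\to V$ flat. Since $\mathcal Z$ is pure-dimensional and $\mathbb W$ is irreducible, the dimension formula for flat morphisms (\cite[IV.6.1.2]{EGA}) gives, for every point $x\in q^{-1}(V)$, that the fibre of $q$ through $x$ has dimension $D-\dim_k W=d-e$ at $x$; in particular every irreducible component of every nonempty fibre over $V$ has dimension $d-e$. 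As $k$ is infinite, $V(k)\neq\emptyset$, so for $s\in V(k)$ every irreducible component of $s^{-1}\Sigma=q^{-1}(s)$ has dimension $d-e$.

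For the smoothness assertion, suppose $\Sigma$ is smooth and $\charac(k)=0$. Then $\Sigma$ is smooth over $k$, so the composite of smooth morphisms $\mathcal Z\to\Sigma\to\Spec k$ exhibits $\mathcal Z$ as smooth over $k$. Now generic smoothness, which is valid over any field of characteristic zero, applied to $q\colon\mathcal Z\to\mathbb W$ (the source being a disjoint union of smooth irreducible varieties, namely the connected components of $\mathcal Z$), yields a dense open $U'\subseteq\mathbb W$ such that $q^{-1}(U')\to U'$ is smooth. Intersecting with the open $V$ from the previous step and using again that $k$ is infinite, we find $s\in(V\cap U')(k)$; for such $s$ the fibre $s^{-1}\Sigma=q^{-1}(s)$ is smooth over $k(s)=k$, as desired.

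I expect the only real work to be in the first paragraph: checking, via Proposition \ref{vb-surj} and Remarks \ref{vb-ump} and \ref{vb-trivial}, that $\mathbf V(\mathrm{ev})$ is smooth and surjective and that its restriction to each slice $X\times\{s\}$ is exactly the section $s$, so that $s^{-1}\Sigma$ is genuinely the scheme-theoretic fibre $q^{-1}(s)$. Once this universal family is in place, the two conclusions are standard: the first is the flat dimension formula over the locus of flatness, and the second is generic smoothness in characteristic zero, with $k$ being infinite guaranteeing at each stage that the relevant dense open of $\mathbb W$ contains a $k$-point $s$.
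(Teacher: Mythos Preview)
Your proof is correct and follows essentially the same approach as the paper: build the universal family via the surjection $W\otimes_k\O_X\twoheadrightarrow E$ and Proposition~\ref{vb-surj}, identify $s^{-1}\Sigma$ as a fibre of the projection $\mathcal Z\to\mathbb W$, then invoke generic flatness for the dimension claim and generic smoothness in characteristic zero. The paper's argument is terser and disposes of the non-dominant case by a one-line remark, whereas you absorb it into the generic-flatness step; both are fine.
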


\begin{proof}
By assumption, the natural map $\alpha:W\otimes_k \O_X\to E$ is surjective. Let $N$ denote the dimension of $W$ over $k$. Then the morphism $\alpha:W\times_k X\to \mathbf V(E)$ is surjective and smooth of relative dimension $N-e$, see Proposition \ref{vb-surj}. It follows that $\alpha^{-1}\Sigma$ has pure dimension $d+N-e$ and is smooth over $k$ if $\Sigma$ is. Let $s\in W$ be a $k$-rational point viewed as a map $\Spec(k)\to W$. Then $s^{-1}\Sigma$ is the pullback of $\alpha^{-1}\Sigma$ along $s$. The proposition is trivial if the first projection $\alpha^{-1}\Sigma\to W$ is not dominant, so we may assume that it is.
Now the result follows from well known theorems on generic flatness or, in characteristic zero, generic smoothness applied to the morphism $\alpha^{-1}\Sigma\to W$.
\end{proof}

\section{Sheaves of principal parts}

In this section we collect definitions and basic facts about sheaves of principal parts.
The reader is referred to \cite[Chapter IV]{EGA} for proofs.

Let $S$ be a scheme. Let $X$ be a scheme over $S$. In the sequel we will abuse our notation by denoting both the structure sheaf of $S$ and its inverse image under the natural map $X\to S$ by $\O_S$.

\begin{definition}
\label{def-DO}
Let $E$ and $F$ be $\O_X$-modules.
Given an open subset $U\subset X$ and elements $D\in \Hom_{\O_S}(E_U,F_U)$ and $f\in \Gamma(U,\O_X)$, let
$
[D,f]:E_U\to F_U
$
be the $\O_S$-linear map that sends $s\mapsto D(fs)-f D(s)$.

Let $i\ge 1$ be a positive integer.
An $\O_S$-linear \textbf{differential operator} of \textbf{order} $i$ on $X$ is an $\O_S$-linear map $D:E\to F$ with the property that, if $U\subseteq X$ is an open subset and $f\in \Gamma(U,\O_X)$, then $[D,f]:E_U\to F_U$ is an $\O_S$-linear differential operator of order $i-1$ on $U$; by definition, an $\O_S$-linear differential operator of order 0 is an $\O_X$-linear map $E\to F$.
\end{definition}

\begin{example}
\label{connection-DO}
Suppose that $X$ is smooth over $S$. Let $E$ be a quasi-coherent sheaf on $X$. Let $\nabla : E\to \Omega_X\otimes E$ be a connection on $E$. Then $\nabla$ is a differential operator of order 1. To see this, let $U\subseteq X$ be an open subset and $f\in \Gamma(U,\O_X)$ a section. The commutator $[\nabla, f]: E_U\to (\Omega_X\otimes E)_U$ is given by
\begin{equation*}
s\mapsto [\nabla ,f](s) = \nabla(fs) - f \nabla s = df\otimes s,
\end{equation*}
hence is $\O_U$-linear, that is, a differential operator of order 0 on $U$.
\end{example}

\begin{remark}
Let $E$ and $F$ be $\O_X$-modules.
For each nonnegative integer $i$, let $\operatorname{Diff}_{\O_S}^i(E,F)$ denote the set of $\O_S$-linear differential operators with source $E$, target $F$, and order $i$.
Then
\begin{equation*}
\Hom_X(E,F) = \operatorname{Diff}_{\O_S}^0(E,F)
\subseteq 
\dotsb \subseteq
\operatorname{Diff}_{\O_S}^{i}(E,F) \subseteq
\operatorname{Diff}_{\O_S}^{i+1}(E,F) \subseteq
\dotsb 
\end{equation*}
as subsets of $\Hom_{\O_S}(E,F)$.
For example, an $\O_X$-linear map $\alpha : E\to F$ is a first-order differential operator because $[\alpha,f]=0$ for all $f\in \O_X$.
\end{remark}

\begin{proposition}
Let $E$, $F$ and $G$ be $\O_X$-modules.
Let $D:E\to F$ and $D': F\to G$ be $\O_S$-linear differential operators of orders $i$ and $j$.
Then the composition $D'\circ D : E\to G$ is a differential operator of order $i+j$.
\end{proposition}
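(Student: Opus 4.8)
The plan is to argue by induction on the sum $i+j$ of the two orders. The base case $i+j=0$ forces $i=j=0$, so both $D$ and $D'$ are $\O_X$-linear; their composition is then $\O_X$-linear as well, hence a differential operator of order $0$ by definition.

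For the inductive step I would first record the key commutator identity: for every open subset $U\subseteq X$ and every section $f\in\Gamma(U,\O_X)$, one has
\[
[D'\circ D,\, f] \;=\; D'\circ [D,f] \;+\; [D',f]\circ D
\]
as $\O_S$-linear maps $E_U\to G_U$. This follows from a one-line computation: applying $D'$ to the identity $D(fs)=[D,f](s)+f\,D(s)$, then expanding $D'(f\,t)=[D',f](t)+f\,D'(t)$, and finally cancelling the two terms in which $f$ sits outside both operators.

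Next I would apply the induction hypothesis to each summand on the right-hand side. Since $[D,f]$ has order $i-1$ and $D'$ has order $j$, the composite $D'\circ[D,f]$ has order $(i-1)+j=i+j-1$; symmetrically $[D',f]\circ D$ has order $i+(j-1)=i+j-1$. When $i=0$ the operator $D$ is $\O_X$-linear, so $[D,f]=0$ and only the second term survives; when $j=0$ the first term vanishes in the same way, so these boundary cases are covered. Invoking the elementary fact that a sum of two $\O_S$-linear differential operators of order $i+j-1$ is again of order $i+j-1$ — itself a trivial induction using $[D_1+D_2,f]=[D_1,f]+[D_2,f]$ together with the increasing filtration $\operatorname{Diff}^{i+j-1}\subseteq\operatorname{Diff}^{i+j}$ noted earlier — I conclude that $[D'\circ D,f]$ is a differential operator of order $i+j-1$. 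As $U$ and $f$ were arbitrary, $D'\circ D$ is of order $i+j$ by definition, which closes the induction.

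The argument is entirely formal, so there is no genuine obstacle; the only points requiring mild care are the verification of the commutator identity (a direct but easy expansion) and the bookkeeping of the two boundary cases $i=0$ and $j=0$, handled above. The auxiliary closure of each $\operatorname{Diff}^{r}$ under addition, needed to combine the two summands, is immediate and may either be folded into the same induction or stated as a preliminary remark.
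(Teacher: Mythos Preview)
Your argument is correct and is the standard one: induct on $i+j$ using the Leibniz-type identity $[D'\circ D,f]=D'\circ[D,f]+[D',f]\circ D$. The paper does not actually supply a proof of this proposition; at the start of the section it states that all proofs are deferred to \cite[Chapter IV]{EGA}, so your write-up is in fact more complete than what appears in the text.
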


\begin{proposition}
\label{univ-DO}
Let $E$ be a sheaf of $\O_X$-modules.
There exist a sheaf of $\O_X$-modules $\PP_X^i E$ and an $\O_S$-linear differential operator of order $i$ 
\begin{equation*}
d_E^i : E\to \PP_X^i(E)
\end{equation*}
with the following \textbf{universal property}:
Let $F$ be a sheaf of $\O_X$-modules and $D: E\to F$ is an $\O_S$-linear differential operator of order $i$. Then there exists a unique $\O_X$-linear map $\overline D : \PP_X^i E\to F$ such that $\overline D \circ d_E^i  = D$.
\end{proposition}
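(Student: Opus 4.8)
The plan is to construct $\PP^i_X(E)$ as a sheaf of principal parts attached to the diagonal, and then to verify the stated universal property by a local computation that matches the order filtration on differential operators with the $\mathcal I$-adic filtration of the diagonal ideal.

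First I would set up the geometry. Let $\Delta\colon X\to X\times_S X$ be the diagonal, let $\mathcal I\subseteq \O_{X\times_S X}$ be its ideal, and let $X^{(i)}$ be the $i$-th infinitesimal neighborhood of the diagonal, cut out by $\mathcal I^{i+1}$; its underlying space is that of $X$. Writing $p,q\colon X^{(i)}\to X$ for the two projections (each a homeomorphism on spaces, with $p$ affine), I would set $\PP^i_X(E):=p_*\,q^*E$, an $\O_X$-module via $p^{\#}\colon \O_X\to p_*\O_{X^{(i)}}$; since $p$ is affine this is automatically quasi-coherent, so no separate existence question for the object arises. The operator $d^i_E\colon E\to \PP^i_X(E)$ sends a local section $e$ of $E$ to $1\otimes e$, the image of $e$ under the canonical map $q^{-1}E\to q^*E$ composed with the tautological identification $p_*q^*E=q^*E$ of sheaves of abelian groups on the common underlying space.

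Next I would check that $d^i_E$ has order $i$. This is local, so I would pass to an affine $\Spec A$ over $S=\Spec R$ with $E$ given by an $A$-module $M$; there $\PP^i_X(E)$ becomes $P^i_{A/R}\otimes_A M$, with $P^i_{A/R}=(A\otimes_R A)/I^{i+1}$, the tensor product formed along the right factor $a\mapsto 1\otimes a$ and the $\O_X$-structure supplied by the left factor $a\mapsto a\otimes 1$. A direct computation gives $[d^i_E,f]=\bigl(\text{mult.\ by }1\otimes f-f\otimes 1\bigr)\circ d^i_E$, so forming $i+1$ successive commutators $[\,\cdots[[d^i_E,f_0],f_1]\cdots,f_i]$ amounts to multiplication by the product $\prod_j(1\otimes f_j-f_j\otimes 1)\in I^{i+1}=0$. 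Hence $d^i_E$ has order $\le i$.

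Finally I would establish the universal property. Since $P^i_{A/R}\otimes_A M$ is generated as a left $A$-module by the image of $d^i_E$, any factorization $\overline D$ of an order-$i$ operator $D$ is forced to satisfy $\overline D(\overline{a\otimes b}\otimes m)=a\,D(bm)$, which gives uniqueness at once. For existence I would show that the $R$-bilinear assignment $(a,b)\mapsto\bigl(m\mapsto a\,D(bm)\bigr)$ descends to $P^i_{A/R}\otimes_A M$: it visibly respects the tensor relation, so the only point is that it annihilate $I^{i+1}$, and this is exactly where the hypothesis on $D$ enters. Multiplying $a\otimes b$ by a generator $f\otimes 1-1\otimes f$ of $I$ replaces $m\mapsto a\,D(bm)$ by $m\mapsto -a\,[D,f](bm)$, that is, it lowers the order of $D$ by one; iterating $i+1$ times produces an $(i+1)$-fold iterated commutator of $D$, which vanishes because $D$ has order $i$. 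I expect this well-definedness of $\overline D$ — the precise dictionary between the order of an operator and the power of $\mathcal I$ — to be the main obstacle; the remaining points are routine. The operators $\overline D$ constructed over affine opens then agree on overlaps by the uniqueness noted above, so they glue to the global $\O_X$-linear map required, completing the proof.
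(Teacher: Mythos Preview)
Your construction via the infinitesimal neighborhood of the diagonal is exactly the standard one from EGA~IV, which is precisely what the paper invokes: this section opens by stating that all proofs are deferred to \cite[Chapter IV]{EGA}, and no argument for Proposition~\ref{univ-DO} is given in the paper itself. Your sketch is correct and matches the cited source; the only point that would need a bit more care in a full write-up is the bookkeeping showing that $\Phi(\delta(f_0)\cdots\delta(f_i))$ equals the iterated commutator $[\cdots[[D,f_0],f_1],\cdots,f_i]$ (your phrasing ``replaces $D$ by $-[D,f]$'' is slightly loose about the left $A$-factor, but the conclusion is right).
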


\begin{definition}
For each nonnegative integer $i$ and each sheaf of $\O_X$-modules $E$, we fix a choice of universal $\O_S$-linear differential operator $d_E^i : E\to \PP_X^i(E)$ of order $i$, as in Proposition \ref{univ-DO},
and call the $\O_X$-module $\PP_X^i E$ the \textbf{sheaf of principal parts} of \textbf{order} $i$ of $E$. We often write $\PP_X^i$ instead of $\PP_X^i(\O_X)$ and $d_X^i$ instead of $d_{\O_X}^i$. 
\end{definition}

Let $E$ and $F$ be a sheaves of $\O_X$-modules.
Let $i$ be a nonnegative integer.

\begin{remark}
\label{surj-pp}
Let $j$ be a nonnegative integers such that $j\le i$.
Differential operators of order $j$ are also differential operators of order $i$, so there exists a unique $\O_X$-linear map 
\begin{equation*}
\epsilon_{i,j} : \PP_X^i E \to \PP_X^j E
\end{equation*}
such that $\epsilon_{i,j}\circ d_X^i  = d_X^j$, by Proposition \ref{univ-DO}.
Setting $j=0$, we obtain a surjection
\begin{equation*}
\epsilon_{i,0} : \PP_X^i E\twoheadrightarrow \PP_X^0 E = E
\end{equation*}
that sends $d_E^i s \mapsto s$ for all $s\in E$.
\end{remark}

\begin{remark}
\label{higher-univ-DO}
Let $D: E\to F$ be an $\O_S$-linear differential operator of order $i$.
Then $D$ induces an $\O_X$-linear map $\overline D : \PP_X^i E\to F$, by Proposition \ref{univ-DO}.
More generally, for each nonnegative integer $m$, there exists a unique $\O_X$-linear map $\overline D : \PP_X^{i+m} E\to \PP_X^m F$ such that the diagram
\begin{equation*}
\begin{tikzcd}
E \ar[r,"D"] \ar[d,"d_E^{i+m}"] &
F \ar[d,"d_F^m"] \\
\PP_X^{i+m} E \ar[r,"\overline D"] &
\PP_X^m F
\end{tikzcd}
\end{equation*}
commutes, by Proposition \ref{univ-DO} applied to $d_F^m\circ D$, a differential operator of order $i+m$.
\end{remark}

\begin{proposition}
Let $P \subseteq \PP_X^i E$ be the $\O_X$-submodule generated by the image of the universal differential operator $d_E^i : E\to \PP_X^i E$.
Then $P =\PP_X^i E$.
\end{proposition}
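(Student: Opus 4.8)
The plan is to show that the quotient $\PP_X^i E / P$ vanishes, relying entirely on the \emph{uniqueness} half of the universal property recorded in Proposition \ref{univ-DO}. By construction $P$ is the $\O_X$-submodule generated by the image of $d_E^i$, so in particular that image is contained in $P$, which is the kernel of the canonical projection $\pi : \PP_X^i E \to \PP_X^i E / P$. Consequently the composite $\pi \circ d_E^i : E \to \PP_X^i E / P$ is the zero map.

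Next I would apply Proposition \ref{univ-DO} with target $F := \PP_X^i E / P$ and with the zero differential operator $D := 0 : E \to F$, which is visibly of order $i$. The proposition furnishes a \emph{unique} $\O_X$-linear map $\overline D : \PP_X^i E \to F$ satisfying $\overline D \circ d_E^i = 0$. Two $\O_X$-linear maps meet this requirement: the projection $\pi$, since $\pi\circ d_E^i = 0$ as noted above, and the zero map, trivially. Uniqueness therefore forces $\pi = 0$, and hence $F = \PP_X^i E / P = 0$, which is exactly the assertion $P = \PP_X^i E$.

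There is no serious obstacle here: the entire content is the passage from ``$\PP_X^i E$ is generated by $d_E^i$-image and nothing more'' to the honest equality, and this is precisely what the uniqueness clause encodes. The one point deserving a line of justification is the vanishing of $\pi \circ d_E^i$, which is immediate from the defining property of $P$ as the submodule generated by $\im(d_E^i)$. (As a sanity check, in \'etale coordinates one may verify directly that $\PP_X^i E$ is spanned over $\O_X$ by the principal parts $d_E^i(s)$ of sections $s\in E$, recovering the same conclusion, but the universal-property argument avoids any such local computation.)
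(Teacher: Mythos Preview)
Your argument is correct. The paper itself omits the proof of this proposition entirely: the surrounding section collects basic facts about sheaves of principal parts and refers the reader to \cite{EGA} for proofs, so there is no in-paper proof to compare against. Your use of the uniqueness clause in Proposition \ref{univ-DO} is the standard and cleanest way to establish the claim.
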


\begin{proposition}
\label{jet-map}
Suppose that $S$ is the spectrum of a field $k$.
Let $x\in X$ be a $k$-rational point.
Then the universal $k$-linear differential operator $d_E^i : E\to \PP_X^i E$ induces a $k$-linear isomorphism
\begin{equation*}
E/\frak m_x^{i+1} E\xrightarrow\sim \PP_X^i E \otimes_{\O_X} \kappa(x)
\end{equation*}
that sends $\bar s\mapsto d_E^i s \otimes 1$ for all local sections $s\in E$ defined around $x$.
\end{proposition}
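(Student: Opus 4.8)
The plan is to build the asserted map $\psi\colon E/\frak m_x^{i+1}E\to \PP_X^iE\otimes_{\O_X}\kappa(x)$ out of the universal differential operator and then to treat its surjectivity and injectivity separately; surjectivity is immediate, and the injectivity is where the real content lies. First I would record the elementary lemma that any $\O_S$-linear differential operator $D\colon E\to F$ of order $i$ sends $\frak m_x^{i+1}E$ into $\frak m_x F$. This follows by induction on $i$: for $i=0$ the operator is $\O_X$-linear, so $D(\frak m_x E)=\frak m_x D(E)\subseteq \frak m_x F$; for the inductive step, write a local section of $\frak m_x^{i+1}E=\frak m_x\cdot \frak m_x^{i}E$ near $x$ as a finite sum $\sum_j f_j t_j$ with $f_j\in\frak m_x$ and $t_j\in\frak m_x^{i}E$, and expand $D(f_jt_j)=f_jD(t_j)+[D,f_j](t_j)$, noting $f_jD(t_j)\in\frak m_x F$ and that $[D,f_j]$ has order $i-1$, so $[D,f_j](t_j)\in\frak m_x F$ by the inductive hypothesis.

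Applying this lemma to the composite $j_x:=\bigl(\PP_X^iE\twoheadrightarrow \PP_X^iE\otimes\kappa(x)\bigr)\circ d_E^i$, a differential operator of order $i$ valued in a $\kappa(x)$-vector space on which $\frak m_x$ acts as zero, shows that $j_x$ annihilates $\frak m_x^{i+1}E$. Hence $j_x$ factors through a $k$-linear map $\psi$ with $\psi(\bar s)=d_E^is\otimes 1$, as required by the statement. Surjectivity of $\psi$ is then immediate: by the preceding proposition $\PP_X^iE$ is generated as an $\O_X$-module by the image of $d_E^i$, so the fiber $\PP_X^iE\otimes\kappa(x)$ is spanned over $k$ by the vectors $d_E^is\otimes 1=\psi(\bar s)$.

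For injectivity I would argue by duality. Over the field $k$, a linear map is injective as soon as its transpose is surjective, so it suffices to show that every functional $\lambda\in(E/\frak m_x^{i+1}E)^\vee$ has the form $\lambda=\mu\circ\psi$ for some $\mu\in(\PP_X^iE\otimes\kappa(x))^\vee$. Given $\lambda$, set $D:=\lambda\circ q\colon E\to\kappa(x)$, where $q\colon E\to E/\frak m_x^{i+1}E$ is the (order-zero) quotient and $\kappa(x)$ is the skyscraper residue field. A short commutator computation shows $D$ is a differential operator of order $i$: for $f\in\O_X$ one finds
\[
[D,f](s)=\lambda\bigl((f-f(x))\,\bar s\bigr),
\]
so $[D,f]$ is again of the form $\lambda'\circ q$ with $\lambda'$ obtained by precomposing $\lambda$ with multiplication by $f-f(x)\in\frak m_x$. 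Iterating against $f_0,\dots,f_i$ produces multiplication by $\prod_{l}(f_l-f_l(x))\in\frak m_x^{i+1}$ inside $E/\frak m_x^{i+1}E$, which vanishes; thus the recursive order condition bottoms out and $D$ has order $\le i$. By the universal property (Proposition \ref{univ-DO}), $D$ factors uniquely as $\overline D\circ d_E^i$ with $\overline D\colon\PP_X^iE\to\kappa(x)$ an $\O_X$-linear map, equivalently a functional $\mu\in(\PP_X^iE\otimes\kappa(x))^\vee$; unwinding the definitions gives $\mu\circ\psi=\lambda$. Hence $\psi^\vee$ is surjective and $\psi$ is injective, completing the proof.

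The main obstacle is precisely this injectivity step. The nontrivial idea is to realize linear functionals on the jet space $E/\frak m_x^{i+1}E$ as genuine order-$i$ differential operators valued in the residue field, so that the universal property of $\PP_X^iE$ can be brought to bear; the supporting $\frak m_x^{i+1}$-lemma and the iterated commutator identity are routine, but the orders must be tracked carefully. I would emphasize that this argument uses nothing beyond the universal property and the generation statement, so it applies to an arbitrary $\O_X$-module $E$ and an arbitrary scheme $X$ over $k$.
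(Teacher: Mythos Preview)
Your proof is correct. The paper itself does not prove this proposition: the section on sheaves of principal parts opens with the sentence ``The reader is referred to \cite[Chapter IV]{EGA} for proofs,'' and Proposition~\ref{jet-map} is stated there without argument. So there is no in-paper proof to compare against.

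That said, a brief remark on approach. The standard construction in EGA builds $\PP_X^i$ from the diagonal, as $(\mathrm{pr}_1)_*\bigl(\O_{X\times_S X}/I_\Delta^{i+1}\bigr)$; with that description, pulling back along the map $\Spec\kappa(x)\to X$ in the second factor literally produces $\O_{X,x}/\frak m_x^{i+1}$ (and tensoring with $E$ gives the statement), so the isomorphism is essentially built into the definition. Your argument is different in that it never touches the diagonal model: it uses only the universal property of $\PP_X^iE$ and the generation statement, proving injectivity by dualizing and realizing functionals on $E/\frak m_x^{i+1}E$ as order-$i$ differential operators into the skyscraper $\kappa(x)$. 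This is a clean and genuinely self-contained route. The iterated-commutator identity $[[\dotsb[D,f_0],\dotsc],f_i](s)=\lambda\bigl(\prod_l(f_l-f_l(x))\cdot\bar s\bigr)$ is exactly what is needed, and your observation that the argument requires no hypotheses on $E$ or $X$ beyond $x$ being $k$-rational is correct (the surjectivity of $\psi^\vee$ forces injectivity of $\psi$ for arbitrary $k$-vector spaces, finite-dimensional or not).
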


Motivated by Proposition \ref{jet-map}, we make the following definition:

\begin{definition}
Let $x :T\to X$ be a morphism of schemes.
An \textbf{order}-$i$ \textbf{jet of a section} of $E$ at $x$ is an element of $\Gamma(T,x^* \PP_X^i E)$. 
\end{definition}

Propositions \ref{algebra-pp}-\ref{smooth-pp} below describe fundamental properties of the $\O_X$-modules $\PP_X^i E$.
When $S$ is the spectrum of a field $k$, each property reduces to a familiar fact about the $k$-vector spaces $E/\frak m_x^{i+1}E$ after tensor product with the residue field of a $k$-rational point $x\in X$. Proposition \ref{algebra-pp}, for example, reflects the fact that $\O_X/\frak m_x^{i+1}$ is not merely a $k$-vector space, but naturally a $k$-algebra.

\begin{proposition}
\label{algebra-pp}
There exists a unique $\O_X$-linear map
$
\PP_X^i \otimes_{\O_X} \PP_X^i \to \PP_X^i
$
giving $\PP_X^i$ the structure of an $\O_X$-algebra such that \begin{equation*}
d_X^i f \cdot d_X^i g = d_X^i(fg)
\end{equation*}
for all local sections $f,g\in \O_X$ defined over a common open subset of $X$.
\end{proposition}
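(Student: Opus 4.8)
The plan is to construct the multiplication directly from the universal property of $\PP_X^i$ (Proposition \ref{univ-DO}), using crucially the preceding proposition that $\PP_X^i$ is generated as an $\O_X$-module by the image of $d_X^i$.

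First, \textbf{uniqueness}. Since $\PP_X^i$ is generated over $\O_X$ by the sections $d_X^i f$, the tensor product $\PP_X^i\otimes_{\O_X}\PP_X^i$ is generated over $\O_X$ by the elementary tensors $d_X^i f\otimes d_X^i g$. An $\O_X$-linear map out of it is therefore determined by its values on these generators, which the stated identity forces to be $d_X^i(fg)$. This gives uniqueness at once, and it also shows that commutativity, associativity and the unit axiom will follow automatically once existence is known: each reduces on generators to the corresponding identity for products in $\O_X$, the identity element being $d_X^i 1$, and the algebra module structure $a\mapsto a\cdot d_X^i1$ recovers the given $\O_X$-module structure.

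For \textbf{existence}, I would proceed as follows. Fix a local section $f\in\O_X$. The assignment $g\mapsto d_X^i(fg)$ is the composition of multiplication by $f$ (an $\O_X$-linear map, hence an operator of order $0$) with $d_X^i$ (order $i$), so it is an $\O_S$-linear differential operator of order $i$; by Proposition \ref{univ-DO} it factors through a unique $\O_X$-linear endomorphism $\lambda_f$ of $\PP_X^i$ with $\lambda_f(d_X^i g)=d_X^i(fg)$. Because endomorphisms of $\PP_X^i$ are determined by their action on the generators $d_X^i g$, the assignment $\Lambda\colon f\mapsto\lambda_f$ is $\O_S$-linear, and I claim it is a differential operator of order $i$ from $\O_X$ to the $\O_X$-module $\sheafEnd_{\O_X}(\PP_X^i)$. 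Granting this, the universal property yields a unique $\O_X$-linear map $\overline\Lambda\colon\PP_X^i\to\sheafEnd_{\O_X}(\PP_X^i)$ with $\overline\Lambda(d_X^i f)=\lambda_f$, and I would define the product by $\alpha\cdot\beta:=\overline\Lambda(\alpha)(\beta)$. This pairing is $\O_X$-bilinear, hence descends to the desired map $\PP_X^i\otimes_{\O_X}\PP_X^i\to\PP_X^i$, and it satisfies $d_X^i f\cdot d_X^i g=\lambda_f(d_X^i g)=d_X^i(fg)$.

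The crux, and the step I expect to be the main obstacle, is verifying that $\Lambda$ has order $i$. Here I would compute iterated commutators against local functions: evaluating on a generator $d_X^i g$, a direct calculation gives $[\Lambda,a](f)(d_X^i g)=[d_X^i,a](fg)$, and inductively the $k$-fold commutator $[\dotsb[[\Lambda,a_1],a_2],\dotsb,a_k](f)$ sends $d_X^i g$ to $[\dotsb[[d_X^i,a_1],a_2],\dotsb,a_k](fg)$. Taking $k=i+1$ and using that $d_X^i$ has order $i$, so that its $(i+1)$-fold commutators vanish, the $(i+1)$-fold commutators of $\Lambda$ vanish on all generators $d_X^i g$, hence on all of $\PP_X^i$; by the recursive definition of order this is exactly the assertion that $\Lambda$ has order $i$. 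This is the one place where the order-$i$ property of $d_X^i$ is genuinely exploited. As an alternative that avoids the commutator bookkeeping, one could instead invoke the concrete description of $\PP_X^i$ as the pushforward of $\O_{X\times_S X}/\mathcal I^{i+1}$, with $\mathcal I$ the ideal of the diagonal, from \cite{EGA}: its ring structure furnishes the algebra structure directly, and the identity $d_X^i f\cdot d_X^i g=d_X^i(fg)$ becomes the transparent computation $(1\otimes f)(1\otimes g)=1\otimes fg$. I prefer the universal-property construction above, as it stays entirely within the framework developed in this section.
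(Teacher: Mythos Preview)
Your proof is correct. The paper does not give its own argument for this proposition; the introductory paragraph of the section refers the reader to \cite[Chapter IV]{EGA} for proofs of all the basic facts collected there. In EGA the algebra structure comes for free from the construction of $\PP_X^i$ as (the pushforward of) the structure sheaf of the $i$th infinitesimal neighborhood of the diagonal, $\O_{X\times_S X}/\mathcal I^{i+1}$, where the identity $d_X^i f\cdot d_X^i g=d_X^i(fg)$ is the tautology $(1\otimes f)(1\otimes g)=1\otimes fg$---exactly the alternative you mention at the end. Your route via iterated commutators of $\Lambda$ is a genuine alternative that stays entirely within the universal-property framework set up in this section; it trades the concrete model for a clean bookkeeping argument, and the key computation $[\Lambda,a](f)(d_X^i g)=[d_X^i,a](fg)$ and its iterates are correct. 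One small point worth making explicit: the universal property in Proposition~\ref{univ-DO} is stated for arbitrary $\O_X$-module targets, so applying it with $F=\sheafEnd_{\O_X}(\PP_X^i)$ is legitimate even before $\PP_X^i$ is known to be locally free.
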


\begin{remark}
The $\O_X$-algebra structure of Proposition \ref{algebra-pp} is commutative with unit $d_X^i(1)\in \Gamma(X,\PP_X^i)$.
By abuse of notation, we identify $f\in \O_X$ with the product $f\cdot d_X^i(1)\in \PP_X^i$.
\end{remark}

\begin{proposition}
The natural $\O_X$-linear map $\epsilon_{i,0} : \PP_X^i \to \PP_X^0 = \O_X$ is a map of $\O_X$-algebras.
Let $I$ denote its kernel.
Then $I$ is an ideal sheaf generated over $\PP_X^i$ by sections of the form $d_X^i f - f$ with $f\in \O_X$, and $I^{i+1}=0$.
\end{proposition}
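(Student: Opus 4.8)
The three assertions will all be reduced to computations with the generators $d_X^i f$ of $\PP_X^i$ as an $\O_X$-module. Throughout I would write $\omega_f := d_X^i f - f$ for the section of $\PP_X^i$ attached to a local section $f\in \O_X$, recalling that $f$ is identified with $f\cdot d_X^i(1)$ and keeping the $\O_X$-module action (multiplication by $f\cdot d_X^i(1)$) carefully distinct from $d_X^i f$. The two facts that drive everything are that $\PP_X^i$ is generated as an $\O_X$-module by the image of $d_X^i$ (established just above) and that $\epsilon_{i,0}(d_X^i f) = f$ by Remark \ref{surj-pp}; note in particular $\epsilon_{i,0}(\omega_f)=0$, so each $\omega_f$ lies in $I$.

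To prove that $\epsilon_{i,0}$ is a map of $\O_X$-algebras, I would observe that both $(u,v)\mapsto \epsilon_{i,0}(uv)$ and $(u,v)\mapsto \epsilon_{i,0}(u)\,\epsilon_{i,0}(v)$ are $\O_X$-bilinear, the former because the multiplication on $\PP_X^i$ is $\O_X$-bilinear (Proposition \ref{algebra-pp}) and $\epsilon_{i,0}$ is $\O_X$-linear. Since $\PP_X^i$ is generated as an $\O_X$-module by the sections $d_X^i f$, it suffices to check agreement on pairs of such generators, where Proposition \ref{algebra-pp} gives $\epsilon_{i,0}(d_X^i f\cdot d_X^i g) = \epsilon_{i,0}(d_X^i(fg)) = fg = \epsilon_{i,0}(d_X^i f)\,\epsilon_{i,0}(d_X^i g)$; together with $\epsilon_{i,0}(d_X^i(1)) = 1$, this establishes the claim.

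For the description of the kernel, I would write an arbitrary local section as $u = \sum_j a_j\, d_X^i f_j$ with $a_j\in \O_X$, using the generation statement above. Applying $\epsilon_{i,0}$ gives $\epsilon_{i,0}(u) = \sum_j a_j f_j$, and subtracting yields $u - \epsilon_{i,0}(u)\cdot d_X^i(1) = \sum_j a_j\,\omega_{f_j}$. Thus if $u\in I$, i.e.\ $\epsilon_{i,0}(u)=0$, then $u = \sum_j a_j\,\omega_{f_j}$ is an $\O_X$-linear, hence a fortiori $\PP_X^i$-linear, combination of the sections $\omega_f = d_X^i f - f$, so these generate $I$.

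The substantive point is $I^{i+1}=0$, and this is where I expect the real content to lie. Since $I$ is generated as an $\O_X$-module by the $\omega_f$ and the multiplication is commutative and $\O_X$-bilinear, $I^{i+1}$ is generated by products $\omega_{f_0}\cdots\omega_{f_i}$, so it suffices to show that each such product vanishes. The key computation is that the commutators of the order-$i$ operator $d_X^i$ (see Definition \ref{def-DO} and Proposition \ref{univ-DO}) are realized as multiplication by the $\omega$'s. A direct calculation using $d_X^i(fg)=d_X^i f\cdot d_X^i g$ gives, for all local $f,g$,
\begin{equation*}
[d_X^i, f](g) = d_X^i(fg) - f\cdot d_X^i(g) = (d_X^i f - f)\cdot d_X^i(g) = \omega_f\cdot d_X^i(g),
\end{equation*}
and then, by induction on $k$,
\begin{equation*}
[\,[\cdots[d_X^i, f_0], f_1],\dotsc, f_k\,](g) = \omega_{f_0}\omega_{f_1}\cdots\omega_{f_k}\cdot d_X^i(g).
\end{equation*}
Because $d_X^i$ has order $i$, the $(i+1)$-fold commutator on the left vanishes identically when $k=i$; evaluating the resulting identity $\omega_{f_0}\cdots\omega_{f_i}\cdot d_X^i(g)=0$ at $g=1$, where $d_X^i(1)$ is the unit of $\PP_X^i$, yields $\omega_{f_0}\cdots\omega_{f_i}=0$. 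The only delicate bookkeeping throughout the commutator induction is to keep the module action of $\O_X$ separate from $d_X^i$; once the displayed identity is in hand, the vanishing of $I^{i+1}$ is immediate.
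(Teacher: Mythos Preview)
Your proof is correct. The paper itself does not prove this proposition; it is one of the ``basic facts about sheaves of principal parts'' for which the reader is referred to EGA~IV. There, $\PP_X^i$ is constructed as $\O_{X\times_S X}/\mathcal I^{i+1}$ with $\mathcal I$ the ideal of the diagonal, and under this identification $d_X^i f - f$ corresponds to the class of $1\otimes f - f\otimes 1$; the generation of $I$ and the nilpotence $I^{i+1}=0$ are then essentially built into the construction. Your argument is genuinely different in that it works purely from the universal property stated in Proposition~\ref{univ-DO} and the algebra structure of Proposition~\ref{algebra-pp}, without ever invoking the diagonal description. The key observation---that the iterated commutator $[\,[\cdots[d_X^i,f_0],\dotsc],f_k]$ acts as multiplication by $\omega_{f_0}\cdots\omega_{f_k}$, so that the order-$i$ bound on $d_X^i$ forces $(i{+}1)$-fold products of the $\omega_f$ to vanish---is a clean way to extract $I^{i+1}=0$ directly from the definition of differential operators. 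This is arguably more in the spirit of the paper's presentation, which takes the universal property as primary.
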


\begin{proposition}
There exists a unique $\O_X$-linear map $\PP_X^i \otimes_{\O_X}\PP_X^i E\to \PP_X^i E$ giving $\PP_X^iE$ the structure of an $\PP_X^i$-module such that
\begin{equation*}
d_X^i f\cdot d_E^i s = d_E^i (fs)
\end{equation*}
for all local sections $f\in \O_X$ and $s\in E$ defined over a common open subset of $X$.
\end{proposition}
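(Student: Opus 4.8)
The plan is to establish uniqueness and existence separately, throughout using that $\PP_X^i$ and $\PP_X^i E$ are generated as $\O_X$-modules by the images of the universal operators $d_X^i$ and $d_E^i$ (the generation proposition preceding Proposition~\ref{jet-map}, applied to $\O_X$ and to $E$). Uniqueness is immediate: the pure tensors $d_X^i f\otimes d_E^i s$ generate $\PP_X^i\otimes_{\O_X}\PP_X^i E$ over $\O_X$, so any $\O_X$-linear map out of it is pinned down by its prescribed values $d_E^i(fs)$ on them.

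For existence I would first build, for each local section $f\in\O_X$, an $\O_X$-linear endomorphism $\theta_f$ of $\PP_X^i E$ realizing multiplication by $d_X^i f$. Multiplication by $f$ is an $\O_X$-linear map $m_f: E\to E$, so $d_E^i\circ m_f: E\to \PP_X^i E$ is an $\O_S$-linear differential operator of order $i$; Proposition~\ref{univ-DO} then yields a unique $\O_X$-linear $\theta_f:\PP_X^i E\to\PP_X^i E$ with $\theta_f\circ d_E^i=d_E^i\circ m_f$, that is, $\theta_f(d_E^i s)=d_E^i(fs)$. The assignment $\Theta: f\mapsto\theta_f$ is an $\O_S$-linear map $\O_X\to\sheafEnd_{\O_X}(\PP_X^i E)$, and the goal is to factor it through $d_X^i$: if $\Theta$ is a differential operator of order $i$, then Proposition~\ref{univ-DO} (with source $\O_X$) produces a unique $\O_X$-linear $\lambda:\PP_X^i\to\sheafEnd_{\O_X}(\PP_X^i E)$ with $\lambda\circ d_X^i=\Theta$, and its adjoint $\O_X$-linear map $\PP_X^i\otimes_{\O_X}\PP_X^i E\to\PP_X^i E$, $u\otimes\xi\mapsto\lambda(u)(\xi)$, sends $d_X^i f\otimes d_E^i s$ to $d_E^i(fs)$ as required.

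The main obstacle is precisely showing that $\Theta$ is a differential operator of order $i$; note that $\Theta$ is \emph{not} $\O_X$-linear, since $\theta_{hf}(d_E^i s)=d_E^i(hfs)\ne h\,d_E^i(fs)$ in general. To control it I would test against the $\O_X$-linear evaluation maps $\mathrm{ev}_s:\sheafEnd_{\O_X}(\PP_X^i E)\to\PP_X^i E$, $\phi\mapsto\phi(d_E^i s)$. Because each $\mathrm{ev}_s$ is $\O_X$-linear it commutes with the formation of the commutators $[-,h]$ from Definition~\ref{def-DO}, and $\mathrm{ev}_s\circ\Theta=d_E^i\circ m_s$ is a differential operator of order $i$ (here $m_s:\O_X\to E$ is multiplication by the section $s$). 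Hence for any local sections $h_0,\dots,h_i$ the $(i{+}1)$-fold iterated commutator satisfies $\mathrm{ev}_s\big([\,\cdots[\Theta,h_0],\dots,h_i\,]\big)=[\,\cdots[d_E^i\circ m_s,h_0],\dots,h_i\,]=0$. As $s$ ranges over local sections the maps $\mathrm{ev}_s$ are jointly injective — their common kernel vanishes because the $d_E^i s$ generate $\PP_X^i E$ over $\O_X$ — so the $(i{+}1)$-fold commutator of $\Theta$ itself vanishes, which is exactly the statement that $\Theta$ has order $\le i$.

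Finally I would check that $\lambda$ makes $\PP_X^i E$ a genuine module, i.e. that $\lambda$ is a homomorphism of $\O_X$-algebras. Unitality is clear: $\lambda(d_X^i 1)=\theta_1$ agrees with $\mathrm{id}$ on the generators $d_E^i s$, hence equals it, and $d_X^i 1$ is the unit of Proposition~\ref{algebra-pp}. For multiplicativity it suffices, by $\O_X$-bilinearity of both sides and the fact that the $d_X^i f$ generate $\PP_X^i$ over $\O_X$, to verify $\lambda(d_X^i f\cdot d_X^i g)=\lambda(d_X^i f)\circ\lambda(d_X^i g)$ on algebra generators; using $d_X^i f\cdot d_X^i g=d_X^i(fg)$ from Proposition~\ref{algebra-pp}, both sides send $d_E^i s$ to $d_E^i(fgs)$ and are $\O_X$-linear, so $\theta_{fg}=\theta_f\circ\theta_g$ and we are done. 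I expect the only nontrivial point to be the order estimate for $\Theta$; everything else is forced by the generation statements together with the universal property.
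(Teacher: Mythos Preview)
Your argument is correct. The uniqueness step is immediate, the construction of each $\theta_f$ via Proposition~\ref{univ-DO} is clean, and the key point---that $\Theta:f\mapsto\theta_f$ is itself a differential operator of order at most $i$---is handled well: postcomposing with the $\O_X$-linear evaluation maps $\mathrm{ev}_s$ reduces the order estimate to that of $d_E^i\circ m_s$, and the joint injectivity of the $\mathrm{ev}_s$ follows from the generation of $\PP_X^iE$ by the $d_E^i s$. The verification that $\lambda$ is an $\O_X$-algebra homomorphism is straightforward as you describe.

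The paper does not supply its own proof of this proposition; the entire block of results on sheaves of principal parts is stated without argument, the reader being referred to \cite[Chapter IV]{EGA}. In the EGA treatment the structure arises differently: one constructs $\PP_X^i$ concretely as $(p_1)_*\big(\O_{X\times_S X}/\mathcal I_\Delta^{\,i+1}\big)$, which carries two $\O_X$-module structures (via $p_1$ and $p_2$), and $\PP_X^iE$ is defined as $\PP_X^i\otimes_{p_2^{\#},\O_X}E$, so the $\PP_X^i$-module structure is built in from the start rather than derived from the universal property. Your approach is genuinely different in that it works purely from the axiomatic characterization in Proposition~\ref{univ-DO}, never invoking the diagonal construction; this is arguably more in keeping with how the paper has chosen to present the material, and it isolates exactly which ingredient (the order bound on $\Theta$) carries the content. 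The EGA route buys you the module structure for free but at the cost of a specific model; your route shows the module structure is forced by the universal property alone.
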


\begin{proposition}
\label{pp-module}
There exists a unique isomorphism of $\PP_X^i$-modules 
\begin{equation*}
\PP_X^i \otimes_{d_X^i, \O_X} E \xrightarrow\sim \PP_X^i E
\end{equation*}
that sends $\alpha\otimes s \mapsto \alpha \cdot d_E^i s$ for all local sections $\alpha \in \PP_X^i$ and $s\in E$ defined over a common open subset of $X$. 
\end{proposition}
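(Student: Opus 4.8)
The plan is to construct the stated map together with an explicit inverse, arguing entirely through the universal property of principal parts so that neither smoothness of $X$ nor local freeness of $E$ is required. First I would produce the forward map. The assignment $(\alpha,s)\mapsto \alpha\cdot d_E^i s$ is biadditive, and it is balanced over $\O_X$ for the right structure on $\PP_X^i$ given by $d_X^i$: using the $\PP_X^i$-module structure on $\PP_X^i E$ constructed in the preceding proposition, one computes $(\alpha\cdot d_X^i f)\cdot d_E^i s=\alpha\cdot(d_X^i f\cdot d_E^i s)=\alpha\cdot d_E^i(fs)$, which is exactly the relation identifying $\alpha\cdot d_X^i f\otimes s$ with $\alpha\otimes fs$. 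Hence the assignment descends to a map $\Phi:\PP_X^i\otimes_{d_X^i,\O_X}E\to \PP_X^i E$ with $\Phi(\alpha\otimes s)=\alpha\cdot d_E^i s$, and $\Phi$ is $\PP_X^i$-linear since the assignment is linear in the first slot. This formula determines $\Phi$ on simple tensors, giving uniqueness; and since its image contains $d_E^i(E)$, which generates $\PP_X^i E$ as an $\O_X$-module, the map $\Phi$ is surjective.

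To build the inverse I would invoke Proposition \ref{univ-DO}. Consider the $\O_S$-linear map $\psi:E\to \PP_X^i\otimes_{d_X^i,\O_X}E$, $s\mapsto 1\otimes s$, where the target carries the $\O_X$-module structure coming from left multiplication by $\O_X\subseteq\PP_X^i$ under the identification $f=f\cdot d_X^i(1)$. The key computation is $[\psi,f](s)=1\otimes fs-f\cdot(1\otimes s)=(d_X^i f-f)\cdot(1\otimes s)$, and more generally the identity $[\mu_b\circ\psi,f]=\mu_{b\cdot(d_X^i f-f)}\circ\psi$ for $b\in\PP_X^i$, where $\mu_b$ denotes multiplication by $b$ in the $\PP_X^i$-module. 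Iterating, the $(i+1)$-fold commutator of $\psi$ against sections $f_0,\dots,f_i$ equals $\mu_{\prod_j(d_X^i f_j-f_j)}\circ\psi$; since each factor lies in the ideal $I=\ker(\epsilon_{i,0})$ and $I^{i+1}=0$, this vanishes. Thus $\psi$ is a differential operator of order $i$, and the universal property furnishes an $\O_X$-linear map $\overline\psi:\PP_X^i E\to \PP_X^i\otimes_{d_X^i,\O_X}E$ with $\overline\psi\circ d_E^i=\psi$, that is, $\overline\psi(d_E^i s)=1\otimes s$.

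It remains to check that $\overline\psi$ is $\PP_X^i$-linear and inverse to $\Phi$. Because $\PP_X^i E$ is generated as an $\O_X$-module by $d_E^i(E)$ and $\PP_X^i$ is generated as an $\O_X$-module by $d_X^i(\O_X)$, it suffices to verify that $\overline\psi$ commutes with multiplication by each $d_X^i g$, and this may be tested on the generators $d_E^i s$: indeed $\overline\psi(d_X^i g\cdot d_E^i s)=\overline\psi(d_E^i(gs))=1\otimes gs=d_X^i g\otimes s=d_X^i g\cdot(1\otimes s)$. Granting $\PP_X^i$-linearity, one has $\overline\psi\circ\Phi(\alpha\otimes s)=\alpha\cdot\overline\psi(d_E^i s)=\alpha\otimes s$ on simple tensors, so $\overline\psi\circ\Phi=\mathrm{id}$, while $\Phi\circ\overline\psi(d_E^i s)=\Phi(1\otimes s)=d_E^i s$ shows that the $\PP_X^i$-linear map $\Phi\circ\overline\psi$ agrees with the identity on the $\O_X$-generating set $d_E^i(E)$, whence $\Phi\circ\overline\psi=\mathrm{id}$. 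Therefore $\Phi$ is the desired isomorphism. I expect the main obstacle to be the commutator computation of the second paragraph, where one must carefully distinguish the two $\O_X$-module structures on $\PP_X^i$ and exploit $I^{i+1}=0$; everything else is bookkeeping with the generation statements already established.
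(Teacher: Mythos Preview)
Your argument is correct. The paper itself does not supply a proof of this proposition: the entire section on principal parts begins with the disclaimer that proofs are deferred to \cite[Chapter IV]{EGA}. In EGA the sheaf $\PP_X^i$ is \emph{constructed} as $(\O_X\otimes_{\O_S}\O_X)/I^{i+1}$ with its two $\O_X$-structures, and $\PP_X^i E$ is \emph{defined} to be $\PP_X^i\otimes_{d_X^i,\O_X}E$, so from that standpoint the isomorphism is a tautology.

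Your proof instead takes the universal property (Proposition~\ref{univ-DO}) as the starting point, which matches the paper's axiomatic presentation. The forward map is forced by the formula, and the clever part is recognising that $s\mapsto 1\otimes s$ is a differential operator of order $i$ via the commutator identity $[\mu_b\circ\psi,f]=\mu_{b(d_X^i f-f)}\circ\psi$ together with $I^{i+1}=0$. Your verification that $\overline\psi$ is $\PP_X^i$-linear is also sound: the difference map $\xi\mapsto\overline\psi(d_X^i g\cdot\xi)-d_X^i g\cdot\overline\psi(\xi)$ is $\O_X$-linear (using commutativity of $\PP_X^i$) and vanishes on the $\O_X$-generating set $d_E^i(E)$, hence vanishes identically; since $\PP_X^i$ is $\O_X$-generated by $d_X^i(\O_X)$, full $\PP_X^i$-linearity follows. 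All of the auxiliary facts you invoke (generation of $\PP_X^i E$ by $d_E^i(E)$, the module law $d_X^i f\cdot d_E^i s=d_E^i(fs)$, and $I^{i+1}=0$) are recorded in the paper just before this proposition, so the argument is self-contained within the paper's framework.
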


\begin{proposition}
\label{pp-hom}
There exists an unique $\PP_X^i$-linear map
\begin{equation}
\PP_X^i \sheafHom_{\O_X}(E,F) \to \sheafHom_{\PP_X^i}(\PP_X^i E, \PP_X^i F)
\label{hom-pp}
\end{equation}
such that
\begin{equation*}
(d_{\sheafHom(E,F)}^i \alpha) ( d_E^i s )  = d_F^i (\alpha s)
\end{equation*}
for all local sections $\alpha \in \sheafHom(E,F)$ and $s\in E$ defined over a common open subset of $X$.
\end{proposition}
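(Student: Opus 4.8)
The plan is to obtain $\Phi$ by adjunction from a pairing, and to build that pairing by applying the principal-parts functor to the evaluation map $\sheafHom_{\O_X}(E,F)\otimes_{\O_X}E\to F$. First I would invoke the tensor--hom adjunction for $\PP_X^i$-modules: giving a $\PP_X^i$-linear map
\[ \Phi : \PP_X^i \sheafHom_{\O_X}(E,F) \to \sheafHom_{\PP_X^i}(\PP_X^i E, \PP_X^i F) \]
is the same as giving a $\PP_X^i$-linear pairing
\[ \mu : \PP_X^i \sheafHom_{\O_X}(E,F) \otimes_{\PP_X^i} \PP_X^i E \to \PP_X^i F . \]
It therefore suffices to construct $\mu$ subject to the requirement that $\mu\bigl((d_{\sheafHom(E,F)}^i \alpha)\otimes (d_E^i s)\bigr) = d_F^i(\alpha s)$; its adjoint $\Phi$ will then automatically satisfy the identity in the statement.

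To construct $\mu$, I would use Proposition \ref{pp-module}, which identifies $\PP_X^i M \cong \PP_X^i \otimes_{d_X^i, \O_X} M$ naturally in the $\O_X$-module $M$, where $\PP_X^i$ is regarded as an $\O_X$-algebra through the ring homomorphism $d_X^i : \O_X\to \PP_X^i$ of Proposition \ref{algebra-pp}. Feeding this into the standard base-change isomorphism $B\otimes_A(M\otimes_A N)\cong (B\otimes_A M)\otimes_B(B\otimes_A N)$ along $d_X^i$ yields a canonical $\PP_X^i$-linear isomorphism
\[ \PP_X^i \sheafHom_{\O_X}(E,F)\otimes_{\PP_X^i}\PP_X^i E \xrightarrow{\sim} \PP_X^i\bigl( \sheafHom_{\O_X}(E,F)\otimes_{\O_X} E \bigr) \]
sending $(d_{\sheafHom(E,F)}^i\alpha)\otimes(d_E^i s)$ to $d^i(\alpha\otimes s)$. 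The evaluation map $\operatorname{ev}:\sheafHom_{\O_X}(E,F)\otimes_{\O_X}E\to F$, $\alpha\otimes s\mapsto \alpha s$, is $\O_X$-linear; applying the functor $M\mapsto \PP_X^i\otimes_{d_X^i,\O_X}M\cong \PP_X^i M$ to it (equivalently, Remark \ref{higher-univ-DO} applied to $\operatorname{ev}$, a differential operator of order $0$, with the parameter $m$ set to $i$) produces a $\PP_X^i$-linear map $\overline{\operatorname{ev}}:\PP_X^i(\sheafHom_{\O_X}(E,F)\otimes_{\O_X}E)\to\PP_X^i F$ with $\overline{\operatorname{ev}}(d^i(\alpha\otimes s))=d_F^i(\alpha s)$. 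Taking $\mu$ to be $\overline{\operatorname{ev}}$ composed with the displayed isomorphism, I get $\mu\bigl((d^i\alpha)\otimes(d^i s)\bigr)=d_F^i(\alpha s)$, as required.

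For uniqueness, I would use that $\PP_X^i\sheafHom_{\O_X}(E,F)$ is generated as a $\PP_X^i$-module by the sections $d_{\sheafHom(E,F)}^i\alpha$, and $\PP_X^i E$ by the sections $d_E^i s$, by the fact established above that $\PP_X^i M$ is generated over $\O_X$ by the image of $d_M^i$. Hence any $\PP_X^i$-linear map $\Phi$ is determined by the values $\Phi(d_{\sheafHom(E,F)}^i\alpha)(d_E^i s)$, and the stated identity pins these down to be $d_F^i(\alpha s)$, forcing uniqueness. The step I expect to demand the most care is the base-change identification in the second paragraph: since $\PP_X^i$ carries two distinct $\O_X$-module structures (the left one and the algebra structure through $d_X^i$), one must check that Proposition \ref{pp-module} and the base-change isomorphism are applied consistently with respect to the $d_X^i$-structure, and verify that the composite isomorphism really sends the distinguished generators $(d^i\alpha)\otimes(d^i s)$ to $d^i(\alpha\otimes s)$. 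Once this bookkeeping is settled, the remaining verifications are purely formal.
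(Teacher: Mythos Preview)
Your argument is correct. The paper itself does not supply a proof of this proposition: it appears in a section of background facts on sheaves of principal parts, prefaced by ``The reader is referred to \cite[Chapter IV]{EGA} for proofs.'' So there is no paper-proof to compare against, only the EGA reference.

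Your route---reducing via tensor--hom adjunction to a pairing, then using the identification $\PP_X^i M \cong \PP_X^i \otimes_{d_X^i,\O_X} M$ of Proposition~\ref{pp-module} together with the base-change isomorphism $B\otimes_A(M\otimes_A N)\cong(B\otimes_A M)\otimes_B(B\otimes_A N)$ to identify $\PP_X^i\sheafHom(E,F)\otimes_{\PP_X^i}\PP_X^i E$ with $\PP_X^i(\sheafHom(E,F)\otimes_{\O_X}E)$, and finally applying the functor to the evaluation map---is clean and exactly how one extracts this statement from the structural isomorphism of Proposition~\ref{pp-module}. Your caveat about keeping the two $\O_X$-structures on $\PP_X^i$ straight is the right thing to flag; once you fix the $d_X^i$-structure throughout, the generator-tracking $(d^i\alpha)\otimes(d^i s)\mapsto d^i(\alpha\otimes s)$ is immediate from $1\otimes\alpha\otimes s$ on both sides. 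The uniqueness argument via generation of $\PP_X^i M$ by $d_M^i(M)$ is also sound and is exactly the proposition stated just before Proposition~\ref{algebra-pp}.
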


\begin{remark}
By Proposition \ref{pp-hom}, the construction $E\mapsto \PP_X^i E$ extends naturally to a functor from the category of $\O_X$-modules to that of $\PP_X^i$-modules.
\end{remark}

\begin{proposition}
Suppose that $i\ge 1$.
There exists a unique $\O_X$-linear map $\delta_i : \Sym^i(\Omega_X)\otimes E \to \PP_X^i E$ such that 
\begin{equation*}
\delta_i(df_1\dotsb df_i \otimes s) = 
(d_X^1 f_1-f_1)\dotsb
(d_X^1 f_i-f_i)\cdot d_E^i s
\end{equation*}
for all local sections $f_1,\dotsc,f_i\in \O_X$ and $s\in E$ defined over a common open subset of $X$.
The sequence of $\O_X$-modules
\begin{equation}
\begin{tikzcd}
0 \ar[r] & 
\Sym^i(\Omega_X)\otimes_{\O_X} E \ar[r,"\delta_i"] &
\PP_X^i E \ar[r,"\epsilon_{i,i-1}"] &
\PP_X^{i-1} E \ar[r] &
0
\end{tikzcd}
\label{pp-exact-seq}
\end{equation}
is exact if $i=1$ and right-exact in general.
\end{proposition}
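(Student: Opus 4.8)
The plan is to construct $\delta_i$ by hand, check that the displayed formula is compatible with the relations defining $\Sym^i(\Omega_X)\otimes E$, and then deduce the exactness statement by reducing to the case $E=\O_X$ and tensoring. Throughout I write $\xi_f := d_X^i f - f$ for the order-one symbol of a local section $f\in\O_X$, so that, by the description of the ideal $I:=\ker(\epsilon_{i,0})$ obtained above, the $\xi_f$ generate $I$ and satisfy $I^{i+1}=0$.

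First I would dispose of uniqueness. Since $\Omega_X$ is generated as an $\O_X$-module by the exact differentials $df$, the module $\Sym^i(\Omega_X)\otimes_{\O_X}E$ is generated by the sections $df_1\cdots df_i\otimes s$, so the prescribed formula determines $\delta_i$ as soon as it is known to define an $\O_X$-linear map. For existence I would set $\gamma_i(df_1\cdots df_i):=\xi_{f_1}\cdots\xi_{f_i}\in I^i$ and then define $\delta_i$ as $\gamma_i\otimes\mathrm{id}_E$ followed by the module action $\alpha\otimes s\mapsto \alpha\cdot d_E^i s$ of Proposition \ref{pp-module}. The one computation needed is the identity $\xi_{fg}=\xi_f\xi_g+f\xi_g+g\xi_f$, which follows from the algebra rule $d_X^i f\cdot d_X^i g=d_X^i(fg)$ of Proposition \ref{algebra-pp}.

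The substance of the existence claim is that $\gamma_i$ respects the relations in $\Sym^i(\Omega_X)$. Symmetry is immediate from commutativity of $\PP_X^i$. For the Leibniz relation I would insert the identity above into a product of $i$ factors: the purely quadratic term $\xi_f\xi_g$ then occurs inside a product of $i+1$ elements of $I$, which vanishes because $I^{i+1}=0$, leaving $\xi_{f_1}\cdots\xi_{fg}\cdots\xi_{f_i}=f\,(\xi_{f_1}\cdots\xi_g\cdots\xi_{f_i})+g\,(\xi_{f_1}\cdots\xi_f\cdots\xi_{f_i})$. This is precisely the image under $\gamma_i$ of $d(fg)=f\,dg+g\,df$, and the same manipulation yields $\O_X$-linearity in each slot; hence $\gamma_i$ factors through $\Sym^i(\Omega_X)$ and $\delta_i$ is well defined.

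For the sequence I would first treat $E=\O_X$, where $\delta_i=\gamma_i$ has image exactly $I^i$, since $I$ is generated by the $\xi_f$ and hence $I^i$ by the products $\xi_{f_1}\cdots\xi_{f_i}$. Surjectivity of $\epsilon_{i,i-1}$ holds because $\PP_X^{i-1}$ is generated by the image of $d_X^{i-1}=\epsilon_{i,i-1}\circ d_X^i$ (Remark \ref{surj-pp}). Writing $I'=\ker(\epsilon_{i-1,0})$ and using $\epsilon_{i,i-1}(\xi_f)=\xi'_f\in I'$ together with $(I')^{i}=0$ gives $\epsilon_{i,i-1}(I^i)=0$, i.e. $\im\delta_i\subseteq\ker\epsilon_{i,i-1}$. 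The reverse inclusion $\ker\epsilon_{i,i-1}\subseteq I^i$ is the main obstacle, and I expect it to be the only step genuinely requiring input beyond the established algebra: here I would pass to the local description, where on an affine $X=\Spec A$ over $S=\Spec R$ one has $\PP_X^i=(A\otimes_R A)/\mathcal J^{i+1}$ for $\mathcal J$ the ideal of the diagonal, so that $\epsilon_{i,i-1}$ is reduction modulo $\mathcal J^i$, its kernel is $\mathcal J^i/\mathcal J^{i+1}=I^i$, and $\gamma_i$ is the classical surjection $\Sym^i(\Omega_X)\twoheadrightarrow \mathcal J^i/\mathcal J^{i+1}$. This gives right-exactness for $E=\O_X$. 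To pass to arbitrary $E$, I would tensor the resulting exact sequence $0\to I^i\to\PP_X^i\xrightarrow{\epsilon_{i,i-1}}\PP_X^{i-1}\to0$ by $E$ over $\O_X$, using $\PP_X^iE=\PP_X^i\otimes_{d_X^i,\O_X}E$ (Proposition \ref{pp-module}); the only delicacy is that on $I^i$ the structural $\O_X$-action and the one through $d_X^i$ agree, because $(d_X^i f-f)\cdot\alpha=\xi_f\alpha\in I^{i+1}=0$ for $\alpha\in I^i$. Right-exactness of $-\otimes_{\O_X}E$, precomposed with the surjection $\Sym^i(\Omega_X)\otimes E\twoheadrightarrow I^i\otimes E$, then yields the right-exactness of (\ref{pp-exact-seq}). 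Finally, for $i=1$ the map $\gamma_1\colon\Omega_X\to\PP_X^1$ is injective because the sequence $0\to\Omega_X\to\PP_X^1\xrightarrow{\epsilon_{1,0}}\O_X\to0$ is split by the unit $f\mapsto f\cdot d_X^1(1)$; being split exact it stays exact after $\otimes_{\O_X}E$, giving full exactness in that case.
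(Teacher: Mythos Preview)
The paper does not prove this proposition; it is one of the foundational facts about principal parts for which the reader is referred to \cite[Chapter IV]{EGA}. Your argument is essentially the standard one and is correct in substance, so there is nothing to compare against. A couple of small remarks on presentation:

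First, you correctly read the displayed formula with $d_X^i f_\ell - f_\ell$ rather than the paper's $d_X^1 f_\ell - f_\ell$; the product must take place in $\PP_X^i$, and the later computation in the paper (e.g.\ the definition $\epsilon_a := d_X^2 x_a - x_a$) confirms this reading.

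Second, your final sentence for the case $i=1$ is slightly circular as written. Knowing that $\epsilon_{1,0}$ is split by $f\mapsto f\cdot d_X^1(1)$ only gives $\PP_X^1 \cong \O_X \oplus I$; it does not by itself show that $\gamma_1:\Omega_X\to I$ is injective. What actually gives injectivity is the diagonal description you already invoked one paragraph earlier: for $i=1$ the map $\gamma_1$ is the canonical identification $\Omega_X \xrightarrow{\sim} \mathcal J/\mathcal J^2$. Once you have that, the splitting argument does the job of preserving exactness under $-\otimes_{\O_X} E$. So the content is there; just reorder the justification so that injectivity of $\gamma_1$ comes from the diagonal picture, and the splitting is used only for stability under tensoring.
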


\begin{proposition}
\label{smooth-pp}
Suppose that $X$ is smooth over $S$. Then
\begin{enumerate}
\item the sequence (\ref{pp-exact-seq}) is exact if $i \ge 1$;
\item the sheaf $\PP_X^i$ is a locally free $\O_X$-module of finite rank; and
\item the map (\ref{hom-pp}) is bijective for all $\O_X$-modules $F$.
\end{enumerate}
\end{proposition}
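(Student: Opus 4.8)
The plan is to reduce everything to the local, étale-coordinate picture and to the case of the trivial bundle $E=\O_X$, where all three assertions become classical computations on the polynomial ring. Since $X$ is smooth over $S$, I would first cover $X$ by Zariski opens $U$ on which there exist étale coordinates $x_1,\dotsc,x_n$, that is, on which the map $(x_1,\dotsc,x_n):U\to \A^n_S$ is étale. All three statements are local and, by Proposition \ref{pp-module} and the right-exactness of \eqref{pp-exact-seq} already known, compatible with restriction to $U$, so it suffices to prove each claim over such a $U$.

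The heart of the argument is the étale-base-change invariance of principal parts. Because $\PP_X^i$ is built from the differential operators $\operatorname{Diff}_{\O_S}^i$, and because forming these operators commutes with étale (indeed flat) base change, the diagram coordinates $(x_1,\dotsc,x_n)$ induce an isomorphism $\PP_U^i(E)\cong (\PP_{\A^n_S}^i\,\O_{\A^n})\otimes_{\O_{\A^n}}E_U$ compatible with the universal operator $d_E^i$. I would therefore reduce to $X=\A^n_S$ with $E=\O_X$. Here one computes $\PP_{\A^n}^i$ explicitly: writing $\xi_a:=d_X^1 x_a-x_a$ for the generators of the ideal $I=\ker(\epsilon_{i,0})$, the relation $I^{i+1}=0$ together with the algebra structure of Proposition \ref{algebra-pp} shows that $\PP_{\A^n}^i$ is the free $\O_{\A^n}$-module with basis the monomials $\xi^\alpha=\xi_1^{\alpha_1}\cdots\xi_n^{\alpha_n}$ of total degree $|\alpha|\le i$. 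This proves part (2), and it exhibits $\delta_i$ as the inclusion of the span of monomials of degree exactly $i$, which splits off the truncation to degree $\le i-1$; hence the sequence \eqref{pp-exact-seq} is left-exact as well, giving part (1). For a general locally free $E$ one tensors with $E$ using Proposition \ref{pp-module}, and local freeness and exactness are preserved.

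For part (3), with $\PP_X^i$ now known to be locally free, I would again work over $U$ with $E,F$ free, choosing bases so that $\sheafHom(E,F)$, $\PP_X^iE$, and $\PP_X^iF$ all become explicit free modules. The map \eqref{hom-pp} is $\PP_X^i$-linear and natural, and by Proposition \ref{pp-module} both sides are computed from the single free module $\PP_X^i$; one checks directly on the basis $\xi^\alpha$ that the prescribed formula $(d^i\alpha)(d_E^i s)=d_F^i(\alpha s)$ determines an isomorphism, the inverse being given by evaluating a $\PP_X^i$-linear map on the generators $d_E^i(e_j)$ for a basis $(e_j)$ of $E$. The one point requiring care—and the step I expect to be the main obstacle—is establishing the étale-invariance cleanly, i.e.\ that the universal differential operator behaves well under the étale map $(x_1,\dotsc,x_n)$; once that compatibility is in hand, every remaining verification is a finite computation with the monomials $\xi^\alpha$ in the polynomial case. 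The reader is referred to \cite[Chapter IV]{EGA} for the details of these standard facts.
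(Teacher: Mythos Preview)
The paper does not give its own proof of this proposition: the section on principal parts opens with ``The reader is referred to \cite[Chapter IV]{EGA} for proofs,'' and Proposition~\ref{smooth-pp} is simply stated. Your sketch is a correct outline of the standard argument and is consistent with what one finds in EGA; your closing reference to \cite[Chapter IV]{EGA} is therefore exactly what the paper itself does.

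One small remark on strategy. You reduce to $\A^n_S$ via an \'etale base-change compatibility for $\PP^i$, which you correctly flag as the delicate step. The Example immediately following the proposition in the paper carries out essentially the same local computation but avoids this reduction: working directly on $U$ \'etale over $\A^n_S$, it builds the $\O_U$-algebra map $\O_U[\epsilon_1,\dotsc,\epsilon_n]/\langle\epsilon\rangle^{i+1}\to \PP_U^i$, $\epsilon_a\mapsto d_X^i x_a - x_a$, and shows it is an isomorphism by induction on $i$ together with the five lemma applied to the map of short exact sequences (using the exactness of \eqref{pp-exact-seq} supplied by EGA). This sidesteps the base-change issue at the cost of taking part (1) as input; conversely, your route proves (1) and (2) simultaneously once the \'etale compatibility is in hand. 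Either way the content is the same explicit description of $\PP_X^i$ in \'etale coordinates.
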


\begin{example}
Suppose that $X$ is \'etale over $\A^n_S$, that is, suppose that $X$ is smooth over $S$ and that there exist sections $x_1,\dotsc,x_n\in \Gamma(X,\O_X)$ whose differentials form a basis of $\Omega_X$ as an $\O_X$-module.
Let $\epsilon_1,\dotsc,\epsilon_n$ be indeterminates. Let $\mathscr A$ denote the sheaf of $\O_X$-algebras $\O_X[\epsilon_1,\dotsc,\epsilon_n]$. Let $\frak M\subset \mathscr A$ denote the sheaf of ideals generated by $\epsilon_1,\dotsc,\epsilon_n$. Then the map of $\O_X$-algebras
\begin{equation*}
\O_X[\epsilon_1,\dotsc,\epsilon_n]\to \PP_X^i
\end{equation*}
that sends $\epsilon_a \mapsto d_X^i x_a - x_a$ for all $a=1,\dotsc, n$ induces a map of short exact sequences of $\O_X$-modules:
\begin{equation*}
\begin{tikzcd}
0 \ar{r} 
& \frak M^i/\frak M^{i+1} \ar{r} \ar{d} 
& \mathscr A/\frak M^{i+1} \ar{r} \ar{d}
& \mathscr A/\frak M^{i} \ar{r} \ar{d}
& 0 \\
0 \ar{r}
& \Sym^i \Omega_X \ar{r}
& \PP_X^i \ar{r}
& \PP_X^{i-1} \ar{r}
& 0
\end{tikzcd}
\end{equation*}
Combining the five lemma and induction on $i$, we see that this map of short exact sequences is an isomorphism. In particular, $\PP_X^i$ is a free $\O_X$-module with basis given by the monomials of degree at most $i$ in $\epsilon_1,\dotsc,\epsilon_n$.
\end{example}

\begin{proposition}
\label{1st-order-pp-ses}
Suppose that $X$ is smooth over $S$.
Suppose that the $\O_X$-module $E$ is quasi-coherent.
There exists a natural bijection between the set of connections on $E$ and the set of splittings of the short exact sequence
\begin{equation}
\label{1st-pp-ses-display}
\begin{tikzcd}
0\ar[r] &
\Omega_X\otimes E \ar[r,"\delta_1"] &
\PP_X^1 E \ar[r,"\epsilon_{1,0}"] &
E \ar[r] &
0
\end{tikzcd}
\end{equation}
obtained by setting $i=1$ in (\ref{pp-exact-seq}). If $\nabla : E\to \Omega_X\otimes E$ is a connection on $E$, the corresponding splitting of (\ref{1st-pp-ses-display}) is given by the unique $\O_X$-linear map $\overline \nabla : \PP_X^1 E\to \Omega_X\otimes E$ such that $\overline\nabla \circ d_E^1 = \nabla$, see Proposition \ref{univ-DO}.
\end{proposition}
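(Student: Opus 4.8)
The plan is to construct the bijection explicitly in both directions and then verify that the two constructions are mutually inverse. Since a connection $\nabla : E \to \Omega_X \otimes E$ is an $\O_S$-linear differential operator of order $1$ (Example \ref{connection-DO}), the universal property of Proposition \ref{univ-DO} produces a unique $\O_X$-linear map $\overline\nabla : \PP_X^1 E \to \Omega_X \otimes E$ with $\overline\nabla \circ d_E^1 = \nabla$; this is the candidate splitting. Conversely, given an $\O_X$-linear splitting $\sigma : \PP_X^1 E \to \Omega_X \otimes E$ of $\delta_1$, I would set $\nabla := \sigma \circ d_E^1$ and check that it is a connection.

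First I would show that $\overline\nabla$ splits the sequence \eqref{1st-pp-ses-display}, i.e.\ that $\overline\nabla \circ \delta_1 = \mathrm{id}$. Because $\Omega_X \otimes E$ is generated over $\O_X$ by sections of the form $df \otimes s$, it suffices to check this on such a section. Using the defining formula $\delta_1(df \otimes s) = (d_X^1 f - f)\cdot d_E^1 s$ together with the identity $d_X^1 f \cdot d_E^1 s = d_E^1(fs)$ for the $\PP_X^1$-module structure, one rewrites $\delta_1(df\otimes s) = d_E^1(fs) - f\, d_E^1 s$. Applying the $\O_X$-linear map $\overline\nabla$ and using $\overline\nabla \circ d_E^1 = \nabla$ gives $\nabla(fs) - f\,\nabla s$, which equals $df \otimes s$ by the Leibniz rule. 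Hence $\overline\nabla \circ \delta_1 = \mathrm{id}$, so $\overline\nabla$ is indeed a splitting.

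For the reverse construction, the same rewriting $d_E^1(fs) = \delta_1(df\otimes s) + f\, d_E^1 s$, now combined with $\sigma \circ \delta_1 = \mathrm{id}$, shows $\nabla(fs) = \sigma(d_E^1(fs)) = df\otimes s + f\,\nabla s$, so $\nabla = \sigma\circ d_E^1$ satisfies the Leibniz rule and is $\O_S$-linear, hence a connection. Finally I would check that the two assignments are mutually inverse: starting from $\nabla$, the associated connection is $\overline\nabla \circ d_E^1 = \nabla$ by construction; starting from a splitting $\sigma$, the associated splitting is the unique $\O_X$-linear map agreeing with $\sigma\circ d_E^1 = \nabla$ after precomposition with $d_E^1$, and since $\PP_X^1 E$ is generated over $\O_X$ by the image of $d_E^1$, an $\O_X$-linear map out of $\PP_X^1 E$ is determined by its restriction along $d_E^1$, forcing this map to be $\sigma$ itself. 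The computations are routine; the only point requiring care is this last uniqueness, which rests on the generation of $\PP_X^1 E$ by the image of $d_E^1$.
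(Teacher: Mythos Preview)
Your proof is correct and uses essentially the same computation as the paper. The paper organizes it slightly more economically: rather than constructing both directions and checking they are mutually inverse, it observes that Proposition~\ref{univ-DO} already gives a bijection between \emph{all} $\O_X$-linear maps $\PP_X^1 E \to \Omega_X\otimes E$ and \emph{all} first-order differential operators $E \to \Omega_X\otimes E$, and then the single computation $\overline\nabla\circ\delta_1(df\otimes s) = \nabla(fs) - f\nabla s$ shows that under this bijection the splitting condition corresponds exactly to the Leibniz rule, so no separate appeal to generation of $\PP_X^1 E$ by $\operatorname{im}(d_E^1)$ is needed.
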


\begin{proof}
Let $\overline \nabla : \PP_X^1 E\to \Omega_X\otimes E$ be an $\O_X$-linear map. Let $\nabla : E\to \Omega_X\otimes E$ be the corresponding differential operator of order 1, so that $\nabla = \overline \nabla\circ d_E^1$.
Let $f\in \O_X$ and $s\in E$ be local sections defined over a common open subset of $X$.
We compute:
\begin{align*}
\overline\nabla \circ \delta_1 (df\otimes s)
&=\overline \nabla ((d_X^1 f-f)\cdot d_E^1 s)\\
&= \overline \nabla (d_X^1 f \cdot d_E^1 s) -f\overline \nabla d_E^1 s \\
&= \overline \nabla d_E^1(fs) -f\overline \nabla d_E^1 s \\
&= \nabla (fs) - f\nabla s
\end{align*}
Thus the $\O_X$-linear map $\overline \nabla$ satisfies $\overline \nabla \circ \delta_1 = \mathrm{id}$, that is, defines a splitting of (\ref{1st-order-pp-ses}), if, and only if, the differential operator $\nabla$ is a connection.
\end{proof}

We write $\PP_{X/S}^i E$ instead of $\PP_X^i E$ when we wish to emphasize the base scheme $S$.

\begin{remark}
\label{pp-base-change-map}
Let
\begin{equation*}
\begin{tikzcd}
X' \ar[r, "u"] \ar[d]
& X \ar[d]\\
S' \ar[r] & S
\end{tikzcd}
\end{equation*}
be a commutative diagram of schemes. Then there exists a canonical $\O_{X'}$-linear map
\begin{equation*}
u^* \PP_{X/S}^iE \to \PP_{X'/S'}^i (u^* E).
\end{equation*}
This map is the adjoint of the $\O_X$-linear map $\PP_{X/S}^i E\to u_* \PP_{X'/S'}$ induced the $\O_S$-linear differential operator between $\O_X$-modules
\begin{equation*}
\begin{tikzcd}[column sep = large]
E \ar[r] &
u_* u^* E \ar[r,"u_* (d_{u^* E}^i)"] &
u_* \PP_{X'/S'}^i (u^* E).
\end{tikzcd}
\end{equation*}
\end{remark}

\begin{proposition}
\label{pp-base-change}
Suppose the diagram in Remark \ref{pp-base-change-map} is Cartesian, so that $X' = X\times_S S'$. Then the canonical $\O_{X'}$-linear map 
\begin{equation*}
u^* \PP_{X/S}^iE \to \PP_{X'/S'}^i (u^* E).
\end{equation*}
is an isomorphism.
\end{proposition}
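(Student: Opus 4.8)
The plan is to reduce the statement, by means of Proposition \ref{pp-module}, to the special case $E=\O_X$, and then to treat that case by an explicit computation using the description of the sheaf of principal parts in terms of the diagonal. Throughout, the source and the target commute with restriction to Zariski opens of $X$, $S$ and $S'$, so I may freely pass to an affine situation once the bookkeeping is set up.

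First I would invoke the isomorphism $\PP_X^i\otimes_{d_X^i,\O_X}E\xrightarrow\sim \PP_X^i E$ of Proposition \ref{pp-module}. Since $u^*$ is monoidal and $u^*\O_X=\O_{X'}$, this identifies $u^*\PP_{X/S}^iE$ and $\PP_{X'/S'}^i(u^*E)$ with principal-part sheaves tensored against $u^*E$; the canonical map of Remark \ref{pp-base-change-map} is compatible with these identifications, because it is built from the universal differential operator $d_{u^*E}^i$ and therefore sends $d_E^i s\mapsto d_{u^*E}^i(u^*s)$. The one point demanding care is that $\PP_X^i$ carries two $\O_X$-module structures, the one induced by $d_X^i$ (used to form the tensor product in Proposition \ref{pp-module}) and the one induced by $\epsilon_{i,0}$ (with respect to which $u^*$ is taken); these are interchanged under the identifications, but tracking them is routine once the case $E=\O_X$ is in hand. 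Granting this reduction, it suffices to prove that $u^*\PP_{X/S}^i\to \PP_{X'/S'}^i$ is an isomorphism.

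For that case I would pass to the affine situation $S=\Spec A$, $S'=\Spec A'$, $X=\Spec B$, $X'=\Spec B'$ with $B'=B\otimes_A A'$. By \cite[Chapter IV]{EGA} one has $\PP_{B/A}^i=(B\otimes_A B)/I^{i+1}$, where $I$ is the kernel of the multiplication $B\otimes_A B\to B$ and the $\epsilon_{i,0}$-structure is the left $B$-factor. The heart of the argument is the canonical identification
\begin{equation*}
B'\otimes_{A'} B'\cong (B\otimes_A B)\otimes_A A',
\end{equation*}
under which the multiplication $B'\otimes_{A'}B'\to B'$ corresponds to $(\mathrm{mult})\otimes_A A'$. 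Applying $-\otimes_A A'$ to the short exact sequence $0\to I\to B\otimes_A B\to B\to 0$ and using right-exactness shows that the diagonal ideal $I'$ is the image of $I\otimes_A A'$, hence $I'^{\,i+1}$ is the image of $I^{i+1}\otimes_A A'$; a second appeal to right-exactness then yields
\begin{equation*}
\PP_{B'/A'}^i=(B'\otimes_{A'}B')/I'^{\,i+1}\cong ((B\otimes_A B)/I^{i+1})\otimes_A A'=\PP_{B/A}^i\otimes_B B'=u^*\PP_{B/A}^i.
\end{equation*}
Notably this never uses smoothness of $X$ over $S$, which is consistent with the hypotheses of the proposition.

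The last step, which I expect to be the main obstacle, is to check that the explicit isomorphism just produced coincides with the canonical base-change map of Remark \ref{pp-base-change-map}, rather than being merely some abstract isomorphism. I would settle this by verifying that both maps send $u^*(d_X^i f)$ to $d_{X'}^i(u^*f)$ for local sections $f$ of $\O_X$. This pins the map down, because $u^*\PP_X^i$ is generated as an $\O_{X'}$-module by the image of $u^*(d_X^i)$ — the pullback of the generating image of the universal differential operator noted earlier — and any two $\O_{X'}$-linear maps agreeing on a generating set are equal. Apart from this identification, and the subsidiary tracking of the two module structures in the reduction to $E=\O_X$, every step is routine.
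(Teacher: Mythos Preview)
Your argument is correct and is essentially the standard one found in \cite[IV, 16.4.5]{EGA}: reduce to $E=\O_X$ via the bimodule description of $\PP_X^iE$, then compute affinely using $(B\otimes_A B)/I^{i+1}$ and the identification $B'\otimes_{A'}B'\cong(B\otimes_A B)\otimes_A A'$. The paper, however, gives no proof of this proposition at all; the opening sentence of the section on sheaves of principal parts announces that all results there are quoted from \cite[Chapter IV]{EGA}, and Proposition~\ref{pp-base-change} is stated without proof. So there is nothing to compare your approach against beyond noting that you have supplied precisely the argument the paper outsources.

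One minor remark on your write-up: in the reduction step you should make explicit that the isomorphism $u^*\PP_{X/S}^i\xrightarrow{\sim}\PP_{X'/S'}^i$ you construct in the affine case is an isomorphism of \emph{bi}modules (for both the left $\O_{X'}$-structure via $\epsilon_{i,0}$ and the right $\O_{X'}$-structure via $d_{X'}^i$), since the reduction via Proposition~\ref{pp-module} tensors on the right while $u^*$ is taken on the left. You flag this point yourself; it is indeed routine, because in the affine description both module structures come from the two factors of $B\otimes_A B$, and your base-change identification visibly respects both.
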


The next result will be applied in the sequel in combination with Proposition \ref{serre-bertini} above.

\begin{proposition}
\label{sufficient-jets}
Suppose that $S$ is the spectrum of a field $k$.
Let $W\subset \Gamma(X,E)$ be a $k$-linear subspace of finite dimension.
Let $\bar k$ be an algebraic closure of $k$.
Then $d_E^i(W)$ generates $\PP_X^i E$ as an $\O_X$-module if, and only if, the natural map 
\begin{equation}
\label{suff-jets-eq}
W\otimes_k \bar k \to E_{X_{\bar k}}/\frak m_x^{i+1} E_{X_{\bar k}}
\end{equation}
is surjective for every closed point $x\in X_{\bar k}$.
\end{proposition}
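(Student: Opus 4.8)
The plan is to reduce the statement to a pointwise condition using Proposition \ref{jet-map}, and to handle the passage between the base field $k$ and its algebraic closure $\bar k$ using the base-change isomorphism of Proposition \ref{pp-base-change}. The key observation is that $d_E^i(W)$ generating $\PP_X^i E$ as an $\O_X$-module is a \emph{local} condition that can be checked by tensoring with residue fields at points of $X$; and since the images of sections in $W$ land in $\PP_X^i E$ via $d_E^i$, Nakayama-type reasoning lets us test generation fiberwise.

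First I would reduce to closed points of $X_{\bar k}$. The $\O_X$-submodule of $\PP_X^i E$ generated by $d_E^i(W)$ equals all of $\PP_X^i E$ if and only if the quotient vanishes; since $\PP_X^i E$ is coherent (indeed, locally free of finite rank by Proposition \ref{smooth-pp} when $X$ is smooth, though here I only need coherence of the quotient), this quotient is zero if and only if its fiber vanishes at every point, and since a coherent sheaf on a scheme of finite type over a field is supported on a closed set whose closed points are dense, it suffices to check vanishing of the fiber at every closed point. The generation property is also stable under the faithfully flat base change $X_{\bar k}\to X$: the submodule generated by $d_E^i(W)$ equals $\PP_X^i E$ over $X$ if and only if the submodule generated by its image equals $\PP_{X_{\bar k}}^i E_{X_{\bar k}}$ over $X_{\bar k}$, where I use Proposition \ref{pp-base-change} to identify the pullback $u^*\PP_{X/S}^i E$ with $\PP_{X_{\bar k}/\bar k}^i(E_{X_{\bar k}})$, and the fact that $d_E^i(W\otimes_k \bar k)$ generates the pullback submodule.

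Next I would identify the fiber at a closed point. For a $\bar k$-rational point $x\in X_{\bar k}$, Proposition \ref{jet-map} gives a natural isomorphism
\begin{equation*}
E_{X_{\bar k}}/\frak m_x^{i+1} E_{X_{\bar k}} \xrightarrow\sim \PP_{X_{\bar k}}^i(E_{X_{\bar k}})\otimes_{\O} \kappa(x)
\end{equation*}
sending $\bar s\mapsto d_E^i s\otimes 1$. Under this identification, the image of $W\otimes_k \bar k$ under the map \eqref{suff-jets-eq} is precisely the image of $d_E^i(W\otimes_k\bar k)$ in the fiber. Therefore the submodule generated by $d_E^i(W\otimes_k\bar k)$ has full fiber at $x$ (equivalently, the quotient sheaf has vanishing fiber at $x$, so by Nakayama the quotient vanishes on a neighborhood of $x$) if and only if the map \eqref{suff-jets-eq} is surjective at $x$. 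Ranging over all closed points $x\in X_{\bar k}$ and combining with the reduction of the previous paragraph yields the equivalence.

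The main obstacle I anticipate is the careful bookkeeping in the base-change step: one must check that the image of $d_E^i(W)$ under the base-change map of Remark \ref{pp-base-change-map} does generate the same submodule that is generated by $d^i_{E_{X_{\bar k}}}$ applied to $W\otimes_k\bar k$, i.e. that the canonical identification of Proposition \ref{pp-base-change} carries one universal differential operator to the other. This is a compatibility of the universal differential operators under pullback along $X_{\bar k}\to X$, which follows from the construction of the base-change map in Remark \ref{pp-base-change-map} but requires tracing the definitions rather than a substantive new idea. Everything else is a standard coherence-plus-Nakayama argument combined with the fiberwise interpretation from Proposition \ref{jet-map}.
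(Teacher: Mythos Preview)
Your proposal is correct and follows essentially the same route as the paper: reduce to $X_{\bar k}$ via the faithfully flat base change and Proposition~\ref{pp-base-change}, then test surjectivity fiberwise at closed points using the isomorphism of Proposition~\ref{jet-map} together with Nakayama. The paper organizes the base-change step through the adjunction of Remark~\ref{pp-base-change-map}, which is exactly the compatibility you flag as the main bookkeeping point.
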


\begin{example}
Suppose that $E$ is an invertible sheaf. Then $d_X^1(W)$ generates $\PP_X^1 E$ if, and only if, the linear system $(E,W)$ separates tangent vectors on $X$.
\end{example}

\begin{proof}[Proof of Proposition \ref{sufficient-jets}]
Let $u : X_{\bar k}\to X$ be the projection.
The natural $\O_X$-linear maps
\begin{equation*}
\begin{tikzcd}
d_E^i(W) \otimes_k \O_X \ar[r,"\alpha"] &
\PP_{X/k}^i E \ar[r] &
u_* \PP_{X_{\bar k}/\bar k}^i (u^* E)
\end{tikzcd}
\end{equation*} 
correspond under the adjunction between $u^*$ and $u_*$ to $\O_{X_{\bar k}}$-linear maps
\begin{equation*}
\begin{tikzcd}
d_E^i(W) \otimes_k \O_{X_{\bar k}} \ar[r,"u^* \alpha"] &
u^* \PP_{X/k}^i E \ar[r,"\sim"] &
\PP_{X_{\bar k}/\bar k}^i (u^* E),
\end{tikzcd}
\end{equation*} 
the second of which is an isomorphism by Proposition \ref{pp-base-change}. 
The projection $u : X_{\bar k}\to X$ is faithfully flat, so the $\O_X$-linear map $\alpha$ is surjective if, and only if, its pullback $u^* \alpha$ is surjective.
By Proposition \ref{jet-map}, the latter holds if, and only if, the map (\ref{suff-jets-eq}) is surjective for all closed points $x\in X_{\bar k}$.
\end{proof}

\section{First-order singularities of generic sections}

Let $S$ be a scheme. Let $X$ be a smooth scheme over $S$.
Let $E$ be a locally free sheaf of finite rank on $X$.
Let $\nabla : E\to \Omega_X\otimes E$ be a connection on $E$.
Let $\overline \nabla : \PP_X^1 E\to \Omega_X\otimes E$ be the unique $\O_X$-linear map such that $\overline\nabla \circ d_E^1 = \nabla$, see Proposition \ref{univ-DO}.

\begin{definition}
Let $x: T\to X$ be a morphism of schemes.
Let $s\in \Gamma(T,x^* \PP_X^1 E)$ be a section.
Let $i\ge 0$ be a nonnegative integer.
The \textbf{$i$th critical locus} of $s$ is the $i$th degeneracy locus of the $\O_T$-linear map $\overline \nabla s : (\T_X)_T\to E_T$.
It is a subscheme of $T$ that we denote by $\Sigma^i(s)$.
\end{definition}

Let $J := \mathbf V(\PP_X^1 E)$ be the vector bundle corresponding the sheaf of principal parts $\PP_X^1 E$, which is locally free. 
Let $h\in \Gamma(J, (\PP_X^1 E)_J)$ be the tautological section.
Let $n$ denote the relative dimension of $X$ over $S$.
Let $e$ denote the rank of $E$.
Let $i$ be a nonnegative integer such that $i\le \min(n,e)$.

\begin{proposition}
\label{univ-1st-sings}
The critical locus $\Sigma^i(h)\setminus \Sigma^{i+1}(h)\subseteq J$ is smooth over $X$, of pure relative codimension $i(|n-e|+i)$ in $J$ over $X$.
\end{proposition}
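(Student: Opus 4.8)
The plan is to realize the critical locus $\Sigma^i(h)$ as the preimage of a universal degeneracy locus under a smooth surjective morphism, and then to invoke Proposition \ref{univ-deg-loci} together with the stability of smoothness and relative codimension under smooth base change.

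First I would set $H := \mathbf V(\sheafHom_X(\T_X, E)) = \mathbf V(\Omega_X \otimes E)$ and let $h_H : (\T_X)_H \to E_H$ be the tautological map. Applying Proposition \ref{univ-deg-loci} with source $\T_X$ of rank $n$ and target $E$ of rank $e$, the locally closed subscheme $\Sigma' := \Sigma^i(h_H)\setminus \Sigma^{i+1}(h_H)\subseteq H$ is smooth over $X$, of pure relative codimension $i(|n-e|+i)$ in $H$ over $X$.

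Next I would introduce the morphism $g := \mathbf V(\overline\nabla) : J \to H$ induced by the $\O_X$-linear map $\overline\nabla : \PP_X^1 E\to \Omega_X\otimes E$. By Proposition \ref{1st-order-pp-ses}, the map $\overline\nabla$ is a splitting of the short exact sequence (\ref{pp-exact-seq}) with $i=1$, hence a surjection of locally free sheaves; therefore $g$ is smooth and surjective by Proposition \ref{vb-surj}. Pulling the tautological section $h\in \Gamma(J,(\PP_X^1E)_J)$ back through $\overline\nabla$ produces exactly the section of $(\Omega_X\otimes E)_J$ that defines $g$, so that $g^* h_H = \overline\nabla h$ as $\O_J$-linear maps $(\T_X)_J \to E_J$. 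By the functoriality of degeneracy loci under pullback (Proposition \ref{deg-loci-functor}), this yields $g^{-1}\Sigma^i(h_H) = \Sigma^i(\overline\nabla h) = \Sigma^i(h)$, and likewise with $i$ replaced by $i+1$, so that
\[
\Sigma^i(h)\setminus \Sigma^{i+1}(h) = g^{-1}(\Sigma').
\]

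Finally I would conclude by base change. Since $g$ is smooth, the projection $g^{-1}(\Sigma')\to \Sigma'$ is smooth as a base change of $g$; composing with the smooth structure morphism $\Sigma'\to X$ shows that $g^{-1}(\Sigma')$ is smooth over $X$. Moreover, smooth pullback preserves relative codimension, so $g^{-1}(\Sigma')$ has pure relative codimension $i(|n-e|+i)$ in $J$ over $X$, as required. The only step requiring genuine care is the identification $g^* h_H = \overline\nabla h$ and its compatibility with the formation of degeneracy loci; this follows from the universal property of the tautological section (Remark following Definition \ref{tautological-section}) and Proposition \ref{deg-loci-functor}. Everything else is a formal consequence of smoothness being stable under composition and base change.
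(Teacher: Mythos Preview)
Your proof is correct and follows essentially the same approach as the paper's own proof: define $H=\mathbf V(\sheafHom_X(\T_X,E))$, observe that the morphism $J\to H$ induced by $\overline\nabla$ is smooth and surjective (via Propositions \ref{vb-surj} and \ref{1st-order-pp-ses}), identify the critical locus of $h$ with the preimage of the universal degeneracy locus using Proposition \ref{deg-loci-functor}, and conclude by Proposition \ref{univ-deg-loci}. The only cosmetic difference is notation ($g$ and $h_H$ in place of the paper's $\overline\nabla$ and $\alpha$) and that you spell out the base-change step for smoothness and codimension a bit more explicitly than the paper does.
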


\begin{proof}
Let $H := \mathbf V(\sheafHom_X(\T_X,E))$ be the vector bundle corresponding to the locally free $\O_X$-module $\sheafHom_X(\T_X,E)$.
The $\O_X$-linear map $\overline \nabla : \PP_X^1 E\to \Omega_X\otimes E$ induces a morphism of schemes $J\to H$ over $X$, which we also denote by $\overline\nabla$.
Let $\alpha : (\T_X)_H\to E_H$ be the tautological map over $H$.
Then, for each nonnegative integer $j$,
\begin{equation*}
\overline \nabla^{-1}(\Sigma^j(\alpha)) =
\Sigma^j( \overline \nabla^* \alpha ) =
\Sigma^j( \overline \nabla h) =
\Sigma^j( h ).
\end{equation*}
The morphism of schemes $\overline\nabla : J\to H$ is smooth and surjective because it is induced by a surjective map between $\O_X$-modules, see Proposition \ref{vb-surj} and Proposition \ref{1st-order-pp-ses}. The result therefore follows from Proposition \ref{univ-deg-loci}, according to which the degeneracy locus $\Sigma^i(\alpha)\setminus \Sigma^{i+1}(\alpha)$ is smooth over $X$, of relative codimension $i(|n-e|+i)$ in $H$.
\end{proof}

Suppose that $S$ is the spectrum of an infinite field $k$.

\begin{corollary}
\label{1st-sings-generic}
Let $W\subset \Gamma(X,E)$ be a $k$-linear subspace of finite dimension.
Suppose that $d_E^1(W)\subseteq \Gamma(X,\PP_X^1 E)$ generates $\PP_X^1 E$ as an $\O_X$-module.
Let $s\in W$ be a general section.
Then every irreducible component of the critical locus $\Sigma^i(s)\setminus \Sigma^{i+1}(s)$ has codimension $i(|n-e|+i)$ in $X$.
Furthermore, if $k$ has characteristic zero, then $\Sigma^i(s)\setminus \Sigma^{i+1}(s)$ is smooth.
\end{corollary}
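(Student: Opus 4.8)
The plan is to combine the universal computation of Proposition \ref{univ-1st-sings} with Serre's Bertini-type Proposition \ref{serre-bertini}, in exactly the way the sentence preceding Proposition \ref{sufficient-jets} anticipates. The key observation is that a section $s\in W\subseteq\Gamma(X,E)$ determines an order-one jet $d_E^1 s\in\Gamma(X,\PP_X^1 E)$, which we view as a morphism $X\to J=\mathbf V(\PP_X^1 E)$ to the jet bundle. Under this morphism the critical loci of $s$ are the pullbacks of the \emph{universal} critical loci in $J$: writing $\Sigma := \Sigma^i(h)\setminus\Sigma^{i+1}(h)\subseteq J$ for the locally closed universal stratum, I claim $s^{-1}\Sigma = \Sigma^i(s)\setminus\Sigma^{i+1}(s)$ as locally closed subschemes of $X$. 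This is because the $i$th critical locus $\Sigma^i(s)$ is by definition the degeneracy locus $\Sigma^i(\overline\nabla\circ d_E^1 s)$, and pullback commutes with the formation of degeneracy loci by Proposition \ref{deg-loci-functor}; the tautological section $h$ pulls back along $d_E^1 s$ to $d_E^1 s$ itself.

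With this identification in hand, the strategy is to apply Proposition \ref{serre-bertini} to the locally closed subscheme $\Sigma\subseteq J = \mathbf V(\PP_X^1 E)$ and the linear system $W\subseteq\Gamma(X,\PP_X^1 E)$ given by the image $d_E^1(W)$. First I would verify the hypotheses of that proposition. By Proposition \ref{univ-1st-sings}, the universal stratum $\Sigma$ is smooth over $X$ of pure relative codimension $i(|n-e|+i)$; since $X$ is smooth over $k$ of dimension $n$ (hence equidimensional, working on connected components), $\Sigma$ has pure dimension $d := \dim J - i(|n-e|+i) = n + \operatorname{rk}(\PP_X^1 E) - i(|n-e|+i)$ inside $J$, whose fibre dimension $\operatorname{rk}(\PP_X^1 E)$ plays the role of the rank $e$ in Proposition \ref{serre-bertini}. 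The generation hypothesis is exactly that $d_E^1(W)$ generates $\PP_X^1 E$ as an $\O_X$-module, which is assumed. Proposition \ref{serre-bertini} then yields that every irreducible component of $(d_E^1 s)^{-1}\Sigma$ has dimension $d - \operatorname{rk}(\PP_X^1 E) = n - i(|n-e|+i)$, i.e.\ codimension $i(|n-e|+i)$ in $X$, for general $s\in W$; and that $(d_E^1 s)^{-1}\Sigma$ is smooth when $\charac k = 0$.

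Finally I would connect the two pictures: the general section $s\in W$ used in Serre's proposition corresponds, via $s\mapsto d_E^1 s$, to a general point of the linear system $d_E^1(W)$, and by the identification of the previous paragraph $(d_E^1 s)^{-1}\Sigma = \Sigma^i(s)\setminus\Sigma^{i+1}(s)$. This gives precisely the codimension and smoothness assertions of the corollary. I expect the only genuine subtlety to be the bookkeeping in this last identification — namely checking that ``general $s\in W$'' and ``general section of the linear system $d_E^1(W)$'' agree, which holds because $W\to d_E^1(W)$ is a surjection of finite-dimensional $k$-vector spaces, so the preimage of a dense open subset of $d_E^1(W)$ is dense and open in $W$. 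Everything else is a direct application of the universal smoothness result and Serre's lemma, with the codimension $i(|n-e|+i)$ matching by design.
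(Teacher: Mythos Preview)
Your approach is correct and essentially identical to the paper's: both view $d_E^1 s$ as a morphism $X\to J=\mathbf V(\PP_X^1 E)$, identify $\Sigma^i(s)\setminus\Sigma^{i+1}(s)$ with the pullback of the universal stratum $\Sigma^i(h)\setminus\Sigma^{i+1}(h)$ via functoriality of degeneracy loci, and then combine Proposition~\ref{univ-1st-sings} with Proposition~\ref{serre-bertini} applied to the linear system $d_E^1(W)\subseteq\Gamma(X,\PP_X^1 E)$. The only cosmetic difference is that the paper handles the passage between ``general in $W$'' and ``general in $d_E^1(W)$'' by observing that $d_E^1$ is a split injection (so $W\cong d_E^1(W)$), whereas you use only surjectivity of $W\to d_E^1(W)$; both arguments are valid.
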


\begin{proof}
The map of sheaves of $k$-vector spaces $d_E^1 : E\to \PP_X^1 E$ is a split injection, so $W\cong d_E^1(W)$ as vector spaces over $k$.
Let $d_E^1 s \in d_E^1(W)$ be a general section, which we view as a morphism $X\to \mathbf V(\PP_X^1 E) = J$. 
Let $h\in \Gamma(J, (\PP_X^1 E)_J)$ be the tautological section.
By Propositions \ref{serre-bertini} and \ref{univ-1st-sings}, every irreducible component of 
\begin{equation*}
(d_E^1 s)^{-1}( \Sigma^i(h)\setminus \Sigma^{i+1}(h) ) \subseteq X
\end{equation*} 
has codimension $n-i(|n-e|+i)$, and this subscheme of $X$ is smooth if $k$ has characteristic zero.
The result follows, since $(d_E^1 s)^{-1} ( \Sigma^j(h) ) = \Sigma^j(s)$ for every nonnegative integer $j$.
\end{proof}

\section{The differential of a jet}

Let $S$ be a scheme. Let $X$ be a scheme over $S$. Let $E$ be a locally free sheaf of finite rank on $X$. Let $V:= \mathbf V(E)$ be the corresponding vector bundle. Let $\pi: V\to X$ be the projection. 

\begin{remark}
The canonical isomorphism $\pi_* \O_V = \Sym (E^\vee)$ allows one to view sections of $\O_X$ and $E^\vee$ as sections of $\O_V$. The differentials of sections of $\O_V$ arrising in this way generate  $\Omega_V$ as an $\O_V$-module.

For example, if $E=\O_X$, then $V = X\times \A^1$. In this case, the sheaf of differentials $\Omega_V$ is generated as an $\O_X$-module by $\mathrm{pr}_1^*\Omega_X$ and the differential of the coordinate on $\A^1$.
\end{remark}

\begin{remark}
A section $s\in \Gamma(X,E)$ may be viewed as a map $X\to V$, which we also denote by $s$. It therefore has a co-differential
\begin{equation*}
ds^* : s^* \Omega_V\to \Omega_X,
\end{equation*}
which is the unique $\O_X$-linear map that sends $s^* df\mapsto df$ and $s^* dt\mapsto d(t\cdot s)$ for all local sections $f\in \O_X$ and $t\in E^\vee$.
\end{remark}

Let $x: T\to X$ be a morphism of schemes.
Let $s\in \Gamma(T,x^* \PP_X^1 E)$ be a section.
Let $s_0 \in \Gamma(T,x^* E)$ be the image of $s$ under the natural $\O_X$-linear map $\PP_X^1 E\to  \PP_X^0 E = E$, see Remark \ref{surj-pp}.
Recall that $s_0$ defines a morphism $T\to V$ over $X$, which we also denote by $s_0$.
We may think of $s$ as a first-order jet of a section of $E$ mapping the $T$-valued point $x\in X(T)$ to $s_0\in V(T)$, see Proposition \ref{jet-map}.

\begin{definition}
\label{diff-jet-def}
The \textbf{co-differential} of the first-oder jet $s\in \Gamma(T, x^*\PP_X^1 E)$ is the $\O_T$-linear map $ds^* : s_0^* \Omega_V  \to x^* \Omega_X$ of Proposition \ref{diff-jet-prop} below.
If $X$ is smooth over $S$, we call the dual $ds : x^* \T_X \to s_0^* \T_V$ of this $\O_T$-linear map the \textbf{differential} of $s$.
\end{definition}

\begin{remark}
Let $t\in \Gamma(U,E^\vee)$ be a section. 
Then
\begin{equation*}
d_{\sheafHom(E,\O)}^1 t \in 
\PP_X^1 \sheafHom_{\O_X}(E,\O_X) = \sheafHom_{\PP_X^1}(\PP_X^1 E, \PP_X^1),
\end{equation*}
so we have a natural product $(d_{\sheafHom(E,\O)}^1 t)\cdot s \in \Gamma(x^{-1} U,x^* \PP_X^1)$. To lighten the notation, we denote this product by $t\cdot s$. 
\end{remark}

Let $\overline d: \PP_X^1 \to \Omega_X$ be the $\O_X$-linear map induced by the universal derivation $d: \O_X \to \Omega_X$, see Proposition \ref{univ-DO}.

\begin{proposition}
\label{diff-jet-prop}
There exists a unique map of $\O_T$-modules
\begin{equation*}
d s^* : s_0^* \Omega_V \to x^* \Omega_X
\end{equation*}
that sends $s_0^* df \mapsto x^* df$ and $s_0^* dt\mapsto x^* (\bar d (t\cdot s))$ for all local sections $f\in \O_X$ and $t\in E^\vee$.
\end{proposition}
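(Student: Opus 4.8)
The plan is to prove uniqueness first and then build the map locally, gluing the local pieces by means of the uniqueness statement. By the opening remark of this section, $\Omega_V$ is generated as an $\O_V$-module by the differentials $df$ with $f\in\O_X$ and $dt$ with $t\in E^\vee$; pulling back along $s_0$ shows that $s_0^*\Omega_V$ is generated over $\O_T$ by the sections $s_0^* df$ and $s_0^* dt$. Consequently any $\O_T$-linear map out of $s_0^*\Omega_V$ is determined by its values on these generators, which settles uniqueness. For existence it then suffices to produce, over the members of an open cover of $X$ on which $E$ is free, an $\O_T$-linear map satisfying the two prescribed formulas: by uniqueness the local maps will agree on overlaps and glue to the desired global map.

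So assume $E$ is free, pick a basis with dual basis $t_1,\dotsc,t_e\in E^\vee$, and identify $V=X\times\A^e$ with fibre coordinates $t_1,\dotsc,t_e$ as in Remark \ref{vb-trivial}. The morphism $\pi\colon V\to X$ is smooth with $\Omega_{V/X}\cong\pi^*E^\vee$ free on $dt_1,\dotsc,dt_e$ (Remark \ref{vb-differentials}), so its relative cotangent sequence $0\to\pi^*\Omega_X\to\Omega_V\to\Omega_{V/X}\to0$ is split exact. Fixing the splitting determined by the $dt_i$ and pulling back along $s_0$, while using $\pi\circ s_0=x$, yields a decomposition
\[
s_0^*\Omega_V \;=\; x^*\Omega_X \;\oplus\; \bigoplus_{i=1}^e \O_T\cdot s_0^* dt_i .
\]
I then define $ds^*$ to be the identity on the summand $x^*\Omega_X$ and to send $s_0^* dt_i\mapsto x^*\bigl(\bar d(t_i\cdot s)\bigr)$ on the remaining summands.

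The crux, and the step I expect to require real work, is to verify that this map obeys the prescribed rule $s_0^* dt\mapsto x^*(\bar d(t\cdot s))$ for every $t\in E^\vee$, not just for the basis elements $t_i$. Writing $t=\sum_i g_i t_i$ with $g_i\in\O_X$, the associated function on $V$ is $\sum_i(\pi^* g_i)\,t_i$, whence $dt=\sum_i\bigl(t_i\,\pi^* dg_i+\pi^* g_i\,dt_i\bigr)$; pulling back and applying the direct-sum definition gives
\[
ds^*(s_0^* dt)=\sum_{i=1}^e\Bigl(\langle t_i,s_0\rangle\,x^* dg_i+x^* g_i\cdot x^*\bigl(\bar d(t_i\cdot s)\bigr)\Bigr).
\]
On the other side, the product $t\cdot s\in x^*\PP_X^1$ is formed from $d^1_{\sheafHom(E,\O)}t$ through the identification of Proposition \ref{pp-hom}; since $d^1$ is a differential operator of order one and the augmentation ideal of $\PP_X^1$ squares to zero, one obtains the Leibniz-type identity $(g_i t_i)\cdot s=g_i\,(t_i\cdot s)+\langle t_i,s_0\rangle\,(d_X^1 g_i-g_i)$. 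Applying $\bar d$, which annihilates $\O_X\subset\PP_X^1$ and sends $d_X^1 g_i\mapsto dg_i$, reproduces precisely the displayed expression. This identity is the heart of the matter: it is where the first-order jet $s$, through the product $t\cdot s$, simultaneously records the value $\langle t,s_0\rangle$ and the first-order correction $dg_i$. Granting it, the two computations coincide, the map $ds^*$ is seen to be independent of the chosen basis and splitting, and the local constructions glue to the unique global map asserted by the proposition.
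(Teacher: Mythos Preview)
Your proof is correct and follows essentially the same approach as the paper's: reduce to the case where $E$ is trivial, use the splitting $s_0^*\Omega_V \cong x^*\Omega_X \oplus \bigoplus_i \O_T\cdot s_0^* dt_i$ to define $ds^*$ on basis elements, and then verify the formula for a general $t=\sum_i g_i t_i$ via a Leibniz-type identity. The paper carries out the same verification by first writing $s=\sum_l s_l\,d_E^1 v_l$ with $s_l\in x^*\PP_X^1$, computing $t\cdot s=\sum_l s_l\,d_X^1 t_l$, and then applying the Leibniz rule for $\bar d$; your formulation via $(g_i t_i)\cdot s = g_i(t_i\cdot s) + \langle t_i,s_0\rangle(d_X^1 g_i - g_i)$ is an equivalent unpacking of the same product in $x^*\PP_X^1$.
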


\begin{proof}
Uniqueness is clear, so it suffices to show existence Zariski-locally on $X$. Hence we may assume that $E$ is trivial and choose an $\O_X$-linear basis $v_1,\dotsc,v_e\in \Gamma(X,E)$. Then, first, $V=X\times \A^r$, where coordinates of the second factor are given by the dual basis,
\begin{equation*}
v_1^\vee,\dotsc,v_e^\vee \in E^\vee\subseteq \Sym (E^\vee) = \pi_* \O_V.
\end{equation*}
Second,
\begin{equation*}
\Omega_V = \pi^* \Omega_X \oplus \left(\oplus_{i=1}^e \O_V\cdot d(v_l^\vee)\right),
\end{equation*}
where $v_l^\vee$ is identified with a global section of $\O_V$. And third, the sheaf $\PP_X^1 E$ is a free $\PP_X^1$-module with basis $d_E^1 (v_1^\vee), \dotsc, d_E^1 (v_e^\vee)$. 

Write $s = \textstyle\sum_{l=1}^e s_l d^1_E v_l$ with $s_1,\dotsc,s_e\in \Gamma(T,x^* \PP_X^1)$, and define $ds^* : s_0^* \Omega_V \to x^* \Omega_X$ by requiring by that its restriction to $s_0^* \pi^* \Omega_X = x^* \Omega_X$ be the identity and that it send $s_0^* d(v_l^\vee)\mapsto \bar d s_l$ for $l=1,\dotsc, e$.

To see that this works, let $U\subseteq X$ be an open subset and $t\in \Gamma(U,E^\vee)$ be a section. Write $t=\sum_{l=1}^e t_l v_l^\vee$ with $t_l\in \Gamma(U,\O_X)$. Then
\begin{align*}
t\cdot s &= (d_{E^\vee}^1 t) \cdot \bigg( \sum_{l=1}^e s_l d_E^1 v_l \bigg) 
= \sum_{l=1}^e s_l (d_{E^\vee}^1 t \cdot d_E^1 v_l)\\ 
&= \sum_{l=1}^e s_l d_X^1 (t\cdot v_l) 
= \sum_{l=1}^e s_l d_X^1 t_l. 
\end{align*}
Let $(s_0)_1,\dotsc,(s_0)_e\in \Gamma(T,\O_T)$ be the images of $s_1,\dotsc, s_e \in \Gamma(T, x^* \PP_X^1)$ under the natural map $\PP_X^1 \to \O_X$, see Remark \ref{surj-pp}.
By the Leibniz rule, 
\begin{equation*}
\bar d(t\cdot s) = \sum_{l=1}^e t_l \bar d(s_l) + (s_0)_l dt_l 
\end{equation*}
in $\Gamma(x^{-1}U, x^* \Omega_X)$.
On the other hand, $s_0 = \sum_{l=1}^e (s_0)_l v_l$ in $\Gamma(T,x^* E)$, so the pullback of $dt\in \Gamma(\pi^{-1} U, \Omega_V)$ along $s_0 : T\to V$ satisfies
\begin{equation*}
s_0^* dt
= s_0 ^* \sum_{l=1}^e  t_l d (v_l^\vee) + v_l^\vee d t_l
= \sum_{l=1}^e t_l s_0^*d (v_l^\vee) + (s_0)_l d t_l
\end{equation*}
in $\Gamma(x^{-1}U,s_0^*\Omega_V)$. Thus $ds^* : s_0^* \Omega_V\to x^* \Omega_X$ sends $s_0^* dt\mapsto x^* (\bar d (t\cdot s))$, as required.
\end{proof}

Suppose that $X$ is smooth over $S$.
Let $\nabla : E\to \Omega_X\otimes E$ be a connection on $E$.
Let $\overline \nabla : \PP_X^1 E\to \Omega_X\otimes E$ be the $\O_X$-linear map induced by $\nabla$, see Proposition \ref{univ-DO}.

\begin{construction}
\label{horiz-sub-constr}
By Proposition \ref{1st-order-pp-ses}, the map $\overline \nabla$ defines a splitting of the canonical short exact sequence
\begin{equation*}
\begin{tikzcd}
0 \ar[r] &
\Omega_X\otimes E \ar[r,"\delta_1"] &
\PP_X^1 E \ar[r,"\epsilon_{1,0}"] &
E \ar[r] &
0.
\end{tikzcd}
\end{equation*}
In other words, the restriction of $\epsilon_{i,0}$ to $\ker(\overline \nabla)$ is an isomorphism of $\O_X$-modules $\epsilon :\ker(\overline\nabla) \xrightarrow\sim E$.
Let $h\in \Gamma(V,E_V)$ be the tautological section. Let $\tilde h$ denote the section $\epsilon^{-1} h
\in \Gamma(V, \ker(\overline\nabla)_V)$, which we regard as a first-order jet of a section of $E$ (at $\pi$).
Let
\begin{equation*}
d\tilde h : (\T_X)_V \to h^* \T_V = \T_V
\end{equation*}
be the differential of $\tilde h$.
Clearly, $d\tilde h$ is an injection split by the differential $d\pi : \T_V \to \pi^* \T_X$.
Let $H\subseteq \T_V$ denote the image of $d\tilde h$. We call $H$ the \textbf{horizontal subbundle} corresponding to the connection $\nabla$, see Definition \ref{horiz-sub-def}.
\end{construction}

The following proposition shows that horizontal subbundle determines the covariant derivative $\overline\nabla s$ of the first-order jet $s\in \Gamma(T, x^* \PP_X^1 E)$.

\begin{proposition}
\label{horiz-sub-prop-3}
Setup as in Construction \ref{horiz-sub-constr}.
The following diagram of solid arrows is commutative:
\begin{equation*}
\begin{tikzcd}
x^* \T_X \ar[r,"ds"] \ar[d, "\overline\nabla s"'] &
s_0^* \T_V \ar[r, two heads] &
s_0^*(\T_V/H) \\
x^* E \ar[r,equals] &
s_0^* \T_{V/X} \ar[u,hook,dashed] \ar[ru, "\sim"']
\end{tikzcd}
\end{equation*}
\end{proposition}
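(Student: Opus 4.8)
The plan is to show that the two composites $x^*\T_X \to s_0^*(\T_V/H)$ running around the diagram agree by comparing their pairings against a basis of $(\T_V/H)^\vee$. The assertion is local on $X$ and all sheaves in sight are locally free, so I would first trivialize $E$ over an open $U\subseteq X$ by a basis $v_1,\dots,v_e$ with dual basis $v_1^\vee,\dots,v_e^\vee$, obtaining fibre coordinates $y_l:=v_l^\vee$ on $V|_U=U\times\A^e$ and the decomposition $\Omega_V=\pi^*\Omega_X\oplus\bigoplus_l\O_V\,dy_l$ recorded in the proof of Proposition \ref{diff-jet-prop}. Writing the connection as $\nabla v_l=\sum_m\omega_l^m\otimes v_m$ with $\omega_l^m\in\Gamma(U,\Omega_X)$, the key computational input is Proposition \ref{diff-jet-prop}: for $s=\sum_l s_l\,d_E^1 v_l$ the co-differential $ds^*$ restricts to the identity on $s_0^*\pi^*\Omega_X=x^*\Omega_X$ and sends $s_0^*dy_m\mapsto x^*\overline d(s_m)$.

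The first real step is to pin down the horizontal subbundle $H$, or rather its annihilator $H^\perp\subseteq\Omega_V$, which furnishes the basis of $(\T_V/H)^\vee$ against which I will pair. By Construction \ref{horiz-sub-constr}, $H$ is the image of $d\tilde h$ for $\tilde h=\epsilon^{-1}h$, the unique element of $\ker(\overline\nabla)$ lifting the tautological section $h=\sum_l y_l v_l$. Since $\overline\nabla$ splits $\delta_1$ (Proposition \ref{1st-order-pp-ses}), the projection of $\PP_X^1 E$ onto $\ker(\overline\nabla)$ along $\delta_1(\Omega_X\otimes E)$ is $w\mapsto w-\delta_1(\overline\nabla w)$, so using the elementary identities $\delta_1(\eta)\cdot d_E^1 v=\delta_1(\eta\otimes v)$ and $\overline d\circ\delta_1=\mathrm{id}$ I can compute $\tilde h=\sum_m\bigl[y_m-\delta_1(\sum_l y_l\,\omega_l^m)\bigr]\,d_E^1 v_m$. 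Feeding the components of $\tilde h$ into Proposition \ref{diff-jet-prop} (with $T=V$, $x=\pi$) gives $d\tilde h^*(dy_m)=-\sum_l y_l\,\omega_l^m$, whence $H^\perp=\ker(d\tilde h^*)$ is spanned by the horizontal forms $\zeta_m:=dy_m+\sum_l y_l\,\pi^*\omega_l^m$; these constitute the basis of $(\T_V/H)^\vee$ dual to the classes $[\partial_{y_m}]$ of the vertical vector fields.

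It then remains to match coefficients against each $s_0^*\zeta_m$. For the lower path, expanding $s=\sum_l s_l\,d_E^1 v_l$ by writing each $s_l$ as its scalar part $(s_0)_l$ plus $\delta_1(x^*\overline d\,s_l)$, and applying $\O_T$-linearity of $x^*\overline\nabla$ together with $\overline\nabla\circ\delta_1=\mathrm{id}$, yields $\overline\nabla s=\sum_m\theta_m\otimes x^*v_m$ with $\theta_m=x^*\overline d(s_m)+\sum_l(s_0)_l\,x^*\omega_l^m$; under the identification $x^*E=s_0^*\T_{V/X}$ of Remark \ref{vb-differentials}, which sends $v_m\mapsto\partial_{y_m}$ because $\tau(v_m^\vee)=dy_m$, pairing the lower composite with $s_0^*\zeta_m$ returns $\langle\theta_m,\xi\rangle$. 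For the upper path, since $\zeta_m\in H^\perp$ the pairing $\langle s_0^*\zeta_m,\mathrm{proj}(ds(\xi))\rangle$ equals $\langle ds^*(s_0^*\zeta_m),\xi\rangle$; computing $s_0^*\zeta_m=s_0^*dy_m+\sum_l(s_0)_l\,x^*\omega_l^m$ and applying the formula for $ds^*$ recalled above gives $ds^*(s_0^*\zeta_m)=\theta_m$, so the two pairings coincide.

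The main obstacle is the middle step: the horizontal subbundle is defined only implicitly in Construction \ref{horiz-sub-constr}, and extracting the explicit forms $\zeta_m$ requires carefully disentangling the two $\PP_X^1$-module structures on $\PP_X^1 E$ and correctly invoking the splitting relation $\overline\nabla\circ\delta_1=\mathrm{id}$; once $H^\perp$ is in hand, the comparison of the two paths is a direct application of Proposition \ref{diff-jet-prop}. Finally, because the two composites agree over an arbitrary trivializing open and every construction involved is compatible with restriction, the diagram commutes globally, and the dashed inclusion together with the indicated isomorphism $s_0^*\T_{V/X}\xrightarrow{\sim}s_0^*(\T_V/H)$ exhibit the asserted factorization of $\overline\nabla s$.
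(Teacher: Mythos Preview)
Your argument is correct, but the paper's proof takes a more streamlined, coordinate-free route. Rather than trivializing $E$ and writing out the Christoffel-type forms $\omega_l^m$, the paper dualizes and reduces the claim to showing that the map $ds^\vee - s_0^*(d\tilde h^\vee)\colon s_0^*\Omega_V\to x^*\Omega_X$ factors as $s_0^*\Omega_V\twoheadrightarrow s_0^*\Omega_{V/X}=x^*E^\vee\xrightarrow{\overline\nabla s} x^*\Omega_X$. The key step is the identity $s-s_0^*\tilde h=\delta_1\overline\nabla s$, which holds because both sides have the same image in $x^*E$ (namely zero) and $\overline\nabla$ splits $\delta_1$; one then computes directly that $(ds^\vee-s_0^*(d\tilde h^\vee))(s_0^*dt)=\overline d(t\cdot(s-s_0^*\tilde h))=\overline d(t\cdot\delta_1\overline\nabla s)=t\cdot\overline\nabla s$ for any $t\in E^\vee$. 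Your approach has the virtue of making the horizontal subbundle completely explicit (and recovers the classical connection-form description $H^\perp=\langle dy_m+\sum_l y_l\,\omega_l^m\rangle$), at the cost of a longer computation; the paper's approach isolates exactly the abstract fact driving the result and avoids ever naming a basis.
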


\begin{proof}
Dually, we wish to show that the following diagram of solid arrows is commutative:
\begin{equation*}
\begin{tikzcd}
s_0^*(\T_V/H)^\vee \ar[r,hook] \ar[rd,"\sim"'] & 
s_0^* \Omega_V \ar[r,"ds^\vee"] \ar[d, two heads, dashed] &
x^*\Omega_X\\
&
s_0^* \Omega_{V/X} &
s_0^* E^\vee \ar[l,equals] \ar[u,"\overline \nabla s"']
\end{tikzcd} 
\end{equation*}
Note that
\begin{equation*}
(\T_V/H)^\vee =
\coker( d\tilde h )^\vee =
\ker( (d \tilde h)^\vee )
\end{equation*}
as split $\O_V$-submodules of $\Omega_V$. Therefore, it suffices to show that the following diagram commutes:
\begin{equation*}
\begin{tikzcd}[column sep = huge]
s_0^* \Omega_V \ar[r,"ds^\vee - s_0^* (d\tilde h^\vee)"] \ar[d, two heads] &
x^* \Omega_X\\
s_0^* \Omega_{V/X} &
x^* E^\vee \ar[l, equals] \ar[u,"\overline \nabla s"']
\end{tikzcd} 
\end{equation*}

Consider the two maps
\begin{equation*}
ds^\vee, s_0^* (d\tilde h^*)  : s^* \Omega_V \to \Omega_X.
\end{equation*}
The sheaf $\Omega_V$ is generated by sections of the form $df$ with $f\in \O_X$ and $dt$ with $t\in E^\vee$. Both maps send $s_0^*df\mapsto x^* df$, whereas
\begin{align*}
(ds^\vee - s_0^* (d\tilde h^\vee)) (s_0^*dt) 
&= \bar d(t\cdot s) - \bar d(t\cdot s_0^* \tilde h) \\
&= \bar d(t\cdot (s - s_0^* \tilde h)).
\end{align*}
The difference $s - s_0^* \tilde h \in \Gamma(T,x^* \PP_X^1 E)$ maps to $ s_0-s_0^* h = 0 \in \Gamma(T,x^* E)$, so is contained in the image of the canonical inclusion $\delta_1 : \Omega_X\otimes E\hookrightarrow \PP_X^1 E$.
Therefore
\begin{equation*}
s - s_0^* \tilde h
= \delta_1 \overline \nabla ( s - s_0^* \tilde h )
=\delta_1 (\overline \nabla s - 0).
\end{equation*}
Using the fact that the $\O_X$-linear map $\overline d : \PP_X^1 \to \Omega_X$ splits the canonical inclusion $\Omega_X \hookrightarrow \PP_X^1$, we conclude that 
\begin{align*}
(ds^\vee - s_0^* (d\tilde h^\vee)) (s_0^*dt)
= \bar d(t\cdot \delta_1 \overline\nabla s)
= t\cdot \overline\nabla s,
\end{align*}
which completes the proof.
\end{proof}

\begin{corollary}[= Proposition \ref{horiz-sub-prop}]
\label{horiz-sub-prop-2}
Setup as in Definition \ref{horiz-sub-def}.
Let $s\in \Gamma(X,E)$ be a section.
Let $ds : \T_X\to s^* \T_V$ denote the differential of $s$ viewed as a morphism $X\to V$.
Then the following diagram of solid arrows is commutative:
\begin{equation*}
\begin{tikzcd}
\T_X \ar[r,"ds"] \ar[d, "\nabla s"'] &
s^* \T_V \ar[r, two heads] &
s^*(\T_V/H) \\
E \ar[r,equals] &
s^* \T_{V/X} \ar[u,hook,dashed] \ar[ru, "\sim"']
\end{tikzcd}
\end{equation*}
\end{corollary}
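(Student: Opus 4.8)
The plan is to deduce this corollary from its jet-theoretic refinement, Proposition \ref{horiz-sub-prop-3}, by feeding in the universal first-order jet attached to $s$. Concretely, I would take $T = X$, let $x = \mathrm{id}_X$, and apply Proposition \ref{horiz-sub-prop-3} to the section $d_E^1 s \in \Gamma(X, \PP_X^1 E)$, the image of $s$ under the universal differential operator of order one. For this to recover the diagram in the statement, three identifications are needed: that the degree-zero part $s_0$ of the jet $d_E^1 s$ equals $s$; that the covariant derivative $\overline\nabla(d_E^1 s)$ of the jet equals $\nabla s$; and that the differential of the jet $d_E^1 s$ coincides with the differential $ds$ of $s$ viewed as a morphism $X \to V$.

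The first two identifications are immediate. By Remark \ref{surj-pp}, the map $\epsilon_{1,0}: \PP_X^1 E \to E$ sends $d_E^1 s \mapsto s$, so $s_0 = s$; and by the defining property of $\overline\nabla$ (Proposition \ref{univ-DO}), we have $\overline\nabla \circ d_E^1 = \nabla$, whence $\overline\nabla(d_E^1 s) = \nabla s$. With these in hand, the bottom row, the left vertical map, and the identification $s_0^* \T_{V/X} = E$ appearing in Proposition \ref{horiz-sub-prop-3} match those of the corollary verbatim once $x = \mathrm{id}_X$ (so that $x^* = \mathrm{id}$) is substituted.

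The only point requiring a short computation is the third identification, which I expect to be the main (if modest) obstacle: the differential of the jet $d_E^1 s$ in the sense of Definition \ref{diff-jet-def} must be checked to agree with the differential of the morphism $s: X \to V$. Both are duals of co-differentials $s^* \Omega_V \to \Omega_X$, so it suffices to compare them on the generators $s^* df$ and $s^* dt$, with $f \in \O_X$ and $t \in E^\vee$. On $s^* df$ both maps return $df$. On $s^* dt$, the co-differential of the morphism $s$ yields $d(t \cdot s)$, whereas that of the jet yields $\bar d(t \cdot d_E^1 s)$ by Proposition \ref{diff-jet-prop}. These agree because $t \cdot d_E^1 s = d_X^1(t \cdot s)$ by the defining property of the pairing in Proposition \ref{pp-hom}, together with $\bar d \circ d_X^1 = d$ from the definition of $\bar d$. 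Hence the two differentials coincide, and Proposition \ref{horiz-sub-prop-3} specializes exactly to the asserted commutative diagram.
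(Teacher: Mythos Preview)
Your proposal is correct and is exactly the intended argument: the paper states this result as an immediate corollary of Proposition \ref{horiz-sub-prop-3} without supplying any further proof, and your specialization $T=X$, $x=\mathrm{id}_X$, jet $=d_E^1 s$ together with the three identifications you check is precisely how one reads the corollary off from that proposition. The only subtlety you had to handle, matching the jet-differential of $d_E^1 s$ with the ordinary differential of $s:X\to V$, is dispatched correctly via Proposition \ref{pp-hom} and the identity $\bar d\circ d_X^1=d$.
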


\begin{definition}
\label{nd-of-jet}
Let $\Sigma\subseteq V$ be a subscheme.
Suppose that $\Sigma$ is smooth over $S$.
The \textbf{normal differential} of $s\in \Gamma(T,x^* \PP_X^1 E)$ with respect to $\Sigma$, denoted $\mathbf d_\Sigma s$, is the composition of $\O_{s^{-1}_0\Sigma}$-linear maps
\begin{equation*}
\begin{tikzcd}
(\T_X)_{s_0^{-1}\Sigma} \ar[r, "ds"] & 
s_0^* (\T_V)_{\Sigma} \ar[r, two heads] &
s_0^* \N_\Sigma.
\end{tikzcd}
\end{equation*}
\end{definition}

\begin{proposition}
\label{nd-jet-prop}
Setup as in Definition \ref{nd-of-jet}.
Suppose that the tangent bundle $\T_\Sigma$ contains the restriction to $\Sigma$ of horizontal subbundle $H \subseteq \T_V$ corresponding to the connection $\nabla$.
Then the normal differential $\mathbf d_\Sigma s$ is equal to the composition
\begin{equation*}
\begin{tikzcd}
(\T_X)_{s_0^{-1} \Sigma} \ar[r,"\nabla s"] &
E_{s_0^{-1} \Sigma} = (\T_{V/X})_{s_0^{-1} \Sigma} \ar[r,hook]  &
(\T_V)_{s_0^{-1} \Sigma} \ar[r,two heads] &
s_0^* \N_\Sigma.
\end{tikzcd}
\end{equation*}
\end{proposition}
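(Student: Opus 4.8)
The plan is to deduce this from Proposition~\ref{horiz-sub-prop-3}, in precisely the way that Proposition~\ref{nd-horizontal} is deduced from Proposition~\ref{horiz-sub-prop} in the case of honest sections. The two ingredients are the direct-sum decomposition $\T_V = H \oplus \T_{V/X}$ recorded in Definition~\ref{horiz-sub-def} and the hypothesis $H|_\Sigma \subseteq \T_\Sigma$.

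First I would use the decomposition $H\oplus \T_{V/X}=\T_V$ to identify the natural map $\T_{V/X}\hookrightarrow \T_V\twoheadrightarrow \T_V/H$ with an isomorphism; after pullback along $s_0$ this is exactly the isomorphism $s_0^*\T_{V/X}\xrightarrow{\sim} s_0^*(\T_V/H)$ appearing in the diagram of Proposition~\ref{horiz-sub-prop-3}. Next I would invoke the hypothesis: since $H|_\Sigma\subseteq \T_\Sigma$, the quotient map $(\T_V)_\Sigma\twoheadrightarrow \N_\Sigma=(\T_V)_\Sigma/\T_\Sigma$ annihilates $H|_\Sigma$ and therefore factors uniquely through $(\T_V/H)_\Sigma$. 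Consequently the defining composition of the normal differential, $(\T_X)_{s_0^{-1}\Sigma}\xrightarrow{ds} s_0^*(\T_V)_\Sigma\twoheadrightarrow s_0^*\N_\Sigma$, may be rewritten by inserting the intermediate quotient as $(\T_X)_{s_0^{-1}\Sigma}\xrightarrow{ds} s_0^*(\T_V)_\Sigma\twoheadrightarrow s_0^*(\T_V/H)_\Sigma\twoheadrightarrow s_0^*\N_\Sigma$.

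I would then restrict the commutative square of Proposition~\ref{horiz-sub-prop-3} to $s_0^{-1}\Sigma$. That square identifies the first two of these arrows, namely $ds$ followed by the quotient onto $s_0^*(\T_V/H)_\Sigma$, with the covariant derivative $\overline\nabla s$ followed by the isomorphism $(\T_{V/X})_{s_0^{-1}\Sigma}\xrightarrow{\sim} s_0^*(\T_V/H)_\Sigma$. Post-composing with the residual surjection onto $s_0^*\N_\Sigma$, and using that a composite of quotient maps is again the total quotient, that isomorphism followed by the surjection collapses to the canonical map $(\T_{V/X})_{s_0^{-1}\Sigma}\hookrightarrow (\T_V)_{s_0^{-1}\Sigma}\twoheadrightarrow s_0^*\N_\Sigma$. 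Under the identification $E=\T_{V/X}$ of Remark~\ref{vb-differentials}, this is exactly the composition asserted in the statement.

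I expect no genuine obstacle here: granted Proposition~\ref{horiz-sub-prop-3}, the argument is a purely formal diagram chase, and the hypothesis $H|_\Sigma\subseteq \T_\Sigma$ enters only through the factorization of $(\T_V)_\Sigma\twoheadrightarrow \N_\Sigma$ through $(\T_V/H)_\Sigma$. The one point deserving a moment's care is the bookkeeping of the various pullbacks---those along $s_0:T\to V$ and along $x=\pi\circ s_0:T\to X$---but these are compatible, so that in fact the proof can reasonably be abbreviated to the single remark that it follows immediately from Proposition~\ref{horiz-sub-prop-3}, just as Proposition~\ref{nd-horizontal} follows from Proposition~\ref{horiz-sub-prop}.
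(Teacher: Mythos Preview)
Your proposal is correct and matches the paper's approach exactly: the paper's proof is the single sentence ``Follows immediately from Proposition~\ref{horiz-sub-prop-3},'' and you have correctly identified and spelled out the diagram chase behind that sentence, even anticipating in your final paragraph that the argument compresses to precisely this remark.
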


\begin{proof}
Follows immediately from Proposition \ref{horiz-sub-prop-3}.
\end{proof}

Let $A$ and $B$ be locally free sheaves of finite rank on $X$.
Let  $\alpha\in \Gamma(T,x^* \PP_X^1 \sheafHom_X(A,B))$ be a section. 
Let $\alpha_0 : A_T\to B_T$ be the image of $\alpha$ under the natural map
\begin{equation*}
\PP_X^1 \sheafHom_X(A,B)\to \sheafHom_X(A,B).
\end{equation*}
Let $i\ge 0$ be a nonnegative integer.
Let $\Sigma$ denote the degeneracy locus $\Sigma^i(\alpha_0)\setminus \Sigma^{i+1}(\alpha_0)\subseteq T$.

\begin{definition}
\label{id-of-jet}
The \textbf{intrinsic differential} of $\alpha$ on $\Sigma$ is the $\O_{\Sigma}$-linear map 
\begin{equation*}
\mathbf d_{\Sigma} \alpha : (\T_X)_{\Sigma} \to
\sheafHom_{\Sigma}(\ker((\alpha_0)_{\Sigma}), \coker( (\alpha_0)_{\Sigma} ))
\end{equation*}
defined as follows.

Let $\rho : H\to X$ denote the vector bundle associated to the locally free sheaf $\sheafHom_X(A,B)$. Let $h: A_H\to B_H$ be the tautological map, see Definition \ref{tautological-section}. 
Let $\Sigma'$ denote the universal degeneracy locus $\Sigma^i(h)\setminus \Sigma^{i+1}(h)\subseteq H$.
Identifying $\alpha_0$ with a morphism $T\to H$, we have $(\alpha_0)^* h = \alpha_0$ and $(\alpha_0)^{-1}\Sigma' = \Sigma$ as subschemes of $T$.
As in Definition \ref{id}, the scheme $\Sigma'$ is smooth over $X$ and there exists a 
natural $\O_{\Sigma}$-linear isomorphism
\begin{equation*}
\delta : (\alpha_0)^* \N_{\Sigma'} \xrightarrow\sim
\sheafHom_{\Sigma}(\ker((\alpha_0)_\Sigma, \coker((\alpha_0)_\Sigma)).
\end{equation*}
Let $\mathbf d_{\Sigma'} \alpha : (\T_X)_\Sigma \to (\alpha_0)^* \N_{\Sigma'}$ be the normal differential of $\alpha$ with respect to $\Sigma$ and set $\mathbf d_{\Sigma} \alpha := \delta \circ  \mathbf d_{\Sigma'} \alpha$.
\end{definition}

\begin{proposition}
\label{id-jet-prop}
Suppose that $A$ and $B$ are free $\O_X$-modules.
Choose bases for $A$ and $B$ and let 
\begin{equation*}
D : \sheafHom(A,B)\to \Omega_X\otimes \sheafHom(A,B)
\end{equation*}
be the trivial connection corresponding to theses bases.
Let
\begin{equation*}
\overline D : \PP_X^1\sheafHom(A,B)\to \Omega_X\otimes \sheafHom(A,B)
\end{equation*}
be the $\O_X$-linear map induced by $D$, see Proposition \ref{univ-DO}.
Then the intrinsic differential $\mathbf d_{\Sigma} \alpha$ is equal to the composition
\begin{equation*}
\begin{tikzcd}
(\T_X)_{\Sigma} \ar[r,"\overline D\alpha"] &
\sheafHom_X(A,B)_{\Sigma} \ar[r,two heads] &
\sheafHom_{\Sigma}(\ker((\alpha_0)_{\Sigma}), \coker((\alpha_0)_{\Sigma})).
\end{tikzcd}
\end{equation*}
\end{proposition}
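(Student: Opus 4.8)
The plan is to mirror the proof of Proposition \ref{id-locally}, replacing the covariant derivative of an honest section by that of a jet and invoking Proposition \ref{nd-jet-prop} in place of Proposition \ref{nd-horizontal}. Recall from Definition \ref{id-of-jet} that $\mathbf d_\Sigma \alpha = \delta \circ \mathbf d_{\Sigma'}\alpha$, where $\rho:H\to X$ is the vector bundle $\mathbf V(\sheafHom_X(A,B))$, $\Sigma'=\Sigma^i(h)\setminus\Sigma^{i+1}(h)$ is the universal degeneracy locus, $\mathbf d_{\Sigma'}\alpha$ is the normal differential of the jet $\alpha$ with respect to $\Sigma'$, and $\delta$ is the isomorphism identifying $\alpha_0^* \N_{\Sigma'}$ with $\sheafHom_\Sigma(\ker((\alpha_0)_\Sigma), \coker((\alpha_0)_\Sigma))$ supplied by Propositions \ref{univ-deg-loci} and \ref{coker-deg-loci}. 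The quantity $\overline D\alpha$, obtained by applying $x^*\overline D$ to the jet $\alpha$ and viewing the resulting $\sheafHom$-valued one-form as an $\O_T$-linear map $(\T_X)_T\to\sheafHom(A,B)_T$, is precisely the covariant derivative of $\alpha$ with respect to the trivial connection $D$.

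First I would apply Proposition \ref{nd-jet-prop} with $E=\sheafHom_X(A,B)$, $V=H$, connection $\nabla=D$ (so that the induced map $\overline\nabla$ of that proposition is our $\overline D$), jet $s=\alpha$, and $\Sigma=\Sigma'$. Its conclusion is that $\mathbf d_{\Sigma'}\alpha$ factors as
\begin{equation*}
(\T_X)_\Sigma \xrightarrow{\ \overline D \alpha\ } \sheafHom(A,B)_\Sigma = (\T_{H/X})_\Sigma \hookrightarrow (\T_H)_\Sigma \twoheadrightarrow \alpha_0^* \N_{\Sigma'},
\end{equation*}
provided its hypothesis holds, namely that $\T_{\Sigma'}$ contains the restriction to $\Sigma'$ of the horizontal subbundle $\tilde H\subseteq \T_H$ determined by $D$. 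To verify this I would argue exactly as in Proposition \ref{id-locally}: the chosen bases of $A$ and $B$ furnish an isomorphism $\theta:H\xrightarrow\sim X\times\A^{ab}$ over $X$ (with $a=\operatorname{rank}(A)$, $b=\operatorname{rank}(B)$) under which $d\theta$ carries $\tilde H$ isomorphically onto the summand $\mathrm{pr}_1^*\T_X$; since the tautological map $h$ has as entries the coordinates of $\A^{ab}$, the locus $\Sigma'$ is of the form $\theta^{-1}(X\times\Sigma'')$ for some locally closed $\Sigma''\subseteq\A^{ab}$ smooth over $S$, whence $d\theta(\T_{\Sigma'})\supseteq(\mathrm{pr}_1^*\T_X)_{\Sigma'}=d\theta(\tilde H|_{\Sigma'})$, as needed.

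It then remains to compose with $\delta$. Since $\delta$ is assembled, via Proposition \ref{univ-deg-loci}, from the canonical identification $\T_{H/X}\xrightarrow\sim\sheafHom_X(A,B)_H$ of Remark \ref{vb-differentials} together with the base-change identifications of kernel and cokernel from Proposition \ref{coker-deg-loci}, the composite $\delta\circ\big((\T_{H/X})_\Sigma\hookrightarrow(\T_H)_\Sigma\twoheadrightarrow\alpha_0^*\N_{\Sigma'}\big)$ is, under $(\T_{H/X})_\Sigma=\sheafHom(A,B)_\Sigma$, exactly the natural projection $\sheafHom(A,B)_\Sigma\twoheadrightarrow\sheafHom_\Sigma(\ker((\alpha_0)_\Sigma),\coker((\alpha_0)_\Sigma))$ induced by $\ker((\alpha_0)_\Sigma)\hookrightarrow A_\Sigma$ and $B_\Sigma\twoheadrightarrow\coker((\alpha_0)_\Sigma)$. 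Chaining this with the factorization of $\mathbf d_{\Sigma'}\alpha$ above yields the claimed description of $\mathbf d_\Sigma\alpha$. I expect the only genuinely delicate point to be the verification of the hypothesis of Proposition \ref{nd-jet-prop}, i.e.\ the horizontality statement $\T_{\Sigma'}\supseteq\tilde H|_{\Sigma'}$; the remaining compatibility of $\delta$ with the natural projection is pure bookkeeping, already performed in the non-jet setting of Definition \ref{id} and Proposition \ref{id-locally}, and transfers verbatim.
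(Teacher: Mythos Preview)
Your proposal is correct and follows exactly the approach the paper takes: the paper's proof reads in its entirety ``Follows from Proposition \ref{nd-jet-prop}, see the proof of Proposition \ref{id-locally},'' and you have spelled out precisely that argument, including the verification of the horizontality hypothesis via the trivialization $\theta:H\xrightarrow\sim X\times\A^{ab}$.
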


\begin{proof}
Follows from Proposition \ref{nd-jet-prop}, see the proof of Proposition \ref{id-locally}.
\end{proof}

\section{The second intrinsic differential of a jet}

Let $S$ be a scheme.
Let $X$ be a smooth scheme over $S$.
Let $E$ be a locally free sheaf of finite rank on $X$.
Let $\nabla : E\to \Omega_X\otimes E$ be a connection. 

Let 
\begin{equation*}
d_{\Omega\otimes E}^1 : \Omega_X\otimes E\to
\PP_X^1 (\Omega_X\otimes E)
\end{equation*}
be the universal first-order differential operator.
The composition $d_{\Omega\otimes E}^1\circ \nabla$ is a second-order differential operator; let
\begin{equation*}
\overline\nabla : \PP_X^2 E\to \PP_X^1 (\Omega_X\otimes E)
\end{equation*}
be the the unique $\O_X$-linear map such that $\overline\nabla \circ d_E^2 = d_{\Omega\otimes E}^1\circ \nabla$, see Proposition \ref{univ-DO}.

Let $x: T\to X$ be a morphism of schemes.
Let
\begin{align*}
s&\in \Gamma(T,x^* \PP_X^2 E)\\
\overline\nabla s&\in \Gamma(T,x^* \PP_X^1 (\Omega_X\otimes E))\\
(\overline\nabla s)_0 &\in \Hom_T((\T_X)_T, E_T)
\end{align*}
be a section, its covariant derivative, and the image of this covariant derivative under the natural map $\PP_X^1 (\Omega_X\otimes E)\to \Omega_X\otimes E$.

Let $i$ be a nonnegative integer. 
Let $\Sigma$ denote the critical locus $\Sigma^i(s)\setminus \Sigma^{i+1}(s)\subseteq T$.
Let $K$ and $C$ respectively denote the kernel and cokernel of the map of $\O_\Sigma$-modules $((\overline\nabla s)_0)_{\Sigma} : (\T_X)_\Sigma \to E_\Sigma$ obtained by retricting the covariant derivative $(\overline\nabla s)_0$ to the subscheme $\Sigma\subseteq T$.

\begin{definition}
\label{order-2-sings-jets}
The \textbf{second intrinsic differential} of $s$ on $\Sigma$ is the map of $\O_\Sigma$-modules
\begin{equation*}
\mathbf d_{\Sigma}^2 s : K \to \sheafHom_{\Sigma}(K,C)
\end{equation*}
obtained by restricting to $K$ the intrinsic differential of $\overline\nabla s$ on $\Sigma$,
\begin{equation*}
\mathbf d_{\Sigma} (\overline \nabla s) : (\T_X)_{\Sigma} \to \sheafHom_{\Sigma}(K,C). 
\end{equation*}
\end{definition}

\begin{definition}
Let $j$ be a nonnegative integer. 
The \textbf{second-order Thom-Boardman singularity} $\Sigma^{i,j}(s)\subseteq T$ is the $j$th degeneracy locus of the second intrinsic differential $\mathbf d_\Sigma^2 s$. In symbols,
\begin{equation*}
\Sigma^{i,j}(s) := \Sigma^j(\mathbf d_\Sigma^2 s).
\end{equation*}
The pair of integers $(i,j)$ is called the \textbf{symbol} of  $\Sigma^{i,j}(s)$. 
\end{definition}

Let $\beta : \Sym^2 (K^\vee) \to (K\otimes K)^\vee$ be the $\O_\Sigma$-linear map that sends $uv \mapsto u\otimes v + v\otimes u$. In other words, $\beta$ sends a quadratic form on $K$ to its associated bilinear form. Let
\begin{equation*}
\theta : \sheafHom_\Sigma(K\otimes K, C) \xrightarrow\sim
\sheafHom_\Sigma(K,\sheafHom_\Sigma(K,C))
\end{equation*}
be the canonical $\O_\Sigma$-linear isomorphism that sends $b\mapsto (v\mapsto b(v\otimes -))$.
Let $B$ be the composition of natural $\O_\Sigma$-linear maps,
\begin{equation*}
\begin{tikzcd}[column sep = small]
(\Sym^2 \Omega_X \otimes E)_{\Sigma} \ar[r,two heads] &
\Sym^2 (K^\vee) \otimes C \ar[r, "\beta\otimes \mathrm{id}"] &
\sheafHom_\Sigma(K\otimes K, C) \ar[r,"\theta","\sim"'] &
\sheafHom_{\Sigma}( K, \sheafHom_\Sigma(K, C)).
\end{tikzcd}
\end{equation*}

\begin{remark}
\label{image-quad-to-bil}
The images of $\beta$ and $B$ depend on the characteristic of the base scheme $S$.
If 2 is invertible in $\O_S$, then $\operatorname{im}(\beta) = (\Sym^2 K)^\vee$ and 
\begin{equation*}
\operatorname{im}(B) = \theta( \sheafHom_\Sigma(\Sym^2 K,C) ) .
\end{equation*}
If $2=0$ in $\O_S$, then $\operatorname{im}(\beta) = (\wedge^2 K)^\vee$ and 
\begin{equation*}
\operatorname{im}(B) = \theta( \sheafHom_\Sigma(\wedge^2 K,C) ) .
\end{equation*}
In mixed characteristic, the two images need not be locally free.
\end{remark}

Recall the canonical short exact sequence
\begin{equation*}
\begin{tikzcd}
0 \ar[r] &
\Sym^2(\Omega_X)\otimes E \ar[r,"\delta"] &
\PP_X^2 E \ar[r, two heads] &
\PP_X^1 E \ar[r] &
0,
\end{tikzcd}
\end{equation*}
see Proposition \ref{smooth-pp} (3).

The following proposition reflects two basic properties, one being the symmetry, of the Hessian matrix of a function.

\begin{proposition}
\label{symmetry-2nd-id}
If $t \in \Gamma(T,x^* (\Sym^2 \Omega_X \otimes E))$ is a section, the second intrinsic differentials of $s$ and $s+\delta t$ are $\O_\Sigma$-linear maps $K\to\sheafHom_\Sigma(K,C)$ related by the equation
\begin{equation*}
\mathbf d_\Sigma^2 (s+\delta t)  = 
\mathbf d_\Sigma^2 s  + B(t).
\end{equation*}
If $E$ is generated as an $\O_X$-module by its subsheaf of horizontal sections $\ker(\nabla) \subseteq E$, then $\mathbf d_\Sigma^2 s$ is contained in the image of $B$.\end{proposition}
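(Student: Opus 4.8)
The plan is to verify both assertions by reducing to an explicit local computation, using the local description of the intrinsic differential from Proposition \ref{id-jet-prop} together with the exact sequence $0\to\Sym^2\Omega_X\otimes E\xrightarrow{\delta}\PP_X^2 E\xrightarrow{\epsilon_{2,1}}\PP_X^1 E\to 0$ of Proposition \ref{smooth-pp}(1). First I would check that $s$ and $s+\delta t$ share the same critical locus $\Sigma$ and the same sheaves $K,C$, so that the two second intrinsic differentials are maps between the same objects. The composite $(\overline\nabla\,\cdot)_0:\PP_X^2 E\to\Omega_X\otimes E$ of $\overline\nabla$ with the projection $\PP_X^1(\Omega_X\otimes E)\to\Omega_X\otimes E$ satisfies $(\overline\nabla\,\cdot)_0\circ d_E^2=\nabla$, an operator of order $1$; by the uniqueness in Proposition \ref{univ-DO} it therefore factors as $\overline\nabla^{(1)}\circ\epsilon_{2,1}$, where $\overline\nabla^{(1)}:\PP_X^1 E\to\Omega_X\otimes E$ is the order-one map induced by $\nabla$. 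Since $\epsilon_{2,1}\circ\delta=0$ by exactness, $(\overline\nabla\,\delta t)_0=0$, whence $(\overline\nabla(s+\delta t))_0=(\overline\nabla s)_0$ and the loci and bundles coincide.

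Next I would exploit linearity. For fixed $\Sigma$ the assignment $\alpha\mapsto\mathbf d_\Sigma\alpha$ is $\O_\Sigma$-linear in the jet $\alpha$, because by Proposition \ref{id-jet-prop} it is computed as $\overline D\alpha$ followed by the projection to $\sheafHom_\Sigma(K,C)$, and $\overline D$ is $\O_X$-linear. Hence $\mathbf d_\Sigma^2(s+\delta t)=\mathbf d_\Sigma^2 s+\mathbf d_\Sigma(\overline\nabla\,\delta t)|_K$, and it remains to identify the last term with $B(t)$. Working locally with a frame of $E$ and its trivial connection $D$, the quantity $\mathbf d_\Sigma(\overline\nabla\,\delta t)$ is the projection of $\overline D(\overline\nabla(\delta t))$. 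Now $\overline D\circ\overline\nabla:\PP_X^2 E\to\Omega_X^{\otimes 2}\otimes E$ is the $\O_X$-linear map induced by the order-two operator $D\circ\nabla$, and its precomposition with $\delta$ is the second-order symbol of $D\circ\nabla$, namely the symmetrization $\iota\otimes\mathrm{id}_E$ with $\iota(\omega_1\omega_2)=\omega_1\otimes\omega_2+\omega_2\otimes\omega_1$; crucially this symbol is insensitive to the connections. Composing with the projection $(\Omega_X^{\otimes2}\otimes E)_\Sigma\to(K^\vee)^{\otimes2}\otimes C$, applying $\theta$, and restricting to $K$ reproduces exactly the definition of $B$, since $\iota$ and $\beta$ are given by the same formula and projecting to $K^\vee$ commutes with $\iota$. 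This establishes the first assertion.

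For the second assertion I would use the hypothesis to choose, locally, a horizontal frame $v_1,\dots,v_e$ with $\nabla v_l=0$ (Remark \ref{gend-horiz}), so that $\nabla$, and hence $D$, is trivial. Then $\overline D\circ\overline\nabla$ is the genuine Hessian operator: writing $s=\sum_l s_l\,d_E^2 v_l$ with $s_l\in x^*\PP_X^2$, its value is $\sum_l(\text{Hessian of }s_l)\otimes v_l$, a symmetric element of $\Omega_X^{\otimes2}\otimes E$. Projecting to $\sheafHom_\Sigma(K,C)$ and restricting to $K$ therefore lands in $\im(\beta\otimes\mathrm{id})=\im(B)$: in characteristic different from $2$ the Hessian is symmetric and so lies in $\im\iota$, while in characteristic $2$ the diagonal second derivatives vanish identically (since $\partial_a^2\equiv0$, in agreement with $r$ being even in Lemma \ref{classification-of-QFs}), so the Hessian is alternating and again lies in $\im\iota$.

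The step I expect to be the main obstacle is the clean identification $\overline D\circ\overline\nabla\circ\delta=\iota\otimes\mathrm{id}_E$ — that is, proving that the top-order symbol of the composite operator $D\circ\nabla$ is the symmetrization and that it does not depend on the chosen connections — together with the bookkeeping required to match this, after projection and the isomorphism $\theta$, with the map $B$ built from $\beta$. The characteristic-$2$ diagonal vanishing, though elementary, must be invoked with care so as to place the Hessian inside $\im(B)=\theta(\sheafHom_\Sigma(\wedge^2 K,C))$ rather than merely inside the space of symmetric forms.
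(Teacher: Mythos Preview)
Your proposal is correct and follows essentially the same local strategy as the paper: reduce via Proposition~\ref{id-jet-prop} to computing the $\O_X$-linear map $\overline D\circ\overline\nabla:\PP_X^2 E\to\Omega_X^{\otimes 2}\otimes E$ for a trivial connection $D$, and then show both that its restriction to $\Sym^2\Omega_X\otimes E$ is the symmetrization $\iota\otimes\mathrm{id}_E$ (giving the first assertion) and that, with a horizontal frame, its values are always symmetric (giving the second). The difference is one of packaging. The paper carries out the computation of $\overline D\overline\nabla$ directly on the $\O_X$-basis $d_E^2 v_l,\ \epsilon_a\,d_E^2 v_l,\ \epsilon_a\epsilon_b\,d_E^2 v_l$ of $\PP_X^2 E$ by repeated Leibniz-rule expansions, obtaining $\overline D\overline\nabla(\epsilon_a\epsilon_b\,d_E^2 v_l)=(dx_a\otimes dx_b+dx_b\otimes dx_a)\otimes v_l$ and, when $\nabla v_l=0$, that the other two basis elements map to zero. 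You instead recognize $\overline D\circ\overline\nabla\circ\delta$ as the top-order symbol of the composite $D\circ\nabla$ and compute it from the symbols of the factors (both identities), which yields the symmetrization via the comultiplication $\Sym^2\Omega_X\to\Omega_X^{\otimes 2}$; for the second assertion you observe that in a horizontal frame the operator decomposes as copies of the scalar Hessian, which is symmetric. Your route is more conceptual and avoids the explicit Leibniz bookkeeping, at the cost of invoking the symbol-of-a-composition formula, which is standard but not stated in the paper; the paper's route is entirely self-contained. One small point: your characteristic-$2$ justification via $\partial_a^2\equiv 0$ applies a priori only to honest functions, so to conclude for arbitrary $2$-jets you should also use that $\PP_X^2$ is generated as an $\O_X$-module by the image of $d_X^2$ (equivalently, note directly that $\overline{Dd}(\epsilon_a^2)=2\,dx_a\otimes dx_a=0$), which the paper's basis computation makes explicit.
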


\begin{proof}
The questions being Zariski-local on $X$, we may assume 
that there exist \'etale coordinates defined on all of $X$, that is, sections $x_1,\dotsc,x_n\in \Gamma(X,\O_X)$ whose differentials form an $\O_X$-linear basis of $\Omega_X$;
that the $\O_X$-module $E$ is free with basis given by sections $v_1,\dotsc,v_e\in \Gamma(X,E)$; and
that $\nabla v_l=0$ for all $l=1,\dotsc,e$ if $E$ is generated by its subsheaf of horizontal sections.

Let
\begin{equation*}
D: \Omega_X\otimes E \to \Omega_X\otimes (\Omega_X\otimes E)
\end{equation*}
be the trivial connection corresponding to the basis of $\Omega_X\otimes E$ formed by the sections $dx_a\otimes v_l$, where $a=1,\dotsc, n$ and $l=1,\dotsc, e$.
Thus $D$ is the unique connection on $\Omega_X\otimes E$ that vanishes on the sections $dx_a\otimes v_l$. Let
\begin{equation*}
\overline D : \PP_X^1 (\Omega_X\otimes E ) \to \Omega_X\otimes (\Omega_X \otimes E)
\end{equation*}
be the corresponding $\O_X$-linear map.

By Proposition \ref{id-jet-prop}, the second intrinsic differentials of $s$ and $s+\delta t$ are the images of these sections under the composition of $\O_\Sigma$-linear maps
\begin{equation*}
\begin{tikzcd}[column sep = small]
(\PP_X^2 E)_\Sigma \ar[r,"\overline D\overline \nabla"] &
\sheafHom_X(\T_X, \sheafHom_X(\T_X, E))_\Sigma \ar[r, two heads, "\eta"] &
\sheafHom_\Sigma(K,\sheafHom_\Sigma(K,C)).
\end{tikzcd}
\end{equation*}

For $a=1,\dotsc, n$, let $\epsilon_a := d_X^2 x_a - x_a \in \Gamma(X,\PP_X^2)$. Then the sections $1, \epsilon_a, \epsilon_a\epsilon_b \in \Gamma(X,\PP_X^2)$, where $a,b = 1,\dotsc, n$, form a basis for $\PP_X^2$ as an $\O_X$-module; and the sections $d_E^2 v_1,\dotsc, d_E^2 v_e\in \Gamma(X,\PP_X^2 E)$ form a basis for $\PP_X^2 E$ as a $\PP_X^2$-module.

To prove both assertions, it suffices to show that 
\begin{align*}
\overline D\overline \nabla ( d_E^2 v_l ) &= D\nabla v_l \\
\overline D\overline \nabla ( \epsilon_a\cdot  d_E^2 v_l ) &= dx_a \otimes \nabla v_l \\
\overline D\overline \nabla ( \epsilon_a \epsilon_b\cdot d_E^2 v_l ) &= (dx_a\otimes dx_b + dx_b \otimes dx_a)\otimes v_l
\end{align*}
for all $a,b=1,\dotsc,n$ and $l=1,\dotsc, e$.
Note that the third equation implies $\eta \overline D\overline \nabla \delta = B$, as 
\begin{equation*}
\delta(dx_a dx_b\otimes v_l) = \epsilon_a \epsilon_b\cdot d_E^2 v_l
\end{equation*}
for all $a,b$ and $l$.

Clearly,
\begin{align*}
\overline D \overline \nabla (d_E^2 v_l) 
&= \overline D( d^1_{\Omega\otimes E}(\nabla v_l) )\\
&= D\nabla v_l
\end{align*}
for all $l$, as required.

Next, by the Leibniz rule, we have
\begin{align*}
\overline \nabla (\epsilon_a \cdot d_E^2 v_l)
&= \overline d(\epsilon_a) \otimes d_E^1 v_l +
\overline{\epsilon_a}\cdot \overline \nabla (d_E^2 v_l)
\end{align*}
for all $a$ and $l$. Here $\bar d$ denotes the $\O_X$-linear map $\PP_X^2 \to \PP_X^1 (\Omega_X)$ induced by the differential $d: \O_X\to \Omega_X$; the tensor product refers to the canonical isomorphism 
\begin{equation*}
\PP_X^1(\Omega_X)\otimes_{\PP_X^1} \PP_X^1(E) = \PP_X^1 (\Omega_X\otimes E);
\end{equation*}
and $\overline{\epsilon_a}$ denotes the image of $\epsilon_a$ under the natural map $\PP_X^2 \to \PP_X^1$.
Note that this image is identified with $dx_a$ under the canonical splitting $\PP_X^1 = \Omega_X\oplus \O_X$.
Note also that
\begin{align*}
\bar d (\epsilon_a)
&= \bar d ( d_X^2 x_a - x_a ) \\
&= d_\Omega^1(dx_a) - x_a d_\Omega^1 (d(1))\\
&= d_\Omega^1(dx_a)
\end{align*}
for all $a$. Thus
\begin{align*}
\overline \nabla (\epsilon_a \cdot d_E^2 v_l) 
&= d_\Omega^1(dx_a)\otimes d_E^1 v_l + dx_a \cdot d_{\Omega\otimes E}^1(\nabla v_l) \\
&= d_{\Omega\otimes E}^1 (dx_a\otimes v_l) + dx_a \cdot d_{\Omega\otimes E}^1(\nabla v_l),
\end{align*}
for all $a$ and $l$.

By the Leibniz rule,
\begin{equation*}
\overline D ( dx_a \cdot d_{\Omega\otimes E}^1(\nabla v_l) ) = \tilde d (dx_a) \otimes \nabla v_l + \overline{dx_a}\cdot \overline D( d_{\Omega\otimes E}^1(\nabla v_l) )
\end{equation*}
for all $a$ and $l$.
Here $\tilde d$ denotes the $\O_X$-linear map $\PP_X^1 \to \Omega_X$ induced by the differential $d: \O_X\to \Omega_X$, and $\overline{dx_a}$ denotes the image of $dx_a$ under the canonical map $\PP_X^1 \to \O_X$.
These are the two maps that define the canonical splitting $\PP_X^1 = \Omega_X\oplus \O_X$, so $\tilde d(dx_a)=dx_a$ and $\overline{dx_a}=0$ for all $a$.
Thus 
\begin{equation*}
\overline D ( dx_a \cdot d_{\Omega\otimes E}^1(\nabla v_l) ) = dx_a \otimes \nabla v_l,
\end{equation*}
hence
\begin{align*}
\overline D \overline \nabla (\epsilon_a \cdot d_E^2 v_l)
&= \overline D ( d_{\Omega\otimes E}^1 (dx_a\otimes v_l) + dx_a \cdot d_{\Omega\otimes E}^1(\nabla v_l) )\\
&= D(dx_a\otimes v_l) + dx_a \otimes \nabla v_l\\
&= dx_a \otimes \nabla v_l
\end{align*}
for all $a$ and $l$, as required.

By the Leibniz rule, we have
\begin{equation*}
\overline \nabla (\epsilon_a \epsilon_b \cdot d_E^2 v_l)
= \bar d(\epsilon_a \epsilon_b)\otimes d_E^1 v_l + 
\overline{\epsilon_a \epsilon_b} \cdot \overline\nabla (d_E^2 v_l)
\end{equation*} 
for all $a, b$ and $l$. Here $\overline{\epsilon_a \epsilon_b}$ denotes the image of $\epsilon_a\epsilon_b$ under the natural map $\PP_X^2 \to \PP_X^1$. This image is zero because $\epsilon_a \epsilon_b$ is the image of $dx_adx_b$ under the canonical map $\Sym^2 \Omega_X\to \PP_X^2$. Again by the Leibniz rule,
\begin{align*}
\bar d ( \epsilon_a \epsilon_b )
&= \overline{\epsilon_a} \bar d(\epsilon_b) + \overline{\epsilon_b} \bar d(\epsilon_a)\\
&= dx_a \cdot d_\Omega^1(dx_b) + dx_b \cdot d_\Omega^1(dx_a) 
\end{align*}
for all $a$ and $b$.
Therefore 
\begin{equation*}
\overline \nabla (\epsilon_a \epsilon_b \cdot d_E^2 v_l)=
dx_a \cdot d_{\Omega\otimes E}^1(dx_b\otimes v_l) + dx_b \cdot d_{\Omega\otimes E}^1(dx_a\otimes v_l)
\end{equation*}
for all $a, b$ and $l$.

To conclude, note that, by the Leibniz rule, 
\begin{align*}
\overline D(dx_a \cdot d_{\Omega\otimes E}^1(dx_b\otimes v_l))&=
\tilde d(dx_a)\otimes (dx_b\otimes v_l) + \overline{dx_a}\cdot \overline D( d_{\Omega\otimes E}^1(dx_b\otimes v_l) )\\
&= dx_a\otimes dx_b\otimes v_l
\end{align*}
for all $a,b$ and $l$. Thus
\begin{equation*}
\overline D\overline \nabla (\epsilon_a \epsilon_b \cdot d_E^2 v_l)=
dx_a\otimes dx_b\otimes v_l +
dx_b\otimes dx_a\otimes v_l
\end{equation*}
for all $a,b$ and $l$, as required.
\end{proof}

\section{Degeneracy loci in bundles of bilinear maps}

Let $S$ be a scheme.
Let $E$ and $F$ be locally free sheaves of finite rank on $S$.
Let $\Box^2 E$ denote one of two $\O_S$-modules: either $\Sym^2 E$ or $\wedge^2 E$.
Let
\begin{equation*}
H := \mathbf V(\sheafHom_S(\Box^2 E, F))
\end{equation*}
be the vector bundle over $S$ associated to the locally free $\O_S$-module
\linebreak
$\sheafHom_S(\Box^2 E,F)$.
Let $h : (\Box^2 E)_H\to F_H$ be the tautological map.

Let
\begin{equation*}
\theta : \sheafHom_S(E\otimes E, F) \xrightarrow\sim
\sheafHom_S(E,\sheafHom(E,F))
\end{equation*}
be the $\O_S$-linear isomorphism that sends $b\mapsto (v\mapsto b(v\otimes -))$.

Let $e$ and $f$ denote the respective ranks of $E$ and $F$ as locally free $\O_S$-modules.
Suppose that $f\ge 1$.
Let $j$ be an integer such that $0\le j\le e$.

\begin{proposition}
\label{univ-bil-deg}
The degeneracy locus $\Sigma^j(\theta h) \setminus \Sigma^{j+1}(\theta h)\subseteq H$ is smooth over $S$, of pure relative codimension
\begin{equation*}
j(e-j)(f-1) +
\tfrac 1 2 j(j\pm1) f
\end{equation*}
in $H$ over $S$. The symbol $\pm$ appearing in this expression should be read as ``plus'' if $\Box^2 E = \Sym^2 E$ and as ``minus'' if $\Box^2 E = \wedge^2 E$.\end{proposition}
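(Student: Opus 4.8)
The plan is to realise $\Sigma^j(\theta h)\setminus\Sigma^{j+1}(\theta h)$ via a Tjurina-type incidence variety, as in Construction~\ref{tjurina-transform} and Remark~\ref{tjurina-isomorphism}, the only new ingredient being the effect of the symmetric/alternating constraint on the dimension count. Pointwise, $\theta h$ carries a form $b\colon \Box^2 E\to F$ to the map $v\mapsto b(v\otimes-)$, whose kernel is the radical of $b$; hence $\Sigma^j(\theta h)\setminus\Sigma^{j+1}(\theta h)$ is exactly the locus of forms whose radical has rank $j$. First I would introduce the relative Grassmannian $G:=G_{e-j}(E)$ of rank-$(e-j)$ quotients of $E$ over $S$, with universal quotient $E_G\twoheadrightarrow\bar E$ and kernel $A\subseteq E_G$ of rank $j$, and let $L$ denote the image of the natural map $A\otimes E_G\to \Box^2 E_G$. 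The condition that $A$ lie in the radical of $b$ is precisely that $b$ vanish on $L$, equivalently that $b$ factor through the quotient $\Box^2 E_G/L\cong \Box^2\bar E$.

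The key structural point is that the evident filtration of $\Box^2 E_G$ associated with $A\subseteq E_G$ has locally free graded pieces; in particular $L$ is a subbundle and $\Box^2\bar E$ is locally free of rank $\tfrac 1 2(e-j)(e-j\pm 1)$. Applying $\sheafHom(-,F_G)$ to $0\to L\to \Box^2 E_G\to \Box^2\bar E\to 0$ yields a locally split short exact sequence, so the vanishing locus $Z\subseteq G\times_S H$ of the composite $L\hookrightarrow(\Box^2 E)_{G\times_S H}\xrightarrow{h}F_{G\times_S H}$ is identified with the total space of the vector bundle $\mathbf V(\sheafHom_G(\Box^2\bar E,F_G))$ over $G$. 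Since $G$ is smooth over $S$ of relative dimension $j(e-j)$ and $Z\to G$ is a vector bundle of rank $\tfrac f2(e-j)(e-j\pm 1)$, the scheme $Z$ is smooth over $S$ of pure relative dimension $j(e-j)+\tfrac f2(e-j)(e-j\pm 1)$.

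It remains to transfer this to $\Sigma:=\Sigma^j(\theta h)\setminus\Sigma^{j+1}(\theta h)$. As in Construction~\ref{tjurina-transform}, the projection $\rho\colon Z\to H$ factors through $\Sigma^j(\theta h)$, since on $Z$ the pullback of $\theta h$ kills $A$ and so factors through $\bar E$, of rank $e-j$. Exactly as in Remark~\ref{tjurina-isomorphism}, $\rho$ restricts to an isomorphism over $\Sigma$: by Proposition~\ref{coker-deg-loci} the kernel of $(\theta h)|_\Sigma$ is locally free of rank $j$, hence defines a classifying morphism $\Sigma\to G$ which, paired with $\Sigma\hookrightarrow H$, gives a section of $\rho$ over $\Sigma$ landing in $Z$; over a form whose radical has rank exactly $j$ the unique rank-$j$ subbundle it contains is that radical, so this section is a two-sided inverse of $\rho|_{\rho^{-1}\Sigma}$. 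Therefore $\Sigma$ is smooth over $S$, being isomorphic to an open subscheme of $Z$, of the same pure relative dimension. The relative codimension of $\Sigma$ in $H$ is then
\[
\tfrac f2 e(e\pm 1)-\bigl(j(e-j)+\tfrac f2(e-j)(e-j\pm 1)\bigr)=\tfrac f2\,j(2e-j\pm 1)-j(e-j),
\]
where I used $e(e\pm1)-(e-j)(e-j\pm1)=j(2e-j\pm1)$; regrouping gives $j(e-j)(f-1)+\tfrac 1 2 j(j\pm1)f$, as claimed.

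The main obstacle is the second paragraph: one must check that $L$ is genuinely a subbundle with locally free quotient $\Box^2\bar E$, so that $Z\to G$ is a vector bundle and $Z$ is smooth over $S$. This is what distinguishes the present statement from the unconstrained case of Proposition~\ref{univ-deg-loci} and is responsible for the corrected codimension; everything else follows the Tjurina template essentially verbatim.
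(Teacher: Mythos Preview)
Your proof is correct and follows the same Tjurina-type strategy as the paper, but with a cleaner execution. Both arguments pass through the same incidence scheme $Z$: the paper realises it as the Tjurina transform of $(\theta h)^\vee$ inside $G_j(E^\vee_H)$ (which, since $E$ is pulled back from $S$, is your $G\times_S H$), and then proves Proposition~\ref{tjurina-univ-bil} by an explicit local computation---writing $E=E_1\oplus E_2$, decomposing each $(\theta h^\vee)(t_l)$ into blocks, manipulating the equations into the form $h_{11}^l-q'h_{22}^l(q')^\vee=0$, $h_{12}^l+q'h_{22}^l=0$, and counting independent coordinates using the (skew-)symmetry of the blocks. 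You instead observe directly that $Z$ is the total space of $\mathbf V(\sheafHom_G(\Box^2\bar E,F_G))$, which bypasses all of the matrix algebra; the ``main obstacle'' you flag, that $\Box^2 E_G/L\cong\Box^2\bar E$ with $L$ locally split, is immediate from a local splitting $E_G\cong A\oplus\bar E$. Your route is shorter and more conceptual; the paper's has the minor advantage of making the (skew-)symmetry visibly responsible for the $\pm$ in the dimension count.
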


\begin{proof}
Let
\begin{equation*}
(\theta h)^\vee : (E\otimes F^\vee)_H\to E^\vee_H
\end{equation*}
be the dual of $\theta h$. 
Then
\begin{equation*}
\Sigma^j(\theta h) \setminus \Sigma^{j+1}(\theta h)
=
\Sigma^j((\theta h)^\vee) \setminus \Sigma^{j+1}((\theta h)^\vee),
\end{equation*}
see Proposition \ref{deg-loci-dual}.
By Remark \ref{tjurina-isomorphism}, this degeneracy locus is isomorphic as a scheme over $S$ to an open subscheme of the scheme $Z$ of Proposition \ref{tjurina-univ-bil} below.
The result therefore follows from that proposition.
\end{proof}

As in Construction \ref{tjurina-transform}, let  $G:=G_j(E^\vee_H)$ be the Grassmannian of rank $j$ quotients of $E^\vee_H$ over $H$, let $q:E^\vee_G\twoheadrightarrow Q$ be the universal quotient on $G$, and let $Z$ be the closed subscheme of $G$ defined by the equation $q\circ (\theta h)^\vee_G=0$.

\begin{proposition}
\label{tjurina-univ-bil}
The scheme $Z$ is smooth over $S$, of pure relative dimension
\begin{equation*}
\tfrac 1 2 e(e\pm 1)f 
- j(e-j)(f-1) - \tfrac 1 2 j(j\pm1)f
\end{equation*}
over $S$. The symbol $\pm$ appearing in this expression should be read as ``plus'' if $\Box^2 E = \Sym^2 E$ and as ``minus'' if $\Box^2 E = \wedge^2 E$.
\end{proposition}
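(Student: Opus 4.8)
The plan is to exhibit $Z$ as the total space of a vector bundle over a Grassmann bundle that is manifestly smooth over $S$, mimicking the analysis of the universal degeneracy locus in Proposition \ref{univ-deg-loci} but using the tautological nature of $h$. First I would record that, since $E^\vee_H$ is pulled back from $S$ along $\pi : H\to S$, the relative Grassmannian $G = G_j(E^\vee_H)$ is canonically $H\times_S \Gamma$, where $\Gamma := G_j(E^\vee)$ is the Grassmann bundle over $S$ of rank-$j$ quotients of $E^\vee$. The bundle $\Gamma$ is smooth over $S$ of pure relative dimension $j(e-j)$ and carries a universal rank-$(e-j)$ subbundle $A\subseteq E^\vee_\Gamma$; I write $U\subseteq E_\Gamma$ for the rank-$j$ annihilator of $A$, i.e.\ the dual of the universal quotient. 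I would then study $Z$ through the projection $Z\to\Gamma$ induced by the second projection $G = H\times_S \Gamma\to\Gamma$.

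The heart of the argument is to rewrite the defining equation $q\circ(\theta h)^\vee_G=0$. Unwinding $\theta$ and the dual, the section $(\theta h)^\vee$ sends $v\otimes\phi\mapsto (w\mapsto \phi(\tilde b(v,w)))$, where $\tilde b : E\otimes E\to F$ is the composite of $h$ with the canonical surjection $E\otimes E\twoheadrightarrow \Box^2 E$. Composing with the universal quotient $q$, whose kernel is $A = U^{\perp}$, the equation $q\circ(\theta h)^\vee_G=0$ becomes the condition that $\tilde b(v,u)=0$ for every local section $v$ of $E$ and $u$ of $U$; equivalently, that $h$ annihilates the subbundle $K_U := \ker(\Box^2 E_\Gamma\twoheadrightarrow \Box^2(E_\Gamma/U))$, that is, that $h$ factors through $\Box^2(E_\Gamma/U)$. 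I would verify this scheme-theoretically by a local computation: choosing a local frame $e_1,\dotsc,e_e$ of $E$ with $e_1,\dotsc,e_j$ spanning $U$, the entries of $q\circ(\theta h)^\vee_G$ are, up to the redundancy forced by (anti)symmetry, exactly the $F$-components of the linear forms $h(\overline{e_a\otimes e_c})$ with $1\le a\le e$ and $1\le c\le j$, and these generate the same ideal as the linear forms cutting out $K_U$. Hence $Z$ coincides, as a closed subscheme of $H_\Gamma := H\times_S \Gamma = \mathbf V(\sheafHom_\Gamma(\Box^2 E_\Gamma, F_\Gamma))$, with the sub-vector-bundle $\mathbf V(\sheafHom_\Gamma(\Box^2(E_\Gamma/U), F_\Gamma))$ determined by the subbundle inclusion dual to $\Box^2 E_\Gamma\twoheadrightarrow \Box^2(E_\Gamma/U)$.

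Granting this identification, the conclusion is immediate. The map $Z\to\Gamma$ is a vector bundle of rank $\operatorname{rank}\sheafHom_\Gamma(\Box^2(E_\Gamma/U),F_\Gamma) = \tfrac12(e-j)(e-j\pm1)f$, and $\Gamma\to S$ is smooth of pure relative dimension $j(e-j)$, so $Z$ is smooth over $S$ of pure relative dimension $j(e-j)+\tfrac12(e-j)(e-j\pm1)f$; an elementary manipulation (factoring out $e-j$) shows this equals the claimed expression $\tfrac12 e(e\pm1)f - j(e-j)(f-1) - \tfrac12 j(j\pm1)f$. The main obstacle I anticipate is the middle step: one must check that the passage from $q\circ(\theta h)^\vee_G=0$ to $h|_{K_U}=0$ is an equality of scheme structures, not merely of underlying point sets (the given section of $\sheafHom((E\otimes F^\vee)_G,Q)$ is far from regular, so one cannot argue by codimension of a zero locus), and that the bookkeeping of the canonical maps $E\otimes E\to\Box^2 E$ and $\theta$ is correct in both the symmetric and exterior cases—in particular in characteristic two, where for $\Box^2 = \wedge^2$ the diagonal terms $\overline{e_c\otimes e_c}$ vanish automatically and so impose no condition, consistent with $K_U$ containing no such element. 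Everything else reduces to the standard fact that a subbundle inclusion yields a sub-vector-bundle that is smooth over the base.
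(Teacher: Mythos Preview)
Your proposal is correct and takes a genuinely different route from the paper's proof. The paper works in affine charts: it trivializes $E$ and $F$, splits $E=E_1\oplus E_2$ with $\operatorname{rank} E_1=j$, passes to the standard chart on the Grassmannian where $q|_{E_1^\vee}$ is an isomorphism, writes the defining equations of $Z$ on that chart as block-matrix identities $h_{11}^l+q'h_{21}^l=0$ and $h_{12}^l+q'h_{22}^l=0$, and then massages these (using the (anti)symmetry to replace $h_{21}^l$ by $-h_{22}^l(q')^\vee$) to exhibit $Z\cap U$ explicitly as an affine space over $S$ of the required dimension. Your argument instead exploits the factorization $G=H\times_S\Gamma$ and projects to the Grassmannian factor $\Gamma=G_j(E^\vee)$ rather than to $H$: over $\Gamma$, the defining condition becomes linear in the fibre coordinates of $H_\Gamma$ and identifies $Z$ globally with the total space of the subbundle $\mathbf V(\sheafHom_\Gamma(\Box^2(E_\Gamma/U),F_\Gamma))$. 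This is cleaner and coordinate-free; smoothness and the dimension count fall out immediately from the ranks of $\Gamma\to S$ and of the bundle. The paper's chart computation, by contrast, makes the (anti)symmetry bookkeeping completely explicit and requires no appeal to the fact that $U\otimes E_\Gamma\twoheadrightarrow K_U$ has locally free kernel (which underlies your claim that the linear equations cut out a subbundle rather than merely a closed subscheme).

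One small wrinkle worth noting: in your formula for $(\theta h)^\vee$ the arguments of $\tilde b$ are transposed (it is $w\mapsto\phi(\tilde b(w,v))$ rather than $\phi(\tilde b(v,w))$), but since $\tilde b$ is (anti)symmetric this is harmless for the zero locus. Your scheme-theoretic check via a local frame of $E_\Gamma$ adapted to $U$ is exactly what is needed and goes through as you describe.
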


\begin{proof}
Suppose for definiteness that $\Box^2 E=\wedge^2 E$.
It will be clear that the argument that follows implies the result also in the case where $\Box^2 E = \Sym^2 E$.

The question being local on $S$, we may assume that $E$ and $F$ are free as $\O_S$-modules. Write $E=E_1\oplus E_2$, where the two summands are generated by complementary subsets of a basis of $E$ and $E_1$ has rank $j$.
Let $U\subseteq G$ be the largest open subset over which the $\O_G$-linear map
\begin{equation*}
q|_{E_1^\vee}:(E_1)_G^\vee \to Q
\end{equation*}
is surjective, and hence an isomorphism. Such open subsets of $G$ obtained from partitions of the basis of $E$ form a cover of $G$, so it suffices to show that $U\cap Z$ is a smooth $S$-scheme of relative dimension
\begin{equation*}
\tfrac 1 2 e(e- 1)f 
- j(e-j)(f-1) - \tfrac 1 2 j(j- 1)f.
\end{equation*}

Let $t_1,\dotsc,t_f\in \Gamma(S,F)$ form a basis of $F$ as an $\O_S$-module. 
Each of the $\O_H$-linear maps
\begin{equation*}
(\theta h^\vee)(t_l) : E_H\to E_H^\vee
\end{equation*}
is represented by a skew-symmetric matrix of size $e$ and entries in $\Gamma(H,\O_H)$.
The collection of these entries (with $l$ varying) induces an isomorphism of schemes
\begin{equation*}
H\xrightarrow\sim \mathbf A_S^{(1/2) e(e-1)f}
\end{equation*}
over $S$, see Remark \ref{vb-trivial}.

For each $l=1,\dots, f$ and $u,v = 1,2$, let $h_{uv}^l$ denote the composition 
\begin{equation*}
\begin{tikzcd}[column sep = large]
(E_v)_H \ar[r,hook] & 
E_H \ar[r,"(\theta h^\vee)(t_l)"] & 
E^\vee_H \ar[r, two heads] & 
(E_u)^\vee_H.
\end{tikzcd}
\end{equation*} 
Then we may write 
\begin{equation*}
(\theta h^\vee)(t_l) = \begin{bmatrix}
h_{11}^l & h_{12}^l\\
h_{21}^l & h_{22}^l
\end{bmatrix}.
\end{equation*}
Note the identities
\begin{equation*}
(h_{11}^l)^\vee = -h_{11}^l ,
\qquad
(h_{12}^l)^\vee = -h_{21}^l ,
\qquad\text{and}\qquad
(h_{22}^l)^\vee = -h_{22}^l.
\end{equation*}

Let $q'$ denote the composition
\begin{equation*}
\begin{tikzcd}
(E_2)_U^\vee \ar[r] &
E^\vee_U \ar[r, two heads, "q_U"] &
Q_U \ar[r, "(q_U|_{E_1})^{-1}", "\sim"'] &
(E_1)^\vee_U.
\end{tikzcd}
\end{equation*}
Then $q'$ is represented by a matrix of shape $j\times(e-j)$ and entries in $\Gamma(U,\O_G)$.
These entries define one of the standard affine charts on the Grassmannian, an isomorphism of schemes 
\begin{equation*}
U\xrightarrow\sim \A_H^{j(e-j)}
\end{equation*}
over $X$.

For each $l=1,\dotsc, f$, the composition
\begin{equation*}
\begin{tikzcd}[column sep = large]
(E_1\oplus E_2)_U \ar[r,"(\theta h^\vee)(t_l)"] & 
(E_1\oplus E_2)^\vee_U \ar[r, two heads, "q_U"] &
Q_U \ar[r, "(q_U|_{E_1})^{-1}", "\sim"'] &
(E_1)^\vee_U
\end{tikzcd}
\end{equation*} 
is given by the matrix product
\begin{equation*}
\begin{bmatrix}
1 & q'
\end{bmatrix}
\begin{bmatrix}
h_{11}^l & h_{12}^l\\
h_{21}^l & h_{22}^l
\end{bmatrix}=
\begin{bmatrix}
h_{11}^l + q' h_{21}^l & h_{12}^l + q' h_{22}^l
\end{bmatrix}.
\end{equation*}
Thus $Z\cap U$ is the subscheme of $U$ defined by the equations
\begin{equation}
\label{eqns-tjurina-I}
h_{11}^l + q' h_{21}^l=0
\quad\quad\text{and}\quad\quad
h_{12}^l + q' h_{22}^l = 0,
\end{equation}
where $l=1,\dotsc, f$.

Another set of equations defining $Z\cap U$ as a subscheme of $U$ is
\begin{equation}
\label{eqns-tjurina-II}
h_{11}^l - q' h_{22}^l (q')^\vee = 0
\quad\quad\text{and}\quad\quad
h_{12}^l + q' h_{22}^l = 0,
\end{equation}
where $l=1,\dotsc, f$. Indeed, in the presence of the dual $(h_{12}^l)^\vee  + (q' h_{22}^l)^\vee =0 $ of the second equation in (\ref{eqns-tjurina-I}) and (\ref{eqns-tjurina-II}), the following holds:
\begin{equation*}
h_{21}^l = -(h_{12}^l)^\vee = + (q' h_{22}^l)^\vee =
(h_{22}^l)^\vee (q')^\vee =
-h_{22}^l (q')^\vee
\end{equation*}

The $j(e-j)f$ entries of the matrices representing the $\O_U$-linear maps $h_{12}^l$ are independent coordinates on
\begin{equation*}
U \cong \A_S^{(1/2)e(e-1)f + j(e-j)}.
\end{equation*}
The entries of the matrices representing $q' h_{22}^l$ are (quadratic) polynomials in a disjoint set of coordinates.
Therefore the subscheme $W\subseteq U$ defined by the equations $h_{12}^l + q' h_{22}^l = 0$ (where $l$ varies) is an affine space over $S$ of relative dimension
\begin{equation*}
[\tfrac 1 2 e(e-1)f + j(e-j)] - j(e-j)f.
\end{equation*}

The equations $h_{11}^l - q' h_{22}^l (q')^\vee = 0$ define  $Z\cap U$ as a subscheme of $W$. The $j^2 f$ entries of the matrices representing the $\O_U$-linear maps $h_{11}^l$ are coordinates on $U$. Of these coordinates, $(1/2) j (j-1) f$ are independent, corresponding to the skew-symmetry of $h_{11}^l$. They do not occur in the matrices representing the $h_{12}^l$ and $q' h_{22}^l$, nor in skew-symmetric matrices representing the $q' h_{22}^l (q')^\vee$. Therefore $Z\cap U$ is an affine space over $S$ of relative dimension 
\begin{equation*}
[\tfrac 1 2 e(e-1)f + j(e-j)] - j(e-j)f - \tfrac 1 2 j(j-1) f.
\end{equation*}
The result follows.
\end{proof}

\section{Second-order singularities of generic sections}

Let $S$ be a scheme.
Suppose that either $2$ is invertible or $2=0$ in $\O_S$.
Let $X$ be a smooth scheme over $S$.
Let $E$ be a locally free sheaf of finite rank on $X$.
Let $\nabla : E\to \Omega_X\otimes E$ be a connection.
Suppose that $E$ is generated as an $\O_X$-module by its subsheaf of horizontal sections $\ker(\nabla)\subseteq E$. 

Let $i$ and $j$ be nonnegative integers. For $m=1,2$,
\begin{itemize}
\item let $J^m \to X$ be the vector bundle associated to the locally free $\O_X$-module $\PP_X^m E$
\item let $s_m \in \Gamma(J^m,(\PP_X^m E)_{J^m})$ be the tautological section; and
\item let $\Sigma_m$ denote the critical locus $\Sigma^i(s_m)\setminus \Sigma^{i+1}(s_m)\subseteq J^m$. 
\end{itemize}
Let $q : J^2 \twoheadrightarrow J^1$ be the morphism of schemes over $X$ induced by the natural surjection $\PP_X^2 E \to \PP_X^1 E$, see Remark \ref{surj-pp}. 
Note that $q$ is smooth and surjective by Proposition \ref{vb-surj}.

\begin{remark}
\label{relation-critical-loci}
The tautological sections $s_1$ and $s_2$ have covariant derivatives
\begin{equation*}
\nabla s_1 \in \Gamma(J^1, (\Omega_X\otimes E)_{J^1})
\qquad\text{and}\qquad
\nabla s_2 \in \Gamma(J^2, \PP_X^1(\Omega_X\otimes E)_{J^2}),
\end{equation*}
see Remark \ref{higher-univ-DO}.
These are related as follows: the image of $\nabla s_2$ under the natural $\O_X$-linear map 
\begin{equation*}
\PP_X^1 (\Omega_X\otimes E) \to \Omega_X\otimes E
\end{equation*}
is equal to $q^* (\nabla s_1)$.
Thus $q^{-1}\Sigma_1 = \Sigma_2$ as subschemes of $J^2$ by the functoriality of degeneracy loci, see Proposition \ref{deg-loci-functor}. In other words, we have a Cartesian diagram:
\begin{equation}
\label{critical-cartesian}
\begin{tikzcd}
\Sigma_2 \ar[r, hook] \ar[d, two heads] \ar[rd, phantom, "\square"] &
J^2 \ar[d, two heads, "q"] \\
\Sigma_1 \ar[r, hook] &
J^1
\end{tikzcd}
\end{equation}
\end{remark}

Let $n$ denote the relative dimension of $X$ over $S$.
Let $e$ denote the rank of $E$ as a locally free $\O_X$-module.
Let $m:=\min(n,e)$.

\begin{proposition}
\label{univ-sigma-i-j}
The second-order singular locus $\Sigma^{i,j}(s_2)\setminus \Sigma^{i,j+1}(s_2)\subseteq \Sigma_2$ is smooth over $X$, of pure relative codimension 
\begin{equation*}
\label{rel-codim-cases}
i(|n-e|+i) +
j(n-m+i-j)(e-m+i-1) +\tfrac 1 2 j(j\pm1)(e-m+i)
\end{equation*}
in $J^2$ over $X$. 
The symbol $\pm$ appearing in this expression should be read as ``plus'' if 2 is invertible in $\O_S$ and as ``minus'' if $2=0$ in $\O_S$.
\end{proposition}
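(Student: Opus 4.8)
The plan is to reduce the statement to the two universal smoothness results already in hand, Proposition \ref{univ-1st-sings} for the first-order critical locus and Proposition \ref{univ-bil-deg} for degeneracy loci of tautological bilinear maps, glued along the Cartesian square \eqref{critical-cartesian}. First I would record that, by Proposition \ref{univ-1st-sings}, the first-order critical locus $\Sigma_1 = \Sigma^i(s_1)\setminus\Sigma^{i+1}(s_1)\subseteq J^1$ is smooth over $X$ of pure relative codimension $i(|n-e|+i)$, and that over $\Sigma_1$ the kernel $K$ and cokernel $C$ of $(\nabla s_1)_{\Sigma_1}$ are locally free of ranks $n-m+i$ and $e-m+i$ by Proposition \ref{coker-deg-loci}. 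Since the square \eqref{critical-cartesian} is Cartesian with $q$ smooth and surjective (Proposition \ref{vb-surj}), the projection $\Sigma_2\to\Sigma_1$ is a torsor under the vector bundle $\mathbf V(\Sym^2(\Omega_X)\otimes E)_{\Sigma_1}$ arising from the short exact sequence of Proposition \ref{smooth-pp}(3), and $\Sigma_2 = q^{-1}\Sigma_1$ has the same relative codimension $i(|n-e|+i)$ in $J^2$ over $X$ that $\Sigma_1$ has in $J^1$.

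The central step is to realize $\mathbf d^2_{\Sigma_1} s_2$ as the pullback of a tautological bilinear map. Set $\widetilde H := \mathbf V(\sheafHom_{\Sigma_1}(\Box^2 K, C))$, where $\Box^2 = \Sym^2$ if $2$ is invertible in $\O_S$ and $\Box^2 = \wedge^2$ if $2=0$, in accordance with Remark \ref{image-quad-to-bil}, and let $\tilde h : (\Box^2 K)_{\widetilde H}\to C_{\widetilde H}$ be its tautological map with associated $\theta\tilde h : K_{\widetilde H}\to \sheafHom(K,C)_{\widetilde H}$. By Proposition \ref{symmetry-2nd-id}, the hypothesis that $E$ is generated by $\ker(\nabla)$ forces $\mathbf d^2_{\Sigma_1} s_2$ to lie in the image of $B$, hence to correspond at every point to a section of $\sheafHom(\Box^2 K, C)$; this produces a classifying morphism $\phi : \Sigma_2 \to \widetilde H$ over $\Sigma_1$ for which $\phi^*(\theta\tilde h) = \mathbf d^2_{\Sigma_1} s_2$. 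By the functoriality of degeneracy loci (Proposition \ref{deg-loci-functor}) it follows that $\Sigma^{i,j}(s_2)\setminus\Sigma^{i,j+1}(s_2) = \phi^{-1}\bigl(\Sigma^j(\theta\tilde h)\setminus\Sigma^{j+1}(\theta\tilde h)\bigr)$. Moreover, Proposition \ref{symmetry-2nd-id} shows that along each fiber of the torsor $\Sigma_2\to\Sigma_1$ the second intrinsic differential varies as $t\mapsto \mathbf d^2 s + B(t)$, an affine map whose linear part $B$ surjects onto $\sheafHom(\Box^2 K, C)$; therefore $\phi$ is a smooth surjection of $\Sigma_1$-schemes.

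It then remains to combine codimensions. Applying Proposition \ref{univ-bil-deg} over the base $\Sigma_1$ with $E\rightsquigarrow K$ of rank $e'=n-m+i$ and $F\rightsquigarrow C$ of rank $f'=e-m+i$, the universal locus $\Sigma^j(\theta\tilde h)\setminus\Sigma^{j+1}(\theta\tilde h)$ is smooth over $\Sigma_1$ of pure relative codimension $j(e'-j)(f'-1)+\tfrac12 j(j\pm1)f'$ in $\widetilde H$; substituting $e'$ and $f'$ gives $j(n-m+i-j)(e-m+i-1)+\tfrac12 j(j\pm1)(e-m+i)$. Because $\phi$ is smooth and surjective, pulling this locus back along $\phi$ preserves both smoothness over $\Sigma_1$ and relative codimension, so $\Sigma^{i,j}(s_2)\setminus\Sigma^{i,j+1}(s_2)$ is smooth over $\Sigma_1$ of that same codimension inside $\Sigma_2$. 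Composing with the smooth structure map $\Sigma_1\to X$ yields smoothness over $X$, and adding the codimension of $\Sigma_2$ in $J^2$ obtained in the first paragraph produces the stated total $i(|n-e|+i)+j(n-m+i-j)(e-m+i-1)+\tfrac12 j(j\pm1)(e-m+i)$. The $\pm$ matches since $\Box^2=\Sym^2$ corresponds both to the ``plus'' sign and to $2$ being invertible, while $\Box^2=\wedge^2$ corresponds to ``minus'' and to $2=0$.

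The main obstacle I expect is the central step: verifying cleanly that $\phi$ is a smooth surjection and that it pulls $\theta\tilde h$ back to $\mathbf d^2_{\Sigma_1} s_2$. This rests entirely on Proposition \ref{symmetry-2nd-id}, specifically on the two facts that $\mathbf d^2 s$ is translated by $B(t)$ as one moves in the fiber of $q$ and that $B$ surjects onto $\sheafHom(\Box^2 K, C)$; upgrading these from pointwise assertions to an honest morphism of $\Sigma_1$-schemes, and checking that the resulting map from a torsor to a vector bundle is a submersion, is where the care is required.
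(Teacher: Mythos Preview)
Your proposal is correct and follows essentially the same route as the paper: reduce to $\Sigma_1$ via Proposition \ref{univ-1st-sings} and the Cartesian square \eqref{critical-cartesian}, realize the second intrinsic differential as a classifying map $\phi:\Sigma_2\to\mathbf V(\sheafHom_{\Sigma_1}(\Box^2 K,C))$ using Proposition \ref{symmetry-2nd-id}, and pull back the universal locus of Proposition \ref{univ-bil-deg}. The paper handles precisely the obstacle you flag at the end---that $\phi$ is smooth and surjective---by packaging it as a general lemma (Lemma \ref{equiv-surj}): a $\phi$-equivariant map from a principal $G$-bundle to a principal $H$-bundle over $\Sigma_1$ is smooth and surjective whenever the group homomorphism $G\to H$ is; here $G=\mathbf V(\Sym^2\Omega_X\otimes E)_{\Sigma_1}$, $H=\mathbf V(\sheafHom_{\Sigma_1}(\Box^2 K,C))$, and the homomorphism is the surjection $B'$ of Remark \ref{image-quad-to-bil}, which is smooth and surjective by Proposition \ref{vb-surj}.
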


\begin{proof}
Combining the Cartesian diagram (\ref{critical-cartesian}) with Proposition \ref{univ-1st-sings}, which states that the critical locus $\Sigma_1$ is smooth over $X$, of pure relative codimension $i(|n-e|+i)$ in $J^1$ over $X$, we see that the critical locus $\Sigma_2$ is smooth over $\Sigma_1$, hence over $X$, of pure relative codimension $i(|n-e|+i)$ in $J^2$ over $X$. The result therefore follows from Proposition \ref{univ-sigma-i-j-bis} below. 
\end{proof}

\begin{proposition}
\label{univ-sigma-i-j-bis}
The second-order singular locus $\Sigma^{i,j}(s_2)\setminus \Sigma^{i,j+1}(s_2)\subseteq \Sigma_2$ is smooth over $\Sigma_1$, of pure relative codimension 
\begin{equation*}
\label{rel-codim-cases-bis}
j(n-m+i-j)(e-m+i-1) +\tfrac 1 2 j(j\pm1)(e-m+i)
\end{equation*}
in $\Sigma_2$ over $\Sigma_1$.
The symbol $\pm$ appearing in this expression should be read as ``plus'' if 2 is invertible in $\O_S$ and as ``minus'' if $2=0$ in $\O_S$.
\end{proposition}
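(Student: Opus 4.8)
The plan is to realize the locally closed subscheme $\Sigma^{i,j}(s_2)\setminus\Sigma^{i,j+1}(s_2)$ as the preimage, under a smooth surjective morphism over $\Sigma_1$, of a universal degeneracy locus of the kind treated in Proposition \ref{univ-bil-deg}, and then to read off the relative codimension from that proposition. To begin, I would record the affine-bundle structure of $\Sigma_2$ over $\Sigma_1$. By Remark \ref{relation-critical-loci} the square (\ref{critical-cartesian}) is Cartesian, so $\Sigma_2=q^{-1}\Sigma_1$; and by Proposition \ref{smooth-pp} the kernel of the surjection $\PP_X^2 E\to\PP_X^1 E$ is $\Sym^2(\Omega_X)\otimes E$. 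Hence, by Example \ref{ses-vb} and Proposition \ref{vb-surj}, the projection $\Sigma_2\to\Sigma_1$ is a principal bundle under the group scheme $\mathbf V((\Sym^2(\Omega_X)\otimes E)_{\Sigma_1})$, whose additive action is the translation $s_2\mapsto s_2+\delta t$. By Proposition \ref{coker-deg-loci} the sheaves $K$ and $C$ attached to $s_2$ on $\Sigma_2$ are pulled back from the corresponding sheaves on $\Sigma_1$; in particular they are locally free of ranks $n-m+i$ and $e-m+i$, and the map $B$ of the present section descends to $\Sigma_1$.

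Next I would exploit Proposition \ref{symmetry-2nd-id}. Because $E$ is generated by $\ker(\nabla)$, that proposition shows $\mathbf d_{\Sigma_2}^2 s_2$ lies in the image of $B$, which by Remark \ref{image-quad-to-bil} equals $\theta(\sheafHom_{\Sigma_1}(\Box^2 K,C))$, where $\Box^2=\Sym^2$ if $2$ is invertible in $\O_S$ and $\Box^2=\wedge^2$ if $2=0$ in $\O_S$. Thus $\mathbf d_{\Sigma_2}^2 s_2=\theta(b)$ for a unique section $b$ of $\sheafHom_{\Sigma_2}(\Box^2 K,C)$, which I regard as a morphism $\Phi\colon\Sigma_2\to M$ over $\Sigma_1$, where $M:=\mathbf V(\sheafHom_{\Sigma_1}(\Box^2 K,C))$. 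The translation formula $\mathbf d_{\Sigma_2}^2(s_2+\delta t)=\mathbf d_{\Sigma_2}^2 s_2+B(t)$ of Proposition \ref{symmetry-2nd-id} says precisely that $\Phi$ is affine-linear on the fibers of $\Sigma_2\to\Sigma_1$, with linear part the $\O_{\Sigma_1}$-linear map $B\colon(\Sym^2(\Omega_X)\otimes E)_{\Sigma_1}\to\sheafHom_{\Sigma_1}(\Box^2 K,C)$. Since $B$ is surjective by Remark \ref{image-quad-to-bil}, its kernel is locally free and $\Phi$ is smooth and surjective over $\Sigma_1$.

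Finally, writing $h_M$ for the tautological map $(\Box^2 K)_M\to C_M$ on $M$, the morphism $\theta h_M$ pulls back under $\Phi$ to $\theta(b)=\mathbf d_{\Sigma_2}^2 s_2$, so by the functoriality of degeneracy loci (Proposition \ref{deg-loci-functor}) we obtain
\begin{equation*}
\Sigma^{i,j}(s_2)\setminus\Sigma^{i,j+1}(s_2)
= \Phi^{-1}\bigl(\Sigma^j(\theta h_M)\setminus\Sigma^{j+1}(\theta h_M)\bigr).
\end{equation*}
Applying Proposition \ref{univ-bil-deg} with $E=K$ of rank $n-m+i$ and $F=C$ of rank $e-m+i$, over the base $\Sigma_1$, the universal degeneracy locus on the right is smooth over $\Sigma_1$ of pure relative codimension $j(n-m+i-j)(e-m+i-1)+\tfrac12 j(j\pm1)(e-m+i)$ in $M$. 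Because $\Phi$ is smooth and surjective over $\Sigma_1$, its preimage is smooth over $\Sigma_1$ of the same relative codimension in $\Sigma_2$, which is the asserted formula.

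The step I expect to be the main obstacle is the construction of $\Phi$ in the second paragraph: one must check carefully that $b$, and hence $\Phi$, is a genuine morphism over $\Sigma_1$ with the stated equivariance, and above all that the linear part $B$ is surjective onto $\sheafHom_{\Sigma_1}(\Box^2 K,C)$, so that $\Phi$ is a smooth surjection. This hinges on the precise identification of $\im(B)$ in Remark \ref{image-quad-to-bil}, which is also the point at which the distinction between $\Sym^2$ and $\wedge^2$---and hence the sign $\pm$ in the codimension---enters.
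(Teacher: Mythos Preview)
Your proof is correct and follows essentially the same route as the paper's: both construct the morphism $\Sigma_2\to\mathbf V(\sheafHom_{\Sigma_1}(\Box^2 K,C))$ from the second intrinsic differential via Proposition \ref{symmetry-2nd-id}, identify the Thom--Boardman locus as the preimage of the universal degeneracy locus of Proposition \ref{univ-bil-deg}, and use the equivariance formula $\mathbf d_{\Sigma_2}^2(s_2+\delta t)=\mathbf d_{\Sigma_2}^2 s_2+B(t)$ together with surjectivity of $B$ to conclude that this morphism is smooth and surjective. The only cosmetic difference is that the paper isolates the ``equivariant map over a surjective group homomorphism is smooth and surjective'' step as a separate Lemma \ref{equiv-surj}, whereas you argue it directly as a statement about affine-bundle maps with surjective linear part.
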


\begin{proof}
Let $K$ and $C$ respectively denote the kernel and cokernel of the $\O_{\Sigma_1}$-linear map $(\nabla s_1)_{\Sigma_1} : (\T_X)_{\Sigma_1} \to E_{\Sigma_1}$.
The second intrinsic differential of $s_2$ on $\Sigma_2$ is an $\O_{\Sigma_2}$-linear map
\begin{equation*}
\mathbf d_{\Sigma_2}^2 s_2 : q^* K \to q^* \sheafHom_{\Sigma_1}(K,C),
\end{equation*}
see Definition \ref{order-2-sings-jets}, Remark \ref{relation-critical-loci} and Proposition \ref{coker-deg-loci}.

Let 
\begin{equation*}
\theta : \sheafHom_{\Sigma_1}(K\otimes K, C) \xrightarrow\sim
\sheafHom_{\Sigma_1}(K, \sheafHom_{\Sigma_1}(K,C))
\end{equation*}
be the $\O_{\Sigma_1}$-linear isomorphism that sends $b\mapsto (v\mapsto b(v\otimes -))$.
Let
\begin{equation*}
\Box^2 K := \begin{cases}
\Sym^2 K & \text{if $2$ is invertible in $\O_S$}\\
\wedge^2 K & \text{if $2=0$ in $\O_S$}.
\end{cases}
\end{equation*}
By Proposition \ref{symmetry-2nd-id}, the inverse image $\theta^{-1} (\mathbf d_{\Sigma_2}^2 s_2)$ is contained in the subsheaf 
\begin{equation*}
\sheafHom_{\Sigma_1}(\Box^2 K, C) \subseteq 
\sheafHom_{\Sigma_1}(K\otimes K, C). 
\end{equation*}

Let $H \to \Sigma_1$ be the vector bundle associated to the locally free $\O_{\Sigma_1}$-module $\sheafHom_{\Sigma_1}(\Box^2 K, C)$.
Let $h : (\Box^2 K)_H \to C_H$ be the tautological map.
For each nonnegative integer $j'$, we have a commutative diagram with Cartesian square as follows.
\begin{equation*}
\begin{tikzcd}[column sep=large]
\Sigma^{i,j}(s_2)\setminus \Sigma^{i,j+1}(s_2) \ar[d, hook] \ar[r] \ar[rd, phantom, "\square"] &
\Sigma^{j}(\theta h) \setminus \Sigma^{j+1}(\theta h) \ar[d, hook]
\\
\Sigma_2 \ar[r, "\theta^{-1}(\mathbf d_{\Sigma_2}^2 s_2)"'] \ar[d, two heads] &
H \ar[ld, bend left = 10]
\\
\Sigma_1
\end{tikzcd}
\end{equation*}

The subscheme $\Sigma^j(\theta h)\setminus \Sigma^{j+1}(\theta h)\subseteq H$ is smooth over $\Sigma_1$, of pure relative codimension (\ref{rel-codim-cases-bis}) in $H$ over $\Sigma_1$, by Proposition \ref{univ-bil-deg}. To complete the proof, it suffices to show that the arrow $\theta^{-1}(\mathbf d_{\Sigma_2}^2 s_2)$ is smooth and surjective. We do this by verifying the hypotheses of Lemma \ref{equiv-surj} below.

First, the short exact sequence (\ref{pp-exact-seq}) gives the $\Sigma_1$-scheme $\Sigma_2$ the structure of a principal bundle under $\Sigma_1$-group scheme
\begin{equation*}
\mathbf V(\Sym^2 \Omega_X \otimes E)\times_X \Sigma_1,
\end{equation*}
see Example \ref{ses-vb}.
Second, there is a natural  $\O_{\Sigma_1}$-linear map
\begin{equation*}
B' : (\Sym^2 \Omega_X \otimes E)_{\Sigma_1} \twoheadrightarrow \sheafHom_{\Sigma_1}(\Box^2 K,C),
\end{equation*}
by Remark \ref{image-quad-to-bil}.
The map $B'$ is surjective, so induces a smooth surjection between the corresponding vector bundles over $\Sigma_1$, by Proposition \ref{vb-surj}.
And third, the map $\theta^{-1}(\mathbf d_{\Sigma_2}^2 s_2)$ is $B'$-equivariant, by Proposition \ref{symmetry-2nd-id}.
Thus Lemma \ref{equiv-surj} applies and the result follows.
\end{proof}

\begin{lemma}
\label{equiv-surj}
Let $Z$ be a scheme. Let $\phi:G\to H$ be a homomorphism of group schemes over $Z$. Let $P$ be a principal $G$-bundle. Let $Q$ be a principal $H$-bundle. Let $f:P\to Q$ be a $\phi$-equivariant morphism of schemes over $Z$. If $\phi$ is smooth and surjective, then $f$ is smooth and surjective.
\end{lemma}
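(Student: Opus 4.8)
The plan is to reduce to the case of trivial bundles by working Zariski-locally on $Z$, where $\phi$-equivariance forces $f$ to become the composite of $\phi$ with a translation isomorphism. Both smoothness and surjectivity are local on the target, and since $f$ is a morphism over $Z$, for any open $U\subseteq Z$ we have $f^{-1}(Q\times_Z U) = P\times_Z U$. As $U$ ranges over a Zariski open cover of $Z$ the schemes $Q\times_Z U$ cover $Q$ and the schemes $P\times_Z U$ cover $P$, so it suffices to prove the statement after replacing $Z$ by a suitable member of a cover. By Definition \ref{principal-bundle}, both $P$ and $Q$ admit trivializations over Zariski open covers of $Z$; passing to a common refinement, I may therefore assume that $P$ and $Q$ are both trivial.

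Next I would exploit the canonical sections of trivial bundles. Fix a $G$-equivariant isomorphism $P\xrightarrow\sim G$ and let $p_0\in P(Z)$ be the section corresponding to the identity $e\colon Z\to G$; likewise fix an $H$-equivariant isomorphism $Q\xrightarrow\sim H$. Let $q_0\in H(Z)$ be the element corresponding to $f(p_0)\in Q(Z)$ under this identification. For any $Z$-scheme $T$ and any $T$-point $p$ of $P$, equivariance of the trivialization gives a unique $g\in G(T)$ with $p = g\cdot p_0$, and then $\phi$-equivariance of $f$ yields
\begin{equation*}
f(p) = f(g\cdot p_0) = \phi(g)\cdot f(p_0).
\end{equation*}
Transporting along the trivialization of $Q$, this says that $f$ is identified with the morphism $G\to H$ sending $g\mapsto \phi(g)\cdot q_0$; that is, $f = R_{q_0}\circ\phi$, where $R_{q_0}\colon H\to H$ denotes right translation by the section $q_0$.

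Finally, $R_{q_0}$ is an isomorphism of $Z$-schemes, with inverse right translation by $q_0^{-1}$, and is therefore smooth and surjective; by hypothesis $\phi$ is smooth and surjective as well. Since the composite of two smooth surjective morphisms is smooth and surjective, $f$ is smooth and surjective over this member of the cover, and hence globally. The only point requiring genuine care is the bookkeeping in the middle step: one must track the $G$- and $H$-actions through the two trivializations to confirm the identification $f = R_{q_0}\circ\phi$. Once that identification is established the conclusion is immediate, so this equivariance computation is the main (and essentially the only) obstacle.
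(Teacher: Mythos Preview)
Your proof is correct and follows essentially the same approach as the paper: reduce Zariski-locally on $Z$ to the case of trivial bundles, then use $\phi$-equivariance to identify $f$ with $\phi$ followed by right translation by the image of the identity section. Your write-up is somewhat more detailed in justifying the localization step, but the core computation is identical.
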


\begin{proof}
The properties of smoothness and surjectivity of a morphism of schemes are local in the Zariski topology of its target.
We may therefore assume that $P=G$ and that $Q=H$.

Let $W$ be a scheme over $Z$.
Let $g:W\to G$ be a morphism over $Z$, which we view as a $W$-valued point of $G$.
Let $e:Z\to G$ denote the identity section of $G$. Then 
\begin{equation*}
f(g)=f(ge_W)=\phi(g)f(e)_W.
\end{equation*}
Thus $f$ agrees with $\phi$ up to the automorphism of $H$ given by right translation by $f(e)$. The result follows.
\end{proof}

Suppose that $S$ is the spectrum of an infinite field $k$.

\begin{corollary}
\label{generic-sigma-i-j}
Let $W\subset \Gamma(X,E)$ be a $k$-linear subspace of finite dimension.
Suppose that $d_E^2(W)$ generates $\PP_X^2 E$ as an $\O_X$-module.
Let $s\in W$ be a general section.
Then every irreducible component of the second-order singular locus $\Sigma^{i,j}(s)\setminus \Sigma^{i,j+1}(s)\subseteq X$ has codimension 
\begin{equation*}
i(|n-e|+i) +
j(n-m+i-j)(e-m+i-1) +\tfrac 1 2 j(j\pm1)(e-m+i)
\end{equation*}
in $X$.
The symbol $\pm$ appearing in this expression should be read as ``plus'' if $k$ has characteristic different from 2 and as ``minus'' if $k$ has characteristic 2.
Furthermore, if $k$ has characteristic zero, then $\Sigma^{i,j}(s)\setminus \Sigma^{i,j+1}(s)$ is smooth.
\end{corollary}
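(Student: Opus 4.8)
The plan is to mirror the proof of the first-order statement, Corollary \ref{1st-sings-generic}, using the second-order jet bundle $J^2 = \mathbf V(\PP_X^2 E)$ in place of $J^1$ and invoking Proposition \ref{univ-sigma-i-j} in place of Proposition \ref{univ-1st-sings}. First I would note that the universal differential operator $d_E^2 : E \to \PP_X^2 E$ is a split injection of sheaves of $k$-vector spaces, split by $\epsilon_{2,0}$ (Remark \ref{surj-pp}); hence it carries $W$ isomorphically onto a finite-dimensional subspace $d_E^2(W) \subseteq \Gamma(X, \PP_X^2 E)$, and a general section $s \in W$ corresponds to a general section $d_E^2 s$ of this subspace. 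Regarding $d_E^2 s$ as a morphism $X \to J^2$, it pulls the tautological section $s_2$ back to $d_E^2 s$ itself (Definition \ref{tautological-section}).

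The crux is the compatibility
\begin{equation*}
(d_E^2 s)^{-1}\bigl(\Sigma^{i,j'}(s_2)\bigr) = \Sigma^{i,j'}(s) \qquad \text{for all } j' \ge 0,
\end{equation*}
as subschemes of $X$. I expect this to be the main obstacle, and I would establish it exactly as the second-order analogue of the identity $(d_E^1 s)^{-1}(\Sigma^{j}(h)) = \Sigma^{j}(s)$ used in Corollary \ref{1st-sings-generic}. It holds because every ingredient of the second-order Thom-Boardman formalism --- the covariant derivative $\overline\nabla$ of a jet, the critical loci, the intrinsic differential and the second intrinsic differential --- is compatible with pullback along the section $d_E^2 s$ of the projection $\pi : J^2 \to X$; here one uses the functoriality of degeneracy loci (Proposition \ref{deg-loci-functor}) and the fact that $\pi \circ d_E^2 s = \mathrm{id}_X$ pulls the relevant connection on $\pi^* E$ back to $\nabla$, together with the identification of the jet-theoretic $\Sigma^{i,j'}(d_E^2 s)$ with the section-theoretic $\Sigma^{i,j'}(s)$.

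With the compatibility in hand, I would apply Serre's lemma (Proposition \ref{serre-bertini}) to the bundle $\PP_X^2 E$, the generating subspace $d_E^2(W)$ (this is where the hypothesis that $d_E^2(W)$ generates $\PP_X^2 E$ enters), and the locally closed subscheme $\Sigma := \Sigma^{i,j}(s_2) \setminus \Sigma^{i,j+1}(s_2) \subseteq J^2$. Write $r := \operatorname{rank}(\PP_X^2 E)$ and let $c_{ij}$ denote the codimension in the statement. By Proposition \ref{univ-sigma-i-j}, $\Sigma$ is smooth over $X$ of pure relative codimension $c_{ij}$ in $J^2$ over $X$; since $X$ is smooth over $k$ of dimension $n$, the scheme $\Sigma$ is smooth over $k$ of pure dimension $n + r - c_{ij}$. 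Serre's lemma then shows that every irreducible component of $(d_E^2 s)^{-1}\Sigma$ has dimension $(n + r - c_{ij}) - r = n - c_{ij}$, hence codimension $c_{ij}$ in $X$, and that this inverse image is smooth when $\operatorname{char} k = 0$. By the compatibility, $(d_E^2 s)^{-1}\Sigma = \Sigma^{i,j}(s) \setminus \Sigma^{i,j+1}(s)$, and the corollary follows.
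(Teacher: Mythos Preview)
Your proposal is correct and follows essentially the same approach as the paper: the paper's proof simply says the result follows from the combination of Propositions \ref{serre-bertini} and \ref{univ-sigma-i-j}, pointing to the proof of Corollary \ref{1st-sings-generic} as the template, which is precisely the argument you have spelled out. Your handling of the compatibility $(d_E^2 s)^{-1}\Sigma^{i,j'}(s_2)=\Sigma^{i,j'}(s)$ via functoriality of degeneracy loci and the identity $\overline\nabla\circ d_E^2 = d^1_{\Omega\otimes E}\circ\nabla$ is the right justification for the step the paper leaves implicit.
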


\begin{proof}
Follows from the combination of Propositions \ref{serre-bertini} and \ref{univ-sigma-i-j}, see the proof of the corresponding result for first-order singularties (Corollary \ref{1st-sings-generic}).
\end{proof}

\chapter{Irrationality}

\section{A necessary condition for separable uniruledness}

\begin{definition}
Let $X$ be an integral scheme.
Let $F$ be a sheaf of $\O_X$-modules. 
The \textbf{torsion subsheaf} of $F$, denoted $F_\mathrm{tors}$, is the kernel of the natural map $F\to F\otimes_{\O_X} k(X)$.
\end{definition}

\begin{remark}
\label{torsion-free-pullback}
A dominant map of integral schemes $f:Y\to X$ induces an $\O_X$-linear map $k(X)\to f_*k(Y)$, hence a natural diagram of $\O_Y$-modules:
\begin{equation*}
\begin{tikzcd}
\arrow{r} \arrow["\alpha"]{d} f^*(F_\mathrm{tors}) &
\arrow{r} \arrow[equals]{d} f^*F &
\arrow["\beta"]{d} f^* (F \otimes_{\O_X} k(X))\\
( f^*F )_\mathrm{tors} \arrow{r}&
\arrow{r}f^* F &
f^*F \otimes_{\O_Y} k(Y)
\end{tikzcd}
\end{equation*}
The natural map $f^*(F/F_\mathrm{tors}) \to (f^*F) /(f^*M)_\mathrm{tors}$ is an isomorphism, see Lemma \ref{tf-ring-change} and Remark \ref{torsion-stalks} below.
\end{remark}

The proof of the main theorem in this thesis makes use of the following strengthening of a result of Koll\'ar's \cite[Lemma 7]{kollar1995}.

\begin{lemma}
\label{not-uniruled}
Let $k$ be a field with algebraic closure $\bar k$.
Let $X$ be a geometrically integral, proper $k$-scheme.
Let $i$ be a positive integer.
Let $T:= (\Omega_X^i)_\mathrm{tors}$.
Let $Q$ be a big line bundle on $X$.
Suppose that there exists an injection of $\O_X$-modules $q:Q\hookrightarrow \Omega_X^i/T$.
Then $X_{\bar k}$ is not separably uniruled, hence not ruled.
\end{lemma}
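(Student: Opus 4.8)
The plan is to adapt Koll\'ar's argument for smooth varieties (\cite[Lemma 7]{kollar1995}); the genuinely new point is to carry the hypothesis through the torsion subsheaf $T$. Set $n := \dim X$. First I would reduce to the case $k = \bar k$: since $X$ is geometrically integral, $X_{\bar k}$ is integral, and because $\bar k/k$ is flat the formation of $\Omega^i_X$, of its torsion subsheaf, and of the injection $q$ all commute with the base change, while bigness of $Q$ is preserved. Thus it suffices to prove that $X_{\bar k}$ is not separably uniruled, and I may assume $k$ algebraically closed from now on.

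Assume for contradiction that $X$ is separably uniruled. Then there exist a variety $Y$ with $\dim Y = n-1$ and a dominant, separable rational map $e : U := Y\times \P^1 \dashrightarrow X$ that is non-constant on the general fibre $F_y := \{y\}\times\P^1$ of the projection $\pi : U\to Y$; shrinking $Y$ to a dense smooth open subset (which only discards non-general fibres), I may take $U$ smooth. As $U$ is smooth and $X$ is proper, $e$ is a morphism away from a closed subset $Z\subseteq U$ of codimension $\ge 2$, the general fibre $F_y$ is contained in $U\setminus Z$, and $e|_{F_y}$ is non-constant, hence finite onto its image. Since $\dim U = \dim X$, the map $e$ is dominant and generically finite.

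The heart of the proof is to produce an injection $\psi : e^*Q \hookrightarrow \Omega_U^i$ over $U\setminus Z$. Because $e$ is separable and generically finite between integral schemes of the same dimension, the differential $de : e^*\Omega_X \to \Omega_U$ is an isomorphism at the generic point (its cokernel $\Omega_{U/X}$ vanishes there by separability, and both sheaves have rank $n$ as $X$ is generically smooth over $\bar k$ and $U$ is smooth); hence $\wedge^i de$ is injective modulo torsion. Since $e$ is dominant, Remark \ref{torsion-free-pullback} identifies $e^*(\Omega_X^i/T)$ with $e^*\Omega_X^i$ modulo its torsion, and the latter maps to $\Omega_U^i$, which is locally free, hence torsion-free, because $U$ is smooth. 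Composing $e^*q$ with these maps yields a homomorphism $e^*Q\to\Omega_U^i$ that is nonzero at the generic point of $U$, hence injective. Restricting to a general fibre, $\psi|_{F_y}$ is nonzero; its source $e^*Q|_{F_y}$ is a line bundle of degree $d := Q\cdot e_*[F_y]$, and its target splits, since $U=Y\times\P^1$ has trivial relative normal bundle along $F_y$, as $\Omega_U^i|_{F_y} = \wedge^i\bigl(\O_{\P^1}^{\,n-1}\oplus \O_{\P^1}(-2)\bigr)$, a direct sum of line bundles of degree $\le 0$.

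To finish, bigness of $Q$ gives $d>0$: writing $Q\equiv A+E$ with $A$ ample and $E$ effective, the curves $e(F_y)$ cover $X$, so a general one avoids $\operatorname{Supp}(E)$, whence $Q\cdot e_*[F_y] = A\cdot e_*[F_y] + E\cdot e_*[F_y] > 0$. But a line bundle of positive degree on $\P^1$ admits no nonzero homomorphism to a direct sum of line bundles of degree $\le 0$, contradicting $\psi|_{F_y}\ne 0$. Hence $X_{\bar k}$ is not separably uniruled; and since a ruled variety is separably uniruled (the projection $W\times\P^1\dashrightarrow X$ is separable and does not contract the ruling), it is not ruled either. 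The step I expect to be the main obstacle is the construction of $\psi$: in the singular setting one must descend $q$ through the torsion subsheaf and across the merely generically finite map $e$, and it is exactly here that Remark \ref{torsion-free-pullback} and the separability of $e$ are indispensable --- the ingredients that are invisible in Koll\'ar's smooth case, where $T=0$.
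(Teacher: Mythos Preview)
Your overall strategy is sound and is a legitimate variant of the paper's argument. Both rest on the same key step --- producing an injection $e^*Q \hookrightarrow \Omega_U^i$ by passing through the torsion quotient via Remark~\ref{torsion-free-pullback} --- and you handle that step correctly. The difference lies in how the contradiction is extracted: the paper works with global sections, showing via K\"unneth that $H^0(Y\times\P^1,(\Omega_{Y\times\P^1}^i)^{\otimes m})$ is pulled back from $Y$ and hence cannot separate two points on a $\P^1$-fibre, whereas you restrict $\psi$ to a single general fibre $F_y$ and argue by degree. Your route is arguably more direct.

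There is, however, one genuine gap. To obtain $d>0$ you invoke Kodaira's lemma, writing $Q\equiv A+E$ with $A$ ample and $E$ effective. This requires $X$ to be \emph{projective}, not merely proper: a proper non-projective variety carries no ample line bundles at all, so no such decomposition exists. The paper deals with this at the outset by applying Chow's lemma to pass to a projective birational model $\pi:X'\to X$, and then checks that the injection $\pi^*Q\hookrightarrow \Omega_{X'}^i/(\Omega_{X'}^i)_{\mathrm{tors}}$ persists (it is injective at the generic point, hence everywhere, since $\pi^*Q$ is torsion-free) and that $\pi^*Q$ remains big. You need this reduction as well; once it is in place your degree argument goes through. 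A related minor point: you say a general $e(F_y)$ ``avoids $\operatorname{Supp}(E)$'', but since these curves cover $X$ they cannot avoid any divisor --- what you mean, and what suffices, is that a general $e(F_y)$ is not \emph{contained} in $\operatorname{Supp}(E)$, whence $E\cdot e_*[F_y]\ge 0$.
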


\begin{remark}
Thus we do not assume that $X$ is smooth, as Koll\'ar does.
If resolutions of singularities are known to exist, the lemma stated follows from the one in \cite{kollar1995}.
The proof that we give below is modeled on Koll\'ar's. 
\end{remark}

\begin{remark}
The reason we divide by torsion in the statement of Lemma \ref{not-uniruled} is that, when $X$ is singular, it can be easier to produce a map from a line bundle into $\Omega_X^i/T$ than one into $\Omega_X^i$.
Below we will start with a map $q' : Q\to \Omega_X^i \otimes k(X)$ and show that, locally on $X$, it factors through a map $Q\to \Omega_X^i$.
The factoring maps won't glue, in general, but their existence implies the image of $q'$ is contained in the subsheaf $\Omega_X^i/T\subseteq \Omega_X^i\otimes k(X)$.
\end{remark}

\begin{proof}
By Chow's lemma, there exists a projective $k$-variety $X'$ and a birational map $\pi: X'\to X$. The composition
\begin{equation*}
\pi^* Q \xrightarrow{\pi^* q} q^*(\Omega_X^i)/(q^*\Omega_X^i)_\mathrm{tors} \to \Omega_{X'}^i/(\Omega_{X'}^i)_\mathrm{tors}
\end{equation*}
is injective at the generic point $X'$, hence everywhere, since $q^* Q$ is torsion free.
Therefore, after replacing $X$ with $X'$, we may assume that $X$ is projective.
Similarly, considering the projection $X_{\bar k}\to X$, we may furthermore assume that $k$ is algebraically closed.

Aiming for a contradiction, suppose that $X$ is separably uniruled. Then there exists an irreducible $k$-variety $Y$ together with a generically finite, dominant and separable rational map $f:Y\times \P^1\dashrightarrow X$.

The restriction
\begin{equation*}
f:\Spec k(Y)\times \P^1 = \P^1_{k(Y)}\dashrightarrow X
\end{equation*}
is a morphism by properness of $X$. Thus by shrinking $Y$ we may assume $f$ is a morphism.

Being $f$  separable and generically finite, the natural map
\begin{equation}
f^*\Omega_X\to \Omega_{Y\times \P^1}
\label{criterion-map}
\end{equation}
is surjective at the generic point of $Y\times \P^1$. Because $X$ is generically smooth, we have
\begin{equation*}
\dim_{k(X)}( \Omega_X\otimes k(X)) = \dim X = \dim Y +1 \le
\dim_{k(Y\times \P^1)} (\Omega_{Y\times \P^1}\otimes k(Y\times \P^1)).
\end{equation*}
It follows that  (\ref{criterion-map}) is bijective at the generic point of $Y\times \P^1$ and that $Y$ is generically smooth. Shrinking $Y$, we may assume  $Y$ is smooth.

The $i$th exterior power of (\ref{criterion-map}) vanishes on $f^*T$ by smoothness of $Y$. The resulting composition 
\begin{equation*}
f^*Q \to f^*(\Omega_X^i/T) \to \Omega_{Y\times \P^1}^i
\end{equation*}
is injective at the generic point of $Y\times \P^1$. Because $f^*Q$ is torsion free, the composition is in fact injective everywhere. More generally, for any integer $m>0$, the map
\begin{equation*}
f^*Q^{\otimes m}\to (\Omega_{Y\times \P^1}^i)^{\otimes m}
\end{equation*}
is injective at the generic point of $Y\times \P^1$ and therefore everywhere.

Let $V\subseteq Y\times \P^1$ be the open subset of consisting of the the points $z\in Y\times \P^1$ such that the map
\begin{equation*}
f^* Q \otimes k(z) \to \Omega_{Y\times \P^1}^i \otimes k(z)
\end{equation*}
is injective. Thus, given a section $s\in f^*Q^{\otimes m}$ and a point $z\in V$, to check whether $s$ vanishes at $z$ it suffices to check whether the image of $s$ in $(\Omega_{Y\times \P^1}^i)^{\otimes m}$ vanishes at $z$. Note that $V$ contains the generic point of $Y\times \P^1$, so is dense.

Because $Q$ is big, there exist, by Kodaira's lemma, Cartier divisors $A$ and $E$ on $X$, the first very ample and the second effective, and an integer $m>0$ such that there exists an isomorphism $Q^{\otimes m} \cong \O_{X}(A+E)$. Choose such $A$, $E$ and $m$ and let $U:= X\setminus E$.

Because $f$ is generically finite, there exists a dense open subset $U'\subseteq X$ over which $f$ is finite.

Let $W=f^{-1}(U\cap U')\cap V$. This is a dense open subset of $Y\times \P^1$. Hence there exists a closed point $y\in Y$ such that the intersection $(y\times \P^1)\cap W$ is nonempty. This intersection in fact contains infinitely many closed points, so by definition of $U'$ there are two of them, say $z_1,z_2\in (y\times \P^1)\cap W$, which have distinct images in $U'\cap U\subseteq X$. By definition of $U$, these two points $z_1$ and $z_2$ are separated by a global section of $f^*Q^{\otimes m}$. By definition of $V$, it follows that $z_1,z_2\in y\times \P^1$ are separated by global sections of $( \Omega_{Y\times\P^1}^i)^{\otimes m}$. This is impossible, since 
\begin{equation}
H^0(Y\times\P^1,(\Omega_{Y\times \P^1}^i)^{\otimes m})
= H^0(Y,(\Omega_Y^i)^{\otimes m})\otimes_k H^0(\P^1,\O_{\P^1}).
\label{Kuenneth}
\end{equation}

To verify  (\ref{Kuenneth}), we compute:
\begin{align*}
(\Omega_{Y\times \P^1}^i)^{\otimes m}
&= (\wedge^i (\mathrm{pr}_1^*\Omega_Y \otimes \mathrm{pr}_2^*\Omega_{\P^1}) )^{\otimes m}\\
&= ( \wedge^i \mathrm{pr}_1^*  \Omega_Y \oplus (\wedge^{i-1}\mathrm{pr}_1^*  \Omega_Y\otimes \mathrm{pr}_2^*\Omega_{\P^1}) )^{\otimes m}\\
&= \oplus_{a+b=m} \mathrm{pr}_1^*(
(\wedge^{i-1}\Omega_Y)^{\otimes a} \otimes (\wedge^i \Omega_Y)^{\otimes b} ) \otimes
\mathrm{pr}_2^* (\Omega_{\P^1})^{\otimes a}
\end{align*}
The K\"unneth formula (see \cite[Proposition 9.2.4]{kempf1993} or \cite[Tag 0BEF]{stacks-project}) now gives (\ref{Kuenneth}), since $\Omega_{\P^1}^{\otimes a}$ has no global sections for $a>0$.
\end{proof}

\section{Inseparable covers}

Let $k$ be a field of positive characteristic $p$.
Let $X$ be a smooth, connected scheme of dimension $n$ over $k$.

\begin{definition}
The \textbf{(absolute) Frobenius morphism} of $X$ is the morphism of schemes $F_X : X\to X$ that acts as the identity on the topological space of $X$ and whose co-morphism $F_X^\# : \O_X\to (F_X)_* \O_X$ sends $f\mapsto f^p$. 
\end{definition}

\begin{remark}
If $M$ is a sheaf of $\O_X$-modules, the pushforward $(F_X)_* M$ has the same underlying sheaf of abelian groups as $M$, but is equipped with the action of $\O_X$ given by $f\cdot s = f^p s$.
\end{remark}

\begin{example}
\label{frobenius-line-bundle}
Let $L$ be an invertible sheaf on $X$.
Then the sheaf morphism
$
L\to (F_X)_* L^{\otimes p}
$
that sends $s\mapsto s^{\otimes p}$ is $\O_X$-linear. By adjunction, it induces a map $\O_X$-modules $F_X^* L \to L^{\otimes p}$, which is an isomorphism. (To see this, consider transition functions.)
\end{example}

Let $E$ be a locally free $\O_X$-module of finite rank $e$.
Suppose that $e\le n$. 

\begin{definition}
\label{can-conn-def}
The \textbf{canonical connection} on the Frobenius pullback $F_X^*E$ is the sheaf morphism
\begin{equation*}
\nabla : F_X^* E\to \Omega_X \otimes F_X^* E.
\end{equation*}
induced by the universal derivation $d :\O_X\to \Omega_X$, which is an $F_X^{-1} \O_X$-linear map, via the canonical isomorphism  $F_X^* E \cong F_X^{-1} E \otimes_{F^{-1}\O}\O_X$.
\end{definition}

In the sequel we regard the Frobenius pullback $F_X^*E$ as equipped with its canonical connection $\nabla$. Hence we may speak of critical loci of sections of this bundle.

Let $s\in \Gamma(X,F_X^* E)$ be a global section.

\begin{example}
\label{can-conn-locally}
Suppose that the $\O_X$-module $E$ is free. Let  $\{v_1, \dotsc, v_e \} \subseteq \Gamma(X,E)$ be a basis. 
Then
\begin{equation*}
s = f_1 \cdot F_X^* v_1 + \dotsb + f_e \cdot F_X^* v_e
\end{equation*}
for uniquely determined $f_1,\dotsc,f_e\in \Gamma(X,\O_X)$, and 
\begin{equation*}
\nabla s = d f_1 \otimes F_X^* v_1 + \dotsb + df_e \otimes F_X^* v_e.
\end{equation*}
The first critical locus of $s$, which is by definition equal to the first degeneracy locus of the map $\T_X\to F_X^*E$ induced by $\nabla s$, is the subscheme
\begin{equation*}
\Sigma^1 (s) = \{ df_1\wedge \dotsb \wedge df_e =0\}\subseteq X.
\end{equation*}
\end{example}

\begin{construction}
\label{insep-cover-constr}
Let $V:= \mathbf V(E)$ be the vector bundle associated to $E$.
Let $\pi:V\to X$ be the projection.
Let $F_V:V\to V$ be the absolute Frobenius morphism of $V$. Then the following diagram commutes:
\begin{equation*}
\begin{tikzcd}
V \ar["F_V"]{r} \ar["\pi"]{d} & V\ar["\pi"]{d} \\
X \ar["F_X"]{r} & X
\end{tikzcd}
\end{equation*}
Let $\tau \in \Gamma(V,\pi^* E)$ be the tautological section. The \textbf{inseparable cover} of $X$ determined by $s$ is the restriction of $\pi$ to
\begin{equation*}
X[\pthroot{s}] := \{ F_V^* \tau  = \pi^* s \} \subseteq \mathbf V(E).
\end{equation*}
\end{construction}

\begin{example}
\label{insep-cover-locally}
Suppose that the $\O_X$-module $E$ is free. Let  $\{v_1, \dotsc, v_e \} \subseteq \Gamma(X,E)$ be a basis. 
Then
\begin{equation*}
s = f_1 \cdot F_X^* v_1 + \dotsb + f_e \cdot F_X^* v_e
\end{equation*}
for uniquely determined $f_1,\dotsc,f_e\in \Gamma(X,\O_X)$.
The vector bundle associated to $E$ is $V=X \times \A^e$.
Let $t_1,\dotsc,t_e$ denote the coordiantes on $\A^e$.
The tautological section $\tau\in \Gamma(V,\pi^* E)$ is
\begin{equation*}
\tau = t_1\cdot  \pi^* v_1 + \dotsb + t_e\cdot \pi^* v_e. 
\end{equation*}
Because $\pi F_X = F_V \pi$, the Frobenius pullback of $\tau$ is
\begin{equation*}
F_V^* \tau = t_1^p\cdot \pi^* F_X^* v_1 + \dotsb + t_e^p \cdot \pi^* F_X^* v_e. 
\end{equation*}
Thus $X[\pthroot{s}]$ is the subscheme 
\begin{equation*}
\{ t_1^p - f_1 = \dotsb = t_e^p - f_e = 0 \}\subset X\times \A^r. 
\end{equation*}
\end{example}

\begin{example}
If $E$ is invertible, the inseparable cover $\pi : X[\pthroot s]\to X$ is cyclic of degree $p$, see Example \ref{frobenius-line-bundle} and Definition \ref{CyclicCovers} below.
\end{example}

\begin{proposition}
The morphism $\pi: X[\pthroot s] \to X$ is finite, flat and induces a homeomorphism of underlying spaces.
\end{proposition}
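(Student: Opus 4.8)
The plan is to reduce everything to the explicit local model of $X[\pthroot s]$ furnished by Example \ref{insep-cover-locally}, and to treat the three assertions in turn. All three properties are local on the target $X$, so I would pass to a Zariski open $U\subseteq X$ over which $E$ is free, fix a basis $v_1,\dotsc,v_e$, and write $s=\sum_l f_l\cdot F_X^* v_l$ with $f_l\in\Gamma(U,\O_X)$. By Example \ref{insep-cover-locally} the restriction $\pi^{-1}(U)\to U$ is then identified with $\Spec$ of the $\O_U$-algebra $B=\O_U[t_1,\dotsc,t_e]/(t_1^p-f_1,\dotsc,t_e^p-f_e)$, and the whole proposition becomes a statement about this $\O_U$-algebra.

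For finiteness and flatness I would observe that, using the relations $t_l^p=f_l$ to lower exponents, the monomials $t_1^{a_1}\dotsm t_e^{a_e}$ with $0\le a_l\le p-1$ generate $B$ as an $\O_U$-module and are $\O_U$-linearly independent. Hence $B$ is a free $\O_U$-module of rank $p^e$, so $\pi$ is finite and, being finite locally free, flat. Since $B$ is free of positive rank, the structure map $\O_U\to B$ is in particular injective, so $\pi$ is faithfully flat and therefore surjective.

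The remaining and main point is that $\pi$ is a homeomorphism of underlying spaces. Being finite, $\pi$ is closed; combined with surjectivity it then suffices to show that $\pi$ is injective on points, for which I would compute the fibers. For a point $x\in X$ the scheme-theoretic fiber is $\Spec$ of $B\otimes_{\O_U}\kappa(x)=\bigotimes_{l=1}^e \kappa(x)[t_l]/(t_l^p-\bar f_l)$, the tensor product being taken over $\kappa(x)$. Over a field of characteristic $p$ the polynomial $t^p-a$ is either irreducible or of the form $(t-b)^p$; in both cases each factor $\kappa(x)[t_l]/(t_l^p-\bar f_l)$ is a local $\kappa(x)$-algebra whose residue field is purely inseparable over $\kappa(x)$, so each factor map to $\Spec\kappa(x)$ is a finite radicial morphism.

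The crux is to pass from the individual factors to their tensor product, and this is the step I expect to be the only subtle one. The cleanest route is to recall that a finite radicial morphism is a universal homeomorphism, and that universal homeomorphisms are stable under base change and composition. Applying this to the factor maps above shows that the fiber $\Spec(B\otimes_{\O_U}\kappa(x))\to\Spec\kappa(x)$ is itself a universal homeomorphism, so its source is a single point. This gives the injectivity of $\pi$ on points; together with the surjectivity and closedness already established, it shows that $\pi$ is a continuous closed bijection, hence a homeomorphism, completing the proof.
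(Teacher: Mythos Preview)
Your proof is correct and follows essentially the same approach as the paper: reduce to the explicit local model, observe that the algebra is free of finite rank to obtain finiteness and flatness, and then verify that the fibers are single points. The only cosmetic difference is in the fiber computation: the paper simply passes to an algebraically closed field $K$, where $t^p-f=(t-f^{1/p})^p$ makes the one-point claim immediate, whereas you stay over the residue field and invoke the stability of radicial (universal homeomorphism) morphisms under base change and composition; both arguments are standard and equally valid.
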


\begin{proof}
If $A$ is a ring and $f\in A$, then $A[t]/(t^p-f)$ is free $A$-module of rank $p$. Hence $\pi: X[\pthroot s] \to X$ is finite and flat. If $K$ is an algebraically closed field of characteristic $p$ and $f\in K$, then the spectrum of $K[t]/(t^p-f)$ consists of a single point. Thus the fibers of $\pi: X[\pthroot s] \to X$ consist of single points. The result follows.
\end{proof}

\begin{proposition}
The inseparable cover $X[\pthroot s]$ is a local complete intersection of codimension $e$ in the smooth $k$-scheme $\mathbf V(E)$.
\end{proposition}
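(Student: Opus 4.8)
The assertion is local on $X$, so the plan is to work over an affine open subset $U = \Spec A \subseteq X$ small enough that $E$ is free on $U$. Fixing a basis $v_1,\dotsc,v_e$ of $E$ over $U$ and writing $s = \sum_{l} f_l\cdot F_X^* v_l$ with $f_l\in A$, Example \ref{insep-cover-locally} identifies the restriction of $X[\pthroot s]$ over $U$ with the closed subscheme of $\mathbf V(E)|_U = \Spec A[t_1,\dotsc,t_e]$ cut out by the $e$ equations $t_1^p - f_1 = \dotsb = t_e^p - f_e = 0$. Thus the ideal of $X[\pthroot s]$ is locally generated by exactly $e$ elements, and the remaining work is to check that these generators form a regular sequence and that the resulting codimension equals $e$.

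To verify the regular-sequence property I would argue by induction on $l$. Each polynomial $g_l := t_l^p - f_l$ is monic of degree $p$ in the single variable $t_l$, and a monic polynomial in one variable over an arbitrary commutative ring is a non-zero-divisor in the corresponding polynomial ring, by comparison of leading coefficients. Applying this to $g_1$, which is monic in $t_1$ over $A[t_2,\dotsc,t_e]$, shows $g_1$ is a non-zero-divisor in $A[t_1,\dotsc,t_e]$, with quotient $A_1[t_2,\dotsc,t_e]$ where $A_1 := A[t_1]/(t_1^p - f_1)$. Now $g_2$ is monic in $t_2$ over $A_1[t_3,\dotsc,t_e]$, hence a non-zero-divisor in $A_1[t_2,\dotsc,t_e]$; iterating yields that $g_1,\dotsc,g_e$ is a regular sequence in $A[t_1,\dotsc,t_e]$. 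This monic-in-one-variable observation is the decisive input, and it is the one step I would be careful to state cleanly, though I do not expect it to present a genuine obstacle.

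Finally, the vector bundle $\mathbf V(E)$ is smooth over $k$, being an affine-space bundle over the smooth $k$-scheme $X$. Having exhibited $X[\pthroot s]$ locally as the zero scheme of a regular sequence of length $e$ inside the smooth scheme $\mathbf V(E)$, I conclude that $X[\pthroot s]$ is a local complete intersection. Its codimension is exactly $e$: one has $\dim X[\pthroot s] = \dim X = \dim \mathbf V(E) - e$, using that $\pi : X[\pthroot s] \to X$ is finite and a homeomorphism onto $X$, as established in the preceding proposition.
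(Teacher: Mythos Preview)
Your proof is correct and follows the same approach as the paper: both rely on the local description from Example~\ref{insep-cover-locally}. The paper's proof is the single sentence ``Clear from the local picture of Example~\ref{insep-cover-locally},'' whereas you have carefully spelled out why the equations $t_l^p - f_l$ form a regular sequence (via the monic-in-one-variable argument) and why the codimension is exactly $e$; these details are precisely what the paper leaves implicit.
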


\begin{proof}
Clear from the local picture of Example \ref{insep-cover-locally}.
\end{proof}

\begin{proposition}
\label{insep-cov-sing}
The inverse image $\pi^{-1}\Sigma^1(s)\subseteq X[\pthroot s]$ is the locus where $X[\pthroot s]$ is not smooth over $k$.
\end{proposition}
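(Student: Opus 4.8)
The plan is to check the claim Zariski-locally on $X$ by means of the Jacobian criterion for smoothness, using the explicit equations of Example \ref{insep-cover-locally}. Since both smoothness and the formation of $\pi^{-1}\Sigma^1(s)$ are local on $X$, I would first restrict to an open subset on which $E$ is free, fix a basis $v_1,\dots,v_e$ of $E$, and write $s=\sum_{i=1}^e f_i\cdot F_X^* v_i$ with $f_i\in\Gamma(X,\O_X)$. Then $X[\pthroot s]$ is the closed subscheme of $X\times\A^e$ cut out by $g_i:=t_i^p-f_i$ for $i=1,\dots,e$, and it is a local complete intersection of codimension $e$ in the smooth $(n+e)$-dimensional $k$-scheme $X\times\A^e$, as established above.

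Next I would invoke the conormal sequence. For a local complete intersection $Z=V(g_1,\dots,g_e)$ inside a smooth $k$-scheme $M$, the conormal sheaf $I/I^2$ is locally free of rank $e$ with basis the classes $\overline{g_1},\dots,\overline{g_e}$, and $Z$ is smooth over $k$ at a point $z$ if and only if the conormal map $I/I^2\to \Omega_M|_Z$, $\overline{g_i}\mapsto dg_i$, is injective after tensoring with the residue field $k(z)$; equivalently, if and only if $dg_1,\dots,dg_e$ are linearly independent in $\Omega_M\otimes k(z)$. Here $M=X\times\A^e$, so $\Omega_M=\mathrm{pr}_1^*\Omega_X\oplus\mathrm{pr}_2^*\Omega_{\A^e}$.

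The crux is the characteristic-$p$ computation $d(t_i^p)=p\,t_i^{p-1}\,dt_i=0$, which gives $dg_i=-\,\mathrm{pr}_1^* df_i$; thus every $dg_i$ lies in the summand $\mathrm{pr}_1^*\Omega_X$ and involves only the coordinates of $X$, none of the fibre coordinates $t_j$. Consequently, for a point $z\in X[\pthroot s]$ lying over $x:=\pi(z)\in X$, the differentials $dg_1,\dots,dg_e$ are linearly independent in $\Omega_M\otimes k(z)$ if and only if $df_1,\dots,df_e$ are linearly independent in $\Omega_X\otimes k(z)$; and since $k(z)$ is a field extension of $k(x)$, this holds if and only if $df_1,\dots,df_e$ are already linearly independent in $\Omega_X\otimes k(x)$.

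Finally I would match this condition with membership in the critical locus. By Example \ref{can-conn-locally}, $\Sigma^1(s)=\{df_1\wedge\dots\wedge df_e=0\}$, so $x\in\Sigma^1(s)$ precisely when $df_1,\dots,df_e$ fail to be linearly independent at $x$. Combining the two preceding paragraphs, $X[\pthroot s]$ is smooth over $k$ at $z$ if and only if $x=\pi(z)\notin\Sigma^1(s)$, that is, if and only if $z\notin\pi^{-1}\Sigma^1(s)$; hence the non-smooth locus of $X[\pthroot s]$ is exactly $\pi^{-1}\Sigma^1(s)$, as asserted. I do not expect a serious obstacle here: the one point requiring care is tracking the residue-field extension $k(z)/k(x)$ when transferring the linear-independence condition from $z$ down to $x$, which is harmless because linear independence of finitely many vectors is insensitive to field extension.
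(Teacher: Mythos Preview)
Your proposal is correct and follows essentially the same direct argument as the paper: compute $d(t_i^p-f_i)=-df_i$ in characteristic $p$, so the Jacobian criterion reduces smoothness of $X[\pthroot s]$ at $z$ to linear independence of $df_1,\dotsc,df_e$ at $\pi(z)$, which is exactly the complement of $\Sigma^1(s)=\{df_1\wedge\dotsb\wedge df_e=0\}$. The paper's proof is terser but identical in substance; you have simply spelled out the conormal-sequence and residue-field steps that the paper leaves implicit.
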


\begin{proof}
This follows from Propositions \ref{omega-insep-cov} and \ref{insep-C-loc-free} below, but we give a direct argument.
In the notation of Example \ref{insep-cover-locally}, we have $d(t_l^p - f_l) = - df_l$ for all $l=1,\dotsc,e$.
On the other hand, $\Sigma^1(s) =\{df_1\wedge \dotsb \wedge df_e=0\}$ as subschemes of $X$, by Example \ref{can-conn-locally}.
Thus $\pi^{-1}\Sigma^1(s)\subseteq X[\pthroot s]$ is the locus where the differentials of the equations defining $X[\pthroot s]$ as a subscheme of $\mathbf V(E)$ are not independent.
\end{proof}

\begin{proposition}
\label{insep-cover-normal}
Let $c$ denote the codimension of $\Sigma^1(s)$ in $X$.
\begin{enumerate}
\item $X[\pthroot s]$ is geometrically integral if, and only if, $c\ge 1$.
\item $X[\pthroot s]$ is geometrically normal if, and only if, $c\ge 2$.
\end{enumerate}
\end{proposition}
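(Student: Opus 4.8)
The plan is to deduce both statements from Serre's criteria for reducedness ($R_0+S_1$) and normality ($R_1+S_2$), using that $X[\pthroot s]$ is a local complete intersection and that its non-smooth locus has already been identified in Proposition~\ref{insep-cov-sing}.

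First I would reduce to the case where $k$ is algebraically closed. Both objects involved---the inseparable cover $X[\pthroot s]$ and the critical locus $\Sigma^1(s)$---are defined by equations (Construction~\ref{insep-cover-constr} and Example~\ref{can-conn-locally}), so their formation commutes with the base change $k\to\bar k$; moreover base change along a field extension preserves dimensions and hence the codimension $c$. Since geometric reducedness and geometric normality are by definition detected after base change to $\bar k$, it suffices to show that $X[\pthroot s]$ is integral (resp.\ normal) if and only if $c\ge 1$ (resp.\ $c\ge 2$) when $k=\bar k$. The advantage of a perfect base field is that for a scheme of finite type regularity and smoothness coincide.

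Next I would assemble the structural inputs. The morphism $\pi:X[\pthroot s]\to X$ is finite, flat and a homeomorphism, and being given locally by $t_l^p=f_l$ it is a universal homeomorphism; thus it remains a homeomorphism after base change and inherits irreducibility from $X$, so $X[\pthroot s]$ is irreducible of pure dimension $n$ and $\pi^{-1}\Sigma^1(s)$ has codimension exactly $c$ in $X[\pthroot s]$. Because $X[\pthroot s]$ is a local complete intersection in the smooth scheme $\mathbf V(E)$, it is Cohen--Macaulay and hence satisfies $S_j$ for all $j$. By Proposition~\ref{insep-cov-sing}, combined with the identification of regularity and smoothness over $\bar k$, the non-regular locus of $X[\pthroot s]$ is precisely $\pi^{-1}\Sigma^1(s)$.

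Finally I would run Serre's criteria. For (1): since $S_1$ holds, $X[\pthroot s]$ is reduced if and only if it satisfies $R_0$, i.e.\ its non-regular locus $\pi^{-1}\Sigma^1(s)$ has positive codimension, which means exactly $c\ge 1$; together with irreducibility this gives integral $\iff c\ge 1$. (Concretely, if $c=0$ then $\Sigma^1(s)=X$ by irreducibility, so $X[\pthroot s]$ is nowhere regular, fails $R_0$, and being Cohen--Macaulay is therefore non-reduced.) For (2): since $S_2$ holds, $X[\pthroot s]$ is normal if and only if it satisfies $R_1$, i.e.\ $\pi^{-1}\Sigma^1(s)$ has codimension at least $2$, which means exactly $c\ge 2$. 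The step demanding the most care is the reduction to $\bar k$---checking that the cover, the critical locus, and the codimension $c$ are all preserved, and that regularity can be read off smoothness there---after which both equivalences follow immediately from Serre's criteria, the geometric work having been absorbed into Proposition~\ref{insep-cov-sing}.
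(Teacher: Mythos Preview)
Your proof is correct and follows essentially the same approach as the paper: pass to $\bar k$, use that the cover is irreducible (homeomorphic to $X_{\bar k}$) and Cohen--Macaulay (as a local complete intersection), then apply Serre's criteria together with the identification of the singular locus from Proposition~\ref{insep-cov-sing}. You spell out more carefully the base-change reduction and the passage from smoothness to regularity over a perfect field, which the paper leaves implicit.
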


\begin{proof}
Let $\overline k$ be an algebraic closure of $k$.
Note that $X[\pthroot s]_{\bar k}$ is homeomorphic to $X_{\bar k}$, hence irreducible.
Being a local complete intersection, $X[\pthroot s]_{\bar k}$ has Serre's property $S_l$ for every $l\ge 0$. In particular, $X[\pthroot s]_{\bar k}$ is reduced if, and only if, it is regular in codimension zero; and $X[\pthroot s]_{\bar k}$ is normal if, and only if, it is regular in codimension 1.
\end{proof}

Let $C$ denote the cokernel of the $\O_X$-linear map $F_X^* E^\vee \to \Omega_X$ induced by the covariant derivative $\nabla s$.

\begin{proposition}
\label{omega-insep-cov}
Let $Y := X[\pthroot s]$.
The sheaf of differentials of $Y$ sits in a short exact sequence 
\begin{equation*}
\begin{tikzcd}
0 \ar[r] &
\pi^*C \ar[r,"\alpha"] &
\Omega_Y \ar[r] &
\pi^* E^\vee \ar[r] &
0,
\end{tikzcd}
\end{equation*}
where the map $\alpha$ is induced by the co-differential $d\pi^\vee : \pi^* \Omega_X\to \Omega_Y$.
\end{proposition}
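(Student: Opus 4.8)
The plan is to combine two standard exact sequences---the conormal sequence of the regular immersion $i : Y \hookrightarrow \mathbf V(E)$ and the relative cotangent sequence of the smooth projection $\mathbf V(E) \to X$---and to exploit the fact that Frobenius has vanishing differential in order to see how the two fit together.

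Write $V := \mathbf V(E)$, let $\pi_V : V \to X$ be the projection and $i : Y \hookrightarrow V$ the closed immersion, so that $\pi = \pi_V \circ i$. As noted above, $Y$ is a local complete intersection of codimension $e$ in the smooth $k$-scheme $V$, so the conormal sequence
\begin{equation*}
\mathscr I/\mathscr I^2 \xrightarrow{\delta} i^* \Omega_V \to \Omega_Y \to 0
\end{equation*}
is right exact and exhibits $\Omega_Y$ as the cokernel of $\delta$, where $\mathscr I$ is the ideal sheaf of $Y$. On the other hand, because $\pi_V$ is smooth with $\Omega_{V/X} \cong \pi_V^* E^\vee$ (Remark \ref{vb-differentials}), restricting the relative cotangent sequence to $Y$ yields a short exact sequence of locally free sheaves
\begin{equation*}
0 \to \pi^* \Omega_X \xrightarrow{a} i^* \Omega_V \xrightarrow{b} \pi^* E^\vee \to 0,
\end{equation*}
which is exact since the relative cotangent sequence is locally split.

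The key step is to compute $\delta$. Working locally as in Example \ref{insep-cover-locally}, with $E$ free on a basis $v_1,\dots,v_e$ and $s = \sum_l f_l\, F_X^* v_l$, the ideal $\mathscr I$ is generated by $g_l := t_l^p - f_l$, and $\mathscr I/\mathscr I^2 \cong \pi^* F_X^* E^\vee$ via $\bar g_l \mapsto \pi^*(F_X^* v_l^\vee)$. Since $p = 0$ in $\O_Y$ we have $d(t_l^p) = 0$, so $\delta(\bar g_l) = dg_l|_Y = -\pi^*(df_l)$; in particular $\delta$ factors as $\delta = a \circ \tilde\delta$ through the subsheaf $\pi^*\Omega_X = \im(a)$, where $\tilde\delta : \pi^* F_X^* E^\vee \to \pi^* \Omega_X$ equals $-\pi^*$ of the map $\phi : F_X^* E^\vee \to \Omega_X$ induced by $\nabla s$ (compare Example \ref{can-conn-locally}). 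The vanishing $d(t_l^p) = 0$---that Frobenius kills differentials---is the crucial characteristic-$p$ input, and verifying this factorization and the identification of $\tilde\delta$ is the main point; everything else is a diagram chase.

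Finally I would assemble the sequence. Since $\im(\delta) = a(\im\tilde\delta) \subseteq \im(a) = \ker(b)$, the map $b$ descends to a surjection $\bar b : \Omega_Y \to \pi^* E^\vee$ whose kernel is the image of $\im(a)$ in $\Omega_Y$, namely $a(\pi^*\Omega_X)/a(\im\tilde\delta) \cong \pi^*\Omega_X/\im\tilde\delta$. Because $\pi$ is flat, $\pi^*$ is right exact, so $\pi^*\Omega_X/\im\tilde\delta = \coker(\pi^*\phi) = \pi^* C$, where $C = \coker(\phi)$. The injectivity of $a$ forces the induced map $\alpha : \pi^* C \to \Omega_Y$ to be injective, and by construction $\alpha$ is induced by the co-differential $d\pi^\vee : \pi^*\Omega_X \to \Omega_Y$. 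This produces the desired short exact sequence
\begin{equation*}
0 \to \pi^* C \xrightarrow{\alpha} \Omega_Y \to \pi^* E^\vee \to 0,
\end{equation*}
completing the proof.
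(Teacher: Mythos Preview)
Your proof is correct and follows essentially the same approach as the paper: both combine the conormal sequence of $Y\hookrightarrow V$ with the relative cotangent sequence of $V\to X$, identify $\mathscr I/\mathscr I^2\cong \pi^*F_X^*E^\vee$, and use $d(t_l^p)=0$ to see that the conormal map factors through $\pi^*\Omega_X$ as $-\pi^*(\nabla s)$. The paper packages the final step as an application of the Snake Lemma to the resulting commutative diagram, whereas you carry out the equivalent diagram chase by hand; one incidental remark is that right exactness of $\pi^*$ holds for any morphism, so invoking flatness there is unnecessary.
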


\begin{proof}
Let $I\subseteq \O_V$ denote the ideal sheaf of $Y$ in $V := \mathbf V(E)$.
Let $d_Y : I/I^2\to (\Omega_V)_Y$ be the $\O_Y$-linear map appearing in the co-normal sequence of $Y$ in $V$, so that $d_Y( \bar g ) = dg$ for all $g\in I$. 

Pairing with the section $F_V^* \tau - \pi^* s$ induces a surjective $\O_V$-linear map $(F_X^* E)_V^\vee\to I$. Let $\theta : \pi^* F_X^* E^\vee \to I/I^2$ be the corresponding $\O_Y$-linear map. 
Then $\theta$ is a surjection of locally free sheaves of the same rank $e$, hence an isomorphism.

By the second sentence in the proof of Proposition \ref{insep-cov-sing}, the following diagram is commutative:
\begin{equation*}
\begin{tikzcd}
0 \ar[r] &
\pi^*F_X^* E^\vee \ar[r,"\theta","\sim"'] \ar[d, "-\nabla s"] &
I/I^2 \ar[r] \ar[d, "d_Y"] &
0 \ar[r] \ar[d] &
0 \\
0 \ar[r] &
\pi^*\Omega_X \ar[r,"d\pi^\vee"] &
(\Omega_V)_Y \ar[r] &
(\Omega_{V/X})_Y \ar[r] &
0
\end{tikzcd}
\end{equation*}
Its bottom row is exact because $V$ is a vector bundle over $X$.
Noting the canonical isomorphism $E_V^\vee =\Omega_{V/X}$ and applying the Snake Lemma yields the desired short exact sequence.
\end{proof}

\begin{proposition}
\label{insep-C-loc-free}
The complement $X\setminus \Sigma^1(s)$ is the largest open subset of $X$ over which $C$ is locally free of rank $n-e$ and the sequence 
\begin{equation*}
\begin{tikzcd}
0 \ar[r] &
F_X^*E^\vee \ar[r, "\nabla s"] &
\Omega_X \ar[r] & 
C \ar[r] &
0
\end{tikzcd}
\end{equation*}
is exact.
\end{proposition}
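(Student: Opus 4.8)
The plan is to recognize the displayed sequence as the degeneracy-locus exact sequence of Proposition \ref{coker-deg-loci} applied to a single map, and then to settle the maximality claim by a fibrewise flatness argument. Write $\alpha : F_X^* E^\vee \to \Omega_X$ for the $\O_X$-linear map dual to $\nabla s : \T_X \to F_X^* E$, so that by construction $C = \coker(\alpha)$. By Proposition \ref{deg-loci-dual} the degeneracy loci of $\alpha$ and of $\nabla s$ coincide, whence $\Sigma^1(\alpha) = \Sigma^1(s)$. Here $\alpha$ has source of rank $e$ and target of rank $n$, so in the notation of Proposition \ref{coker-deg-loci} we have $m = \min(e,n) = e$.

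First I would establish the two properties over $\Sigma := X \setminus \Sigma^1(s) = \Sigma^0(\alpha) \setminus \Sigma^1(\alpha)$. Applying Proposition \ref{coker-deg-loci} with $i = 0$, the sheaves $\ker(\alpha|_\Sigma)$ and $\coker(\alpha|_\Sigma)$ are locally free of ranks $e - m + i = 0$ and $n - m + i = n - e$ respectively. A locally free sheaf of rank zero vanishes, so $\alpha|_\Sigma$ is injective; and since formation of cokernel commutes with restriction to the open subset $\Sigma$, we have $\coker(\alpha|_\Sigma) = C|_\Sigma$. Thus over $X \setminus \Sigma^1(s)$ the sheaf $C$ is locally free of rank $n - e$ and the sequence $0 \to F_X^* E^\vee \xrightarrow{\nabla s} \Omega_X \to C \to 0$ is exact, as required.

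It then remains to prove maximality: if $U \subseteq X$ is open and over $U$ both $C|_U$ is locally free of rank $n - e$ and the sequence is exact, then $U \cap \Sigma^1(s) = \emptyset$. Over such a $U$ the sequence is a short exact sequence of locally free $\O_U$-modules of ranks $e$, $n$ and $n - e$. Because $C|_U$ is locally free, hence flat, the sequence stays exact after tensoring with the residue field $k(x)$ of any $x \in U$; in particular the fibre map $\alpha(x) : F_X^* E^\vee(x) \to \Omega_X(x)$ is injective and so has rank $e$. Since $\Sigma^1(\alpha)$ is the locus where $\alpha$ has rank at most $m - 1 = e - 1$, this forces $x \notin \Sigma^1(\alpha) = \Sigma^1(s)$. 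As both conditions are Zariski-local, they hold on the union of all such $U$, which is therefore the largest open subset with the stated properties, and the inclusion just proved identifies it with $X \setminus \Sigma^1(s)$.

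The forward inclusion is essentially immediate once the sequence is identified with the $i=0$ case of Proposition \ref{coker-deg-loci}, so the only genuine content is maximality. The hard part there is conceptually small but needs to be said cleanly: local freeness of the cokernel is exactly what guarantees flatness, and hence exactness on fibres, which is the mechanism that detects membership in $\Sigma^1(s)$. I anticipate no serious obstacle beyond keeping the duality bookkeeping straight, that is, consistently passing between $\nabla s$ and its transpose $\alpha$ by means of Proposition \ref{deg-loci-dual}.
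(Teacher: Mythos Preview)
Your proof is correct. It differs from the paper's in the tools invoked: you appeal to Proposition~\ref{coker-deg-loci} (applied with $i=0$ to the dual map $\alpha$) to get local freeness and exactness over $X\setminus\Sigma^1(s)$, and then run a flatness-on-fibres argument for maximality. The paper's proof is much terser: it simply observes that, set-theoretically, $X\setminus\Sigma^1(s)$ is exactly the locus where $C$ has fibre dimension $n-e$ (transpose of a matrix preserves rank), and then uses that a coherent sheaf of constant fibre dimension on a \emph{reduced} scheme is locally free. This single observation handles both directions at once, since it identifies $X\setminus\Sigma^1(s)$ with the constant-rank locus of $C$ on the nose. Your argument is longer but has the virtue of not needing reducedness of $X$ as a separate input (Proposition~\ref{coker-deg-loci} works over an arbitrary base), and it makes the exactness of the sequence explicit rather than leaving it implicit.
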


\begin{proof}
Set-theoretically, the critical locus $\Sigma^1(s)$ is the subset of $X$ over which the map $\T_X \to F_X^* E$ induced by $\nabla s$ has less-than-full rank.
This implies that $X\setminus \Sigma^1(s)$ is the subset of $X$ over which $C$ has constant rank $n-e$, since the rank of matrix is equal to that of its transpose.
The scheme $X$ is reduced, so the result follows.
\end{proof}

Let $Q$ denote the double dual of the $\O_X$-module $\wedge^{n-e}C$.

\begin{proposition}
\label{this-is-Q}
Let $c$ denote the codimension of $\Sigma^1(s)$ in $X$.
\begin{enumerate}
\item If $c\ge 1$, then $Q$ is invertible.
\item If $c\ge 2$, then $Q \cong \omega_X \otimes (\det E)^{\otimes p}$.
\end{enumerate}
\end{proposition}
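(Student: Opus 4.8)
The plan is to reduce everything to the open set $U := X \setminus \Sigma^1(s)$, where by Proposition \ref{insep-C-loc-free} the sheaf $C$ is locally free of rank $n-e$ and fits into a short exact sequence of locally free $\O_U$-modules
\[
0 \to (F_X^* E^\vee)|_U \to \Omega_X|_U \to C|_U \to 0.
\]
First I would take determinants. Since $C|_U$ has rank $n-e$, its top exterior power is $\wedge^{n-e} C|_U = \det C|_U$, and multiplicativity of determinants in short exact sequences gives $\det C|_U \cong \omega_X|_U \otimes \det(F_X^* E)|_U$. Using $\det(F_X^* E) = F_X^*(\det E)$ together with Example \ref{frobenius-line-bundle}, which identifies the Frobenius pullback of an invertible sheaf with its $p$-th tensor power, this becomes
\[
\wedge^{n-e} C|_U \cong \big(\omega_X \otimes (\det E)^{\otimes p}\big)|_U.
\]
In particular $\wedge^{n-e} C$ is invertible on $U$, and its double dual $Q$ restricts to this same invertible sheaf on $U$ (an invertible sheaf is reflexive, so taking double duals changes nothing over $U$).

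For part (1), the hypothesis $c \ge 1$ means $\Sigma^1(s) \ne X$, so $U$ is a dense open subset of the integral scheme $X$ (note that $X$ is integral, being smooth and connected over a field). Hence $\wedge^{n-e} C$ has generic rank $1$, and $Q = (\wedge^{n-e} C)^{\vee\vee}$ is a reflexive sheaf of rank $1$. The key structural input I would invoke is that $X$, being smooth, is locally factorial; on a locally factorial integral Noetherian scheme every reflexive sheaf of rank $1$ is invertible, since rank-one reflexive sheaves are divisorial and every Weil divisor is Cartier. This yields that $Q$ is invertible.

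For part (2), the hypothesis $c \ge 2$ means the closed set $X \setminus U = \Sigma^1(s)$ has codimension at least $2$. By part (1), $Q$ is invertible, and $\omega_X \otimes (\det E)^{\otimes p}$ is invertible as well; the computation above exhibits an isomorphism between their restrictions to $U$. Since $X$ is smooth and $X \setminus U$ has codimension $\ge 2$, the restriction map $\operatorname{Pic}(X) \to \operatorname{Pic}(U)$ is injective, so this isomorphism extends to all of $X$, giving $Q \cong \omega_X \otimes (\det E)^{\otimes p}$. Equivalently, both sheaves are reflexive and agree on $U$, hence agree on $X$, because a reflexive sheaf on a normal scheme is recovered as the pushforward of its restriction across a closed subset of codimension $\ge 2$.

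The main obstacle is precisely this transition from $U$ to all of $X$: one must carefully use reflexivity together with the local factoriality of the smooth scheme $X$ to promote the rank-one sheaf $Q$ to a genuine line bundle in part (1), and to propagate the isomorphism across the codimension-$\ge 2$ locus $\Sigma^1(s)$ in part (2). Everything over $U$ itself is a routine determinant calculation, so the conceptual weight of the argument lies in these codimension and reflexivity inputs.
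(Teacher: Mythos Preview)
Your proof is correct and follows essentially the same approach as the paper: compute $Q$ on $U = X \setminus \Sigma^1(s)$ via the short exact sequence of Proposition \ref{insep-C-loc-free}, then use that rank-one reflexive sheaves on a regular scheme are invertible and are determined by their restrictions to open subsets with complement of codimension $\ge 2$. The paper's proof is just a terser version of yours.
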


\begin{proof}
By Proposition \ref{insep-C-loc-free},
\begin{equation*}
Q = \omega_X\otimes \det (F_X^* E) = \omega_X \otimes (\det E)^{\otimes p}
\end{equation*}
over $X\setminus \Sigma^1(s)$. Both statements follow, since reflexive sheaves of rank 1 on regular schemes are invertible and are determined by their restrictions to open subsets whose complements have codimension at least 2.
\end{proof}

\begin{construction}
\label{map-from-Q}
Suppose $\Sigma^1(s)$ has codimension at least 1 in $X$, so that $Y:= X[\pthroot s]$ is integral and generically smooth, and $Q$ is an invertible $\O_X$-module.
By Proposition \ref{omega-insep-cov}, the co-differential $d\pi^\vee : \pi^* \Omega_X\to \Omega_Y$ factors through an injective $\O_Y$-linear map $\alpha : \pi^* C\to \Omega_Y$.
Taking duals of coherent sheaves commutes with flat pullback, so $\alpha$ induces an $\O_Y$-linear map
\begin{equation*}
\pi^*Q = (\wedge^{n-e} \pi^* C)^{\vee\vee} \to (\Omega_Y^{n-e})^{\vee\vee}.
\end{equation*}
Composing with the natural map
$
(\Omega_Y^{n-e})^{\vee\vee} \to \Omega_Y^{n-e}\otimes k(Y),
$
yields an $\O_Y$-linear map
\begin{equation*}
\pi^* Q \to \Omega_Y^{n-e}\otimes k(Y),
\end{equation*}
which is injective at the generic point of $Y$, hence everywhere.
\end{construction}

\section{Remarks on torsion-free quotients}

Although we are primarily interested in integral schemes, we record in this section observations that apply to arbitrary rings. This will allow us to comfortably work with completed local rings later.

If $R$ is a ring, we denote by $\Frac R$ its total ring of fractions. Thus $\Frac R$ is the localization $R$ at the multiplicative system consisting of the nonzerodivisors in $R$.

\begin{definition}
\label{torsion-submodule}
Let $R$ be a ring. Let $M$ be an $R$-module.
The \textbf{torsion submodule} of $M$, denoted $M_\mathrm{tors}$, is the kernel of the natural map $M\to M\otimes_R \Frac R$.
\end{definition}

\begin{remark}
\label{torsion-ring-map}
Let $\phi:R\to S$ be ring map.
Suppose that $\phi$ sends nonzerodivisors of $R$ to nonzerodivisors of $S$, which holds for example if $\phi$ is flat. Then $\phi$ induces a natural map $\Frac R\to \Frac S$, hence a diagram as follows for each $R$-module $M$.
\begin{equation*}
\begin{tikzcd}
0 \arrow{r}& \arrow{r} \arrow["\alpha"]{d} (M_\mathrm{tors}) \otimes_R S
&\arrow{r} \arrow[equals]{d} M\otimes_R S
& \arrow["\beta"]{d}(M\otimes_R \Frac R)\otimes_R S\\
0 \arrow{r}& (M\otimes_R S)_\mathrm{tors} \arrow{r}& \arrow{r}M \otimes_R S & (M\otimes_R S)\otimes_S \Frac S
\end{tikzcd}
\end{equation*}
\end{remark}

\begin{lemma}
\label{tf-ring-change}
Notation as in Remark \ref{torsion-ring-map}.
Suppose that $M\otimes_R \Frac R$ is flat over $\Frac R$. Then $\beta$ is injective. Hence, $\alpha$ is surjective and
\begin{equation*}
(M/M_\mathrm{tors})\otimes_R S = (M\otimes_R S)/(M\otimes_R S)_\mathrm{tors}.
\end{equation*}
\end{lemma}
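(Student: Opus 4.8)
The plan is to read off everything from the commutative ladder of Remark \ref{torsion-ring-map}, whose middle vertical arrow is the identity of $M\otimes_R S$. The one place where the flatness hypothesis really enters is the injectivity of the right-hand arrow $\beta$; granting that, $\alpha$ will be forced to be surjective by a diagram chase, and the displayed identity is just a reformulation of that surjectivity.

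To prove $\beta$ injective I would first make it concrete. Writing $K:=\Frac R$ and letting $T\subseteq R$ be the multiplicative set of nonzerodivisors, there are canonical identifications
\begin{equation*}
(M\otimes_R K)\otimes_R S=(M\otimes_R K)\otimes_K(K\otimes_R S),\qquad (M\otimes_R S)\otimes_S\Frac S=(M\otimes_R K)\otimes_K\Frac S,
\end{equation*}
under which $\beta$ becomes $\mathrm{id}_{M\otimes_R K}\otimes_K\gamma$ for the natural map $\gamma\colon K\otimes_R S\to\Frac S$. Now $K\otimes_R S=T^{-1}R\otimes_R S=\phi(T)^{-1}S$, and since by hypothesis $\phi(T)$ consists of nonzerodivisors of $S$, the localization $\phi(T)^{-1}S$ embeds into the total ring of fractions $\Frac S$; thus $\gamma$ is injective. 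Because $M\otimes_R K$ is flat over $K$, applying $(M\otimes_R K)\otimes_K-$ to the injection $\gamma$ keeps it injective, so $\beta$ is injective.

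Next I would observe that the surjectivity of $\alpha$ and the displayed equality are the same assertion, namely the equality of submodules
\begin{equation*}
\im\!\bigl(M_\mathrm{tors}\otimes_R S\to M\otimes_R S\bigr)=(M\otimes_R S)_\mathrm{tors}
\end{equation*}
of $M\otimes_R S$: the left-hand side is $\im\alpha$, and dividing $M\otimes_R S$ by it yields $(M/M_\mathrm{tors})\otimes_R S$ by right exactness of the tensor product. The inclusion $\subseteq$ is formal, since $M_\mathrm{tors}\to M\to M\otimes_R K$ vanishes. For $\supseteq$, take $z\in(M\otimes_R S)_\mathrm{tors}=\ker q$; then $\beta(p(z))=q(z)=0$, so injectivity of $\beta$ gives $p(z)=0$, i.e. $z\in\ker p$. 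The chase therefore closes provided the top row is exact at $M\otimes_R S$, that is, provided $\ker p$ coincides with $\im(M_\mathrm{tors}\otimes_R S)$.

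This last exactness is the step I expect to be the main obstacle. It is equivalent to the torsion-freeness of $(M/M_\mathrm{tors})\otimes_R S$ over $S$, or to the injectivity of the map $(M/M_\mathrm{tors})\otimes_R S\to(M\otimes_R K)\otimes_R S$; writing $P$ for the cokernel of the inclusion $M/M_\mathrm{tors}\hookrightarrow M\otimes_R K$, it amounts to the vanishing of $\operatorname{Tor}_1^R(P,S)$. The flatness hypothesis helps twice here: since $K$ is flat over $R$ and $M\otimes_R K$ is flat over $K$, the module $M\otimes_R K$ is flat over $R$, so that $\operatorname{Tor}_1^R(M\otimes_R K,S)=0$ and $\operatorname{Tor}_1^R(P,S)$ is identified with the kernel in question. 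The remaining and most delicate point is that $P$ is killed by inverting $T$ and is $T$-divisible, being a quotient of the $K$-module $M\otimes_R K$; I would try to leverage this, together with $\phi(T)\subseteq\{\text{nonzerodivisors of }S\}$, to force $\operatorname{Tor}_1^R(P,S)=0$. Once that vanishing is secured, $\ker p=\im(M_\mathrm{tors}\otimes_R S)$, the element $z$ above lies in $\im\alpha$, and both the surjectivity of $\alpha$ and the displayed identity follow.
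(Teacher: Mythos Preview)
Your proof that $\beta$ is injective is correct and is exactly the paper's argument. The paper then simply writes that ``a diagram chase'' yields the surjectivity of $\alpha$; you go further and correctly isolate what that chase actually requires, namely exactness of the top row at $M\otimes_R S$, equivalently injectivity of $(M/M_{\mathrm{tors}})\otimes_R S\to(M\otimes_R K)\otimes_R S$, equivalently the vanishing of your $\operatorname{Tor}_1^R(P,S)$.

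Your hesitation at this last step is warranted: it cannot be completed, because the lemma is false under the stated hypotheses. Take $R=k[x,y]$, $S=k[x,z]$, and $\phi\colon R\to S$ the ring map with $\phi(x)=x$, $\phi(y)=xz$; this is an injection of domains, so nonzerodivisors go to nonzerodivisors. Let $M=(x,y)\subset R$. Then $M_{\mathrm{tors}}=0$ and $M\otimes_R\Frac R\cong\Frac R$ is flat over $\Frac R$, but from the resolution $0\to R\xrightarrow{(-y,\,x)}R^2\to M\to 0$ one gets $M\otimes_R S\cong S^2/S\cdot(-xz,x)$, in which the class of $(-z,1)$ is nonzero and annihilated by the nonzerodivisor $x$. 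Thus $(M\otimes_R S)_{\mathrm{tors}}\ne 0=\im\alpha$; in your notation $\operatorname{Tor}_1^R(P,S)\ne 0$, so no divisibility argument will rescue the step. The right additional hypothesis is flatness of $\phi$: then tensoring the exact sequence $0\to M_{\mathrm{tors}}\to M\to M\otimes_R\Frac R$ with $S$ preserves exactness at the middle term and your chase goes through verbatim. Every actual use of the lemma in the paper has $\phi$ flat, so nothing downstream breaks, but the sentence immediately following the lemma (``if $R$ is a domain, then $\alpha$ is surjective'') is incorrect as stated.
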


Thus, if $R$ is a domain, then $\alpha$ is surjective. This can fail in general, see Example \ref{not-surjective} below.

\begin{proof}
The natural map $S\to \Frac S$ is injective by the definition of a nonzerodivisor. Taking the tensor product of this map with the flat $R$-module $\Frac R$, we obtain an injection
\begin{equation*}
\Frac(R)\otimes_R S \to \Frac(R) \otimes_R \Frac(S) = \Frac(S).
\end{equation*}
Taking the tensor product of this injection with the flat $\Frac(R)$-module $M\otimes_R \Frac(R)$, we recover the map $\beta$. Thus $\beta$ is injective, which implies by a diagram chase that $\alpha$ is surjective, as desired.
\end{proof}

\begin{example}
\label{not-surjective}
Let $k$ be a field. Let $x,y,z$ be indeterminates. Suppose that $R$ is the localization of the ring $k[x,y,z]/\langle xz,yz,z^2\rangle$ at the maximal ideal $\langle x,y,z\rangle$.
Suppose that $S=R_x$ and that $\phi:R\to S$ is the localization map, which is flat. Finally, suppose that $M=R/\langle y\rangle$.

Then $M_\mathrm{tors}=0$. Indeed, the unique maximal ideal of $R$ is an associated prime of $R$, hence consists entirely of zerodivisors. This implies that the natural map $R\to \Frac R$ is an isomorphism.

But $(M\otimes_R S)_\mathrm{tors}\ne 0$. To see this, we first note that $S=R_x = (k[x,y]_{\langle x,y\rangle})_x$, which implies that $\Frac S = k(x,y)$. Thus $M\otimes_R \Frac S=0$, while
\begin{equation*}
M\otimes_R S = (R/\langle y\rangle)_x =(k[x,z]/\langle xz,z^2\rangle_{\langle x,z\rangle})_x = (k[x]_{\langle x\rangle})_x \ne 0.
\end{equation*}
\end{example}

The following result will be used in the proof of Proposition \ref{inclusion-Q} below.

\begin{lemma}
\label{torsion-descent-I}
Let $p:P\to M\otimes_R \Frac R$ be a map of $R$-modules.
Let $p'$ denote the composition
\begin{equation*}
P\otimes_R S\xrightarrow{p\otimes 1} (M\otimes_R \Frac R )\otimes_R S \xrightarrow\beta M\otimes_R \Frac S.
\end{equation*}
Suppose that $\phi:R\to S$ is faithfully flat and that $M\otimes_R \Frac R$ is flat over $\Frac R$. Then $p$ factors through $M/M_\mathrm{tors}$ if, and only if, $p'$ factors through $(M\otimes_R S)/(M\otimes_R S)_\mathrm{tors}$.
\end{lemma}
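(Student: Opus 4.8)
The plan is to reduce both factorization properties to the vanishing of a single homomorphism and then to invoke faithful flatness. Write $L := M\otimes_R \Frac R$ and let $N\subseteq L$ denote the image of the natural map $M\to L$, so that $N$ is canonically identified with $M/M_\mathrm{tors}$ and the inclusion $N\hookrightarrow L$ is the one through which $p$ is required to factor. Thus ``$p$ factors through $M/M_\mathrm{tors}$'' means precisely that $\im(p)\subseteq N$, which is equivalent to the vanishing of the composite $\bar p : P\xrightarrow{p} L\twoheadrightarrow L/N$.

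First I would record the matching description on the $S$-side. Since $\phi$ is flat, tensoring the short exact sequence $0\to N\to L\to L/N\to 0$ with $S$ keeps it exact, so $N\otimes_R S$ is a submodule of $L\otimes_R S$ and $(L/N)\otimes_R S = (L\otimes_R S)/(N\otimes_R S)$; under this identification $\bar p\otimes 1$ is exactly the composite $P\otimes_R S\to L\otimes_R S\to (L/N)\otimes_R S$. By Lemma \ref{tf-ring-change}, the flatness of $L$ over $\Frac R$ makes the map $\beta$ of Remark \ref{torsion-ring-map} injective, and the commutativity of that diagram shows that $\beta$ carries $N\otimes_R S$ isomorphically onto $(M\otimes_R S)/(M\otimes_R S)_\mathrm{tors}$ inside $M\otimes_R\Frac S$. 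Because $\beta$ is injective and $p' = \beta\circ(p\otimes 1)$, it follows that $p'$ factors through $(M\otimes_R S)/(M\otimes_R S)_\mathrm{tors}$ if and only if $p\otimes 1$ factors through $N\otimes_R S$, that is, if and only if $\bar p\otimes 1 = 0$.

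The crux is then the elementary descent statement that, for $S$ faithfully flat over $R$, an $R$-linear map $g$ vanishes if and only if $g\otimes_R S$ vanishes: factoring $g$ through its image and using flatness gives $\im(g\otimes 1)=\im(g)\otimes_R S$, which is zero precisely when $\im(g)=0$ by faithful flatness. Applying this to $g=\bar p$ gives $\bar p\otimes 1 = 0$ if and only if $\bar p = 0$, and combining with the previous paragraph yields the desired equivalence. I expect the faithful-flatness descent of the vanishing of $\bar p$ to be the one genuinely essential ingredient; the remaining work is bookkeeping, namely checking that $\bar p\otimes 1$ is the stated composite and that $\beta$ identifies $N\otimes_R S$ with the torsion-free quotient $(M\otimes_R S)/(M\otimes_R S)_\mathrm{tors}$, both of which follow directly from the flatness of $\phi$ and Lemma \ref{tf-ring-change}.
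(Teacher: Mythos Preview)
Your proof is correct and follows essentially the same strategy as the paper: both reduce to identifying $(M\otimes_R S)/(M\otimes_R S)_\mathrm{tors}$ with $N\otimes_R S$ inside $L\otimes_R S$ via Lemma \ref{tf-ring-change}, and then invoke faithful flatness. The one noteworthy difference is in how faithful flatness is used. The paper constructs the factoring map $\pi':P\otimes_R S\to N\otimes_R S$, checks a cocycle condition for the two pullbacks to $P\otimes_R S\otimes_R S$, and descends $\pi'$ to a map $\pi:P\to N$. You instead pass to the obstruction map $\bar p:P\to L/N$ and use only the elementary fact that an $R$-linear map vanishes if and only if its base change to $S$ vanishes. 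Your route is cleaner and avoids invoking descent of morphisms; the paper's argument works but is heavier than necessary for this statement.
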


\begin{proof}
Write $N=M/M_\mathrm{tors}$ and $N'=M\otimes_R \Frac R$, and let $\iota: N\to N'$ denote the natural inclusion.
To prove the nontrivial implication, suppose that $p'$ factors through $(M\otimes_R S)/(M\otimes_R S)_\mathrm{tors}$. Then $p\otimes 1$ factors through a map $\pi':P\otimes_R S\to N\otimes_R S$, by Lemma \ref{tf-ring-change}.
\begin{equation*}
\begin{tikzcd}
P\otimes_R S \ar["p\otimes 1"]{r} \ar["\pi'"',dashed]{rd} & 
N'\otimes_R S \ar["\beta"]{r}&
M\otimes_R \Frac S \\
&
N\otimes_R S \ar[hook, "\iota\otimes 1"']{u} \ar["\sim"]{r} &
(M\otimes_R S)/(M\otimes_R S)_\mathrm{tors} \ar[hook]{u}
\end{tikzcd}
\end{equation*}

Any ring map $S\to T$ induces a natural functor
\begin{equation*}
-\otimes_S T : (S\text{-modules}) \to (T\text{-modules}).
\end{equation*}
Applying to $\pi'$ the functors induced by the $R$-algebra maps $S\rightrightarrows S\otimes_R S$ that send $s\mapsto s\otimes 1$ and $s\mapsto 1\otimes s$, we obtain two $S\otimes_R S$-linear maps,
\begin{equation*}
P\otimes_R S \otimes_R S \rightrightarrows N\otimes_R S \otimes_R S.
\end{equation*}
By functoriality, the composition of either map with $\iota\otimes 1 \otimes 1$ (an injection) equals $p\otimes 1 \otimes 1$. Therefore the two maps coincide. It follows by faithfully flat descent that there exists a unique map of $R$-modules $\pi:P\to N$ such that $\pi'=\pi\otimes 1$. This map gives the desired factorization of $p$, since $p\otimes 1 = (\iota\circ \pi)\otimes 1$ implies $p=\iota\circ \pi$ by faithful flatness of $\phi:R\to S$.
\end{proof}

\begin{remark}
\label{torsion-stalks}
Let $X$ be an integral scheme.
Let $F$ be a sheaf of $\O_X$-modules.
Let $x\in X$ be a point.
Then $(F_\mathrm{tors})_x = (F_x)_\mathrm{tors}$, since passing to stalks is exact.
\end{remark}

\begin{lemma}
\label{torsion-descent-II}
Let $f:Y\to X$ be a faithfully flat map of integral schemes.
Let $F$ be a sheaf of $\O_X$-modules.
Let $q:Q\to F\otimes_{\O_X} k(X)$ be a map of $\O_X$-modules.
Let $q'$ denote the composition
\begin{equation*}
f^* Q \xrightarrow{f^* q} f^* (F\otimes_{\O_X} k(X)) \to f^* F \otimes_{\O_Y} k(Y).
\end{equation*}
Then $q$ factors through $F/F_\mathrm{tors}$ if, and only if, $q'$ factors through $(f^*F) /(f^*M)_\mathrm{tors}$.
\end{lemma}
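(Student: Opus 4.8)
The plan is to deduce the statement from its affine, module-theoretic counterpart, Lemma \ref{torsion-descent-I}, by testing both factorization conditions stalk by stalk. Since $X$ and $Y$ are integral, the torsion-free quotients $F/F_\mathrm{tors}$ and $(f^*F)/(f^*F)_\mathrm{tors}$ are, by definition of the torsion subsheaf, subsheaves of $F\otimes_{\O_X} k(X)$ and $f^*F\otimes_{\O_Y}k(Y)$ respectively. Thus the conditions ``$q$ factors through $F/F_\mathrm{tors}$'' and ``$q'$ factors through $(f^*F)/(f^*F)_\mathrm{tors}$'' simply require that the images of $q$ and $q'$ land in these subsheaves, and each such containment can be checked on stalks. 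Using Remark \ref{torsion-stalks} to identify $(F/F_\mathrm{tors})_x = F_x/(F_x)_\mathrm{tors}$, together with the exactness of the stalk functor, the first condition becomes: for every point $x\in X$ the stalk $q_x : Q_x\to F_x\otimes_{\O_{X,x}} k(X)$ factors through $F_x/(F_x)_\mathrm{tors}$; and the second becomes: for every $y\in Y$ the stalk $q'_y$ factors through $(f^*F)_y/((f^*F)_y)_\mathrm{tors}$.

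Next I would compare these pointwise conditions at a point $y\in Y$ and its image $x = f(y)$. Writing $R:=\O_{X,x}$ and $S:=\O_{Y,y}$, the comorphism of $f$ induces a local homomorphism $R\to S$, which is flat because $f$ is flat; a flat local homomorphism of local rings is automatically faithfully flat. Since $R$ and $S$ are domains with $\Frac R = k(X)$ and $\Frac S = k(Y)$, setting $P:=Q_x$ and $M:=F_x$ one checks directly that $q_x$ is the map $p:P\to M\otimes_R\Frac R$ of Lemma \ref{torsion-descent-I} and that $q'_y$ is precisely the associated map $p':P\otimes_R S\to M\otimes_R \Frac S$, the second leg being the base-change map $\beta$ of Remark \ref{torsion-ring-map}. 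The hypothesis of that lemma that $M\otimes_R \Frac R$ be flat over $\Frac R$ holds trivially, as $\Frac R$ is a field. Lemma \ref{torsion-descent-I} therefore yields, for each such $y$, that $q_x$ factors through $M/M_\mathrm{tors}$ if and only if $q'_y$ factors through $(M\otimes_R S)/(M\otimes_R S)_\mathrm{tors}=(f^*F)_y/((f^*F)_y)_\mathrm{tors}$.

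Combining these equivalences gives both implications. If $q$ factors, then every $q_x$ factors, so by the equivalence every $q'_y$ factors and hence $q'$ factors. Conversely, if $q'$ factors, then for an arbitrary $x\in X$ I would use that $f$ is surjective, being faithfully flat, to choose $y\in f^{-1}(x)$; then $q'_y$ factors, so $q_x$ factors, and since $x$ was arbitrary $q$ factors. The only delicate part is the bookkeeping: matching the stalks of the pullback $q'$ with the map $p'$ of Lemma \ref{torsion-descent-I}, and recognizing that the stalkwise faithfully flat local homomorphisms $\O_{X,x}\to\O_{Y,y}$ allow the affine lemma to be applied one point at a time, thereby sidestepping any need for quasi-compactness of $f$ or for a global faithfully flat affine chart.
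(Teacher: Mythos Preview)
Your proof is correct and follows exactly the approach indicated by the paper, which simply says the result ``follows from Lemma \ref{torsion-descent-I} via Remark \ref{torsion-stalks}.'' You have carefully unpacked this: checking factorization on stalks, identifying the stalk maps at $y\mapsto x$ with the maps $p$ and $p'$ of Lemma \ref{torsion-descent-I} for the faithfully flat local ring map $\O_{X,x}\to\O_{Y,y}$, and using surjectivity of $f$ for the converse direction.
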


\begin{proof}
Follows from Lemma \ref{torsion-descent-I} via Remark \ref{torsion-stalks}.
\end{proof}

\section{Irrational inseparable covers}

Let $k$ be a field of positive characteristic $p$.
Let $X$ be a smooth, connected scheme of dimension $n$ over $k$.
Let $E$ be a locally free of rank $e$ on $X$.
Suppose that $e\le n$.
Let $\nabla$ denote the canonical connection on $F_X^*E$.

Let $s\in \Gamma(X,F_X^*E)$ be a section.
Let $\pi : X[\pthroot s]\to X$ be the corresponding inseparable cover of $X$.
Write $Y := X[\pthroot s]$.
Let $\rho:B\to Y$ be the blowup of the scheme-theoretic inverse image $\pi^{-1}\Sigma^1(s) \subseteq Y$. 

Let $C$ be the cokernel of the $\O_X$-linear map $F_X^* E^\vee \to \Omega_X $ induced by $\nabla s$.
Let $Q := (\wedge^{n-e}C)^{\vee\vee}$.

\begin{proposition}
\label{inclusion-Q}
Suppose that
\begin{enumerate}
\item $\Sigma^1(s)\subseteq X$ has codimension at least 2;
\item $\Sigma^2(s)\subseteq X$ is empty; and
\item $\Sigma^{1,n-e-1}(s)\subseteq X$ is empty.
\end{enumerate}
Then $Y$ is geometrically normal, $B$ is geometrically integral, and the composition of natural maps
\begin{equation*}
\rho^*\pi^*Q\hookrightarrow \rho^* \Omega_Y^{n-e}\otimes k(B) \xrightarrow\sim \Omega_B^{n-e} \otimes k(B)
\end{equation*}
(see Construction \ref{map-from-Q})
factors through the subsheaf $\Omega_B^{n-e}/(\Omega_B^{n-e})_\mathrm{tors}\subseteq \Omega_B^{n-e}\otimes k(B)$.
\end{proposition}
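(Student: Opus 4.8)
The plan is to dispose first of the two structural assertions and then to reduce the factorization statement to an explicit local computation governed by Morse's Lemma. Geometric normality of $Y := X[\pthroot s]$ is immediate from Proposition \ref{insep-cover-normal}(2), since hypothesis (1) says that $c := \operatorname{codim}_X \Sigma^1(s) \ge 2$; the same proposition (part (1)) gives that $Y$, and hence $Y_{\bar k}$, is integral. As $\pi^{-1}\Sigma^1(s)$ is nowhere dense in $Y$ (being the preimage of a codimension-$\ge 2$ locus under the homeomorphism $\pi$) and blowing up commutes with the flat base change $\bar k/k$, the blowup $B = \rho^{-1}$ is geometrically integral. By Proposition \ref{this-is-Q}, hypothesis (1) also guarantees $Q \cong \omega_X \otimes (\det E)^{\otimes p}$ is invertible, so Construction \ref{map-from-Q} supplies the injective map $\rho^*\pi^* Q \to \Omega_B^{n-e}\otimes k(B)$ whose factorization we must establish.

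For the factorization I would use that membership of a section of $\Omega_B^{n-e}\otimes k(B)$ in the subsheaf $\Omega_B^{n-e}/(\Omega_B^{n-e})_\mathrm{tors}$ is local on $B$: it holds precisely when the section lifts, locally, to $\Omega_B^{n-e}$, the local lifts not being required to glue. By Lemma \ref{torsion-descent-II} this may be checked after the faithfully flat base change to $\bar k$ and after completing at a closed point $b \in B$, so I assume $k = \bar k$ and work in $\widehat{\O}_{B,b}$. Over $Y \setminus \pi^{-1}\Sigma^1(s)$ the cover is smooth (Proposition \ref{insep-cov-sing}) and $\rho$ is an isomorphism, whence $\Omega_B^{n-e}$ is locally free and there is nothing to prove; thus I may assume that $b$ maps into $\pi^{-1}\Sigma^1(s)$.

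At such a point I would install the local model furnished by the Morse analysis. Hypothesis (2) forces corank one at every critical point, so Corollary \ref{local-coords-cor} and Proposition \ref{morse-corank-1} apply: after choosing \'etale coordinates, shifting by $p$-th roots of constants over $\bar k$, and eliminating the $e-1$ submersive directions via the cover equations $t_l^p = f_l$, the scheme $Y$ is cut out locally by a single equation $w^p = q(u_1, \dots, u_r) + \tilde h(a, v)$, where $q$ is a nondegenerate quadratic form of rank $r = n-e+1-j$, the variables $a = (a_1, \dots, a_{e-1})$ and $v$ are parameters, and $\tilde h \in \langle a_1^p, \dots, a_{e-1}^p\rangle + \langle v\rangle^2$ is independent of $u$ (note $\partial_{a_l}\tilde h = 0$ in characteristic $p$). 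Crucially, by Corollary \ref{2nd-intrinsic-corank-1} the integer $j$ is the corank of the second intrinsic differential at $b$, so hypothesis (3) yields $j \le n-e-2$ and hence $r \ge 3$. Using Propositions \ref{omega-insep-cov} and \ref{insep-C-loc-free}, the generator of $\pi^* Q$ is, on the smooth locus, the Gelfand--Leray form $\omega = (\partial_{u_{i_0}}q)^{-1}\, du_1 \wedge \cdots \wedge \widehat{du_{i_0}} \wedge \cdots \wedge du_r \wedge dv_1 \wedge \cdots \wedge dv_j$, which is independent of $i_0$ modulo the relation $\sum_i \partial_{u_i}q\, du_i + \sum_k \partial_{v_k}\tilde h\, dv_k = 0$ defining $\Omega_Y$. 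It then remains to pull $\omega$ back along the blowup of $\langle u_1, \dots, u_r, \partial_{v_1}\tilde h, \dots, \partial_{v_j}\tilde h\rangle$ and to verify, chart by chart, that it extends to a regular section of $\Omega_B^{n-e}$.

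The heart of the matter, and the step I expect to be the main obstacle, is this last blowup computation. In a standard affine chart $u_i = u_{i_0}\xi_i$ the numerator $du_1 \wedge \cdots \widehat{du_{i_0}} \cdots \wedge du_r$ acquires a factor $u_{i_0}^{\,r-2}$ while the denominator $\partial_{u_{i_0}}q$ contributes a single factor of the exceptional coordinate, so the pole of $\omega$ along the exceptional divisor is killed exactly when $r \ge 3$ -- which is precisely the content of hypothesis (3). The delicate points are: (i) no single representative is regular along the whole exceptional divisor, so one must cover it by the loci where successive $\partial_{u_{i_0}}q$ are invertible and patch the regular representatives; (ii) the characteristic-two case, where $q = u_1 u_2 + \cdots + u_{r-1}u_r$, so that $\partial_{u_{i_0}}q$ is the paired variable rather than $u_{i_0}$ and logarithmic cross terms $d\xi/\xi$ appear -- this forces $r \ge 4$ there, available since $r$ is even and $\ge 3$, and explains the discrepancy with Koll\'ar noted in the introduction; and (iii) the bookkeeping of the parameter directions $a, v$ and of the extra generators $\partial_{v_k}\tilde h$ of the blown-up ideal, which must be shown not to reintroduce poles. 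Once regularity of $\rho^*\omega$ is verified in every chart, the local lifting is established and the proposition follows.
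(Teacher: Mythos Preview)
Your proposal is correct and tracks the paper's strategy closely: the same reduction via Lemma~\ref{torsion-descent-II} to completed local rings over $\bar k$, the same corank-one Morse normal form (Remarks~\ref{cork-1-insep} and~\ref{morse-corank-1-insep}), and the same Gelfand--Leray generator of $Q$ (Lemma~\ref{eta}). The one substantive difference is the final blowup computation. Where you plan a chart-by-chart power count using the full rank-$r$ quadratic part, the paper splits off only the minimal nondegenerate piece---rank $3$ if $p\ne 2$, rank $4$ if $p=2$---and reduces everything to one elementary observation: if $u_1,u_2$ are both multiples of a nonzerodivisor $g$, then $du_1\wedge du_2/g$ is regular, since $u_l = v_l g$ gives $du_1\wedge du_2 = g\,(v_1\,dg\wedge dv_2 + v_2\,dv_1\wedge dg + g\,dv_1\wedge dv_2)$. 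In every blowup chart $D_+(T_i)$ the relations $\partial_j f = T_j\,\partial_i f$ force at least two of the three (resp.\ four) quadratic variables to be divisible by $\partial_i f$, and both appear in the numerator of $\eta_i$. This handles all charts---including those centered on the residual generators $\partial_{v_k}\tilde h$---uniformly, so your anticipated difficulties (i)--(iii) dissolve. In particular your worry (ii) about logarithmic terms in characteristic~$2$ stems from a chart/generator mismatch: the chart $D_+(T_i)$ is indexed by the partial $\partial_i f$, which is the \emph{paired} variable rather than $u_i$, while Lemma~\ref{eta} attaches to it the form $\eta_i$ with $du_i$ removed; with this correct pairing no $d\xi/\xi$ appears. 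Your power-counting is also valid, but the paper's two-variable lemma is shorter and avoids the case analysis.
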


The proof of Proposition \ref{inclusion-Q} will be given after that of Lemma \ref{eta} below.

\begin{remark}
Let $x\in X$ be a $k$-rational point.
The canonical connection $\nabla : F_X^* E\to \Omega_X\otimes F_X^* E$ is a first-order differential operator, so induces a $k$-linear map
\begin{equation*}
(F_X^* E) \otimes \O_{X,x}/\frak m_x^{i+1} \to 
(\Omega_X\otimes F_X^* E) \otimes \O_{X,x}/\frak m_x^i
\end{equation*}
for each $i\ge 0$.
Taking inverse limits yields a $k$-linear map 
\begin{equation*}
(F_X^* E) \otimes \widehat \O_{X,x} \to 
(\Omega_X\otimes F_X^* E) \otimes \widehat \O_{X,x},
\end{equation*}
which by abuse of notation we also denote by $\nabla$.
Similarly, the universal derivation $ d: \O_X\to\Omega_X$ induces a $k$-linear map $\widehat\O_{X,x}\to \Omega_X\otimes \widehat\O_{X,x}$, which we also denote by $d$.
\end{remark}

\begin{remark}
\label{2nd-id-insep}
Let $\Sigma$ denote the critical locus $\Sigma^1(s)\setminus \Sigma^2(s)\subseteq X$.
Set-theoretically, $\Sigma$ is the locus in $X$ where the covariant derivative $\nabla s : \T_X\to F_X^*E$ has constant rank $e-1$.
The second intrinsic differential
\begin{equation*}
\mathbf d_\Sigma^2 s : \ker((\nabla s)_\Sigma) \to \sheafHom_\Sigma(\ker((\nabla s)_\Sigma),\coker((\nabla s)_\Sigma)).
\end{equation*}
is a map of locally free $\O_\Sigma$-modules of rank $n-e+1$.
Set-theoretically, the singular locus $\Sigma^{1,n-e-1}(s)$ is the locus in $\Sigma$ where $\mathbf d_\Sigma^2 s$ has rank at most
\begin{equation*}
n-e+1 - (n-e-1)=2.
\end{equation*}

Note that $\Sigma^{1,n-e-1}(s)$ can only be empty if $n-e+1 \ge 3$, since the rank of a matrix is no larger than its dimensions.
If $p=2$, the second intrinsic differential $\mathbf d_\Sigma^2 s$ is skew-symmetric by Proposition \ref{symmetry-2nd-id}, hence has even rank.
In this case, $\Sigma^{1,n-e-1}(s)$ can only be empty if $n-e+1 \ge 4$.
\end{remark}

\begin{remark}
\label{cork-1-insep}
Let $x\in X$ be a $k$-rational point.
Suppose that $x\not\in \Sigma^2(s)$.
Then there exist
\begin{enumerate}
\item an isomorphism of local $k$-algebras  
\begin{equation*}
\widehat\O_{X,x} \cong k[[x_1,\dotsc,x_n]];
\end{equation*} 
\item a basis $\{v_1,\dotsc,v_e\}$ of $E\otimes \widehat \O_{X,x}$ as an $\widehat\O_{X,x}$-module;
\item constants $c_1,\dotsc,c_{e-1}\in k$; and
\item a power series $f\in k[[x_1,\dotsc,x_n]]$
\end{enumerate}
such that
\begin{equation*}
s = (c_1 + x_1)\cdot F_X^*v_1 + \dotsb + (c_{e-1} + x_{e-1})\cdot F_X^*v_{e-1} + f \cdot F_X^* v_e,
\end{equation*}
in $F_X^* E\otimes \widehat\O_{X,x}$, see Corollary \ref{local-coords-cor}.
\end{remark}

\begin{remark}
\label{morse-corank-1-insep}
Let $\Sigma$ denote the critical locus $\Sigma^1(s)\setminus \Sigma^2(s)\subseteq X$.
Let $x\in \Sigma$ be a $k$-rational point.
Suppose that $x\not\in \Sigma^{1,n-e-1}(s)$, so that the second intrinsic differential $\mathbf d_\Sigma^2 s$ has rank at least 3 at $x$.
Then the isomorphism, basis, constants and power series of Remark \ref{cork-1-insep} may be chosen so that $f= q+h$, where 
\begin{equation*}
q =
\begin{cases}
x_e^2 + x_{e+1}^2 + x_{e+2}^2 &\text{if }\operatorname{char}(k)\ne 2\\
x_ex_{e+1} + x_{e+2}x_{e+3} &\text{if }\operatorname{char}(k)= 2.
\end{cases}
\end{equation*}
and $h\in k[[x_1,\dotsc,x_n]]$ is a power series that does not involve the variables occurring in $q$, see  Proposition \ref{morse-corank-1}.
\end{remark}

\begin{lemma}
\label{eta}
Let $x\in X$ be a $k$-rational point.
Suppose that $x\not\in \Sigma^2(s)$.
Fix a choice of isomorphism, basis, constants and power series as in Remark \ref{cork-1-insep}.
Let $X' = \Spec \widehat\O_{X,x}$.
Then
\begin{equation*}
\Sigma^1(s) \cap X' =
\left\{
\dfrac{\partial f}{\partial x_e} = 
\dotsb =
\dfrac{\partial f}{\partial x_n} = 0
\right\}
\end{equation*}
as subschemes of $X'$.
Suppose that $\Sigma^1(s)\cap X'$ has codimension at least 2 in $X'$.
Let $i\in \{e,\dotsc,n\}$ be such that $\partial f/\partial x_i\ne 0$.
Let
\begin{equation*}
\eta_i := \dfrac{dx_e\dotsb\widehat{dx_i}\dotsb dx_n}{\partial f/\partial x_i}\in \Omega_X^{n-e}\otimes \Frac \widehat\O_{X,x}.
\end{equation*} 
Then the image of $\eta_i$ in $Q\otimes \Frac \widehat\O_{X,x}$ lies inside, and is a generator of, the $\widehat \O_{X,x}$-module $Q\otimes \widehat\O_{X,x}$.
\end{lemma}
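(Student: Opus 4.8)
The plan is to do everything inside $X' = \Spec\widehat\O_{X,x}$ using the explicit local description of $s$ from Remark~\ref{cork-1-insep}, and then to pass from the punctured spectrum to all of $X'$ using the reflexivity of $Q$. First I would compute the covariant derivative in the chosen frame. By Example~\ref{can-conn-locally} the canonical connection gives $\nabla s = dx_1\otimes F_X^*v_1 + \dotsb + dx_{e-1}\otimes F_X^*v_{e-1} + df\otimes F_X^*v_e$, so the dual map $(\nabla s)^\vee : F_X^*E^\vee \to \Omega_X$ sends $(F_X^*v_l)^\vee \mapsto dx_l$ for $l<e$ and $(F_X^*v_e)^\vee \mapsto df$. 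Its matrix in the basis $dx_1,\dotsc,dx_n$ has its first $e-1$ columns equal to standard basis vectors and last column $(\partial_1 f,\dotsc,\partial_n f)^{\top}$. Expanding the $e\times e$ minors shows that the only nonzero ones are $\pm\,\partial_j f$ for $j=e,\dotsc,n$: any minor that omits a row $r\le e-1$ vanishes, since the corresponding standard basis column then restricts to zero. As $\Sigma^1(s)$ is cut out by $\wedge^e(\nabla s)=0$, this gives the scheme-theoretic equality $\Sigma^1(s)\cap X' = \{\partial f/\partial x_e = \dotsb = \partial f/\partial x_n = 0\}$, the first assertion.

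Next I would describe $Q$ on the punctured spectrum. Killing $dx_1,\dotsc,dx_{e-1}$, one obtains locally on $X'$ that $C = M/\langle \overline{df}\,\rangle$, where $M := \langle dx_e,\dotsc,dx_n\rangle$ is free of rank $n-e+1$ and $\overline{df} = \sum_{a=e}^n (\partial_a f)\,dx_a$. On the chart $U_i := \{\partial_i f \ne 0\}$ the relation $\overline{df}=0$ lets one eliminate $dx_i$, so $C|_{U_i}$ is free with basis $\{dx_a : a\ge e,\ a\ne i\}$; hence $\wedge^{n-e}C|_{U_i}$ is free of rank one, generated by $\omega_i := dx_e\wedge\dotsb\widehat{dx_i}\dotsb\wedge dx_n$, and, since $\partial_i f$ is a unit there, also by $\eta_i = \omega_i/\partial_i f$. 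By Proposition~\ref{insep-C-loc-free} the sheaf $C$ is locally free of rank $n-e$ on $U := X'\setminus\Sigma^1(s) = \bigcup_{a\ge e} U_a$, so $Q|_U = (\wedge^{n-e}C)|_U$ there; thus $\eta_i$ generates $Q$ on $U_i$.

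The crux is to upgrade this to: $\eta_i$ generates $Q$ on all of $U$, not merely on $U_i$. For this I would check the chart-independence $\eta_i = \pm\,\eta_j$ in $\wedge^{n-e}C|_{U_i\cap U_j}$. The clean route is the identity $\overline{df}\wedge\eta_i = \pm\,dx_e\wedge\dotsb\wedge dx_n$ in $\det M = \wedge^{n-e+1}M$: only the $a=i$ term of $\overline{df}$ survives the wedge with $\omega_i$, and the factor $\partial_i f$ cancels the denominator, so the right-hand side is a fixed generator of $\det M$ independent of $i$. On $U_j$ the element $\overline{df}$ is part of a basis of $M$, so the map $\wedge^{n-e}M \to \det M$, $\omega\mapsto\overline{df}\wedge\omega$, descends to an injection $\wedge^{n-e}C \hookrightarrow \det M$; since it sends $\eta_i$ and $\eta_j$ to the same generator up to sign, we get $\eta_i = \pm\,\eta_j$. (Equivalently, substituting $dx_i = -(\partial_i f/\partial_j f)\,dx_j + \dotsb$ into $\omega_j$ on $U_j$ and reordering reads off $\eta_j = \pm\,\eta_i$ directly.) Consequently $\eta_i$ restricts to a generator of $Q$ on every $U_a$, hence generates $Q$ over all of $U$. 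I expect this cancellation — the fact that the normalization $1/\partial_i f$ makes the $\eta_i$ glue — to be the main point of the lemma.

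Finally I would extend across $\Sigma^1(s)$. Because $X'$ is regular and $Q$ is reflexive of rank one (Proposition~\ref{this-is-Q}), the module $Q\otimes\widehat\O_{X,x}$ is free of rank one, say $Q\otimes\widehat\O_{X,x} = \widehat\O_{X,x}\cdot\omega$, and $\eta_i = g\,\omega$ for some nonzero $g\in\Frac\widehat\O_{X,x}$. The previous step shows $g$ is a unit at every point of $U$; in particular $g$ is regular at every height-one prime of the normal ring $\widehat\O_{X,x}$, so $g\in\widehat\O_{X,x}$, and its zero locus is contained in $\Sigma^1(s)$, of codimension at least $2$. Since the zero locus of a nonunit in a regular local ring is pure of codimension one, $g$ must be a unit, whence $\eta_i$ generates $Q\otimes\widehat\O_{X,x}$, as claimed. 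The extension step is then routine normality, the substance residing in the chart-compatibility of the $\eta_i$.
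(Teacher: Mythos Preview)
Your proof is correct and follows essentially the same route as the paper's. The paper obtains the key relation $\eta_i=\pm\eta_j$ by observing that $df\wedge(dx_e\dotsb\widehat{dx_i}\dotsb\widehat{dx_j}\dotsb dx_n)$ maps to zero in $\wedge^{n-e}C$, which is exactly your substitution alternative and dual to your $\overline{df}\wedge\eta_i=\pm\,dx_e\wedge\dotsb\wedge dx_n$ computation; for the extension across $\Sigma^1(s)$ the paper simply cites \cite[Proposition~6.3A]{hartshorne77}, which is the normality/codimension argument you spell out.
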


\begin{proof}
Note that
\begin{equation*}
C_{X'} = \coker(\nabla s)_{X'} = (\Omega_X)_{X'}/\langle dx_1,\dotsc,dx_{e-1},df\rangle. 
\end{equation*}
Let $\mathscr S$ be the set of integers $j$ such that
$e\le j\le n$,
$j\ne i$, and 
$\partial f/\partial x_j\ne 0$.
For each $j\in \mathscr S$, the differential form
\begin{equation*}
df \wedge (dx_e\dotsb \widehat{ dx_i }\dotsb\widehat{ dx_j }\dotsb dx_n) \in (\wedge^{n-e}\Omega_X)\otimes \widehat\O_{X,x}
\end{equation*}
maps to zero in $(\wedge^{n-e}C)\otimes \widehat\O_{X,x}$.
In other words,
\begin{equation*}
(\partial f/\partial x_j) \cdot dx_e\dotsb\widehat{dx_i}\dotsb dx_n= \pm
(\partial f/\partial x_i) \cdot dx_e\dotsb\widehat{dx_j}\dotsb dx_n,
\end{equation*}
in $(\wedge^{n-e}C)\otimes \widehat\O_{X,x}$, so that $\eta_i = \pm \eta_j$ in $Q\otimes \Frac \widehat\O_{X,x}$, for all $j\in \mathscr S$.

Clearly,
\begin{equation*}
\Sigma' := \Sigma^1(s) \cap X' = 
\{ dx_1\wedge\dotsb \wedge dx_{e-1}\wedge df =0 \},
\end{equation*}
which implies the claimed description of $\Sigma'$. From that description and the fact that $\eta_i=\pm \eta_j$ in $Q\otimes \Frac \widehat\O_{X,x}$, it follows that $\eta_i$ defines a regular, generating section of $Q$ over $X'\setminus \Sigma'$.
The result follows as $\Sigma'$ has codimension 2 in $X'$, see  \cite[Proposition 6.3A]{hartshorne77}. 
\end{proof}

\begin{proof}[Proof of Proposition \ref{inclusion-Q}]
The inseparable cover $Y$ is geometrically normal by Proposition \ref{insep-cover-normal}.
Blowups of integral schemes are again integral, so $B$ is geometrically integral.

By Lemma \ref{torsion-descent-II}, it suffices to show the factoring morphism
\begin{equation*}
\rho^* \pi^* Q\dashrightarrow \Omega_B^{n-e}/(\Omega_B^{n-e})_\mathrm{tors}\hookrightarrow \Omega_B^{n-e}\otimes k(B)
\end{equation*}
exists on a faithfully flat cover of $B$.
Hence we may assume that $k$ is algebraically closed.
It suffices to show the factoring morphism exists after pulling back to the spectrum of $\O_{B,b}$ for all closed points $b\in B$. In fact, by Lemma \ref{torsion-descent-I}, it suffices to show that
\begin{equation*}
Q \otimes \widehat \O_{B,b}\to \Omega_B^{n-e} \otimes \Frac \widehat \O_{B,b}
\end{equation*}
maps into the image of $\Omega_B^{n-e} \otimes \widehat \O_{B,b}$ for all closed points $b\in B$.

Let $b\in B$ be a closed point. Write $y:=\rho(b)$ and $x:=\pi(y)$.
The existence of the factoring morphism is clear if $x\in X\setminus \Sigma^1(s)$, since the natural map $\wedge^{n-e}C \to Q$ induces an isomorphism of stalks over those points.
Assume that $x\in \Sigma^1(s)$ and fix a choice of isomorphism, basis, constants and power series as in Remark \ref{morse-corank-1-insep}.

Let $Y':=\Spec \widehat\O_{Y,y}$. 
Let $B'$ denote the base change of $B\to Y$ to $Y'$. Blowing up commutes with flat pullback, so $B'$ is the blowup of $Y'$ at the closed subscheme $Y'\cap \pi^{-1}\Sigma^1(s)$.
The ideal of this subscheme of $Y'$ is
\begin{equation*}
I:=\langle \partial_e f,\dotsc, \partial_n f\rangle \subseteq \widehat \O_{Y,y},
\end{equation*}
where $\partial_i f$ denotes the partial derivative $\partial f/\partial x_i$ for each $i=e,\dotsc, n$.
Let $\mathscr S$ be the set of integers $i$ such that $e\le i\le n$ and $\partial_i f\ne 0$ in $\widehat\O_{X,x}$. Then the map of graded $\widehat \O_{Y,y}$-algebras
\begin{equation*}
\widehat\O_{Y,y}[\{T_i\}_{i\in \mathscr S}] / \langle T_i \partial_j f - T_j \partial_i f\rangle_{i,j\in \mathscr S} \to \mathrm{Rees}(I):=
\widehat\O_{Y,y} \oplus I \oplus I^2 \oplus I^3 \oplus \dotsb
\end{equation*}
that sends $T_i$ to $\partial_i f$ (in degree 1) for each $i\in \mathscr S$ is well defined and surjective.
It follows that 
\begin{equation*}
B' =
\bigcup_{i\in \mathscr S} D_+(T_i),
\end{equation*}
where $D_+(T_i)\subseteq B'$ is an open subset isomorphic over $Y'$ to a closed subscheme of
\begin{equation*}
\Spec \widehat\O_{Y,y}[\{T_j\}_{j\in \mathscr S, j\ne i}] / \langle \partial_j f - T_j \partial_i f\rangle_{j\in \mathscr S, j\ne i}
\end{equation*}
for each $i\in \mathscr S$.

Let $b'\in B'$ be the unique point that lies over the closed point of $Y'$ and maps to $b\in B$. Then the completion of the local ring of $B'$ at $b'$ is naturally isomorphic to $\widehat \O_{B,b}$. Note that each of the natural maps
\begin{equation*}
\widehat \O_{X,x} \xrightarrow{\alpha_1}
\widehat \O_{Y,y} \xrightarrow{\alpha_2}
\O_{B',b'} \xrightarrow{\alpha_3}
\widehat \O_{B,b}
\end{equation*}
sends nonzerodivisors to nonzerodivisors. Indeed, $\alpha_1$ and $\alpha_3$ are flat, and if $A$ is a ring and $J$ is an ideal, it is easily checked that nonzerodivisors of $A$ map to nonzerodivisors of $\operatorname{Rees}(J)$. There exists therefore a natural map
\begin{equation*}
\Frac \widehat \O_{X,x} \to \Frac \widehat \O_{B,b}.
\end{equation*}

Let $i\in\mathscr S$ be such that $b'\in D_+(T_i)$.
To complete the proof, we now show that the rational differential form $\eta_i$ of Lemma \ref{eta} maps into the image of $\Omega_B^{n-e}\otimes \widehat \O_{B,b}$ under the natural map
\begin{equation*}
\Omega_X^{n-e}\otimes \Frac \widehat\O_{X,x}\to 
\Omega_B^{n-e}\otimes \Frac \widehat\O_{B,b}.
\end{equation*}

If $\operatorname{char}(k)\ne 2$, then
\begin{equation*}
2x_j = \partial_j f = T_j \partial_i f
\end{equation*}
in $\Gamma(D_+(T_i),\O_{B'})$ for all $j\in\{e, e+1, e+2\}\setminus\{i\}$. Similarly, if $\operatorname{char}(k)=2$, then $x_j$ is divisible by $\partial_i f$ in $\Gamma(D_+(T_i),\O_{B'})$ for all $j\in \{e,e+1,e+2,e+3\}\setminus\{i\}$. For this reason, the result is a consequence of the following observation.

Let $u_1,u_2,g\in \widehat \O_{B,b}$ be elements such that $g$ is a nonzerodivisor and divides $u_1$ and $u_2$. Then
\begin{equation*}
\dfrac{du_1\wedge du_2}{g}\in \Omega_B^2 \otimes \Frac \widehat \O_{B,b}
\end{equation*}
lies in the image of $\Omega_B^2 \otimes \widehat \O_{B,b}$. Indeed, if $u_1=v_1 g$ and $u_2=v_2 g$, then
\begin{align*}
du_1\wedge du_2 &=
(v_1 dg + g dv_1)\wedge( v_2 dg + g dv_2 )\\
&= v_1 g \cdot dg\wedge dv_2 + g v_2 \cdot dv_1\wedge dg + g^2 \cdot dv_1\wedge dv_2. \qedhere
\end{align*}
\end{proof}

\begin{definition}
\label{Exceptions}
Given a prime number $q$, we define a finite set $\mathscr E_q$ of  pairs of integers as follows. 
We let
\begin{align*}
\mathscr E' = \{&
(1, 2),
(2, 3),
(2, 4),
(3, 4),
(3, 5),
(4, 5),
(4, 6),
(4, 7),
(5, 7),\\
&(5, 8),
(6, 9),
(7, 11)\},\\
\mathscr E'' =\{&
(1, 3),
(2, 5),
(3, 6),
(4, 8),
(5, 9),
(6, 10),
(7, 12),
(8, 13)\}
\end{align*}
and set
\begin{equation*}
\mathscr E_q =\begin{cases}
\mathscr E' &\text{if }q\ne 2\\
\mathscr E' \sqcup \mathscr E'' &\text{if }q=2.
\end{cases}
\end{equation*}
\end{definition}

\begin{theorem}
\label{irrational-insep-covers}
Let $k$ be an infinite field of characteristic $p>0$. Let $X$ be a scheme over $k$. Let $E$ be an locally free sheaf of rank $e$ on $X$. Let $W\subset \Gamma(X,F_X^* E)$ be a $k$-linear subspace of finite dimension. 
Suppose that
\begin{enumerate}
\item $X$ is smooth, proper, connected and of dimension $n$;
\item $\omega_X\otimes \det(E)^{\otimes p}$ is a big invertible sheaf on $X$; 
\item $d^2 (W)$ generates $\PP_X^2(F_X^* E)$ as an $\O_X$-module; and
\item $e\le n-1$, $e\le \tfrac 1 2 (n+3)$ and $(e,n)\not\in \mathscr E_p$.
\end{enumerate}
Let $s\in W$ be a general section.
Then $X[\pthroot s]_{\bar k}$ is integral, normal and not separably uniruled.
\end{theorem}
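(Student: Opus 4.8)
The plan is to verify, for a general section $s\in W$, the three hypotheses of Proposition \ref{inclusion-Q}, and then to feed its output into Lemma \ref{not-uniruled}. Throughout, write $m:=\min(n,e)$; since $e\le n-1$ we have $m=e$ and $n-e\ge 1$. The Frobenius pullback $F_X^*E$ has rank $e$ and carries its canonical connection $\nabla$, which is generated by its horizontal sections (Example \ref{can-conn-locally}), and $2$ is either invertible or zero in $k$. Hypothesis (3) states that $d^2(W)$ generates $\PP_X^2(F_X^*E)$; applying the surjection $\PP_X^2\to\PP_X^1$ shows that $d^1(W)$ generates $\PP_X^1(F_X^*E)$ as well (Remark \ref{surj-pp}). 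Hence Corollaries \ref{1st-sings-generic} and \ref{generic-sigma-i-j} both apply to $s$. Specializing their codimension formulas to $m=e$, a general $s$ satisfies: every component of the first-order stratum $\Sigma^i(s)\setminus\Sigma^{i+1}(s)$ has codimension $i(n-e+i)$; and every component of $\Sigma^{1,j}(s)\setminus\Sigma^{1,j+1}(s)$ has codimension $(n-e+1)+\tfrac12 j(j\pm1)$, the middle term of the general formula vanishing because its factor $e-m+i-1$ equals $0$ when $i=1$ and $m=e$. Here and below the sign is ``$+$'' if $p\ne 2$ and ``$-$'' if $p=2$. A single general $s$ satisfies all of these (finitely many) conditions simultaneously.

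Next I would deduce the three hypotheses of Proposition \ref{inclusion-Q}. For (2), $i(n-e+i)$ is increasing in $i$, with minimum value $2(n-e+2)$ over $i\ge 2$; this exceeds $n$ exactly when $e\le\tfrac12(n+3)$, so under hypothesis (4) every stratum of $\Sigma^2(s)$ has codimension $>n$ and hence $\Sigma^2(s)=\emptyset$. Hypothesis (1) is then immediate, since $\Sigma^1(s)=\Sigma^1(s)\setminus\Sigma^2(s)$ has every component of codimension $n-e+1\ge 2$. For (3), $(n-e+1)+\tfrac12 j(j\pm1)$ is increasing in $j$, so $\Sigma^{1,n-e-1}(s)$ is empty as soon as its value at $j=n-e-1$ exceeds $n$. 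Writing $r:=n-e$, this amounts to
\begin{equation*}
r^2-r+2>2e \quad(p\ne 2), \qquad r^2-3r+4>2e \quad(p=2).
\end{equation*}
A finite verification shows that, within the range $1\le e\le n-1$ and $e\le\tfrac12(n+3)$, these inequalities hold precisely when $(e,n)\notin\mathscr E_p$ (Definition \ref{Exceptions}): the sets $\mathscr E'$ and $\mathscr E''$ are exactly the lists of borderline pairs where the relevant inequality fails. This combinatorial bookkeeping — in particular the $p=2$ case, where the skew-symmetry of $\mathbf d_\Sigma^2 s$ forces the ``$-$'' sign and enlarges the exceptional set by $\mathscr E''$ — is the main obstacle in the proof.

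With (1)--(3) in hand, Proposition \ref{inclusion-Q} gives that $Y:=X[\pthroot s]$ is geometrically normal, that the blowup $\rho:B\to Y$ of $\pi^{-1}\Sigma^1(s)$ is geometrically integral and proper over $k$, and that there is a map $\rho^*\pi^*Q\to\Omega_B^{n-e}/(\Omega_B^{n-e})_\mathrm{tors}$ whose composition with the inclusion into $\Omega_B^{n-e}\otimes k(B)$ is injective; since $\rho^*\pi^*Q$ is a line bundle, this map is itself injective. By Proposition \ref{this-is-Q}(2), hypothesis (1) yields $Q\cong\omega_X\otimes(\det E)^{\otimes p}$, which is big by hypothesis (2). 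Because $\pi$ is finite surjective and $\rho$ is proper birational, the composite $\pi\rho:B\to X$ is a generically finite surjective morphism of proper $k$-varieties, so $\rho^*\pi^*Q=(\pi\rho)^*Q$ is again big.

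Finally I would apply Lemma \ref{not-uniruled} to the geometrically integral proper variety $B$, with $i=n-e\ge 1$, the big line bundle $\rho^*\pi^*Q$, and the injection just produced: it follows that $B_{\bar k}$ is not separably uniruled. Since $\rho$ is birational, $B_{\bar k}$ and $Y_{\bar k}$ are birational, and separable uniruledness is a birational invariant; hence $X[\pthroot s]_{\bar k}=Y_{\bar k}$ is not separably uniruled. Integrality and normality of $X[\pthroot s]_{\bar k}$ follow from hypothesis (1) via Proposition \ref{insep-cover-normal}, which completes the argument.
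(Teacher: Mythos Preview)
Your proof is correct and follows the same strategy as the paper's: verify the hypotheses of Proposition \ref{inclusion-Q} for general $s$ via the codimension formulas of Corollaries \ref{1st-sings-generic} and \ref{generic-sigma-i-j}, identify $Q$ using Proposition \ref{this-is-Q}, and conclude with Lemma \ref{not-uniruled}. You supply a few details the paper leaves implicit---that $d^2(W)$ generating $\PP_X^2(F_X^*E)$ forces $d^1(W)$ to generate $\PP_X^1(F_X^*E)$ via the surjection $\epsilon_{2,1}$, and that bigness of $Q$ on $X$ pulls back to bigness of $\rho^*\pi^*Q$ on $B$ because $\pi\rho$ is generically finite and surjective---but the architecture is identical.
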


\begin{proof}
Let $\delta := n-e$ and 
\begin{equation*}
C := \delta + 1 + \tfrac 1 2 (\delta-1)(\delta-1\pm 1),
\end{equation*}
where the symbol ``$\pm$'' should be read as ``plus'' if $k$ has characteristic different from 2, and ``minus'' if $k$ has characteristic 2.
By Corollaries \ref{1st-sings-generic} and 
\ref{generic-sigma-i-j}, the hypotheses of Proposition \ref{inclusion-Q} are satisfied for general $s\in W$ provided that
\begin{equation*}
n-e+1 \ge 2, \qquad
2(n-e+2) > n \qquad\text{and}\qquad
C > n.
\end{equation*}
The first two inequalities are satisfied if, and only if, $e\le n-1$ and $e\le \tfrac 1 2(n+3)$.
In this case, the third inequality is satisfied if, and only if,  $(r,n)\not\in \mathscr E_p$.
There exists therefore an injective map $\rho^* \pi^* Q\hookrightarrow \Omega_B^{n-e}/(\Omega_B^{n-e})_\mathrm{tors}$, where $Q\cong \omega_X\otimes (\det E)^{\otimes p}$ by Proposition \ref{this-is-Q}.
From Lemma \ref{not-uniruled} it follows that $B$ is not separably uniruled.
This implies the result, since $B$ is birationally equivalent to $X[\pthroot s]$.
\end{proof}

\section{Irrational complete intersections}

In this section we reduce Theorem \ref{MainResultIrrationalCIs}, our main result about complete intersections in characteristic zero, to Theorem \ref{irrational-insep-covers}, which concerns insparable covers in positive characteristic. This is made possible by the following result of Matsusaka's.

\begin{theorem}[{\cite[p. 233]{Matsusaka68}, \cite[Theorem IV.1.8.3]{Kollar1996}}]
\label{Matsusaka}
Let $f:Z\to S$ be a morphism of schemes. Suppose that $S$ is excellent and that $f$ is flat, proper and has geometrically integral fibers. Then there exist coutably many closed subsets $R_i\subset S$ such that $Z_s$ is geometrically ruled if, and only if, $s\in \bigcup_i R_i$.
\end{theorem}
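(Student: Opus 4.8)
The plan is to detect ruledness of a fiber by a geometric ``witness'' that can be parametrized in a family over $S$, and then to read off the locus of ruled fibers from the images of countably many finite-type parameter schemes. The reason one should expect only a \emph{countable} union of closed sets, rather than a single closed set, is that the witnessing data live on moduli spaces (Chow or Hilbert schemes) that have countably many components.

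First I would record a characterization of ruledness amenable to families. A geometrically integral proper variety $V$ of dimension $d$ over an algebraically closed field is ruled if and only if there is a projective variety $Y$ of dimension $d-1$ together with a birational map $Y\times \P^1\dashrightarrow V$. Resolving the indeterminacy of such a map and spreading it out, ruledness becomes equivalent to the existence of a covering family of rational curves on $V$ — a diagram $Y\leftarrow P\rightarrow V$ with $P\to Y$ a $\P^1$-fibration and $P\to V$ birational — in which the generic member passes through the generic point of $V$ with multiplicity one. Each member curve determines a point of the Chow variety (or Hilbert scheme) of $V$, so a ruling is witnessed by a suitable $(d-1)$-dimensional subvariety of that moduli space.

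Next I would globalize over $S$. By the existence theory for relative Chow and Hilbert schemes (Koll\'ar, \cite[Ch. I]{Kollar1996}) applied to the flat proper family $Z\to S$, the space of $1$-cycles in the fibers is a countable disjoint union $\bigsqcup_i \mathcal C_i$ of $S$-schemes of finite type, each carrying a universal family of curves mapping to $Z$. Inside each $\mathcal C_i$ the locus parametrizing integral rational curves is again of finite type, and over it I can build finite-type $S$-schemes $T_i$ parametrizing the data of a candidate ruling: a subfamily of such curves whose base has the correct relative dimension and whose evaluation map into the ambient fiber is dominant and generically one-to-one. Dominance, generic injectivity, and correctness of the base dimension are constructible conditions, so they cut out a constructible $U_i\subseteq T_i$, and the locus $R_i\subseteq S$ of points whose fiber admits a ruling of the shape recorded by $T_i$ is the image of $U_i$ under $T_i\to S$. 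By Chevalley's theorem each $R_i$ is constructible and the ruled locus equals $\bigcup_i R_i$. To upgrade ``constructible'' to ``closed'' I would exploit properness of the components of the relative Hilbert/Chow scheme over $S$, arranging the evaluation and incidence conditions as closed conditions tested on a proper $S$-scheme so that each image $R_i$ is closed; at worst one stratifies each constructible $R_i$ into finitely many locally closed pieces and replaces each by its closure, using that specialization within a stratum preserves the witnessing structure. Excellence of $S$ is what guarantees that the auxiliary resolutions, normalizations and generic-smoothness inputs behave uniformly in the family.

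The main obstacle is the birationality — as opposed to mere dominance — of the map $Y\times\P^1\dashrightarrow V$. Uniruledness is detected by a dominant covering family of rational curves and is comparatively well behaved in families, whereas ruledness demands that the generic fiber of the covering family be a single reduced $\P^1$ and that the evaluation map have degree one onto $V$. Encoding this degree-one condition so that it is simultaneously constructible across the family and yields honestly closed strata is the delicate bookkeeping at the heart of Matsusaka's argument, and it is exactly what forces the ruled locus to be a countable union of closed subsets rather than a single closed one.
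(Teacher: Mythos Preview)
The paper does not prove this theorem: it is stated with citations to Matsusaka \cite{Matsusaka68} and Koll\'ar \cite[Theorem IV.1.8.3]{Kollar1996} and then used as a black box (see Remarks \ref{exhibit-one} and \ref{MatsusakaDVR} and the proof of Proposition \ref{IrrationalCIs}). So there is no ``paper's own proof'' to compare your proposal against.

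That said, your sketch is a reasonable outline of the argument one finds in the cited reference \cite[IV.1]{Kollar1996}: parametrize candidate rulings via components of the relative Chow/Hilbert scheme, cut out the locus where the evaluation map is birational, and take images in $S$. Two points are worth sharpening. First, your passage from ``constructible'' to ``closed'' is hand-wavy: you cannot simply ``replace each constructible piece by its closure,'' since you must actually show that ruledness specializes---i.e., that if the geometric generic fiber over an irreducible closed set is ruled then every fiber over that set is ruled. This is the genuine content (and is where excellence is used, to control the behavior of the witnessing family under specialization via suitable models). Second, you correctly identify the degree-one condition on the evaluation map as the crux, but you should be explicit that this is an \emph{open} condition on the parameter space, not a closed one, which is exactly why the closedness of each $R_i$ requires the specialization argument rather than properness alone.
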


\begin{remark}
\label{exhibit-one}
In order to show that the very general complete intersection of a given multi-degree is not geometrically ruled, it suffices to exhibit a single complete intersection of that multi-degree that is (geometrically integral, but) not geometrically ruled. Indeed, if the fiber of $f:Z\to S$ over $s\in S$ is not geometrically ruled, then $s\not \in R_i$ for all $i$, so that $\operatorname{Supp} R_i\ne S$ for all $i$.
\end{remark}

\begin{remark}
\label{MatsusakaDVR}
Suppose that $S$ is the spectrum of an excellent discrete valuation ring. Then the conclusion of Theorem \ref{Matsusaka} may be equivalently stated as the following implication: If the special fiber of $f:X\to S$ is not geometrically ruled, then the generic fiber isn't either. To see this, note that $S$ has two points. Its only closed subsets are the empty set, the singleton consisting of the closed point, and $S$ itself. Any closed subset $R_i\subset S$ which does not contain the closed point is therefore empty.
\end{remark}

\begin{construction}
\label{CyclicCovers}
Let $X$ be a scheme. Let $L$ be an invertible sheaf on $X$. Let $p\ge 1$ be an integer. Let $s\in\Gamma(X,L^{\otimes p})$ be a section.
Let $V=\mathbf V(L)$ be the vector bundle associated to $L$. Let $\pi:V\to X$ denote the projection. Let $\tau\in \Gamma(V,\pi^*L)$ be the tautological section, see Definition \ref{tautological-section}. The \textbf{cyclic cover} of $X$ defined by $s$ is the restriction of $\pi$ to the subscheme
\begin{equation*}
X[\displayroot p s] := \{ \tau^{\otimes p} = s\}\subseteq V.
\end{equation*}
\end{construction}

We now describe a variant of the hypersurface degeneration introduced in \cite[Example 4.3]{Mori1975}.

\begin{construction}
\label{MoriDegeneration}
Let $R$ be a discrete valuation ring. Let $K$ denote the fraction field of $R$. Let $t\in R$ be a uniformizer, that is, a generator of the maximal ideal of $R$. Let $k$ denote the residue field of $R$.

Let $c$ be an integer such that $1\le c\le N$. For each $i=1,\dotsc,c$, let $f_i, g_i\in R[x_0,\dotsc,x_N]$ be homogeneous elements with $\deg f_i$ divisible by $\deg g_i$, and write
\begin{equation*}
a_i := \deg g_i \qquad\text{and}\qquad
p_i := \deg f_i / \deg g_i.
\end{equation*}
Let $\rho:V\to \P^N_R$ denote the vector bundle associated to the locally free sheaf $\oplus_{i=1}^c \O(a_i)$. Let 
\begin{equation*}
\tau = (\tau_1,\dotsc,\tau_c)\in \Gamma(V,\rho^* \oplus_{i=1}^c \O(a_i))
\end{equation*}
denote the tautological section. Let $Z$ denote the subscheme of $V$ defined by the vanishing of the sections
\begin{equation*}
\tau_i^{\otimes p_i}- f_i \in \Gamma(V,\rho^* \O(p_ia_i))
\qquad\text{and}\qquad
t\tau_i - g_i \in \Gamma(V,\rho^* \O(a_i))
\end{equation*}
for all $i=1,\dotsc, c$. Let $\pi$ denote the composition
\begin{equation*}
Z \hookrightarrow V \xrightarrow\rho \P^N_R \to \Spec R. 
\end{equation*}
\end{construction}

\begin{lemma}
Notation as in Construction \ref{MoriDegeneration}. 
\begin{enumerate}
\item The morphism $\pi:Z\to \Spec R$ is proper.
\item If the special fiber $Z_k$ has dimension $N-c$, then the generic fiber $Z_K$ has also dimension $N-c$, and $\pi$ is flat.
\item If furthermore $Z_k$ is geometrically integral, then $Z_K$ is geometrically integral.
\end{enumerate}
\label{AboutDegeneration}
\end{lemma}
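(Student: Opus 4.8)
The plan is to read off properness from finiteness over $\P^N_R$, to extract the dimension of the generic fiber and flatness from Krull's height theorem combined with a Cohen--Macaulayness argument, and to settle geometric integrality by invoking a semicontinuity theorem. For part (1), I would work on an affine open $U=\Spec A\subseteq\P^N_R$ over which each $\O(a_i)$ is trivialized; there $V|_U=\Spec A[T_1,\dots,T_c]$ and $Z|_U=\Spec A[T]/(T_i^{p_i}-\phi_i,\ tT_i-\gamma_i:i=1,\dots,c)$ for suitable $\phi_i,\gamma_i\in A$. Each relation $T_i^{p_i}=\phi_i$ is monic in $T_i$, so $\O(Z|_U)$ is a finite $A$-module; hence $Z\to\P^N_R$ is finite, therefore proper, and composing with the proper morphism $\P^N_R\to\Spec R$ yields properness of $\pi$.

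For part (2), I would first describe the two fibers. Over $K$ the uniformizer $t$ is invertible, so $t\tau_i=g_i$ forces $\tau_i=g_i/t$ and identifies $Z_K$ with the closed subscheme $\{g_i^{p_i}=t^{p_i}f_i:i=1,\dots,c\}\subseteq\P^N_K$ cut out by $c$ equations; every component of $Z_K$ therefore has dimension at least $N-c$. Upper semicontinuity of fiber dimension for the finite-type morphism $\pi$ gives the reverse inequality: a point of $Z_K$ realizing $\dim Z_K$ has closure meeting $Z_k$ (using properness from part (1)), so $\dim Z_K\le\dim Z_k=N-c$, and hence $\dim Z_K=N-c$.

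The heart of the argument is to show that $Z$ is pure of dimension $N-c+1$ with no associated point in the special fiber. Since $V$ is smooth, hence regular, over $R$ of relative dimension $N+c$, and $Z$ is cut out in $V$ by the $2c$ displayed sections, Krull's height theorem bounds the codimension in $V$ of each component of $Z$ by $2c$. If a component had generic point $\zeta$ in the special fiber, then flatness of $V/R$ would give $\dim\O_{V,\zeta}=\dim\O_{V_k,\zeta}+1$, while $V_k$ is an $(N+c)$-dimensional variety over $k$ with $\dim\overline{\{\zeta\}}\le\dim Z_k=N-c$; this forces $\dim\O_{V,\zeta}\ge 2c+1$, contradicting Krull. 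Thus every component of $Z$ dominates $\Spec R$, and with the codimension bound this makes $Z$ pure of dimension $N-c+1=\dim V-2c$. In the regular, hence Cohen--Macaulay, scheme $V$ this means the $2c$ equations form a regular sequence locally, so $Z$ is a local complete intersection, Cohen--Macaulay, and unmixed; in particular it has no embedded points and all of its associated points are generic points of components, each dominating $\Spec R$. Consequently $t$ is a nonzerodivisor on $\O_Z$ locally, so $\O_Z$ is torsion-free, hence flat, over the discrete valuation ring $R$. I expect this purity step to be the main obstacle, since it requires ruling out both small components and embedded points in the special fiber; the decisive input is the dimension bookkeeping that plays Krull's codimension bound in $V$ against the hypothesis $\dim Z_k=N-c$, using flatness of $V$ over $R$ to convert between dimensions in $V$ and in $V_k$.

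For part (3), I would observe that by parts (1) and (2) the morphism $\pi$ is proper, flat, and of finite presentation, as $R$ is Noetherian. The set of points of $\Spec R$ over which the fiber is geometrically integral is then stable under generization \cite[IV.12.2.4]{EGA}; it contains the closed point by hypothesis, hence contains the generic point, so $Z_K$ is geometrically integral.
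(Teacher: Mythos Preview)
Your proof is correct and follows essentially the same approach as the paper: finiteness over $\P^N_R$ for properness, Krull's height theorem on the $2c$ defining sections in the regular scheme $V$ together with the fiber-dimension hypothesis to get the correct dimension and Cohen--Macaulayness of $Z$, and EGA~IV~12.2.4 for geometric integrality. The only cosmetic difference is that the paper packages the flatness conclusion as ``miracle flatness'' \cite[Theorem 23.1]{Matsumura86}, whereas you spell out the equivalent observation that no associated prime of the Cohen--Macaulay scheme $Z$ lies over the closed point, so $t$ is a nonzerodivisor.
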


\begin{proof}
Let $P$ denote $\P^N_R$. 
Note that the closed immersion $Z\hookrightarrow V$ factors through the inclusion
\begin{equation*}
P[\displayroot {p_1} {f_1}]\times_P \dotsb \times_P P[\displayroot {p_c} {f_c}] \subset \mathbf V(\O(a_1))\times_P \dotsb\times_P \mathbf V(\O(a_c)) = V.
\end{equation*}
This implies (1) since each of the morphisms $P[\displayroot {p_i} {f_i}]\to P$ is finite.

If $\dim Z_k = N-c$, then $\dim Z_K \le N-c$ by upper-semicontinuity of fiber dimension (see \cite[Corollaire 13.1.5]{EGA}). This implies $\dim Z \le N-c+1$. To get the opposite inequality, note that $Z$ is defined by the vanishing of $2c$ sections of invertible sheaves on $V$, which is a regular scheme of pure dimension $N+c+1$. It follows that $\dim Z_K = N-c$ and that $Z$ is Cohen-Macaulay. By ``miracle flatness'' \cite[Theorem 23.1]{Matsumura86}, part (2) follows.

The property of geometric integrality of fibers is open on the base of a flat, proper and finitely presented morphism of schemes (see \cite[Th\'eor\`eme 12.2.4]{EGA}). This implies (3).
\end{proof}

\begin{remark}
Let $\pi:Z\to \Spec R$ be as in Construction \ref{MoriDegeneration}. Suppose that the residue field $k$ has positive characteristic $p$ and that $p_i=p$ for all $i=1,\dotsc, c$.

Let $X$ denote the subscheme
\begin{equation*}
\{g_1=\dotsb=g_c=0\}\subseteq \P^N_k.
\end{equation*}
Let $F_X:X\to X$ denote the absolute Frobenius morphism of $X$. Let $E=\oplus_{i=1}^c \O_X(a_i)$. Then $(f_1,\dotsc,f_c)$ induces a global section $s$ of $(F_X)^* E =\oplus_{i=1}^c \O_X(pa_i)$.

The special fiber $Z_k$ is isomorphic as a $k$-scheme to $X[\pthroot s]$, the inseparable cover of $X$ determined by $s$. The generic fiber $Z_K$, on the other hand, is isomorphic to the subscheme
\begin{equation*}
\{ g_1^p - t^p f_1 = \dotsb = g_c^p - t^p f_c =0 \} \subseteq \P^N_K.
\end{equation*}

Suppose that $X$ is a complete intersection in $\P_k^N$. Then $\pi:Z\to \Spec R$ is flat and $\dim Z_K=N-c$ by Lemma \ref{AboutDegeneration}. Thus $\pi$ is a degeneration of complete intersection over $K$ to an inseparable cover of a complete intersection of smaller multi-degree over $k$.
\end{remark}

\begin{proposition}
Let $N,d_1,\dotsc,d_c, p$ be positive integers. Suppose that
\begin{enumerate}
\item $p$ is a common prime factor of $d_1,\dotsc,d_c$;
\item $\sum_{i=1}^c d_i > \frac{p}{p+1}(N+1)$; and 
\item $c\le \tfrac 1 2 N-1$, $c\le \frac 1 3 N +1$ and $(c,N-c)\not\in \mathscr E_p$, the finite set of Definition \ref{Exceptions}.
\end{enumerate}
Then a complete intersection of $c$ very general hypersurfaces of degrees $d_1,\dotsc,d_c$ in $N$-dimensional complex projective space is not ruled.
\label{IrrationalCIs}
\end{proposition}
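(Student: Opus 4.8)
The plan is to realize the given complete intersections as generic fibers of the degeneration of Construction~\ref{MoriDegeneration}, whose special fibers are inseparable covers governed by Theorem~\ref{irrational-insep-covers}, and then to propagate the resulting non-ruledness by two applications of Matsusaka's theorem (Theorem~\ref{Matsusaka}): once across the discrete valuation ring of the degeneration, and once across the family of all complete intersections of the given multi-degree.

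First I would fix an excellent discrete valuation ring $R$ of mixed characteristic with residue field $k=\overline{\mathbf F}_p$ (for instance the Witt vectors $W(\overline{\mathbf F}_p)$) and fraction field $K$ of characteristic zero. Since $p\mid d_i$ for every $i$, I set $a_i:=d_i/p\ge 1$ and $p_i:=p$, and I run Construction~\ref{MoriDegeneration} with homogeneous forms $g_i,f_i\in R[x_0,\dotsc,x_N]$ of degrees $a_i$ and $d_i$ reducing to general forms $\bar g_i,\bar f_i\in k[x_0,\dotsc,x_N]$. By the remark following Lemma~\ref{AboutDegeneration}, the special fiber is then $Z_k\cong X[\pthroot s]$, where $X=\{\bar g_1=\dotsb=\bar g_c=0\}\subseteq \P^N_k$ is a complete intersection of multi-degree $(a_1,\dotsc,a_c)$, $E=\bigoplus_i \O_X(a_i)$ has rank $c$, and $s\in \Gamma(X,F_X^*E)=\bigoplus_i \Gamma(X,\O_X(d_i))$ is the section determined by $(\bar f_1,\dotsc,\bar f_c)$; the generic fiber $Z_K=\{g_i^p-t^pf_i=0\}_i\subseteq \P^N_K$ is a complete intersection of multi-degree $(d_1,\dotsc,d_c)$.

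Next I would check that $(X,E,W)$ satisfies the hypotheses of Theorem~\ref{irrational-insep-covers} with $n=N-c$ and $e=c$, where $W$ is the image of $\bigoplus_i H^0(\P^N_k,\O(d_i))$ in $\Gamma(X,F_X^*E)$. Smoothness, properness, connectedness, and $\dim X=N-c$ hold for general $\bar g_i$ by Bertini. The conditions $e\le n-1$, $e\le\tfrac12(n+3)$, $(e,n)\notin\mathscr E_p$ follow from hypothesis (3), since $(c,N-c)=(e,n)$ and $c\le\tfrac12 N-1\le\tfrac12(N-1)$ while $c\le\tfrac13 N+1$. The jet-generation hypothesis follows from Proposition~\ref{sufficient-jets}: as $d_i=pa_i\ge 2$, forms of degree $d_i$ span the $2$-jets of $\O_{\P^N}(d_i)$ at every point, and the $2$-jets of $\O_X(d_i)$ are a quotient of these. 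The decisive input is the bigness hypothesis: by adjunction $\omega_X=\O_X(\sum_i a_i-N-1)$ and $(\det E)^{\otimes p}=\O_X(p\sum_i a_i)$, so $\omega_X\otimes(\det E)^{\otimes p}=\O_X\bigl((p+1)\sum_i a_i-N-1\bigr)$; since $(p+1)\sum_i a_i=\tfrac{p+1}{p}\sum_i d_i$, this line bundle is ample, hence big, precisely when $\sum_i d_i>\tfrac{p}{p+1}(N+1)$, which is hypothesis (2). Theorem~\ref{irrational-insep-covers} then shows that for general $s\in W$---that is, for general $\bar f_i$---the special fiber $X[\pthroot s]_{\bar k}$ is integral, normal, and not separably uniruled, hence not ruled.

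It remains to propagate non-ruledness. Since $Z_k\cong X[\pthroot s]$ is geometrically integral of dimension $N-c$, Lemma~\ref{AboutDegeneration} makes $\pi:Z\to\Spec R$ flat and proper with geometrically integral fibers and shows $Z_K$ is geometrically integral of multi-degree $(d_1,\dotsc,d_c)$; Remark~\ref{MatsusakaDVR} then upgrades ``$Z_k$ not geometrically ruled'' to ``$Z_K$ not geometrically ruled.'' This produces a single geometrically integral, non-geometrically-ruled complete intersection of multi-degree $(d_1,\dotsc,d_c)$ over the characteristic-zero field $K$. To reach the very general complex member, let $P$ be the irreducible, finite-type $\Q$-scheme parametrizing geometrically integral complete intersections of this multi-degree, with universal family $\mathcal Z\to P$; Theorem~\ref{Matsusaka} over the excellent base $P$ yields countably many closed subsets $R_j\subseteq P$ cutting out exactly the geometrically ruled fibers. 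The $\bar K$-point of $P$ carrying $Z_{\bar K}$ avoids every $R_j$, so each $R_j$ is a proper closed subset; base-changing to $\C$, the locus of ruled complex members lies in $\bigcup_j (R_j)_\C$, a countable union of proper closed subsets of $P_\C$, and the very general complex complete intersection of multi-degree $(d_1,\dotsc,d_c)$ is therefore not ruled. I expect the main obstacle to be exactly this last transfer: one must argue that geometric ruledness of a fiber depends only on the underlying scheme point of $P$, so that the lone non-ruled $\bar K$-fiber forces each $R_j$ to be proper over $\Q$ and this properness survives base change to $\C$; the bigness computation is the only other place where the precise form of hypothesis (2) enters.
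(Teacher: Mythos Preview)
Your proposal is correct and follows essentially the same route as the paper: degenerate via Construction~\ref{MoriDegeneration} to an inseparable cover of a smooth complete intersection of multi-degree $(d_1/p,\dotsc,d_c/p)$, verify the hypotheses of Theorem~\ref{irrational-insep-covers} (the numerical conditions on $(e,n)=(c,N-c)$, the second-order jet generation from $d_i\ge 2$, and the bigness computation $\omega_X\otimes(\det E)^{\otimes p}=\O_X\bigl(\tfrac{p+1}{p}\sum d_i-(N+1)\bigr)$), and then apply Matsusaka twice.

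The only genuine difference is in the bookkeeping of the transfer to $\mathbf C$. The paper takes $R=\Z[s]_{(p)}$, whose residue field $(\Z/p)(s)$ is infinite (as Theorem~\ref{irrational-insep-covers} requires) and whose fraction field $\Q(s)$ embeds directly into $\mathbf C$; this makes the second application of Matsusaka immediate via Remark~\ref{exhibit-one}. Your choice $R=W(\overline{\mathbf F}_p)$ works equally well (it is excellent and the residue field is infinite), but then you must argue, as you do, over the $\Q$-scheme $P$ of complete intersections and use that geometric ruledness of a fiber depends only on the image point in $P$; this is fine, since a variety over an algebraically closed field is ruled if and only if any base change to a larger algebraically closed field is. Alternatively you could shortcut this by noting that $\operatorname{Frac}W(\overline{\mathbf F}_p)$, being a characteristic-zero field of cardinality $2^{\aleph_0}$, embeds abstractly into $\mathbf C$, and then invoke Remark~\ref{exhibit-one} directly as the paper does.
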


\begin{proof}
Let $s$ be an indeterminate. Let $\frak p$ denote the prime ideal $p\Z[s] \subset \Z[s]$. Thus $\frak p$ corresponds to the generic point of the fiber over $p$ of the natural map
\begin{equation*}
\Spec \Z[s]\to \Spec \Z.
\end{equation*}
Let $R=\Z[s]_\frak p$. Then $R$ is an excellent discrete valuation ring whose residue field $k=(\Z/(p))(s)$ is infinite of characteristic $p$ and whose fraction field $K = \Q(s)$ embeds in the field of complex numbers. 

Let $\bar g_1,\dotsc,\bar g_c\in k[x_0,\dotsc,x_N]$ be homogeneous polynomials of degrees $d_1/p,\dotsc, d_c/p$ such that 
\begin{equation*}
X := \{\bar g_1 =\dotsb =\bar g_c =0\}\subset \P^N_{k}
\end{equation*}
is a smooth complete intersection over $k$.

Let $E=\oplus_{i=1}^c \O_X(d_i/p)$. Then $F_X^* E= \oplus_{i=1}^c \O_X(d_i)$. Let $W$ denote the image of the natural $k$-linear map
\begin{equation*}
\oplus_{i=1}^c \Gamma(\P_k^N, \O_{\P^N}(d_i)) \to \Gamma(X,F_X^* E).
\end{equation*}
Let $\bar k$ be an algebraic closure of $k$.
Because $d_i\ge 2$ for $i=1,\dotsc,c$, the natural map
\begin{equation*}
W\otimes_k \bar k\to E_{X_{\bar k}}/\frak m_x^3 E_{X_{\bar k}}
\end{equation*}
is surjective for every closed point $x\in X_{\bar k}$. Note that the invertible sheaf
\begin{align*}
\omega_X \otimes (\det E)^{\otimes p} = \omega_X ( \textstyle\sum d_i ) = \O_X(-N-1 + (\tfrac 1 p + 1)\textstyle\sum d_i)
\end{align*}
is big because $\sum d_i > (N+1)/(1+\tfrac 1 p)$ by hypothesis.

It follows from Proposition \ref{irrational-insep-covers} that there exist homogeneous polynomials
$\bar f_1,\dotsc,\bar f_c\in k[x_0,\dotsc,x_N]$
of degrees $d_1,\dotsc,d_c$ with the following property: Let $s$ denote the image of $(\bar f_1,\dotsc,\bar f_c)$ in $\Gamma(X,F_X^* E)$. Then $X[\pthroot s]$ is geometrically integral and not separably uniruled.

Let $\pi: Z\to \Spec R$ be the morphism obtained by applying Construction \ref{MoriDegeneration} to a choice of homogeneous lifts 
\begin{equation*}
g_1,\dotsc,g_c,f_1,\dotsc, f_c\in R[x_0,\dotsc,x_N]
\end{equation*}
of the $\bar g_i$ and $\bar f_i$. Then $\pi$ is equidimensional and flat with geometrically integral fibers, and $Z_k \cong X[\pthroot s]$ is not ruled.

Applying Matsusaka's Theorem \ref{Matsusaka}, we obtain that the generic fiber of $\pi$ is not ruled (see Remark \ref{MatsusakaDVR}). Thus there exists a complex complete intersection in $\P^N$  of multidegree $(d_1,\dotsc,d_c)$ that is not ruled. Another application of Theorem \ref{Matsusaka} to the family of all complex complete intersections in $\P^N$ of that multi-degree  yields the result (see Remark \ref{exhibit-one}).
\end{proof}

\begin{theorem}
\label{MainResultIrrationalCIs}
Let $N,d_1,\dotsc,d_c$ be positive integers. Let $p$ be a prime number. For $i=1,\dotsc,c$, let $r_i\in \{0,1,\dotsc,p-1\}$ be the remainder of the division of $d_i$ by $p$. Suppose that
\begin{enumerate}
\item $c\le \tfrac 1 2 N-1$, $c\le \frac 1 3 N +1$ and $(c,N-c)\not\in \mathscr E_p$ (see Definition \ref{Exceptions}); 
\item $d_1,\dotsc,d_c \ge p$; and
\item $\sum_{i=1}^c (d_i-r_i) > \frac{p}{p+1}(N+1)$.
\end{enumerate}
Then a complete intersection of $c$ very general hypersurfaces of degrees $d_1,\dotsc,d_c$ in $N$-dimensional complex projective space is not ruled.
\end{theorem}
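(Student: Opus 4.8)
The plan is to mirror the proof of Proposition \ref{IrrationalCIs}, replacing the multidegree $(d_1,\dotsc,d_c)$ by $(e_1,\dotsc,e_c)$ with $e_i := d_i - r_i = p\lfloor d_i/p\rfloor$, and then to bridge the gap between the two multidegrees by a degeneration. Since $d_i \ge p$ we have $e_i \ge p$, each $e_i$ is divisible by $p$, and (because $\sum_i e_i = \sum_i(d_i-r_i)$) hypotheses (1) and (3) for $(d_1,\dotsc,d_c)$ read verbatim as the hypotheses of Proposition \ref{IrrationalCIs} for $(e_1,\dotsc,e_c)$. The first thing I would record is that these are exactly the numerical hypotheses of Theorem \ref{irrational-insep-covers} for the smooth complete intersection $X\subseteq\P^N_k$ of degrees $a_i := e_i/p$ with $E := \bigoplus_{i=1}^c \O_X(a_i)$: here $n = N-c$ and $e = c$, so (1) becomes $c\le n-1$, $c\le\tfrac12(n+3)$ and $(c,N-c)\notin\mathscr E_p$, while $\omega_X\otimes(\det E)^{\otimes p} = \O_X\big((p+1)\sum_i a_i - N - 1\big)$ is big precisely when $(p+1)\sum_i a_i > N+1$, i.e. when $\sum_i e_i > \tfrac{p}{p+1}(N+1)$. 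Thus (3) is exactly the bigness hypothesis, and $d_i\ge p\ge 2$ supplies the jet-generation hypothesis as in the proof of Proposition \ref{IrrationalCIs}.

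With these identifications Theorem \ref{irrational-insep-covers} produces a geometrically integral, non-separably-uniruled cover $X[\pthroot s]$, the special fibre of a Mori degeneration whose generic fibre has degrees $(e_1,\dotsc,e_c)$. It remains to pass to the larger multidegree $(d_1,\dotsc,d_c)$. By Matsusaka's theorem in the form of Remarks \ref{exhibit-one} and \ref{MatsusakaDVR} it suffices to exhibit one geometrically integral complete intersection of degrees $(d_1,\dotsc,d_c)$ that is not ruled, and for this I would run Construction \ref{MoriDegeneration} over an excellent discrete valuation ring of residue characteristic $p$, inserting degree-$r_i$ factors to correct the degree. Concretely I would take the generic fibre cut out by $g_i^{\,p}\ell_i^{r_i} - t^p f_i$, with $g_i$ of degree $a_i$, $\ell_i$ a linear form, and $f_i$ of degree $d_i$; for general data this is a smooth complete intersection of degrees $(d_1,\dotsc,d_c)$ over the fraction field, and the special fibre is the twisted inseparable cover $\{\tau_i^{\,p}\ell_i^{r_i} = \bar f_i\}$ of $X=\{g_i=0\}$.

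The hard part will be the non-ruledness of this twisted special fibre. Away from the hyperplanes $\{\ell_i=0\}$ it agrees with the untwisted cover $X[\pthroot{s'}]$ for the rational section $s' = (\bar f_i\,\ell_i^{-r_i})_i$ of $F_X^*E$, hence is birational to a cover of the type studied in Chapters 1 and 2; but because the twisting bundle $\bigoplus_i\O_X(r_i)$ is not a Frobenius pullback (as $0<r_i<p$), this is not literally a cover to which Theorem \ref{irrational-insep-covers} applies. I would therefore re-run that theorem's argument for $s'$: the singularity analysis of Corollary \ref{generic-sigma-i-j} and Proposition \ref{morse-corank-1} is local and insensitive to the twist, so for general $f_i$ the loci $\Sigma^2(s')$ and $\Sigma^{1,n-e-1}(s')$ are again empty and $\Sigma^1(s')$ has codimension $\ge 2$; Proposition \ref{inclusion-Q} then yields a big invertible subsheaf of $\Omega_B^{\,n-c}/T$ on a resolution $B$, and Lemma \ref{not-uniruled} gives non-ruledness. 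The two delicate points I expect to dominate the work are (i) arranging that the twisted cover is genuinely finite and geometrically integral over $X$ — the naive equations have empty or positive-dimensional fibres over $\{\ell_i=0\}$, so the $\ell_i$ and the $f_i$ must be chosen with care — and (ii) verifying that the big line bundle $Q$ on the proper model retains the positivity furnished by (3) after the boundary divisors $\{\ell_i=0\}$ are taken into account. Granting these, Matsusaka's theorem propagates non-ruledness from the special fibre to the generic fibre, and thence to the very general complete intersection of degrees $(d_1,\dotsc,d_c)$.
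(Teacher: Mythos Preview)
Your reduction to Proposition \ref{IrrationalCIs} for the multidegree $(e_1,\dotsc,e_c)$ is correct and is exactly how the paper begins. The gap is in your bridge from $(e_1,\dotsc,e_c)$ to $(d_1,\dotsc,d_c)$. The twisted special fibre $\{\tau_i^{\,p}\ell_i^{r_i}=\bar f_i\}\subset\mathbf V(E)$ is not finite over $X$: over $X\cap\{\ell_i=0\}\cap\{\bar f_i\ne 0\}$ it is empty, and over $X\cap\{\ell_i=0\}\cap\{\bar f_i=0\}$ the fibre in $\tau_i$ is all of $\A^1$. So this ``cover'' is neither proper nor equidimensional, and Lemma \ref{not-uniruled} (which needs properness) does not apply. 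Restricting to the open set $\{\prod\ell_i\ne 0\}$ gives an honest inseparable cover, but of a \emph{non-proper} variety; passing to a compactification re-introduces exactly the boundary components you flag in (i) and (ii), and there is no mechanism in the paper to control them. Likewise the ``rational section'' $s'$ lies in no linear system of regular sections of $F_X^*E$, so the genericity statements (Corollaries \ref{1st-sings-generic} and \ref{generic-sigma-i-j}) are not available for it.

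The paper avoids all of this by separating the two degenerations. Proposition \ref{IrrationalCIs} already yields a nonruled \emph{smooth} complete intersection $X\subset\P^N_{\mathbf C}$ of multidegree $(e_1,\dotsc,e_c)$ over $\mathbf C$. One then works entirely in characteristic zero: multiply the defining equations of $X$ by general forms of degrees $r_1,\dotsc,r_c$ to obtain a \emph{reducible} complete intersection $X'\subset\P^N_{\mathbf C}$ of multidegree $(d_1,\dotsc,d_c)$ containing $X$ as an irreducible component, and take a one-parameter pencil from $X'$ to a smooth complete intersection $Y$ of the same multidegree. The total space of this pencil is normal and irreducible (Lemma \ref{family-CIs}), and its special fibre has the nonruled component $X$; the variant of Matsusaka's theorem allowing reducible special fibres, \cite[Theorem IV.1.6.2]{Kollar1996}, then forces the geometric generic fibre to be nonruled. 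No twisted covers, no rational sections, and no positivity bookkeeping along $\{\ell_i=0\}$ are needed.
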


\begin{proof}
Let $X\subseteq \P^N_\mathbf C$ be a nonruled smooth complete intersection of multi-degree $(d_1-r_1,\dotsc, d_c - r_c)$, which exists by Proposition \ref{IrrationalCIs}.
Let $X'\subseteq \P^N_\mathbf C$ be a generically smooth complete intersection of multidegree $(d_1,\dotsc,d_e)$ that contains $X$ as an irreducible component.
(To obtain such $X'$, one may multiply equations defining $X$ in $\P^N_\mathbf C$ by general homogeneous polynomials of degrees $r_1, \dotsc, r_c$.)
Let $Y\subseteq \P^N_\mathbf C$ be a smooth complete intersection of multi-degree $(d_1,\dotsc,d_c)$.

Let $x_0,\dotsc, x_N$ denote the coordinates on $\P^N$.
Let 
\begin{equation*}
F_1,\dotsc, F_c, G_1,\dotsc, G_c \in \mathbf C[x_0,\dotsc,x_N]
\end{equation*}
be homogeneous polynomials such that $\deg F_i = \deg G_i= d_i$ for all $i$; such that the equations $F_1=\dotsb=F_c=0$ define $X'$ in $\P^N_\mathbf C$; and such that the equations $G_1=\dotsb=G_c=0$ define $Y$ in $\mathbf P^N_\mathbf C$.
Let $t$ denote the coordinate on $\A^1$.
Let $Z\subseteq (\P^N\times \A^1)_\mathbf C$ be subscheme defined by the equations
\begin{equation*}
(1-t) F_i + t G_i =0
\end{equation*}
with $i=1,\dotsc, c$.
Let $\pi : Z\to \A^1_\mathbf C$ be the second projection.

Let $U\subseteq \A^1_\mathbf C$ denote the largest open subset over which $\pi$ is smooth of relative dimension $N-c$.
Note that $1\in U$, so that $U$ is nonempty.
By Remark \ref{exhibit-one}, it suffices to show that the geometric generic fiber of $Z_U\to U$ is not ruled.

Let $T$ denote the spectrum local ring of the point $0\in \A^1_\mathbf C$, which is a DVR.
Note that $T$ contains two points, which correspond to 0 and the generic point of $\A^1_\mathbf C$.
The special fiber of $\pi : Z_T\to T$ is $X'$, which contains a nonruled component.
Furthermore, the total space $Z_T$ is normal and irreducible by Lemma \ref{family-CIs} below.
By a variant of Matsusaka's result \cite[Theorem IV.1.6.2]{Kollar1996}, these observations imply that the geometric generic fiber of $Z_T\to T$ is not ruled.
\end{proof}

\begin{lemma}
\label{family-CIs}
Let $S$ be a scheme.
Let $c$ and $N$ be integers such that $1\le c\le N-1$.
Let $Z\subseteq \P^N\times S$ be a closed subscheme defined by the vanishing of $c$ homogeneous polynomials in the coordinates on $\P^N$ with coefficients in $\Gamma(S,\O_S)$.
Let $\pi : Z\to S$ be the second projection, which is proper.
Assume:
\begin{itemize}
\item $S$ is locally Noetherian, regular and connected;
\item for every $s\in S$, the fiber $Z_s := Z\times_S \Spec \kappa(s)$ is generically smooth of dimension $N-c$ over $\kappa(s)$; and
\item there exists $s\in S$ such that $Z_s$ is smooth over $\kappa(s)$.
\end{itemize}
Then $Z$ is normal and irreducible, and $\pi:Z\to S$ is flat.
\end{lemma}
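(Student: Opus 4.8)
The plan is to deduce everything from three inputs: that $Z$ is a relative complete intersection in the smooth $S$-scheme $P := \P^N \times S$, the flat dimension and regularity formulas for $\pi$, and Serre's criterion for normality. Throughout I use that $S$, being locally Noetherian, regular and connected, is integral (a regular scheme is normal, and a connected normal scheme is irreducible); write $\eta$ for its generic point. Since $P \to S$ is smooth of relative dimension $N$ and $S$ is regular, $P$ is regular, hence Cohen--Macaulay.

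First I would establish flatness and the Cohen--Macaulay property of $Z$. Let $f_1,\dots,f_c$ be local equations cutting out $Z$ in $P$, and fix $z \in Z$ over $s \in S$. The fibre $P_s = \P^N_{\kappa(s)}$ is regular and irreducible of dimension $N$, and $Z_s$ is cut out in it by the $c$ elements $\bar f_1,\dots,\bar f_c$. By Krull's height theorem every component of $Z_s$ has dimension $\ge N-c$, while $\dim Z_s = N-c$ by hypothesis; hence $Z_s$ is equidimensional of codimension $c$, so the ideal $(\bar f_1,\dots,\bar f_c)$ has height $c$ in the Cohen--Macaulay local ring $\O_{P_s,z}$. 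An ideal of height $c$ generated by $c$ elements of a Cohen--Macaulay local ring is generated by a regular sequence, so $\bar f_1,\dots,\bar f_c$ is a regular sequence on $P_s$ at $z$. The relative complete intersection criterion then shows that $\pi$ is flat at $z$ and that $f_1,\dots,f_c$ is a regular sequence in the regular ring $\O_{P,z}$; consequently $\O_{Z,z} = \O_{P,z}/(f_1,\dots,f_c)$ is Cohen--Macaulay. Thus $\pi$ is flat and $Z$ is Cohen--Macaulay, in particular it satisfies Serre's condition $S_2$.

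Next I would prove irreducibility. The non-smooth locus $\operatorname{Sing}(\pi) \subseteq Z$ is closed, and since $\pi$ is proper its image in $S$ is closed; this image omits the point $s$ over which $Z_s$ is smooth, so it is a proper closed subset and therefore does not contain $\eta$. Hence the generic fibre $Z_\eta$ is smooth over $\kappa(\eta)$ of dimension $N-c \ge 1$ (here $c \le N-1$ enters); being a complete intersection of positive dimension it has $H^0(\O_{Z_\eta}) = \kappa(\eta)$, so it is connected, and being smooth it is integral. By flatness every associated point of $Z$ lies over an associated point of $S$, i.e. over $\eta$; since $Z$ is $S_2$ its associated points are its generic points, so $Z = \overline{Z_\eta}$ is irreducible. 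Its unique generic point $\xi$ lies in $Z_\eta$, where $\O_{Z,\xi}$ is a field, so $Z$ is generically reduced; together with the condition $S_1$ coming from Cohen--Macaulayness this shows $Z$ is reduced, hence integral.

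Finally I would verify Serre's condition $R_1$, which is the heart of the matter; normality then follows from $R_1$ and $S_2$ by Serre's criterion, and together with irreducibility gives the claim. Since $\pi$ is flat and $S$ is regular, $\O_{Z,w}$ is regular if and only if the fibre $\O_{Z_{\pi(w)},w}$ is regular, so I must show that $Z$ has no non-regular point of codimension $\le 1$. At a generic point of $Z$ the fibre is $Z_\eta$, which is smooth, so $Z$ is regular there. Let $w$ have codimension $1$ and set $s=\pi(w)$; the flat dimension formula $\dim \O_{Z,w} = \dim \O_{S,s} + \dim \O_{Z_s,w}$ forces either $\dim \O_{S,s} = 0$, so $s=\eta$ and $w$ is a codimension-$1$ point of the smooth scheme $Z_\eta$, or $\dim \O_{S,s}=1$ and $\dim \O_{Z_s,w}=0$, so $w$ is a generic point of $Z_s$. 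In the first case $Z_\eta$ is regular at $w$; in the second, $Z_s$ is generically smooth by hypothesis, hence regular at its generic point $w$. Either way the fibre is regular at $w$, so $Z$ is regular at $w$, and $R_1$ holds. I expect the main obstacle to be precisely this codimension-$1$ analysis: it is where the equidimensionality of the fibres, the generic smoothness of every fibre, and the existence of a single smooth fibre must all be brought to bear simultaneously, through the flat dimension and regularity-transfer formulas.
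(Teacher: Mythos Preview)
Your proof is correct and follows the same overall strategy as the paper's: establish that $Z$ is Cohen--Macaulay and $\pi$ is flat, then obtain normality from Serre's criterion. The paper is terser---it invokes miracle flatness directly rather than the relative complete intersection criterion, and it justifies $R_1$ simply by asserting that the singular locus of $\pi$ has codimension at least $2$, which is exactly the content of your codimension-$1$ case analysis via the flat dimension formula.

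The one genuine point of divergence is the irreducibility argument. You deduce it from the integrality of the generic fibre together with the fact that associated points of a flat morphism lie over associated points of the base; the paper instead observes that every fibre $Z_s$ is a positive-dimensional complete intersection in $\P^N_{\kappa(s)}$, hence connected, so $Z$ is connected and (being already normal) irreducible. Your route avoids appealing to connectedness of the possibly singular fibres; the paper's avoids the machinery of associated points under flat maps. Both are standard and of comparable length.
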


\begin{proof}
The assumptions imply that $Z$ is a complete intersection in the regular scheme $\P^N\times S$.
Thus $Z$ is Cohen-Macaulay.
The projection $\pi : Z\to S$ is flat by miracle flatness.
The scheme $Z$ is regular away from the singular locus of $\pi$, which has codimension 2.
Thus $Z$ is normal.
For each $s\in S$, the fiber $Z_s$ is a positive-dimensional complete intersection in $\P^N_{\kappa(s)}$, hence connected.
Thus $Z$ is connected (and normal), so irreducible.
\end{proof}

%\bibliography{MasterBib}{}
%\bibliographystyle{arxiv}

\end{document}